\documentclass[11pt,a4paper]{amsart}
\pdfoutput=1
\usepackage{amssymb}
\usepackage{amsmath}
\usepackage{mathtools}
\usepackage{amsthm}
\usepackage{amsfonts}
\usepackage{bbm} 
\usepackage{dutchcal} 
\usepackage{tikz-cd} 
\usepackage{todonotes} 
\usepackage{graphicx} 
\usepackage[toc,page]{appendix} 
\usepackage{kpfonts} 
\usepackage[mathscr]{euscript} 
\definecolor{winered}{rgb}{0.5,0,0}

\usepackage[margin=2.5cm]{geometry} 

\usepackage[style = alphabetic]{biblatex}
\renewbibmacro{in:}{}
\addbibresource{references.bib}

\usepackage[citecolor = winered, linkcolor = winered, menucolor = winered, colorlinks = true]{hyperref}

\usepackage[foot]{amsaddr}

\newtheoremstyle{theoremdd}
{\topsep}{\topsep}{\upshape}{0pt}{\bfseries}{.}{ }{\thmname{#1}\thmnumber{ #2}\thmnote{ (#3)}}

\theoremstyle{definition}
\newtheorem{Th}{Theorem}[section]
\newtheorem{Lemma}[Th]{Lemma}
\newtheorem{Cor}[Th]{Corollary}
\newtheorem{Prop}[Th]{Proposition}
\newtheorem{Def}[Th]{Definition}

\newtheorem{Rem}[Th]{Remark}

\newtheorem{Ex}[Th]{Example}

\newcommand{\Map}{\text{Map}}

\newcommand{\R}{\mathbb{R}}
\newcommand{\Z}{\mathbb{Z}}

\renewcommand{\H}{\mathbb{H}}
\newcommand{\cH}{\check{\H}}

\renewcommand{\u}{\underline}

\newcommand{\colim}{\text{colim}}
\newcommand{\ncat}{\mathsf} 
\newcommand{\cat}{\mathcal} 
\newcommand{\pre}{\ncat{Pre}}
\newcommand{\spre}{\ncat{sPre}}

\newcommand{\cart}{\ncat{Cart}}

\newcommand{\site}{\mathscr{C}}

\newcommand{\cl}{\text{cl}}
\newcommand{\B}{\textbf{B}}
\newcommand{\DK}{\ncat{DK}}
\newcommand{\mc}{\text{mc}}
\newcommand{\MapCh}{\u{\text{Map}_{\ncat{Ch}}}}
\newcommand{\Disc}{\text{Disc}}
\newcommand{\Sing}{\text{Sing}}
\newcommand{\tot}{\ncat{tot} \,}

\newcommand{\dR}{\text{dR}}
\newcommand{\conn}{\text{conn}}
\newcommand{\im}{\text{im} \,}

\makeatletter
\newtheorem*{rep@theorem}{\rep@title}
\newcommand{\newreptheorem}[2]{%
\newenvironment{rep#1}[1]{%
 \def\rep@title{#2 \ref{##1}}%
 \begin{rep@theorem}}%
 {\end{rep@theorem}}}
\makeatother

\newreptheorem{theorem}{Theorem}

\newreptheorem{lemma}{Lemma}

\newreptheorem{cor}{Corollary}
\newreptheorem{prop}{Proposition}

\usetikzlibrary{calc}
\usetikzlibrary{decorations.pathmorphing}

\tikzset{curve/.style={settings={#1},to path={(\tikztostart)
    .. controls ($(\tikztostart)!\pv{pos}!(\tikztotarget)!\pv{height}!270:(\tikztotarget)$)
    and ($(\tikztostart)!1-\pv{pos}!(\tikztotarget)!\pv{height}!270:(\tikztotarget)$)
    .. (\tikztotarget)\tikztonodes}},
    settings/.code={\tikzset{quiver/.cd,#1}
        \def\pv##1{\pgfkeysvalueof{/tikz/quiver/##1}}},
    quiver/.cd,pos/.initial=0.35,height/.initial=0}

\tikzset{tail reversed/.code={\pgfsetarrowsstart{tikzcd to}}}
\tikzset{2tail/.code={\pgfsetarrowsstart{Implies[reversed]}}}
\tikzset{2tail reversed/.code={\pgfsetarrowsstart{Implies}}}

\title{The Diffeological \v{C}ech-de Rham Obstruction}

\author{Emilio Minichiello}
\address{CUNY Graduate Center, Email address:  \href{mailto:eminichiello@gradcenter.cuny.edu}{eminichiello@gradcenter.cuny.edu}}

\begin{document}

\maketitle

\begin{abstract}
Using higher topos theory, we explore the obstruction to the \v{C}ech-de Rham map being an isomorphism in each degree for diffeological spaces. In degree 1, we obtain an exact sequence which interprets Iglesias-Zemmour's construction \cite{iglesias2023vcech} in $\infty$-stack cohomology. We obtain new exact sequences in all higher degrees. These exact sequences are constructed using homotopy pullback diagrams that include the $\infty$-stack classifying higher $\R$-bundle gerbes with connection. We also obtain a conceptual and succinct proof that the $\infty$-stack cohomology of the irrational torus $T_K$ for $K \subset \R$ a diffeologically discrete subgroup, agrees with the group cohomology of $K$ with values in $\R$. Finally, for a Lie group $G$, we prove that the groupoid of diffeological principal $G$-bundles with connection one obtains via higher topos theory is equivalent to the groupoid of diffeological principal $G$-bundles with connection defined in \cite{waldorf2012transgression}. 

\end{abstract}
\tableofcontents

\section{Introduction}

Classical differential geometry involves the study of finite dimensional smooth manifolds. As a theory, it has many achievements. One of its most celebrated is the \textbf{\v{C}ech-de Rham Theorem}, more commonly known as the de Rham Theorem\footnote{We call it the \v{C}ech-de Rham Theorem because some authors use ``the de Rham Theorem" to refer to the isomorphism between de Rham cohomology and singular cohomology.}. The \v{C}ech-de Rham Theorem, proven in 1931 by de Rham \cite{de1931analysis}, states that if $M$ is a finite dimensional smooth manifold, then there is an isomorphism
\begin{equation}
H^k_{\dR}(M) \cong \check{H}^k(M, \R^\delta),    
\end{equation}
where $H^k_{\dR}(M)$ denotes the de Rham cohomology of $M$, and $\check{H}^k(M, \R^\delta)$ denotes the \v{C}ech cohomology of $M$ with values in $\R^\delta$, the constant sheaf on the discrete group of real numbers. There are many good textbook accounts of the \v{C}ech-de Rham Theorem, such as \cite[Chapter II]{bott1982differential} and \cite[Chapter 9]{gallier2022homology}. The de Rham cohomology of a finite dimensional smooth manifold $M$ is constructed using its smooth structure, but the \v{C}ech-de Rham Theorem shows that the de Rham cohomology of $M$ is independent of this smooth structure and depends only on the topology of $M$.

Diffeology is a modern framework for differential geometry whose main objects of study are diffeological spaces, encompassing smooth manifolds, orbifolds, and mapping spaces. The category of diffeological spaces is better behaved than the category of finite dimensional smooth manifolds, indeed the category of diffeological spaces is complete, cocomplete and cartesian closed \cite{iglesias2013diffeology}. This makes diffeological spaces attractive to geometers who study spaces that are not finite dimensional smooth manifolds. However, this generalization comes at the cost of losing many of the theorems and constructions of classical differential geometry\footnote{Many of these theorems are lost because not all diffeological spaces have partitions of unity, a crucial ingredient to many theorems in differential geometry.}. Much contemporary work has gone into extending these constructions and theorems to diffeological spaces. The textbook \cite{iglesias2013diffeology} by Iglesias-Zemmour has in particular pushed the theory quite far, defining differential forms, de Rham cohomology, singular cohomology, fiber bundles, and smooth homotopy groups of diffeological spaces amongst many other contributions.

In \cite{iglesias85}, Patrick Iglesias-Zemmour proved that the \v{C}ech-de Rham Theorem does not hold in general for diffeological spaces. Interestingly, this result was written as a preprint in French in the late 80s and was only recently published in English as \cite{iglesias2023vcech}. Furthermore Iglesias-Zemmour obtained an exact sequence
\begin{equation*}
    0 \to H^1_{\dR}(X) \to \check{H}^1_{PIZ}(X,\R^\delta) \to {}^d E^{1,0}_2(X) \xrightarrow{c_1} H^2_{\dR}(X) \to \check{H}^2_{PIZ}(X, \R^\delta)
\end{equation*}
which is a receptacle for the obstruction to the \v{C}ech-de Rham Theorem. The group ${}^d E^{1,0}_2(X)$ is the subgroup of the group of isomorphism classes of diffeological principal $\R$-bundles that admit a connection. If this group is trivial, as it is for all finite dimensional smooth manifolds, then $H^1_{\dR}(X) \cong \check{H}^1_{PIZ}(X)$. However, the situation for higher degrees is not addressed in \cite{iglesias2023vcech}. Iglesias-Zemmour writes ``We must acknowledge that the geometrical natures of the higher obstructions
of the De Rham theorem still remain uninterpreted. It would be certainly
interesting to pursue this matter further" \cite[Page 2]{iglesias2023vcech}. In this paper, we obtain such an interpretation of the higher obstructions. 

In \cite{minichiello2022diffeological}, we introduced a generalization of \v{C}ech cohomology for diffeological spaces that we call \textbf{$\infty$-stack cohomology}. If $X$ is a diffeological space and $A$ is a diffeological abelian group, then $\check{H}^k_\infty(X, A)$ denotes the $k$th $\infty$-stack cohomology of $X$ with values in $A$.

Currently, there are four definitions of \v{C}ech cohomology for diffeological spaces in the literature. They are Iglesias-Zemmour's cohomology from \cite{iglesias2023vcech}, which we call PIZ cohomology, there is $\infty$-stack cohomology \cite{minichiello2022diffeological}, there is Krepski-Watts-Wolbert cohomology \cite{krepski2021sheaves} and there is Ahmadi's cohomology \cite{ahmadi2023diffeological}. In \cite[Section 5.3]{minichiello2022diffeological}, the first three \v{C}ech cohomologies were compared, and some relationships deduced, but it is currently unknown if any of the above cohomology theories agree in general.

This paper is a sequel to \cite{minichiello2022diffeological}, where we explored the connection between diffeological spaces and higher topos theory to study diffeological principal $G$-bundles. When $G$ is a diffeological group, not necessarily abelian, it is still possible to define $\infty$-stack cohomology in degree $1$, $\check{H}^1_{\infty}(X, G)$. We proved \cite[Corollary 6.9]{minichiello2022diffeological} that degree $1$ $\infty$-stack cohomology is in bijection with isomorphism classes of diffeological principal $G$-bundles over $X$. In fact, we obtained a much stronger result by showing that the nerve of the groupoid of diffeological principal $G$-bundles is weak equivalent to the nerve of the category of $G$-principal $\infty$-bundles on $X$ \cite[Theorem 6.8] {minichiello2022diffeological}.

In this paper, we study two cases where the tools of higher topos theory help us better understand diffeological spaces. The first case is studying the $\infty$-stack cohomology of the irrational torus. The irrational torus was the first example of a nontrivial diffeological space with trivial underlying topology, see \cite{iglesias2020torus}. In \cite{iglesias2023vcech}, Iglesias-Zemmour proved that if $K \subset \R$ is a diffeologically discrete subgroup, then the PIZ cohomology of the irrational torus $T_K = \R / K$ is isomorphic to the group cohomology of $K$ with values in $\R$. However, his proof of this, \cite[Section II]{iglesias2023vcech}, is computational. In Section \ref{section irrational torus}, we prove 

\begin{reptheorem}{th infinity stack cohomology of irrational torus is group cohomology}
There is an isomorphism
\begin{equation*}
    \check{H}^n_\infty(T_K, \R^\delta) \cong H^n_\text{grp}(K, \R^\delta)
\end{equation*}
of abelian groups, for every $n \geq 0$, where $\R^\delta$ denotes the discrete group of real numbers, and where $H^n_{\text{grp}}(K, \R^\delta)$ denotes the group cohomology of $K$ with coefficients in $\R^\delta$.
\end{reptheorem}

Theorem \ref{th infinity stack cohomology of irrational torus is group cohomology} supports the conjecture that PIZ cohomology and $\infty$-stack cohomology agree. The proof of Theorem \ref{th infinity stack cohomology of irrational torus is group cohomology} is short and conceptual. It uses the shape functor $\int$, much beloved by higher differential geometers \cite{berwickevans2022classifying}, \cite{bunk2022r}, \cite{schreiber2013differential}, \cite{clough2023homotopy}, \cite{carchedi2015homotopy}, in a crucial way, reducing the $\infty$-stack cohomology of $T_K$ to the singular cohomology of the classifying space $\B K$. This demonstrates the advantage of using $\infty$-stack cohomology to study diffeological spaces.

The second case, which makes up the bulk of the paper, is to use $\infty$-stack cohomology, and more generally the framework of higher topos theory, to study the diffeological \v{C}ech-de Rham obstruction. First we obtain a homotopy pullback diagram of $\infty$-stacks.

\begin{reptheorem}{th hopullback diagram}
For every $k \geq 1$, there exists a commutative diagram of $\infty$-stacks of the following form
\begin{equation}
\begin{tikzcd}
	{*} & {\B^k \R^\delta} & {*} & {*} \\
	{*} & {\B^k_\nabla \R} & {\Omega^{k+1}_\cl} & {\Omega^{k+1}} \\
	& {\B^k \R} & {\B^k \Omega^1_{\cl}} & {\Omega^{1 \leq \bullet \leq k+1}} \\
	& {*} & {\B^{k+1} \R^\delta} & {\B^{k+1}_\nabla \R}
	\arrow[from=1-1, to=2-1]
	\arrow[from=1-1, to=1-2]
	\arrow[from=2-1, to=2-2]
	\arrow[from=1-2, to=2-2]
	\arrow[from=2-2, to=2-3]
	\arrow[from=1-2, to=1-3]
	\arrow[from=1-3, to=2-3]
	\arrow[from=2-2, to=3-2]
	\arrow[from=2-3, to=2-4]
	\arrow[from=1-4, to=2-4]
	\arrow[from=1-3, to=1-4]
	\arrow[from=2-4, to=3-4]
	\arrow[from=2-3, to=3-3]
	\arrow[from=3-2, to=3-3]
	\arrow[from=3-3, to=3-4]
	\arrow[from=4-3, to=4-4]
	\arrow[from=3-4, to=4-4]
	\arrow[from=3-2, to=4-2]
	\arrow[from=4-2, to=4-3]
	\arrow[from=3-3, to=4-3]
\end{tikzcd}
\end{equation}
furthermore every commutative square in this diagram is a homotopy pullback square in the \v{C}ech model structure on simplicial presheaves over $\cart$.
\end{reptheorem}

Such diagrams are often used in higher category-theoretic treatments of differential cohomology, see \cite{schreiber2013differential}, \cite{amabel2021differential}, \cite{Myers2021}. One can think of an $\infty$-stack as a classifying object for a mathematical structure, such as diffeological principal $G$-bundles, as in \cite{minichiello2022diffeological}. Thus the above diagrams can be thought of as tight relationships between the corresponding mathematical structures.

Of particular interest is the $\infty$-stack $\B^k_\nabla \R$. This is the $\infty$-stack which classifies diffeological \textbf{$\R$-bundle $(k-1)$-gerbes with connection}. Cohomology with values in this $\infty$-stack is called the $k$th pure differential cohomology in \cite{Myers2021}. From Theorem \ref{th hopullback diagram} we are immediately able to obtain the following result. 

\begin{repcor}{cor prePIZ exact sequence}
For every diffeological space $X$, there is an exact sequence of vector spaces
\begin{equation} \label{eq degree k prePIZ exact sequence}
   0 \to \check{H}^k_{\infty}(X, \R^\delta) \to \check{H}^k_{\infty, \nabla}(X, \R) \to \Omega^{k+1}_{\cl}(X) \to \check{H}^{k+1}_{\infty}(X, \R^\delta).
\end{equation}
\end{repcor}

Near the completion of this paper, we learned that an analogous exact sequence was also obtained in \cite[Page 27]{Myers2021} using completely different methods in the framework of homotopy type theory. The above exact sequence allows us to compute the pure differential cohomology of the irrational torus.

\begin{reptheorem}{th bundle gerbes of irrational torus}
Let $T_\alpha$ denote the irrational torus, then
\begin{equation}
    \check{H}^k_{\infty, \nabla}(T_\alpha, R) \cong \begin{cases}
        \R^2, & k = 1,\\
        \R, & k = 2,\\
        0, & k > 2.
    \end{cases}
\end{equation}
\end{reptheorem}

While Corollary \ref{cor prePIZ exact sequence} is useful for computations with the irrational torus, it is desirable to have an exact sequence including de Rham cohomology rather than closed forms. This is obtained in the following result.

\begin{reptheorem}{th degree k PIZ exact sequence}
Given a diffeological space $X$ and $k \geq 1$, the sequence of vector spaces
\begin{equation*}
     \check{H}^k_{\infty}(X, \R^\delta) \to \check{H}^k_{\conn}(X, \R) \to H^{k+1}_{\dR}(X) \to \check{H}^{k+1}_{\infty}(X, \R^\delta)
\end{equation*}
is exact.
\end{reptheorem}

When $k = 1$, we obtain an additional piece to this exact sequence.

\begin{reptheorem}{th degree 1 PIZ exact sequence}
Given a diffeological space $X$, the sequence of vector spaces
\begin{equation} \label{eq degree 1 piz exact sequence}
0 \to H^1_{\dR}(X) \to \check{H}^1_{\infty}(X, \R^\delta) \to \check{H}^1_{\conn}(X, \R) \to H^2_{\dR}(X) \to \check{H}^2_{\infty}(X, \R^\delta)
\end{equation}
is exact.
\end{reptheorem}

The above exact sequence is exactly analogous to the exact sequence obtained by Iglesias-Zemmour in \cite{iglesias2023vcech}.

In Appendix \ref{section diff bundles with connection} we turn to the study of connections for diffeological principal bundles. This theory is still in its infancy, and there are a few references that give varying definitions of diffeological connections \cite[Section 8.32]{iglesias2013diffeology}, \cite[Section 3]{waldorf2012transgression}, \cite[Section 4]{magnot2017diffeology}. The theory of $\infty$-stacks provides another definition. Let $G$ be a Lie group, and $U$ a cartesian space. Then let $\Omega^1(U, \mathfrak{g})//G$ denote the groupoid whose objects are differential $1$-forms $\omega \in \Omega^1(U, \mathfrak{g})$, where $\mathfrak{g}$ denotes the Lie algebra of $G$, and where there is a morphism $g: \omega \to \omega'$ if there exists a smooth map $g: U \to G$ such that
\begin{equation*}
    \omega' = \text{Ad}_g^{-1}(\omega) + g^* \mc(G)
\end{equation*}
where $\mc(G)$ denotes the Maurer-Cartan form of $G$. Taking the nerve of this groupoid, and letting $U$ vary defines an $\infty$-stack $\Omega^1(-,\mathfrak{g})//G$, which amongst others has been studied in \cite{fiorenza2011cech}, \cite{freed2013chern}. We connect this notion of connection to that given in \cite[Definition 3.2.1]{waldorf2012transgression} in the following result.

\begin{reptheorem}{th waldorf and cocycle with connections iso}
Given a diffeological space $X$ and a Lie group $G$, the functor
\begin{equation}
    \ncat{Cons}_\nabla : \ncat{Coc}_{\nabla}(X,G) \to \ncat{Wal}_G(X),
\end{equation}
is an equivalence of groupoids, where $\ncat{Coc}_{\nabla}(X,G)$ is the groupoid whose objects are maps $QX \to \Omega^1(-,\mathfrak{g})//G$, where $QX$ is a cofibrant replacement of $X$ in the projective model structure on simplicial presheaves, and $\ncat{Wal}_G(X)$ is the groupoid of diffeological principal $G$-bundles with connection as defined in \cite[Definition 3.2.1]{waldorf2012transgression}.
\end{reptheorem}

To compute $\infty$-stack cohomology, one needs a workable model of the derived mapping space $\R \text{Hom}(X,A)$, when $X$ is a diffeological space and $A$ is a presheaf of chain complexes. In Appendix \ref{section totalization}, we obtain such a model, which reduces many computations with $\infty$-stacks to manipulations with double complexes. As a corollary, we obtain a simple and direct proof of the following well known folklore result.

\begin{repprop}{prop holim of cosimplicial chain complex}
Let $C$ be a cosimplicial chain complex, then
\begin{equation}
    \text{holim}_{n \in \ncat{\Delta}} C^n \simeq \tot C,
\end{equation}
where we are computing the homotopy limit in the category of chain complexes equipped with the projective model structure, and $\tot C$ denotes the total complex of $C$.
\end{repprop}

The paper is organized as follows. In Section \ref{section smooth sheaves and diffeological spaces}, we introduce diffeological spaces and place them in the context of sheaf theory. In Section \ref{section simplicial presheaves}, we introduce simplicial presheaves, show how diffeological spaces embed into simplicial presheaves, and introduce the shape functor. In Section \ref{section irrational torus}, we prove that the $\infty$-stack cohomology of the irrational torus $T_K$ is isomorphic to the group cohomology of $K$ with values in $\R$. In Section \ref{section dold kan}, we introduce the Dold-Kan correspondence, which is a core tool we use for the rest of the paper. In Section \ref{section examples of infinity stacks}, we introduce the main $\infty$-stacks that will be used in the paper, and compute various examples of $\infty$-stack cohomology. In Section \ref{section cech de Rham obstruction}, we prove the main results of this paper, Theorem \ref{th degree k PIZ exact sequence} and Theorem \ref{th degree 1 PIZ exact sequence}. In Appendix \ref{section diff bundles with connection}, we prove that our notion of diffeological principal $G$-bundles with connection using $\infty$-stacks agrees with Waldorf's \cite{waldorf2012transgression}. In Appendix \ref{section totalization}, we prove a technical result allowing us to easily compute $\infty$-stack cohomology when the coefficient $\infty$-stack comes from a presheaf of chain complexes. In Appendix \ref{section proof} we prove Theorem \ref{th hopullback diagram}.

\section{Smooth Sheaves and Diffeological Spaces} \label{section smooth sheaves and diffeological spaces}

In this section we briefly describe diffeological spaces and their connection to sheaves on $\cart$. See \cite[Section 2]{minichiello2022diffeological} for more details.

\begin{Def} \label{def cart, good covers, parametrizations}
Let $M$ be a finite dimensional smooth manifold\footnote{We will assume throughout this paper that manifolds are Hausdorff and paracompact.}. We say a collection of subsets $\mathcal{U} = \{U_i \subseteq M \}_{i \in I}$ is an \textbf{open cover} if each $U_i$ is an open subset of $M$, and $\bigcup_{i \in I} U_i = M$. If $U$ is a finite dimensional smooth manifold diffeomorphic to $\R^n$ for some $n \in \mathbb{N}$, we call $U$ a \textbf{cartesian space}. We call $\mathcal{U} = \{U_i \subseteq M \}$ a \textbf{cartesian open cover} of a manifold $M$ if it is an open cover of $M$ and every $U_i$ is a cartesian space. We say that $\mathcal{U}$ is a \textbf{good open cover} if it is a cartesian open cover, and further every finite non-empty intersection $U_{i_0 \dots i_k} = U_{i_0} \cap \dots \cap U_{i_k}$ is a cartesian space. 

Let $\ncat{Man}$ denote the category whose objects are finite dimensional smooth manifolds and whose morphisms are smooth maps. Let $\cart$ denote the full subcategory whose objects are cartesian spaces. Given a set $X$, let $\ncat{Param}(X)$ denote the set of \textbf{parametrizations} of $X$, namely the collection of set functions $p: U \to X$, where $U \in \ncat{Cart}$.
\end{Def}

\begin{Def} \label{def diffeological space}
A \textbf{diffeology} on a set $X$, consists of a collection $\mathscr{D}$ of parametrizations $p: U \to X$ satisfying the following three axioms:
\begin{enumerate}
    \item $\mathscr{D}$ contains all points $\R^0 \to X$,
    \item If $p: U \to X$ belongs to $\mathscr{D}$, and $f: V \to U$ is a smooth map, then $pf: V \to X$ belongs to $\mathscr{D}$, and
    \item If $\{U_i \subseteq U \}_{i \in I}$ is a good open cover of a cartesian space $U$, and $p: U \to X$ is a parametrization such that $p|_{U_i} : U_i \to X$ belongs to $\mathscr{D}$ for every $i \in I$, then $p \in \mathscr{D}$.
\end{enumerate}
A set $X$ equipped with a diffeology $\mathscr{D}$ is called a \textbf{diffeological space}. Parametrizations that belong to a diffeology are called \textbf{plots}. We say a set function $f: X \to Y$ between diffeological spaces is \textbf{smooth} if for every plot $p: U \to X$ in $\mathscr{D}_X$, the composition $pf: U \to Y$ belongs to $\mathscr{D}_Y$. We often denote the set of smooth maps from $X$ to $Y$ by $C^\infty(X,Y)$. Let $\ncat{Diff}$ denote the category of diffeological spaces.
\end{Def}

Every manifold $M$ is canonically a diffeological space by considering the set of parametrizations $p : U \to M$ that are smooth in the classical sense. This gives a diffeology on $M$, called the \textbf{manifold diffeology}. One can show \cite[Chapter 4]{iglesias2013diffeology} that the manifold diffeology defines a fully faithful functor $\ncat{Man} \hookrightarrow \ncat{Diff}$.

Diffeology extends many constructions and concepts from classical differential geometry to diffeological spaces, such as the theory of bundles.

\begin{Def} \label{def subduction}
We say that a map $\pi : X \to Y$ of diffeological spaces is a \textbf{subduction} if it is surjective, and for every plot $p: U \to Y$, there exists a good open cover $\{U_i \subseteq U \}$, and plots $p_i: U_i \to X$ making the following diagram commute
\begin{equation}
    \begin{tikzcd}
	{U_i} & X \\
	{U} & Y
	\arrow["\pi", from=1-2, to=2-2]
	\arrow["{p_i}", from=1-1, to=1-2]
	\arrow[hook, from=1-1, to=2-1]
	\arrow["{p}"', from=2-1, to=2-2]
\end{tikzcd}
\end{equation}
\end{Def}

\begin{Def}
A \textbf{diffeological group} is a group $G$ equipped with a diffeology such that the multiplication map $m: G \times G \to G$, and inverse map $i: G \to G$ are smooth. A right \textbf{diffeological group action} of a diffeological group $G$ on a diffeological space $X$ is a smooth map $\rho: X \times G \to X$ such that $\rho(x, e_G) = x$, and $\rho(\rho(x,g), h) = \rho(x, gh)$, where $e_G$ denotes the identity element of $G$.
\end{Def}

\begin{Def}
Let $G$ be a diffeological group, and $P$ be a diffeological right $G$-space. A map $\pi: P \to X$ of diffeological spaces is a \textbf{diffeological principal $G$-bundle} if:
\begin{enumerate}
    \item the map $\pi: P \to X$ is a subduction, and
    \item the map $\text{act}: P \times G \to P \times_X P$ defined by $(p,g) \mapsto (p, p \cdot g)$, which we call the \textbf{action map} is a diffeomorphism.
\end{enumerate}
\end{Def}

A map of diffeological principal $G$-bundles $P \to P'$ over $X$ is a diagram
\begin{equation*}
    \begin{tikzcd}
	P && {P'} \\
	& X
	\arrow["{\pi'}", from=1-3, to=2-2]
	\arrow["\pi"', from=1-1, to=2-2]
	\arrow["f", from=1-1, to=1-3]
\end{tikzcd}
\end{equation*}
where $f$ is a $G$-equivariant smooth map. A diffeological principal $G$-bundle $P$ is said to be \textbf{trivial} if there exists an isomorphism $\varphi: X \times G \to P$, called a \textbf{trivialization}, where $\text{pr}_1: X \times G \to X$ is the product bundle. Let $\ncat{DiffPrin}_G(X)$ denote the category of diffeological principal $G$-bundles over a diffeological space $X$.

In \cite{minichiello2022diffeological}, we proved that diffeological principal bundles can be classified using cocycles in a way reminiscent of classical differential geometry. However, rather than using cocycles defined over an open cover, we use cocycles defined on plots. Let $\ncat{Plot}(X)$ denote the category whose objects are plots $p: U \to X$ of $X$ and whose morphisms $f: p \to p'$ are smooth maps $f: U \to U'$ such that $p'f = p$. 

\begin{Def} \label{def cocycle}
Given a diffeological space $X$ and a diffeological group $G$, call a collection $g = \{g_{f_0} \}$ of smooth maps $g_{f_0} : U_{p_1} \to G$ indexed by maps of plots $f_0: U_{p_1} \to U_{p_0}$ of $X$ a \textbf{$G$-cocycle} if for every pair of composable plot maps of $X$
\begin{equation*}
    U_{p_2} \xrightarrow{f_1} U_{p_1} \xrightarrow{f_0} U_{p_0}
\end{equation*}
it follows that
\begin{equation} \label{eqn diff cocycle condition}
    g_{f_0 f_1} = (g_{f_0} \circ f_1) \cdot g_{f_1}.
\end{equation}
We call (\ref{eqn diff cocycle condition}) the \textbf{diffeological $G$-cocycle condition}. 

Given two $G$-cocycles, $g,g'$, we say a collection $h = \{h_{p_0}\}$ of smooth maps $h_{p_0}: U_{p_0} \to G$ indexed by plots of $X$ is a \textbf{morphism of $G$-cocycles} $h : g \to g'$ if for every map $f_0 :U_{p_1} \to U_{p_0}$ of plots of $X$, it follows that
\begin{equation} \label{eqn map of diff cocycles}
    g'_{f_0} \cdot h_{p_1} = (h_{p_0} \circ f_0) \cdot g_{f_0}.
\end{equation}
\end{Def}

Given a diffeological space $X$ and a $G$-cocycle $g$ on $X$, we can construct a diffeological principal $G$-bundle $\pi : P \to X$, by taking the quotient
\begin{equation}
    P = \left( \coprod_{p_0 \in \ncat{Plot}(X)} U_{p_0} \times G \right) /{\sim}
\end{equation}
where $\sim$ is the smallest equivalence relation such that $(x_{p_1}, k_1) \sim (x_{p_0}, k_0)$ if there exists a map $f_0: U_{p_1} \to U_{p_0}$ of plots such that $f_0(x_{p_1}) = x_{p_0}$ and $k_0 = g_{f_0}(x_{p_1}) \cdot k_1$. We let $\pi = \ncat{Cons}(g)$, short for construction. In fact, this construction defines a functor from the category $\ncat{Coc}(X,G)$ of $G$-cocycles on $X$ to the category of diffeological principal $G$-bundles.

\begin{Th}[{\cite[Theorem 3.15]{minichiello2022diffeological}}]\label{th diff cocycle theorem}
Given a diffeological space $X$ and a diffeological group $G$, the functor 
\begin{equation}
    \ncat{Cons}: \ncat{Coc}(X,G) \to \ncat{DiffPrin}_G(X)
\end{equation}
is an equivalence of groupoids.
\end{Th}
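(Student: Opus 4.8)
The plan is to show that $\ncat{Cons}$ is a well-defined functor which is fully faithful and essentially surjective. Everything is organized around the principle that the category $\ncat{Plot}(X)$ of plots and maps of plots plays the role that an open cover together with its \v{C}ech nerve plays in classical bundle theory. The reason cocycles must be indexed by maps of plots rather than by an honest open cover is that, in diffeology, a plot $p\colon U_p\to X$ need not admit a lift to $P$ even when $\pi$ is a subduction; only some plots lift, and these form a ``covering sieve'' in the sense that every plot restricts, on a good open cover of its domain, to one of this kind.

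For well-definedness and full faithfulness: given a $G$-cocycle $g$, equip $P=\big(\coprod_{p}U_{p}\times G\big)/{\sim}$ with the quotient diffeology; then $\pi\colon P\to X$ is a subduction (surjectivity is clear, and a plot $p$ of $X$ lifts into its own chart via $x\mapsto[x,e]_p$), and the action map $\mathrm{act}\colon P\times G\to P\times_X P$ is a diffeomorphism --- here (\ref{eqn diff cocycle condition}) is exactly what is needed to see that each fibre is a single free transitive $G$-set, and the inverse of $\mathrm{act}$ is smooth because it is read off from the charts. For a morphism of cocycles $h\colon g\to g'$ one sets $\ncat{Cons}(h)[x,k]_p=[x,h_p(x)k]_p$, and (\ref{eqn map of diff cocycles}) is precisely the compatibility of this formula with $\sim$; functoriality is then immediate. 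Writing $s^g_p(x)=[x,e]_p$ for the tautological section over $p$, faithfulness follows because any $h,h'\colon g\to g'$ inducing the same bundle map agree after evaluation on the $s^g_p$. For fullness, given a map of principal $G$-bundles $\phi\colon\ncat{Cons}(g)\to\ncat{Cons}(g')$, the plot $\phi\circ s^g_p$ lies over $p$, as does $s^{g'}_p$, so the diffeomorphism $\mathrm{act}$ produces a unique smooth $h_p\colon U_p\to G$ with $\phi(s^g_p(x))=s^{g'}_p(x)\cdot h_p(x)$; one checks that $\{h_p\}$ satisfies (\ref{eqn map of diff cocycles}) (because $\phi$ is well defined on the quotient) and that $\ncat{Cons}(\{h_p\})=\phi$, as both agree on the generating sections $s^g_p$.

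The main obstacle is essential surjectivity. Given $\pi\colon P\to X$, let $S\subseteq\ncat{Plot}(X)$ be the full subcategory of plots admitting a lift to $P$; it is closed under precomposition with maps of plots, and, since $\pi$ is a subduction, every plot of $X$ restricts on a good open cover of its domain to a plot in $S$. Choose a lift $\til p\colon U_p\to P$ for each $p\in S$. For a map of plots $f_0\colon U_{p_1}\to U_{p_0}$ with $p_0,p_1\in S$, both $\til p_1$ and $\til p_0\circ f_0$ lift $p_1$, so there is a unique smooth $g^P_{f_0}\colon U_{p_1}\to G$ with $\til p_1=(\til p_0\circ f_0)\cdot g^P_{f_0}$, and uniqueness yields (\ref{eqn diff cocycle condition}) for such $f_0$. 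One extends $g^P$ to an arbitrary map of plots $f_0\colon U_{p_1}\to U_{p_0}$ by covering $U_{p_1}$ by cartesian opens $W$ with $p_1|_W\in S$ and $f_0(W)$ contained in a cartesian open $V\subseteq U_{p_0}$ with $p_0|_V\in S$, using the $G$-valued function produced above on each such $W$, and gluing; this is well posed because uniqueness forces agreement on overlaps, and legitimate because $\u{G}$ is a sheaf on $\cart$. The cocycle identity for the full $g^P$ is then a local check. Finally, define $\Phi\colon\ncat{Cons}(g^P)\to P$ by $[x,k]_p\mapsto\til p(x)\cdot k$ on the $S$-charts; this respects $\sim$ by the defining equation of $g^P$, and since $S$ is cofinal it descends to all of $\ncat{Cons}(g^P)$, giving a smooth $G$-equivariant bijection over $X$. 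Its inverse is smooth because any plot $r\colon V\to P$ is a lift of $\pi r\in S$, so $\Phi^{-1}\circ r$ is the plot $v\mapsto[v,c(v)]_{\pi r}$ where $c\colon V\to G$ is the unique smooth map with $r=\til{(\pi r)}\cdot c$; hence $\Phi$ is a diffeomorphism and $\ncat{Cons}(g^P)\cong P$. The recurring device, and the source of every smoothness assertion, is that the diffeomorphism $\mathrm{act}$ turns any pair of points in a common fibre into a smooth $G$-valued function; the only genuinely fiddly step is the bookkeeping in the extension of $g^P$ from $S$ to all maps of plots.
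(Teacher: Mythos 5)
Your construction of $\ncat{Cons}$, the verification that it lands in $\ncat{DiffPrin}_G(X)$, and the full-faithfulness argument follow the expected route and are essentially fine (modulo the usual care needed to see that the generated equivalence relation does not collapse fibres, for which one compares each chart point $(p,x)$ against the constant plot at $p(x)$). The genuine gap is in essential surjectivity, precisely at the step where you extend $g^P$ from the subcategory $S$ of globally liftable plots to an arbitrary morphism $f_0\colon U_{p_1}\to U_{p_0}$ of $\ncat{Plot}(X)$. On a chart $W$ with $p_1|_W\in S$ and $f_0(W)\subseteq V$, $p_0|_V\in S$, your function is the unique $g$ with $\widetilde{p_1|_W}=(\widetilde{p_0|_V}\circ f_0)\cdot g$; on an overlapping chart $W'$ (with its own $V'$) the analogous function is computed from the independently chosen lifts $\widetilde{p_1|_{W'}}$ and $\widetilde{p_0|_{V'}}$. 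Uniqueness is uniqueness relative to a fixed pair of lifts, so it does not force the two functions to agree on $W\cap W'$; in general they differ by the comparison functions between the chosen local lifts (if $\widetilde{p_1|_{W'}}=\widetilde{p_1|_W}\cdot a$ and $\widetilde{p_0|_{V'}}=\widetilde{p_0|_V}\cdot b$, then $g_{W'}=(b\circ f_0)^{-1}\,g_W\,a$). The appeal to $C^\infty(-,G)$ being a sheaf only lets you glue functions that already agree on overlaps. The same problem resurfaces when you try to descend $\Phi$ from the $S$-charts to the charts indexed by plots not in $S$.

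What is missing is the statement that $S=\ncat{Plot}(X)$, i.e.\ that every plot of $X$ admits a global lift to $P$ --- equivalently, that every diffeological principal $G$-bundle over a cartesian space is trivial. This plotwise triviality is the real content of the proof in \cite{minichiello2022diffeological}: it amounts to the assertion that every $C^\infty(-,G)$-valued \v{C}ech $1$-cocycle on a good cover of a cartesian space is a coboundary (the descent property of $\B G$ quoted in Example \ref{ex diffeological bundles via stacks}), which is not automatic for a general diffeological group $G$. Granting it, one chooses a global trivialization $\varphi_{p}\colon U_{p}\times G\to p^*P$ for every plot, exactly as recalled at the start of Appendix \ref{section diff bundles with connection}, and the transition functions between these trivializations give the cocycle with no gluing required. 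Your argument cannot circumvent this input: producing $g_{f_0}$ for a morphism whose target $p_0$ does not lift is tantamount to trivializing $p_0^*P$, which is the lemma you were trying to avoid. Note also that plotwise triviality is forced by the theorem itself --- over a cartesian space $U$ every cocycle is isomorphic to the trivial one via the unique morphisms to the terminal plot $1_U$ --- so no proof of essential surjectivity can succeed without establishing it in some form.
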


While extending the classical theory, there are constructions one can do with diffeological spaces that are not available to smooth manifolds:
\begin{enumerate}
    \item Given a diffeological space $X$, and a subset $A \xhookrightarrow{i} X$ a subset. Then consider the set of parametrizations $p : U \to A$ such that $ip : U \to X$ is a plot of $X$. This collection is a diffeology, called the \textbf{subspace diffeology} on $A$,
    \item Given a diffeological space $X$ and an equivalence relation $\sim$ on $X$, let $\pi: X \to X/{\sim}$ denote the resulting quotient function on sets. Consider the set of parametrizations $p : U \to X/{\sim}$ such that there exists a good open cover $\{U_i \subseteq U \}$ and plots $p_i: U_i \to X$ making the following diagram commute
\begin{equation*}
        \begin{tikzcd}
	{U_i} & X \\
	{U} & X/{\sim}
	\arrow["\pi", from=1-2, to=2-2]
	\arrow["{p_i}", from=1-1, to=1-2]
	\arrow[hook, from=1-1, to=2-1]
	\arrow["{p}"', from=2-1, to=2-2]
\end{tikzcd}
\end{equation*}
This forms a diffeology on $X/{\sim}$, called the \textbf{quotient diffeology},
\item Given a pair $X$ and $Y$ of diffeological spaces, the set of parametrizations $p : U \to X \times Y$ such that the composites $\pi_1 \circ p$ and $\pi_2 \circ p$ are plots of $X$ and $Y$ respectively, forms a diffeology, called the \textbf{product diffeology},
\item Given diffeological spaces $X$ and $Y$, the set of parametrizations $p: U \to C^\infty(X, Y)$ such that the transposed function $p^\#: U \times X \to Y$ is a smooth map is a diffeology, called the \textbf{functional diffeology}.
\end{enumerate}

These constructions make the category of diffeological spaces considerably better than the category of finite dimensional smooth manifolds, as shown in Corollary \ref{cor diff is a quasitopos}.

Diffeological spaces inherit this nice structure from the category of smooth sheaves.

\begin{Def}
We briefly recall the relevant definitions for sheaf theory.
\begin{itemize}
    \item A \textbf{collection of families} $j$ on a category $\site$ consists of a set $j(U)$ for each $U \in \site$, whose elements $\{r_i: U_i \to U \} \in j(U)$ are families of morphisms over $U$. We call a collection of families $j$ on $\site$ a \textbf{coverage} if it satisfies the following property: for every $\{ r_i: U_i \to U \} \in j(U)$, and every map $g: V \to U$ in $\site$, then there exists a family $\{ t_j: V_j \to V \} \in j(V)$ such that $g t_j$ factors through some $r_i$. Namely for every $t_j$ there exists some $i$ and some map $s_j: V_j \to U_i$ making the following diagram commute:
\begin{equation} \label{eqn coverage def}
    \begin{tikzcd}
	{V_j} & {U_i} \\
	V & U
	\arrow["{t_j}"', from=1-1, to=2-1]
	\arrow["{s_j}", from=1-1, to=1-2]
	\arrow["{r_i}", from=1-2, to=2-2]
	\arrow["g"', from=2-1, to=2-2]
\end{tikzcd}
\end{equation}
The families $\{ r_i: U_i \to U \} \in j(U)$ are called \textbf{covering families} over $U$. If a map $r_i: U_i \to U$ belongs to a covering family $r \in j(U)$, then we say that $r_i$ is a \textbf{covering map}. If $\site$ is a category, and $j$ is a coverage on $\site$, then we call the pair $(\site, j)$ a \textbf{site}.
\item A \textbf{presheaf} on a category $\site$ is a functor $F: \site^{op} \to \ncat{Set}$. A morphism of presheaves is a natural transformation. An element $x \in F(U)$ for an object $U \in \site$ is called a \textbf{section} over $U$. If $f: U \to V$ is a map in $\site$, and $x \in F(V)$,  then we sometimes denote $F(f)(x)$ by $x|_U$. Let $\ncat{Pre}(\cat{C})$ denote the category of presheaves on $\cat{C}$.
\item  If $\{r_i: U_i \to U \}_{i \in I}$ is a covering family, then a \textbf{matching family} is a collection $\{x_i \}_{i \in I}$, $x_i \in F(U_i)$, such that given a diagram in $\site$ of the form
\begin{equation*}
\begin{tikzcd}
	V & {U_j} \\
	{U_i} & U
	\arrow["{r_i}"', from=2-1, to=2-2]
	\arrow["{r_j}", from=1-2, to=2-2]
	\arrow["f"', from=1-1, to=2-1]
	\arrow["g", from=1-1, to=1-2]
\end{tikzcd}
\end{equation*}
then $F(f)(x_i) = F(g)(x_j)$ for all $i,j \in I$. An \textbf{amalgamation} $x$ for a matching family $\{ x_i \}$ is a section $x \in F(U)$ such that $x_i|_U = x$ for all $i$.
\item Given a family of morphisms $r = \{ r_i: U_i \to U \}$ in a category $\site$, we say that a presheaf $F: \site^{op} \to \ncat{Set}$ is a \textbf{sheaf on $r$} if every matching family $\{ s_i \}$ of $F$ over $r$ has a unique amalgamation. If $j$ is a coverage on a category $\site$, we call $F$ a \textbf{sheaf} on $(\site, j)$ if it is a sheaf on every covering family of $j$. Let $\ncat{Sh}(\site)$ denote the full subcategory of $\ncat{Pre}(\site)$ whose objects are sheaves on $(\site, j)$.
\end{itemize}
\end{Def}

One can put a site structure on $\cart$ using the coverage of good open covers, see \cite[Section 4]{minichiello2022diffeological}. We call sheaves on $\cart$ \textbf{smooth sheaves}. There are many interesting examples of smooth sheaves. Every cartesian space defines a representable sheaf $yU$. Every manifold $M$ defines a sheaf by $U \mapsto C^\infty(U, M)$. There are also $\Omega^n$ and $\Omega^n_{\cl}$ for every $n \geq 0$, the sheaves of differential $n$-forms and closed differential $n$-forms respectively. The category $\ncat{Sh}(\cart)$ of smooth sheaves is ``extremely nice", being a Grothendieck topos \cite{maclane2012sheaves}.

A sheaf $X$ on $\cart$ is \textbf{concrete} if $X(U)$ is a subset of the set functions $U \to X(*)$ where $*$ is the terminal object in $\cart$. The representable sheaves $yU$ and the sheaves induced by manifolds $M$ are concrete, but $\Omega^n$ and $\Omega^n_{\cl}$ are not. 

The full subcategory $\ncat{ConSh}(\cat{C}) \hookrightarrow \ncat{Sh}(\cat{C})$ of concrete sheaves on a concrete site forms a quasitopos, which while not being a Grothendieck topos, is still a very ``nice" category \cite[Theorem 52]{baez2009convenient}.

\begin{Th}[{\cite[Prop 24]{baez2009convenient}}] \label{th baez-hoffnung theorem}
Let $\cart$ denote the site of cartesian spaces with the coverage of good open covers. Then there is an equivalence of categories
\begin{equation}
    \ncat{Diff} \simeq \ncat{ConSh}(\cart),
\end{equation}
where $\ncat{ConSh}(\cart)$ denotes the category of concrete sheaves on $\cart$. 
\end{Th}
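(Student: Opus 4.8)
The plan is to build the equivalence directly, by constructing a functor $\u{(-)} \colon \ncat{Diff} \to \ncat{ConSh}(\cart)$ and showing it is well defined, fully faithful, and essentially surjective. To a diffeological space $(X,\D)$ I would assign the presheaf $\u{X}$ on $\cart$ with $\u{X}(U) = \{\, p\colon U \to X \mid p \in \D \,\}$ and with restriction along a smooth map $f\colon V\to U$ given by $p \mapsto p\circ f$; this is well defined by axiom~(2) of Definition~\ref{def diffeological space}. To a smooth map $h\colon X\to Y$ I would assign the natural transformation $\u{h}$ with $(\u{h})_U(p) = h\circ p$, which lands in $\u{Y}(U)$ precisely because $h$ is smooth. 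Concreteness of $\u{X}$ is immediate: by axiom~(1), $\u{X}(\R^0)$ is canonically the underlying set $X$, and the canonical comparison $\u{X}(U) \to \Map_{\ncat{Set}}(U, \u{X}(\R^0))$, $p \mapsto (u\mapsto p(u))$, is nothing but the inclusion ``a plot is in particular a set function'', hence injective. That $\u{(-)}$ respects identities and composition is then a triviality.

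The first real point is that $\u{X}$ is a sheaf for the coverage of good open covers. Uniqueness of amalgamations holds because a good open cover $\{U_i\hookrightarrow U\}$ satisfies $\bigcup_i U_i = U$ as sets, so a plot $U\to X$ is determined by its restrictions to the $U_i$. For existence, I would use that every finite intersection $U_{i_0\cdots i_k}$ of a good open cover is again a cartesian space, so it may serve as a test object in the matching-family diagrams; consequently a matching family for $\u{X}$ is exactly a family of plots $p_i\colon U_i\to X$ agreeing on all intersections $U_{ij}$. Such a family glues to a unique set function $p\colon U\to X$, and axiom~(3) of Definition~\ref{def diffeological space} says precisely that $p$ is then a plot, i.e.\ the required amalgamation.

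Next, full faithfulness. Injectivity of $h\mapsto\u{h}$ is clear since on global sections $\u{X}$ returns the underlying set and $\u{h}$ returns $h$. For surjectivity, given $\eta\colon\u{X}\to\u{Y}$ set $h:=\eta_{\R^0}\colon X\to Y$; naturality of $\eta$ along the points $u\colon\R^0\to U$ shows that the underlying function of the plot $\eta_U(p)\in\u{Y}(U)$ is $h\circ p$ for every plot $p$ of $X$, whence $h$ is smooth, and since $\u{Y}$ is concrete this forces $\eta_U(p)=h\circ p=(\u{h})_U(p)$, i.e.\ $\eta=\u{h}$. For essential surjectivity, given a concrete sheaf $F$ put $X:=F(*)$ and declare a parametrization $p\colon U\to X$ to be a plot iff it lies in the image of the concreteness injection $F(U)\hookrightarrow\Map_{\ncat{Set}}(U,X)$. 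One checks this is a diffeology: axiom~(1) because the concreteness map $F(*)\hookrightarrow\Map_{\ncat{Set}}(*,F(*))$ is the identity; axiom~(2) from the naturality square relating the concreteness maps for $U$ and $V$ to restriction along $f\colon V\to U$; and axiom~(3) from the sheaf condition on good open covers, once more invoking that finite intersections are cartesian and that concreteness records a matching family by its underlying functions. The comparison $F\to\u{X}$ sending a section to its underlying function is then, objectwise, the corestriction of the concreteness injection onto its image, hence a natural isomorphism, giving $F\cong\u{X}$.

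I expect the main obstacle to be reconciling the two notions of gluing that appear: diffeologies are required to glue along good open covers of cartesian spaces, while the sheaf condition for the coverage is phrased abstractly via matching families with arbitrary test maps. Matching these rests on the defining feature of a good open cover — that all finite intersections remain cartesian, hence admissible test objects — together with the elementary but essential fact that an open cover of a cartesian space is a genuine set-theoretic cover, so that a parametrization is both determined by and reconstructible from compatible restrictions. Everything else — in particular the family of naturality squares for the concreteness maps — is routine once one pins down the comparison $F(U)\to\Map_{\ncat{Set}}(U,F(*))$, $s\mapsto(u\mapsto F(u)(s))$, and checks it is natural in $U$.
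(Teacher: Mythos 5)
The paper does not prove this statement; it cites it to Baez--Hoffnung (and a footnote even concedes that the cited result is stated for a slightly different site, deferring the translation to an appendix of \cite{minichiello2022diffeological}). Your argument is the standard direct proof, adapted correctly to the site $(\cart,\text{good covers})$ as actually used here, and I find it complete. The two points that genuinely need care are both handled: (i) the identification of an abstract matching family over a good cover with a family of plots agreeing on the intersections $U_{ij}$ --- this works because any test object $V\to U_i$, $V\to U_j$ over $U$ has image in $U_i\cap U_j$ and hence factors (smoothly, and in $\cart$, since the intersection is cartesian) through $U_{ij}$; and (ii) in the essential-surjectivity step, the passage from ``same underlying function on $U_{ij}$'' to ``equal sections of $F$ over $U_{ij}$'' requires injectivity of the concreteness comparison at $U_{ij}$, which is available precisely because good covers have cartesian intersections. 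One could streamline the fullness step slightly: for the plot sheaf $\u{Y}$ a section literally \emph{is} its underlying function, so $\eta_U(p)=h\circ p$ follows without invoking concreteness, but this is cosmetic. Checking fully faithful plus essentially surjective suffices for the equivalence, so no explicit quasi-inverse is needed.
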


\begin{Cor} \label{cor diff is a quasitopos}
The category $\ncat{Diff}$ is a quasitopos. This implies that it is a complete, cocomplete and cartesian closed category.
\end{Cor}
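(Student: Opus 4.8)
The plan is to deduce everything from Theorem~\ref{th baez-hoffnung theorem} together with the cited structural results of Baez--Hoffnung. By Theorem~\ref{th baez-hoffnung theorem} the category $\ncat{Diff}$ is equivalent to $\ncat{ConSh}(\cart)$, so it suffices to show that $\ncat{ConSh}(\cart)$ is a quasitopos and that it is complete, cocomplete and cartesian closed. That $\ncat{ConSh}(\cart)$ is a quasitopos is, up to this equivalence, exactly the content of \cite[Theorem 52]{baez2009convenient} (equivalently \cite[Prop 24]{baez2009convenient}), which asserts that the category of concrete sheaves on any concrete site is a quasitopos. So the first step is to verify that $\cart$ equipped with the coverage of good open covers is a concrete site: it has a terminal object $* = \R^0$; the underlying-set functor $\cart(*,-)\colon \cart \to \ncat{Set}$ is faithful, since a smooth map out of a cartesian space is determined by its values on points; the coverage is subcanonical, so representables are (concrete) sheaves; and every good open cover $\{U_i \hookrightarrow U\}$ is jointly surjective on underlying sets. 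These are immediate, and with them the Baez--Hoffnung theorem applies verbatim, giving that $\ncat{ConSh}(\cart)$, hence $\ncat{Diff}$, is a quasitopos.

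Next I would record why a quasitopos arising this way is in addition complete, cocomplete and cartesian closed. Cartesian closedness is already part of the definition of a quasitopos (a quasitopos is finitely complete, finitely cocomplete, locally cartesian closed, and has a classifier for strong subobjects), so it remains only to upgrade finite (co)completeness to small (co)completeness. Since $\cart$ is essentially small, $\ncat{Sh}(\cart)$ is a Grothendieck topos, in particular complete and cocomplete. The inclusion $\ncat{ConSh}(\cart) \hookrightarrow \ncat{Sh}(\cart)$ has a left adjoint, the concretization functor; concrete sheaves are closed under small limits taken in $\ncat{Sh}(\cart)$ (a limit of concrete sheaves embeds into a product of concrete sheaves, hence is again concrete), so $\ncat{ConSh}(\cart)$ is complete with limits computed as in $\ncat{Sh}(\cart)$, and it is cocomplete with colimits obtained by concretizing the colimit formed in $\ncat{Sh}(\cart)$. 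Transporting these structures along the equivalence of Theorem~\ref{th baez-hoffnung theorem} gives the three listed properties for $\ncat{Diff}$.

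The only genuine obstacle here is bookkeeping rather than mathematics: one must check that the hypotheses of \cite{baez2009convenient} (its precise notion of ``concrete site'') are met on the nose for $\cart$ with the good-open-cover coverage, and that the size conditions needed to pass from finite to small (co)completeness hold. All of this is routine given that $\cart$ is essentially small and that the earlier sections have already set up $\cart$ as a site; no new ideas are needed beyond invoking the two cited results of Baez--Hoffnung.
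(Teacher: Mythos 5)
Your proposal is correct and follows essentially the same route as the paper, which derives the corollary directly from Theorem \ref{th baez-hoffnung theorem} together with the cited result of Baez--Hoffnung that concrete sheaves on a concrete site form a quasitopos; you merely spell out the routine verifications (that $\cart$ with good open covers is a concrete site, and that the reflective inclusion $\ncat{ConSh}(\cart) \hookrightarrow \ncat{Sh}(\cart)$ upgrades finite to small (co)completeness) that the paper leaves implicit.
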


We refer to Theorem \ref{th baez-hoffnung theorem} as the \textbf{Baez-Hoffnung Theorem}\footnote{Strictly speaking, the Baez-Hoffnung theorem gives an equivalence between the category of what we call classical diffeological spaces and the category of concrete sheaves on the site of open subsets of cartesian spaces with open covers, see \cite[Appendix A]{minichiello2022diffeological} for a proof that this is an equivalent formulation.}. It is the starting point of the interaction of sheaf theory and diffeology. Many aspects of the study of diffeological spaces can be restated using sheaf theory, for example a differential $n$-form $\omega$ on a diffeological space $X$ as defined in \cite[Article 6.28]{iglesias2013diffeology} is equivalently a morphism $X \to \Omega^n$ of sheaves.

In \cite{minichiello2022diffeological} we took advantage of the Baez-Hoffnung Theorem to embed the category of diffeological spaces into the category of simplicial presheaves on $\cart$. We will delve into this idea in the next section. Once inside the category of simplicial presheaves, we can then take advantage of many homotopical tools. This in effect provides a way of obtaining a very powerful and expressive homotopy theory for diffeological spaces that subsumes the usual homotopy theory for diffeological spaces as considered in \cite[Chapter 5]{iglesias2013diffeology}.

\section{Simplicial Presheaves} \label{section simplicial presheaves}
In this section we detail the model categorical notions we will need for the remainder of the paper. We assume the reader is comfortable with model categories and simplicial homotopy theory, and recommend the following standard sources \cite{hirschhorn2009model}, \cite{hovey2007model}, \cite{goerss2012simplicial}, \cite{goerss2006model} for good references on the topics. See \cite[Section 5]{minichiello2022diffeological} for more details.

\begin{Def}
Let $\spre(\cart)$ denote the category whose objects are functors $\cart^{op} \to \ncat{sSet}$, which we call \textbf{simplicial presheaves}, and whose morphisms are natural transformations.
\end{Def}

Note that $\spre(\cart)$ is complete and cocomplete, with limits and colimits computed objectwise. There are two pairs of adjoint triples that give structure to $\spre(\cart)$.

\begin{equation} \label{eq adjoint triples for simplicial presheaves}
\begin{tikzcd}
	{\spre(\cart)} && {\ncat{sSet}}
	\arrow[""{name=0, anchor=center, inner sep=0}, "{(-)_c}"{description}, from=1-3, to=1-1]
	\arrow[""{name=1, anchor=center, inner sep=0}, "{\text{colim}_{\cart^{op}}}", curve={height=-30pt}, from=1-1, to=1-3]
	\arrow[""{name=2, anchor=center, inner sep=0}, "{\text{lim}_{\cart^{op}}}"', curve={height=30pt}, from=1-1, to=1-3]
	\arrow["\dashv"{anchor=center, rotate=-93}, draw=none, from=0, to=2]
	\arrow["\dashv"{anchor=center, rotate=-87}, draw=none, from=1, to=0]
\end{tikzcd}, \qquad
\begin{tikzcd}
	{\spre(\cart)} && {\pre(\cart)}
	\arrow[""{name=0, anchor=center, inner sep=0}, "{{}^c(-)}"{description}, from=1-3, to=1-1]
	\arrow[""{name=1, anchor=center, inner sep=0}, "{\pi_0}", curve={height=-30pt}, from=1-1, to=1-3]
	\arrow[""{name=2, anchor=center, inner sep=0}, "{(-)_0}"', curve={height=30pt}, from=1-1, to=1-3]
	\arrow["\dashv"{anchor=center, rotate=-89}, draw=none, from=1, to=0]
	\arrow["\dashv"{anchor=center, rotate=-91}, draw=none, from=0, to=2]
\end{tikzcd}
\end{equation}
where $(-)_c$ is the functor induced by restricting along the unique functor $\cart \to *$ and ${}^c(-)$ is the functor that sends a presheaf to the corresponding simplicial presheaf where all the simplicial face and degeneracy maps are the identity. We often don't use the notation ${}^c(-)$ explicitly, especially for representable presheaves, as it should be clear from context. The functors $\pi_0$ and $(-)_0$ are defined objectwise. For every $U \in \cart$, and simplicial presheaf $X$ on $\cart$, $\pi_0 X(U) = \pi_0 (X(U))$, the set of connected components of $X(U)$, and $(X)_0(U) = X(U)_0$, the set of vertices of $X(U)$.

\begin{Rem}
The above adjoint triples exist for any essentially small category $\cat{C}$ in place of $\cart$.
\end{Rem}

The category $\spre(\cart)$ is tensored, cotensored and enriched over $\ncat{sSet}$. Indeed, if $K$ is a simplicial set and $X$ is a simplicial presheaf, then
\begin{itemize}
    \item $X \otimes K$ is the simplicial presheaf defined objectwise by
    \begin{equation*}
        (X \otimes K)(U) = (X \times K_c)(U) = X(U) \times K.
    \end{equation*}
    \item $X^K$ is the simplicial presheaf defined objectwise by
    \begin{equation*}
        (X^K)(U) = X(U)^K,  
    \end{equation*}
    where for simplicial sets $K$ and $L$, $K^L$ denotes the simplicial function complex.
    \item for any two simplicial presheaves $X$ and $Y$, let $\u{\spre(\cart)}(X,Y)$ denote the simplicial set defined levelwise by
    \begin{equation*}
        \u{\spre(\cart)}(X,Y)_n = \spre(\cart)(X \otimes \Delta^n, Y).
    \end{equation*}
\end{itemize}
This structure is compatible in the sense of the following natural isomorphisms of simplicial sets
\begin{equation}
    \u{\spre(\cart)}(X \otimes K, Y) \cong \u{\spre(\cart)}(X,Y^K).
\end{equation}

The category $\spre(\cart)$ inherits several model structures from $\ncat{sSet}$. We say a map $f: X \to Y$ is a \textbf{projective weak equivalence} if it is an objectwise weak equivalence of simplicial sets, a \textbf{projective fibration} if it is an objectwise fibration, and a \textbf{projective cofibration} if it left lifts against all maps that are both projective weak equivalences and projective fibrations.

\begin{Th}[{\cite[Page 314]{bousfield1972homotopy}, \cite[Section A.2.6]{lurie2009higher}}]
The projective weak equivalences, fibrations and cofibrations define a proper, combinatorial, simplicial model category structure on $\spre(\cart)$, called the \textbf{projective model structure} on simplicial presheaves.
\end{Th}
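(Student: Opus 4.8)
The plan is to obtain the projective model structure by the standard two-step route: first exhibit it as a cofibrantly generated model structure transferred objectwise from the Kan--Quillen model structure on $\ncat{sSet}$, and then verify the remaining adjectives (combinatorial, simplicial, proper) by reducing each to the corresponding property of $\ncat{sSet}$, using that limits, colimits, and the simplicial tensoring are all computed objectwise in $\spre(\cart)$.

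First I would set up the generating data. For each $U \in \cart$ let $\mathrm{ev}_U \colon \spre(\cart) \to \ncat{sSet}$, $X \mapsto X(U)$, be evaluation, with left adjoint $F_U \colon \ncat{sSet} \to \spre(\cart)$ given by $F_U(K)(V) = \coprod_{\cart(V,U)} K$. Let $I = \{\partial\Delta^n \hookrightarrow \Delta^n\}_{n \geq 0}$ and $J = \{\Lambda^n_k \hookrightarrow \Delta^n\}$ be the generating cofibrations and generating trivial cofibrations of $\ncat{sSet}$, and set
\begin{equation*}
    I_{\proj} = \{ F_U(i) : U \in \cart, \ i \in I \}, \qquad J_{\proj} = \{ F_U(j) : U \in \cart, \ j \in J \}.
\end{equation*}
By the adjunction $F_U \dashv \mathrm{ev}_U$, a map $f$ in $\spre(\cart)$ has the right lifting property against $I_{\proj}$ (resp. $J_{\proj}$) if and only if $f(U)$ is a Kan trivial fibration (resp. a Kan fibration) for every $U$; thus $I_{\proj}\text{-inj}$ is precisely the class of objectwise trivial fibrations, and $J_{\proj}\text{-inj}$ is precisely the class of projective fibrations.

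Next I would invoke the recognition theorem for cofibrantly generated model structures (\cite[Theorem 11.3.1]{hirschhorn2009model}, \cite[Theorem 2.1.19]{hovey2007model}), with $W$ the class of projective weak equivalences. The smallness hypotheses are automatic: $\ncat{sSet}$ is locally presentable, $F_U$ preserves colimits, and colimits in $\spre(\cart)$ are objectwise, so the domains of $I_{\proj}$ and $J_{\proj}$ are small. The two-out-of-three property and closure of $W$ under retracts are objectwise statements. The only genuinely substantive clause is acyclicity: every relative $J_{\proj}$-cell complex must be a projective weak equivalence (and automatically a projective cofibration). Evaluated at any $U$, a generator $F_U(j)$ becomes a coproduct of copies of the trivial cofibration $j$, hence is itself a trivial cofibration of simplicial sets; since (trivial) cofibrations in $\ncat{sSet}$ are closed under coproducts, pushouts, transfinite composition, and retracts, and all of these operations are computed objectwise in $\spre(\cart)$, every relative $J_{\proj}$-cell complex is an objectwise trivial cofibration, in particular an objectwise weak equivalence. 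Finally one checks the compatibility $I_{\proj}\text{-inj} = J_{\proj}\text{-inj} \cap W$: an objectwise trivial fibration is in particular an objectwise fibration and an objectwise weak equivalence, and conversely an objectwise fibration that is an objectwise weak equivalence is objectwise a trivial fibration. The recognition theorem then yields the cofibrantly generated model structure whose weak equivalences are $W$, whose fibrations are $J_{\proj}\text{-inj}$ (the projective fibrations), and whose cofibrations are forced to be the maps with the left lifting property against projective trivial fibrations --- exactly the projective cofibrations of the statement.

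It remains to record the three adjectives. Combinatoriality is immediate: $\spre(\cart)$ is locally presentable, being a functor category from a small category into the locally presentable category $\ncat{sSet}$, and we have produced an explicit set of generating (trivial) cofibrations. For the simplicial structure, I would take the tensoring, cotensoring and enrichment over $\ncat{sSet}$ recalled above and verify the pushout--product axiom; by the usual reduction it suffices to check on generators, and the identifications $F_U(K) \otimes L \cong F_U(K \times L)$ and $F_U(i') \,\square\, j \cong F_U(i' \,\square\, j)$ (using that $F_U$ preserves colimits) reduce the pushout--product $F_U(i') \,\square\, j$ to $F_U$ applied to the pushout--product in $\ncat{sSet}$, so the axiom for $\spre(\cart)$ follows from that for $\ncat{sSet}$. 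For properness, observe that each generator $F_U(i)$ with $i$ a monomorphism is an objectwise monomorphism, so every projective cofibration is an objectwise cofibration; since pushouts and pullbacks are objectwise, left (resp. right) properness of $\spre(\cart)$ follows from left (resp. right) properness of $\ncat{sSet}$, which holds since every simplicial set is both cofibrant and fibrant. The only step requiring any care is the acyclicity clause of the recognition theorem, and even that is dispatched entirely by the objectwise reduction; alternatively, the whole statement is the special case $\cat{M} = \ncat{sSet}$ of the general existence of projective model structures on diagram categories over a combinatorial, proper, simplicial model category \cite[Section A.2.6]{lurie2009higher}, \cite[Page 314]{bousfield1972homotopy}, of which the argument above is an unwinding.
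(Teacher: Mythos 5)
Your proposal is essentially correct and is the standard argument; note that the paper does not prove this theorem at all but simply cites \cite[Page 314]{bousfield1972homotopy} and \cite[Section A.2.6]{lurie2009higher}, so what you have written is precisely an unwinding of those references rather than an alternative to anything in the text. The transfer along the adjunctions $F_U \dashv \mathrm{ev}_U$, the identification of $I_{\proj}\text{-inj}$ and $J_{\proj}\text{-inj}$ with the objectwise (trivial) fibrations, the objectwise verification of acyclicity, and the reduction of the pushout--product axiom to generators via $F_U(K) \otimes L \cong F_U(K \times L)$ are all correct and complete.

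One justification is wrong, though the fact it supports is true: you assert that properness of $\ncat{sSet}$ holds ``since every simplicial set is both cofibrant and fibrant.'' Every simplicial set is cofibrant, which gives left properness, but it is emphatically not the case that every simplicial set is fibrant (fibrant objects are Kan complexes). Right properness of $\ncat{sSet}$ is a genuine theorem --- e.g.\ it follows from the fact that pullback along a Kan fibration preserves weak equivalences, which one proves via geometric realization or the theory of minimal fibrations --- and you should cite it (for instance \cite[Theorem 13.1.13]{hirschhorn2009model}) rather than derive it from a false premise. With that citation substituted, the objectwise reduction you give for properness of $\spre(\cart)$ goes through as stated.
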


Let $\H$ denote the category of simplicial presheaves equipped with the projective model structure. Note that \cite[Corollary 9.4]{dugger2001universal} describes a sufficient condition on simplicial presheaves to be projective cofibrant, and it implies that all representable presheaves, denoted $yU$ for a cartesian space $U$, are projective cofibrant.

Given a cartesian space $U$ and a good cover $\mathcal{U} = \{U_i \subseteq U \}$ of $U$, we can form the simplicial presheaf $\check{C}(\mathcal{U})$ defined levelwise by
\begin{equation*}
\check{C}(\mathcal{U})_n = \coprod_{i_0 \dots i_n} y(U_{i_0} \cap \dots \cap U_{i_{n}}). 
\end{equation*}
We call $\check{C}(\mathcal{U})$ the \textbf{\v{C}ech nerve} of $\mathcal{U}$. There is a canonical map $\pi: \check{C}(\mathcal{U}) \to yU$. Let $\check{C}$ denote the class of morphisms $\pi: \check{C}(\mathcal{U}) \to yU$ where $U$ ranges over the cartesian spaces and $\mathcal{U}$ ranges over the good open covers for $U$. 

\begin{Th}[{\cite[Theorem A.6]{dugger2004hypercovers}}]
The left Bousfield localization of $\H$ at $\check{C}$ exists. We call the resulting model structure the \textbf{\v{C}ech model structure} on $\spre(\cart)$, and denote it by $\cH$. It is similarly a proper, combinatorial and simplicial model category.\footnote{It is important to note that the projective/objectwise weak equivalences between simplicial presheaves are still weak equivalences in the \v{C}ech model structure. Furthermore, all \v{C}ech weak equivalences between $\infty$-stacks are objectwise weak equivalences.}
\end{Th}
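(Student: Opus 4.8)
The statement is precisely \cite[Theorem A.6]{dugger2004hypercovers}, so here I only sketch the plan; the point is that all of the asserted properties except one are formal. The strategy is to exhibit $\cH$ as the left Bousfield localization $L_{\check{C}}\H$ and then invoke the standard existence machinery for left Bousfield localizations of left proper combinatorial simplicial model categories, the single non-formal ingredient being right properness.

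\textbf{Reduction to a set.} As written, $\check{C}$ is a proper class, because for a fixed cartesian space $U$ the good open covers $\mathcal{U}$ are indexed by arbitrary index sets. First I would replace $\check{C}$ by a genuine \emph{set} $S$ of maps between projective-cofibrant objects having the same class of local objects. This is a standard cardinality argument (compare \cite{dugger2001universal}): up to isomorphism in the arrow category there is only a set of maps $\check{C}(\mathcal{U}) \to yU$, since $\cart$ has the set of representatives $\{\R^n\}_{n \geq 0}$ and, each cartesian space being Lindel\"of, the good covers one needs to retain can be taken of bounded cardinality. Once this is done, we may localize at a set.

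\textbf{The localization machine.} The projective model structure $\H$ on $\spre(\cart)$ is left proper, combinatorial, and simplicial: combinatoriality and simpliciality are recalled above, and properness (both sides) is inherited objectwise from the Kan--Quillen model structure on $\ncat{sSet}$. Hence, by the fundamental theorem on left Bousfield localization (\cite[Theorem 4.1.1]{hirschhorn2009model} in the cellular setting, together with its combinatorial counterpart; see also \cite[Section A.3.7]{lurie2009higher}), the localization $\cH := L_S \H$ exists, has the same cofibrations as $\H$, has as fibrant objects the $S$-local (equivalently, $\check{C}$-local) projective-fibrant simplicial presheaves, has as weak equivalences the $\check{C}$-local equivalences, and is again left proper, combinatorial, and simplicial. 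In particular every projective weak equivalence is a $\check{C}$-local equivalence, as claimed in the footnote above.

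\textbf{Right properness: the main obstacle.} Right properness does not follow from the localization machinery and must be checked by hand; this is the crux of the theorem. Since $\H$ is right proper and every $\check{C}$-local fibration is in particular an objectwise Kan fibration, it suffices to show that the class of $\check{C}$-local equivalences is stable under base change along $\check{C}$-local fibrations. The geometric input is that the coverage of good open covers on $\cart$ is stable under pullback --- this is built into the coverage axiom: given a good cover of $U$ and a map $V \to U$, there is a good cover of $V$ refining the pullback. From this one deduces that the associated $\infty$-sheafification, which identifies $\cH$ with a model for (not necessarily hypercomplete) $\infty$-sheaves on $\cart$, is left exact, hence preserves homotopy pullbacks; since it also inverts exactly the $\check{C}$-local equivalences, stability of those equivalences under homotopy base change follows from the corresponding --- standard --- stability in the local model structure on simplicial sheaves over a site with a pullback-stable coverage. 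An alternative route is to transfer right properness along a Quillen equivalence between $\cH$ and Jardine's local injective model structure on simplicial sheaves over $\cart$, which is known to be proper. I expect this verification of stability to be the hard part; everything else is an application of the general theory.
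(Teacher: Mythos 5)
The paper offers no proof of this statement; it is quoted verbatim from \cite[Theorem A.6]{dugger2004hypercovers}, so there is no in-text argument to compare yours against. Your outline of the standard proof is essentially right: reduce $\check{C}$ to a set of maps with the same local objects, run the existence theorem for left Bousfield localizations of left proper combinatorial simplicial model categories, and observe that right properness is the only claim not delivered by that machinery. Your primary route to right properness is also sound, but it deserves to be stated more cleanly: a Čech-local fibration is in particular a projective fibration, so a strict pullback along it is a homotopy pullback in $\H$; the localization $\H \to \cH$ is \emph{topological} (each $\check{C}(\mathcal{U}) \to yU$ factors through the sieve generated by $\mathcal{U}$ via an objectwise weak equivalence, so one is localizing at monomorphisms of presheaves), hence left exact — this is exactly the content of Proposition \ref{prop model topos localization preserves finite homotopy limits} quoted later in the paper — and a left exact localization both preserves homotopy pullbacks and inverts precisely the local equivalences, which together give right properness. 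Spelled out this way, the ``pullback-stable coverage'' input you gesture at is what makes the sieves a Grothendieck coverage and the localization topological.

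The one genuinely wrong step is your proposed alternative route: transferring right properness ``along a Quillen equivalence between $\cH$ and Jardine's local injective model structure.'' No such Quillen equivalence exists in general — the local (hypercover-localized) structure is a \emph{further} localization of the Čech structure, and distinguishing the two is the central point of \cite{dugger2004hypercovers}; moreover, right properness is not invariant under Quillen equivalence, so even a correct equivalence would not transport it. Delete that sentence and rely on the left-exactness argument. A smaller gap: your cardinality reduction asserts without proof that descent for a cofinal set of good covers implies descent for all of them; this refinement argument is standard but is precisely the kind of step that should be cited (e.g.\ to \cite{dugger2001universal}) rather than asserted. Finally, you address only the first half of the footnote; the second half (Čech equivalences between $\infty$-stacks are objectwise equivalences) is the general fact that local equivalences between local objects are equivalences in the unlocalized structure, and is worth a sentence.
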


The fibrant objects in $\cH$ are called \textbf{$\infty$-stacks}. They are those projective fibrant simplicial presheaves $X$ such that the canonical map
\begin{equation}
   \u{\spre(\cart)}(yU, X) \to \u{\spre(\cart)}(\check{C}(\mathcal{U}), X), 
\end{equation}
is a weak equivalence of simplicial sets, for every cartesian space $U$ and good cover $\mathcal{U}$ of $U$. Every sheaf and classical stack of groupoids on $\cart$, thought of as simplicial presheaves on $\cart$, is an $\infty$-stack. See \cite[Section 4.1]{minichiello2022diffeological} for more details.

The identity functors define a Quillen adjunction between the projective and \v{C}ech model structure on simplicial presheaves.
\begin{equation}
    \begin{tikzcd}
	{\H} && {\cH}
	\arrow[""{name=0, anchor=center, inner sep=0}, "{1_{\spre(\cart)}}", curve={height=-12pt}, from=1-1, to=1-3]
	\arrow[""{name=1, anchor=center, inner sep=0}, "{1_{\spre(\cart)}}", curve={height=-12pt}, from=1-3, to=1-1]
	\arrow["\dashv"{anchor=center, rotate=-90}, draw=none, from=0, to=1]
\end{tikzcd}
\end{equation}
Crucially, the (easy to compute) finite homotopy limits in $\H$ are preserved as homotopy limits in $\cH$, thanks to the following result.

\begin{Prop}[{\cite[Proposition 11.2]{rezktoposes2010}}] \label{prop model topos localization preserves finite homotopy limits}
The left Quillen functor $1_{\spre(\cart)} : \H \to \cH$ preserves finite homotopy limits.
\end{Prop}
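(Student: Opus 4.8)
The plan is to recognize this as an instance of the general fact that a topological localization of an $\infty$-topos is left exact, and to make that concrete in terms of the model categories $\H$ and $\cH$; the whole argument then rests on the fact that filtered homotopy colimits of spaces commute with finite homotopy limits.

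First I would reduce to a statement about $\infty$-stackification. The identity functor $1_{\spre(\cart)}$ has the same cofibrations for $\H$ and $\cH$, and every projective weak equivalence is a \v{C}ech weak equivalence, so its total left derived functor is computed simply by applying a \v{C}ech-fibrant replacement; write $L\colon \mathrm{Ho}(\H)\to\mathrm{Ho}(\cH)$ for the resulting $\infty$-stackification functor. Since \v{C}ech weak equivalences between $\infty$-stacks are objectwise, the inclusion $\mathrm{Ho}(\cH)\hookrightarrow\mathrm{Ho}(\H)$ preserves and reflects homotopy limits, so the claim is equivalent to the assertion that $L$ preserves finite homotopy limits. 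As $L$ is a left adjoint it already preserves homotopy colimits, so only homotopy limits are at issue; and because every finite homotopy limit is generated by the terminal object together with homotopy pullbacks, and the terminal presheaf $\ast$ is already an $\infty$-stack (so $L$ fixes it), it suffices to show that $L$ preserves homotopy pullbacks.

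Next I would fix a concrete model for $L$. For a simplicial presheaf $X$, let $PX$ be defined objectwise by
\[
(PX)(U)\;\simeq\;\colim_{\mathcal{U}}\,\mathbf{R}\u{\spre(\cart)}\bigl(\check{C}(\mathcal{U}),X\bigr)(U),
\]
the homotopy colimit running over the poset of good open covers $\mathcal{U}$ of $U$ ordered by refinement, which is filtered since any two good covers admit a common good refinement. There is a natural map $X\to PX$, and, using precisely that the maps $\check{C}(\mathcal{U})\to yU$ lie in the localizing class $\check{C}$, one checks that it is a \v{C}ech-local equivalence and that it is an objectwise weak equivalence exactly when $X$ already satisfies \v{C}ech descent, i.e. is $\cH$-local. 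A standard accessibility argument --- the good covers form a small set and $P$ is an accessible endofunctor --- then shows that the transfinite composite $LX\simeq\colim_{\alpha<\lambda}P^{\alpha}X$ is $\cH$-fibrant for $\lambda$ a sufficiently large regular cardinal, so this iteration computes $L$; I would cite \cite{rezktoposes2010} and \cite{dugger2004hypercovers} for these facts rather than reprove them.

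Finally I would check the limit-commutation. Finite homotopy limits of simplicial presheaves are computed objectwise, so fix $U$; for each good cover $\mathcal{U}$ the functor $\mathbf{R}\u{\spre(\cart)}(\check{C}(\mathcal{U}),-)$, being a derived mapping object into its argument, preserves arbitrary homotopy limits, and evaluation at $U$ and filtered homotopy colimits of spaces preserve finite homotopy limits; hence $P$ preserves finite homotopy limits. Since a transfinite composite is a filtered homotopy colimit, the same commutation gives that $L\simeq\colim_{\alpha}P^{\alpha}$ preserves finite homotopy limits, in particular homotopy pullbacks; this is also where the word ``finite'' is forced, as an infinite homotopy limit need not commute with the transfinite colimit defining $L$. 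The main obstacle in a full write-up is not this bookkeeping but the identification $L\simeq\colim_{\alpha}P^{\alpha}$ --- that iterating the one-step \v{C}ech replacement genuinely terminates at an $\infty$-stack and that each stage is a \v{C}ech-local equivalence --- which is the substantive input from the theory of left Bousfield localizations of simplicial presheaves.
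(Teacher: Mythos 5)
The paper offers no proof of this proposition at all: it is imported wholesale as \cite[Proposition 11.2]{rezktoposes2010}, so there is no internal argument to compare yours against. What you have written is essentially the standard proof that ($\infty$-)sheafification is left exact --- express the localization functor as a transfinite iterate of a \v{C}ech plus construction, observe that each stage is a filtered homotopy colimit of derived mapping spaces out of the \v{C}ech nerves, and invoke the commutation of filtered homotopy colimits with finite homotopy limits of spaces --- and this is the same circle of ideas used in the cited sources. The reduction to homotopy pullbacks and the terminal object, and the observation that local objects are closed under homotopy limits so that homotopy limits in $\cH$ of $\infty$-stacks may be computed in $\H$, are both correct.

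Two points in your sketch deserve more care than you give them. First, the functoriality of $P$ in $U$ is not automatic for the site $\cart$ with \emph{good} open covers: the preimage of a good cover along a smooth map $V \to U$ is an open cover but generally not a good one, so the assignment $U \mapsto \colim_{\mathcal{U}} \R\u{\spre(\cart)}(\check{C}(\mathcal{U}), X)$ does not restrict along morphisms without either passing to covering sieves or invoking the coverage axiom of Definition \ref{def cart, good covers, parametrizations} to choose refinements; the indexing category of covers-with-refinements is also not literally a poset, only cofinally so. (Your displayed formula also has a stray ``$(U)$'' applied to a simplicial set.) Second, you correctly identify the convergence of the transfinite iteration to a $\cH$-local object as the substantive input; since you defer that entirely to the literature, your argument is in the end also a citation, just to a finer-grained set of facts. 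Neither point is a fatal gap --- both are handled in \cite{dugger2004hypercovers} and \cite{rezktoposes2010} --- but a self-contained write-up would have to address them.
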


Given simplicial presheaves $X$ and $Y$ on $\cart$, let $Q$ and $R$ denote cofibrant and fibrant replacement functors for $\cH$ respectively, then let
\begin{equation}
   \R\cH(X, Y) = \u{\spre(\cart)}(QX,RY).  
\end{equation}
We call $\R\cH(X,Y)$ the \textbf{derived mapping space} of $X$ and $Y$. If $X$ is already cofibrant, then we can take $Q = 1_{\spre(\cart)}$ and if $Y$ is already fibrant, we can take $R = 1_{\spre(\cart)}$.

If $X$ and $A$ are simplicial presheaves, then let
\begin{equation*}
    \check{H}^0_\infty(X,A) = \pi_0 \R \cH(X,A).
\end{equation*}
We call this the $0$th $\infty$-stack cohomology of $X$ with values in $A$.

If $A$ is a simplicial presheaf that is objectwise a simplicial group, then we let
\begin{equation*}
    \check{H}^1_\infty(X,A) = \pi_0 \R \cH(X, \overline{W}A),
\end{equation*}
where $\overline{W}$ is the delooping functor, see \cite[Definition 4.29]{minichiello2022diffeological}.

If $A$ is a simplicial presheaf such that $\overline{W}^k A$, which we call its $k$-fold delooping, exists for $k \geq 1$, then we say that $A$ is $k$-deloopable, and we let
\begin{equation*}
 \check{H}^k_\infty(X,A) = \pi_0 \R \cH(X, \overline{W}^k A). 
\end{equation*}
We call this the $k$th $\infty$-stack cohomology of $X$ with coefficients in $A$. If the $k$-fold delooping of a simplicial presheaf $A$ exists and is an $\infty$-stack, then we denote it by $\B^k A \coloneqq \overline{W}^k A$. The following result is well known, see \cite[Lemma 4.34]{minichiello2022diffeological} for a proof.

\begin{Lemma}
If $A$ is a presheaf of simplicial abelian groups, then $A$ is $k$-deloopable for all $k \geq 1$. 
\end{Lemma}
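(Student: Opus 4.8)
The plan is to argue by induction on $k$, the essential point being that the delooping functor $\overline{W}$ carries presheaves of simplicial abelian groups to presheaves of simplicial abelian groups. Since $\overline{W}$ is computed objectwise on presheaves, I would first reduce everything to a statement about a single simplicial abelian group: that if $G$ is a simplicial abelian group, then $\overline{W}G$ carries a simplicial abelian group structure, naturally in $G$. Granting this, for a presheaf $A$ of simplicial abelian groups the simplicial presheaf $\overline{W}A$ defined by $(\overline{W}A)(U) = \overline{W}(A(U))$ is again a presheaf of simplicial abelian groups, naturality ensuring that the restriction maps $\overline{W}(A(U)) \to \overline{W}(A(V))$ are homomorphisms; in particular it is objectwise a simplicial group, so $\overline{W}^2 A = \overline{W}(\overline{W}A)$ is defined. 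Iterating, $\overline{W}^k A$ exists for every $k \geq 1$, which is precisely the claim that $A$ is $k$-deloopable for all $k$.

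The heart of the argument is therefore the reduced statement, which I would prove by inspecting the explicit model of $\overline{W}$ used in \cite[Definition 4.29]{minichiello2022diffeological} (see also \cite[Chapter V]{goerss2012simplicial}): one has $\overline{W}(G)_0 = *$ and $\overline{W}(G)_n = G_{n-1} \times G_{n-2} \times \cdots \times G_0$ for $n \geq 1$, with degeneracies built from those of $G$ together with insertion of the unit, and with face maps built from the faces of $G$, from projections, and---in a single coordinate of each $d_i$ with $0 < i < n$---from the multiplication of $G$, via a coordinate of the form $(x,y) \mapsto d_0(x) \cdot y$ with $x$ and $y$ in consecutive levels of $G$. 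Equipping $\overline{W}(G)_n$ with the product abelian group structure coming from the $G_j$, the degeneracies and the outer face maps $d_0$ and $d_n$ are visibly group homomorphisms, being assembled entirely from additive maps; for $0 < i < n$ the only point to check is that the coordinate $(x,y) \mapsto d_0(x) \cdot y$ is additive on the product group, and this holds precisely because $G$ is commutative. Hence every simplicial structure map of $\overline{W}G$ is a group homomorphism, so $\overline{W}G$ is a simplicial abelian group, and the construction is manifestly functorial in $G$.

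I expect the only genuinely non-formal step to be this homomorphism check for the middle face maps $d_i$ with $0 < i < n$, and even that is short; the one subtlety is the appeal to commutativity, which is also the reason the lemma must be restricted to the abelian case, since for non-abelian $G$ the faces of $\overline{W}G$ fail to be homomorphisms. An alternative route that sidesteps the bar-construction bookkeeping: once the Dold--Kan correspondence of Section \ref{section dold kan} is available, one may instead use that $\overline{W}$ restricted to simplicial abelian groups corresponds to the degree-shift functor on connective chain complexes, which is visibly iterable. Either way the induction closes and yields the result.
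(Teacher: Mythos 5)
Your proof is correct. The paper does not actually prove this lemma itself---it defers to \cite[Lemma 4.34]{minichiello2022diffeological}---and your argument supplies exactly the standard verification: the only non-additive-looking ingredient in the structure maps of $\overline{W}G$ is the single coordinate $(x,y) \mapsto d_0(x)\cdot y$ appearing in the middle face maps, and this is a homomorphism precisely because $G$ is abelian, so $\overline{W}$ preserves (presheaves of) simplicial abelian groups objectwise and can be iterated. Your alternative route via Dold--Kan, under which $\overline{W}$ becomes the degree shift on connective chain complexes, is the argument most commonly given in the literature (and is the one used in the cited reference); both close the induction, so nothing is missing.
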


There is a convenient cofibrant replacement functor for $\cH$, given (in the notation of \cite[Section 4.2]{riehl2014categorical}) for a simplicial presheaf $X$ by the bar construction $B(X, \cart, y)$, where $y$ denotes the Yoneda embedding $y : \cart \hookrightarrow \spre(\cart)$. 

If $X$ is a diffeological space, then by the Baez-Hoffnung Theorem (Theorem \ref{th baez-hoffnung theorem}), we can consider it as a sheaf on $\cart$. Then ${}^c X$ is a simplicial presheaf\footnote{We will often not use the notation ${}^c X$ for diffeological spaces in what follows, as it should be apparent from context what category we are considering $X$ in.}. If we apply $Q$ to ${}^c X$ then this formula reduces to the simplicial presheaf given levelwise by
\begin{equation}
    QX_n = \coprod_{(f_{n-1}, \dots, f_0) \in N(\ncat{Plot}(X))_n} yU_{p_n} \otimes \Delta^n.
\end{equation}
See \cite[Section 4.2]{minichiello2022diffeological} for more details.

Using the above cofibrant replacement functor for a diffeological space $X$, if $A$ is a $\infty$-stack that is also objectwise a simplicial abelian group, then we can obtain an explicit description of its $k$th $\infty$-stack cohomology with values in $A$, given by the $k$th cohomology of the cochain complex obtained by taking the dual Dold-Kan correspondence functor applied to the cosimplicial abelian group 
\begin{equation}
\begin{tikzcd}
	{A(QX_0)} & {A(QX_1)} & {A(QX_2)} & \dots
	\arrow[shift left=2, from=1-1, to=1-2]
	\arrow[shift right=2, from=1-1, to=1-2]
	\arrow[shift left=2, from=1-2, to=1-3]
	\arrow[shift right=2, from=1-2, to=1-3]
	\arrow[from=1-2, to=1-3]
\end{tikzcd},
\end{equation}
the case where $A$ is an abelian diffeological group is given by \cite[Corollary 4.38]{minichiello2022diffeological}.

\begin{Ex} \label{ex diffeological bundles via stacks}
Let $G$ be a diffeological group, and consider the (strict) functor $\B G: \cart^{op} \to \ncat{Gpd}$ that sends a cartesian space $U$ to the groupoid\footnote{Here we use the convention discussed in \cite[Example 5.13]{minichiello2022diffeological} for $\B G$.}
\begin{equation}
    [C^\infty(U, G) \rightrightarrows *]
\end{equation}
Postcomposing with the nerve functor gives us a simplicial presheaf $N \B G$, which we will often just denote by $\B G$. By \cite[Theorem 5.17]{minichiello2022diffeological}, (referencing \cite[Lemma 3.3.29]{sati2022equivariant} and \cite[Proposition 4.13]{pavlov2022numerable}), $\B G$ is an $\infty$-stack. 

This $\infty$-stack takes a central role in the theory of diffeological principal $G$-bundles. For every cartesian space $U$, there is a canonical map of groupoids
\begin{equation}
    \B G(U) \to \ncat{DiffPrin}_G(U),
\end{equation}
that sends the point to the trivial diffeological principal $G$-bundle, and sends a map to $G$ to the corresponding automorphism of the trivial bundle. This map is an equivalence of groupoids.

Furthermore, if $X$ is a diffeological space and $QX$ is its cofibrant replacement, then $G$-cocycles on $X$ are equivalent to maps of $\infty$-stacks $QX \to \B G$ in the sense of \cite[Lemma 6.7]{minichiello2022diffeological}. In other words, for every diffeological space $X$, there is a weak equivalence
\begin{equation}
    \R \cH(X, \B G) \simeq N \ncat{DiffPrin}_G(X).
\end{equation}
Thus we say that $\B G$ classifies diffeological principal $G$-bundles. This implies that
\begin{equation}
    \check{H}^1_\infty(X, G) \cong \pi_0 \ncat{DiffPrin}_G(X),
\end{equation}
where $\pi_0 \ncat{DiffPrin}_G(X)$ denotes the set of isomorphism classes of diffeological principal $G$-bundles on $X$.
\end{Ex}

Let us now examine the left hand side of (\ref{eq adjoint triples for simplicial presheaves}). If $K$ is a simplicial set, then $K_c$ is the constant simplicial presheaf on $K$, namely $K_c(U) = K$ for all $U \in \cart$. The functors making up (\ref{eq adjoint triples for simplicial presheaves}) are important enough to warrant renaming. Notice that since $\R^0 = *$ is the terminal object in $\cart$, it is the initial object in $\cart^{op}$, thus $\lim_{U \in \cart^{op}} X(U) \cong X(*)$. For $K \in \ncat{sSet}$ and $X \in \spre(\cart)$, we set
\begin{equation}
    \Disc(K) = K_c, \qquad \Gamma(X) = \lim_{U \in \cart^{op}} X(U) \cong X(*), \qquad \Pi_\infty(X) = \underset{U \in \cart^{op}}{\colim} X(U).
\end{equation}
It turns out that $\Gamma$ has a further right adjoint, $\text{CoDisc} :  \ncat{sSet} \to \spre(\cart)$ defined objectwise by
\begin{equation*}
    \text{CoDisc}(K)(U) = K^{\Gamma(yU)}.
\end{equation*}
We say that $\Disc(K)$ is the \textbf{discrete simplicial presheaf} on $K$, $\Gamma(X)$ is the \textbf{global sections} of $X$, $\Pi_\infty(X)$ is the \textbf{fundamental $\infty$-groupoid} or \textbf{shape} of $X$, and that $\text{CoDisc}(K)$ is the \textbf{codiscrete simplicial presheaf} on $K$. In fact, all of these adjunctions are simplicially enriched adjunctions.

Thus we obtain the following triple of simplicially enriched adjunctions
\begin{equation} \label{eq cohesive adjunctions}
\begin{tikzcd}
	{\spre(\cart)} && {\ncat{sSet}}
	\arrow[""{name=0, anchor=center, inner sep=0}, "\Pi_\infty", curve={height=-30pt}, from=1-1, to=1-3]
	\arrow[""{name=1, anchor=center, inner sep=0}, "{\text{CoDisc}}", curve={height=-30pt}, from=1-3, to=1-1]
	\arrow[""{name=2, anchor=center, inner sep=0}, "{\text{Disc}}"{description}, curve={height=12pt}, from=1-3, to=1-1]
	\arrow[""{name=3, anchor=center, inner sep=0}, "\Gamma"{description}, curve={height=12pt}, from=1-1, to=1-3]
	\arrow["\dashv"{anchor=center, rotate=-92}, draw=none, from=3, to=1]
	\arrow["\dashv"{anchor=center, rotate=-90}, draw=none, from=2, to=3]
	\arrow["\dashv"{anchor=center, rotate=-88}, draw=none, from=0, to=2]
\end{tikzcd}
\end{equation}

\begin{Prop}[{\cite[Prop 4.1.30 and 4.1.32]{schreiber2013differential}}]
Each adjunction in (\ref{eq cohesive adjunctions}) is a simplicial Quillen adjunction, where $\spre(\cart)$ is given the \v{C}ech model structure $\cH$, and $\ncat{sSet}$ is given the Kan-Quillen model structure.
\end{Prop}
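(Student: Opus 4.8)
The plan is to route everything through the projective model structure $\H$: first show that each of the three adjunctions is a Quillen pair for $\H$, then transport it along the left Bousfield localization $1_{\spre(\cart)}\colon\H\to\cH$. Since $\Pi_\infty,\Disc,\Gamma,\text{CoDisc}$ are already arranged as simplicially enriched adjunctions, I never need to touch the enrichment: once a pair is a Quillen adjunction between the simplicial model categories in sight, it is automatically a simplicial Quillen adjunction.

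For the projective case, the point is that $\Disc$ and $\Gamma$ are each at once left Quillen and right Quillen for $\H$. On the left side: writing the terminal cartesian space as $\R^0$ and using that $y\R^0$ is representable, one has $\Disc(\Delta^n)=y\R^0\otimes\Delta^n$ and $\Disc(\partial\Delta^n)=y\R^0\otimes\partial\Delta^n$ (and similarly with horns), so $\Disc$ carries generating (acyclic) cofibrations of $\ncat{sSet}$ to generating (acyclic) projective cofibrations and, being a left adjoint, preserves all (acyclic) projective cofibrations; and $\Gamma=(-)(\R^0)$ sends a generating (acyclic) projective cofibration $yU\otimes\partial\Delta^n\to yU\otimes\Delta^n$ to a coproduct, indexed by the set $C^\infty(\R^0,U)$, of copies of $\partial\Delta^n\to\Delta^n$, hence to an (acyclic) cofibration of simplicial sets, and $\Gamma$ is likewise a left adjoint. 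On the right side: $\Disc$ trivially preserves objectwise (acyclic) Kan fibrations, and $\text{CoDisc}(K)(U)=K^{\Gamma(yU)}$ is a set-indexed power, so $\text{CoDisc}$ preserves them too. This gives $\Pi_\infty\dashv\Disc$, $\Disc\dashv\Gamma$ and $\Gamma\dashv\text{CoDisc}$ as Quillen adjunctions for $\H$.

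Next I localize at the set $\check C$ of \v{C}ech projections $\pi\colon\check C(\mathcal U)\to yU$. For $\Disc\dashv\Gamma$ there is nothing to do: we are merely enlarging the weak equivalences on the \emph{target} of the left Quillen functor $\Disc\colon\ncat{sSet}\to\spre(\cart)$, cofibrations are unchanged, and projective weak equivalences remain \v{C}ech weak equivalences, so $\Disc$ stays left Quillen into $\cH$. For $\Pi_\infty\dashv\Disc$ and $\Gamma\dashv\text{CoDisc}$ we are instead enlarging the weak equivalences on the \emph{source} of a left Quillen functor, and here I invoke the standard recognition criterion for left Bousfield localizations (see \cite[Theorem 3.3.20]{hirschhorn2009model}): the pair descends to $\cH$ exactly when the left derived functor carries each $\pi\in\check C$ to a weak equivalence of simplicial sets. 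Both $yU$ and $\check C(\mathcal U)$ are projective cofibrant — the former by \cite[Corollary 9.4]{dugger2001universal}, and the \v{C}ech nerve because it is a Reedy-cofibrant realization of coproducts of representables — so the left derived functors agree with $\Pi_\infty$ and $\Gamma$ on the nose, and a left Quillen functor preserves this realization as a homotopy colimit; thus $\Pi_\infty(\check C(\mathcal U))\simeq\mathrm{hocolim}_{[n]\in\Delta^{op}}\coprod_{i_0\dots i_n}\Pi_\infty(yU_{i_0\dots i_n})$, and similarly for $\Gamma$.

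The remaining content is the two computations. For $\Pi_\infty$: a direct colimit computation gives $\Pi_\infty(yV)\simeq\ast$ for every cartesian space $V$ — every plot into $V$ factors through a point of $V$, and any two points of $V$ are joined by a smooth path $\R^1\to V$ — so $\Pi_\infty(\check C(\mathcal U))$ is the nerve of the good cover $\mathcal U$ of $U$; the nerve theorem for good open covers of paracompact manifolds identifies it with $U$ up to weak equivalence, and $U$ is contractible, so the map to $\Pi_\infty(yU)\simeq\ast$ is a weak equivalence. For $\Gamma$: here $\Gamma(yV)$ is the underlying set of $V$ as a discrete simplicial set, and the map $\Gamma(\check C(\mathcal U))\to\Gamma(yU)$ restricts, over each point $x\in U$, to the nerve of the codiscrete groupoid on the non-empty index set $\{\,i:x\in U_i\,\}$, which is weakly contractible; since the target is discrete, a map that is weakly contractible over each vertex is a weak equivalence. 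This verifies the criterion for the last two adjunctions and completes the argument. The main obstacle is precisely this final step: identifying $\Pi_\infty$ of the \v{C}ech nerve with the nerve of the cover — which needs the cofibrancy of $\check C(\mathcal U)$ and the preservation of the relevant homotopy colimit — and then feeding in the nerve theorem, along with the parallel but more elementary fibrewise-contractibility argument for $\Gamma$; by contrast, the model-categorical bookkeeping in the earlier steps is routine once one recalls that $y\R^0$ is representable and that localizing the target of a left Quillen functor changes nothing.
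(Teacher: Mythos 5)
The paper offers no proof of this proposition, deferring entirely to \cite[Prop 4.1.30 and 4.1.32]{schreiber2013differential}; your argument is correct and is essentially the standard verification that $(\cart,\ \text{good open covers})$ is an $\infty$-cohesive site, which is exactly what Schreiber's cited propositions rest on (terminal object in the site, Čech nerves of good covers degreewise coproducts of representables and cofibrant, $\Pi_\infty$ of the Čech nerve the weakly contractible nerve of the cover, and $\Gamma$ of the Čech nerve fibrewise contractible over $\Gamma(yU)$). The only blemish is the phrase ``every plot into $V$ factors through a point of $V$,'' which is literally false; but the surrounding colimit argument — precomposition with $\R^0 \to W$ identifies $f$ with a constant plot, and smooth paths identify all constants — is what actually does the work, so this is a wording issue rather than a gap.
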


In \cite{schreiber2013differential}, Schreiber defines the three following endofunctors on the category of simplicial presheaves on $\cart$:
\begin{equation}
\begin{aligned}
\textstyle{\int} & = \text{Disc} \circ \Pi_\infty \\
\flat & = \text{Disc} \circ \Gamma \\
\sharp & = \text{CoDisc} \circ \Gamma
\end{aligned}
\end{equation}
called \textbf{shape}, \textbf{flat} and \textbf{sharp} respectively.

\begin{Rem}
Using the name shape functor for both $\Pi_\infty$ and $\int$ is justified by remembering that $\Disc : \ncat{sSet} \to \cH$ is fully faithful.
\end{Rem}

They give another pair of simplicial Quillen adjunctions
\begin{equation}
    \begin{tikzcd}
	{\cH} && {\cH}
	\arrow[""{name=0, anchor=center, inner sep=0}, "{\textstyle{\smallint}}"', curve={height=30pt}, from=1-3, to=1-1]
	\arrow[""{name=1, anchor=center, inner sep=0}, "\sharp", curve={height=-30pt}, from=1-3, to=1-1]
	\arrow[""{name=2, anchor=center, inner sep=0}, "\flat"{description}, from=1-1, to=1-3]
	\arrow["\dashv"{anchor=center, rotate=-90}, draw=none, from=0, to=2]
	\arrow["\dashv"{anchor=center, rotate=-90}, draw=none, from=2, to=1]
\end{tikzcd}
\end{equation}

Let us focus further on the shape functor $\Pi_\infty$. Let $\Delta^k_a$ denote the cartesian space defined by
\begin{equation}
    \Delta^k_a = \left \{ (x_0, \dots, x_k) \in \R^{k+1} \, : \, \sum_{i = 0}^k x_i = 1 \right \}.
\end{equation}
We call these \textbf{affine simplices}.

Let $\Sing_\infty : \spre(\cart) \to \ncat{sSet}$ be the functor defined objectwise by
\begin{equation}
    \Sing_\infty(X) = \text{hocolim}_{\ncat{\Delta}^{op}} \left( \begin{tikzcd}
	{X(\Delta^0_a)} & {X(\Delta^1_a)} & {X(\Delta^2_a)} & \dots
	\arrow[shift right=2, from=1-2, to=1-1]
	\arrow[shift left=2, from=1-2, to=1-1]
	\arrow[shift right=2, from=1-3, to=1-2]
	\arrow[shift left=2, from=1-3, to=1-2]
	\arrow[from=1-3, to=1-2]
	\arrow[shift right=3, from=1-4, to=1-3]
	\arrow[shift left=3, from=1-4, to=1-3]
	\arrow[shift right=1, from=1-4, to=1-3]
	\arrow[shift left=1, from=1-4, to=1-3]
\end{tikzcd}\right),
\end{equation}
where if we wish to be concrete, we can use the model of the homotopy colimit given by taking the diagonal of the above bisimplicial set. We call this the \textbf{smooth singular complex functor}.

\begin{Lemma} \label{lem sing preserves objectwise weak equiv}
The functor $\text{Sing}_\infty$ sends objectwise weak equivalences of simplicial presheaves to weak equivalences.
\end{Lemma}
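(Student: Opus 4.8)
The plan is to observe that $\Sing_\infty$ is built out of two operations, each of which is known to behave well with respect to objectwise weak equivalences: first the formation of the bisimplicial set $n \mapsto X(\Delta^n_a)$, and then the passage to its homotopy colimit over $\ncat{\Delta}^{op}$ (equivalently, its diagonal). So I would factor the argument accordingly. Given an objectwise weak equivalence $f : X \to Y$ of simplicial presheaves, for each $n \geq 0$ the map $f(\Delta^n_a) : X(\Delta^n_a) \to Y(\Delta^n_a)$ is a weak equivalence of simplicial sets, since $\Delta^n_a$ is a cartesian space (it is an affine simplex, diffeomorphic to $\R^n$) and ``objectwise'' means we may evaluate at any object of $\cart$. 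Hence $f$ induces a levelwise weak equivalence of bisimplicial sets $X(\Delta^\bullet_a) \to Y(\Delta^\bullet_a)$.

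Next I would invoke the standard fact that the homotopy colimit over $\ncat{\Delta}^{op}$ — or, concretely, the diagonal functor on bisimplicial sets — sends levelwise weak equivalences to weak equivalences. This is classical: see for instance \cite[Chapter IV, Proposition 1.7]{goerss2012simplicial} (the diagonal of a pointwise weak equivalence of bisimplicial sets is a weak equivalence), or equivalently the fact that $\text{hocolim}_{\ncat{\Delta}^{op}}$ is a homotopy-invariant construction and in particular preserves objectwise weak equivalences between $\ncat{\Delta}^{op}$-diagrams in $\ncat{sSet}$ with the Kan--Quillen model structure. Chaining the two steps, $\Sing_\infty(f) : \Sing_\infty(X) \to \Sing_\infty(Y)$ is a weak equivalence, which is the claim.

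There is essentially no genuine obstacle here; the only point requiring a word of care is making sure the model of $\text{hocolim}_{\ncat{\Delta}^{op}}$ being used is the homotopy-invariant one. Since the excerpt explicitly offers the diagonal of the bisimplicial set as ``the'' model, I would simply use that, cite the Goerss--Jardine result on diagonals of bisimplicial sets, and be done. If one instead wanted to use the Bousfield--Kan formula for the homotopy colimit, the relevant input is that $\text{hocolim}$ over a fixed small category is a left Quillen functor from the projective model structure on diagrams (hence preserves weak equivalences between projective-cofibrant objects) together with the observation that every $\ncat{\Delta}^{op}$-diagram of simplicial sets is Reedy-cofibrant, so levelwise weak equivalences are detected correctly; but the diagonal route is cleaner and is the one I would write up.
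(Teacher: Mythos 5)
Your argument is exactly the paper's: evaluate at the affine simplices to get a levelwise weak equivalence of bisimplicial sets, then apply the Goerss--Jardine result that the diagonal preserves pointwise weak equivalences (the paper cites \cite[Proposition 1.9]{goerss2012simplicial} for this same step). The proposal is correct and essentially identical in approach.
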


\begin{proof}
This follows from \cite[Proposition 1.9]{goerss2012simplicial} and taking the diagonal to model the homotopy colimit.
\end{proof}

\begin{Prop} \label{prop two models for shape}
There are natural weak equivalences between the functors
\begin{equation}
    \Sing_\infty \simeq \Sing_\infty \circ Q \simeq \Pi_\infty \circ Q
\end{equation}
where $Q$ denotes a cofibrant replacement functor for $\cH$.
\end{Prop}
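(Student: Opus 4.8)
The plan is to prove the two weak equivalences in turn. The first, $\Sing_\infty \circ Q \simeq \Sing_\infty$, is formal: since $\cH$ is a left Bousfield localization of $\H$, the two model structures share the same cofibrations and the same trivial fibrations, so a cofibrant replacement functor $Q$ for $\cH$ may be taken to be one for $\H$ (the bar construction $B(-,\cart,y)$ is such an example), and then each component of the natural transformation $q \colon Q \Rightarrow \mathrm{id}$ is an objectwise weak equivalence $q_X \colon QX \to X$. Applying Lemma~\ref{lem sing preserves objectwise weak equiv} to $q_X$ yields a natural weak equivalence $\Sing_\infty(QX) \to \Sing_\infty(X)$.

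For the second equivalence, $\Sing_\infty \circ Q \simeq \Pi_\infty \circ Q$, I would first produce a natural comparison map. The cosimplicial cartesian space $[n] \mapsto \Delta^n_a$ is a functor $\ncat{\Delta} \to \cart$, so restricting a simplicial presheaf $X$ along it gives a simplicial diagram $[n] \mapsto X(\Delta^n_a)$ in $\ncat{sSet}$ whose diagonal is $\Sing_\infty(X)$; the colimit cocone defining $\Pi_\infty(X) = \colim_{U \in \cart^{op}} X(U)$ restricts to a cocone under this diagram, and composing the canonical comparison from its realization (homotopy colimit) to its colimit with the induced map to $\Pi_\infty(X)$ gives a natural transformation $\eta \colon \Sing_\infty \Rightarrow \Pi_\infty$. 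It then remains to show that $\eta_{QX}$ is a weak equivalence for all $X$. Here I would use that both $\Sing_\infty$ and $\Pi_\infty \circ Q$ send homotopy colimits of simplicial presheaves to homotopy colimits of simplicial sets: for $\Pi_\infty \circ Q$ this holds because it computes the total left derived functor of the left Quillen functor $\Pi_\infty$; for $\Sing_\infty$ it holds because evaluation at each $\Delta^n_a$ preserves all colimits and the bisimplicial diagonal commutes with homotopy colimits in the remaining variable, so $\Sing_\infty$ commutes with coproducts, copowers by simplicial sets, and realizations. Since $QX = B(X,\cart,y)$ is the realization of a simplicial object whose terms are coproducts of copowers of representables — equivalently, $QX$ is a homotopy colimit of representables — it suffices to check that $\eta_{yU} \colon \Sing_\infty(yU) \to \Pi_\infty(yU)$ is a weak equivalence for every cartesian space $U$ (note $yU$ is already cofibrant, so $\Pi_\infty \circ Q$ and $\Pi_\infty$ agree on it).

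On representables this is a direct computation. By co-Yoneda, $\Pi_\infty(yU) = \colim_{U' \in \cart^{op}} \cart(U',U) = *$, since the comma category $\cart \downarrow U$ has a terminal object and is therefore connected. And $\Sing_\infty(yU)$ is the diagonal of the bisimplicial set $([n],[m]) \mapsto C^\infty(\Delta^n_a, U)$ (constant in $m$), i.e.\ the smooth singular complex of $U$, which is weakly contractible — either because the natural map from the smooth to the continuous singular complex of a smooth manifold is a weak equivalence and $U \cong \R^n$ is contractible, or directly by turning the smooth straight-line contraction of $\R^n$ into a simplicial contraction. This makes $\eta_{yU}$ a weak equivalence and finishes the argument. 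The two points needing genuine care are (i) checking that $\Sing_\infty$ really preserves the relevant homotopy colimits, which rests on computing homotopy colimits of simplicial presheaves objectwise on a cofibrant model and interchanging them with the bisimplicial diagonal, and (ii) the weak contractibility of the smooth singular complex of $\R^n$; I expect (i) to be the main bookkeeping obstacle, everything else being formal.
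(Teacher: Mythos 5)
Your proposal is correct and follows essentially the same route as the paper: the substance in both cases is that $QX = B(X,\cart,y)$ exhibits every object as a homotopy colimit of representables, that $\Sing_\infty(yU)$ is contractible while $\Pi_\infty(yU) \cong *$ for cartesian $U$, and that both functors interact well with this decomposition. The only cosmetic differences are that you prove the first equivalence directly from the agreement of cofibrations and trivial fibrations under left Bousfield localization (where the paper cites \cite{bunk2022r}), and that you run an explicit comparison map plus a check on generators where the paper instead invokes homotopy invariance of the two-sided bar construction in the coefficient variable, via the natural weak equivalence $\Sing_\infty|_{\cart} \to *$.
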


\begin{proof}
We mirror the proof given in \cite[Remark 4.12]{bunk2022r}, and since the left hand weak equivalence is shown there, we only prove the middle weak equivalence. If we restrict the smooth singular complex functor along the Yoneda embedding $\cart \hookrightarrow \spre(\cart)$, then we obtain a functor $\Sing_\infty : \cart \to \ncat{sSet}$, and there is a natural weak equivalence of functors $\Sing_\infty \xrightarrow{\sim} *$, where $*: \cart \to \ncat{sSet}$ is the constant functor on a point $* = \Delta^0$, this follows from \cite[Proposition 3.11]{bunk2022r}. Since all simplicial sets are cofibrant in the Quillen model structure on $\ncat{sSet}$, \cite[Corollary 5.2.5]{riehl2014categorical} implies that this induces a natural weak equivalence
\begin{equation}
    B(X, \cart, \Sing_\infty) \xrightarrow{\sim} B(X, \cart, *)
\end{equation}
of simplicial presheaves for every $X \in \spre(\cart)$. But $B(X, \cart, *) \cong \colim_{\cart^{op}} B(X, \cart, y) \cong \colim_{\cart^{op}} QX \cong \Pi_\infty QX$. This follows from the fact that $B(X, \cart, y) \cong QX$, which is just repackaging the definition of $QX$. Since $\Sing_\infty$ is a left adjoint, we have 
$$B(X, \cart, \Sing_\infty) \cong \Sing_\infty B(X, \cart, y) \cong \Sing_\infty QX.$$ 
This gives the second natural weak equivalence above.
\end{proof}

By Proposition \ref{prop two models for shape}, we will often refer to $\Sing_\infty(X)$ as the shape of $X$ as well. The shape functor has many wonderful properties. While we will not need all of the following results on the shape functor for this paper, we provide a concise listing of them here as such results are scattered throughout the literature.

\begin{Rem}
Since $\int$ is just applying the shape functor and then treating the resulting simplicial set as a constant simplicial presheaf, along with the fact that $\int \simeq \Disc \circ \Sing_\infty$, we will blur the distinction between $\Pi_\infty, \Sing_\infty$ and $\int$. We will use $\int$ when we wish to be ambiguous about which particular model of the shape functor we wish to use.    
\end{Rem}

\begin{Th}[{\cite[Theorem 4.15]{bunk2021sheaves}}] \label{th manifolds shape is underlying homotopy type}
Let $M$ be a finite dimensional smooth manifold, thought of as a simplicial presheaf on $\cart$, then
\begin{equation}
    \int M \simeq \Sing(M^{top})
\end{equation}
where $M^{top}$ is the underlying topological space of $M$ and $\Sing : \ncat{Top} \to \ncat{sSet}$ is the classical singular complex. In other words, the shape of $M$ is its underlying homotopy type.
\end{Th}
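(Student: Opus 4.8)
The plan is to realise $M$, inside the \v{C}ech model structure, as glued together out of cartesian spaces along contractible overlaps, and then to apply the shape functor, which converts this gluing into the nerve of the cover. First I would choose a good open cover $\mathcal{U} = \{U_i \subseteq M\}$ of $M$; such covers exist because $M$ is assumed paracompact (take geodesically convex balls for an auxiliary Riemannian metric, whose nonempty finite intersections are again geodesically convex, hence diffeomorphic to cartesian spaces). Form the \v{C}ech nerve $\check{C}(\mathcal{U})$ together with its canonical augmentation $\check{C}(\mathcal{U}) \to M$. The first key input is that this augmentation is a weak equivalence in $\cH$: the \v{C}ech model structure is constructed exactly so that \v{C}ech nerves of good covers of \emph{cartesian} spaces become weak equivalences, and this propagates to good covers of arbitrary manifolds since $M$, being a sheaf, is an $\infty$-stack and hence satisfies \v{C}ech descent (cf.\ \cite{dugger2004hypercovers}, \cite[Section 4]{minichiello2022diffeological}).

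Next I would apply the (derived) shape functor $\int$, which is a left Quillen endofunctor of $\cH$ (being the composite $\Disc \circ \Pi_\infty$ of left Quillen functors) and therefore preserves homotopy colimits. From $\check{C}(\mathcal{U}) \simeq \text{hocolim}_{\ncat{\Delta}^{op}} \big( [n] \mapsto \coprod_{i_0 \dots i_n} y(U_{i_0 \dots i_n}) \big)$ (a coproduct of representables in each simplicial degree) I therefore obtain
\begin{equation*}
{\textstyle\int} M \;\simeq\; {\textstyle\int} \check{C}(\mathcal{U}) \;\simeq\; \text{hocolim}_{\ncat{\Delta}^{op}} \Big( [n] \mapsto \coprod_{i_0 \dots i_n} {\textstyle\int} y(U_{i_0 \dots i_n}) \Big).
\end{equation*}
The second key input is $\int yU \simeq *$ for every cartesian space $U$: this is \cite[Proposition 3.11]{bunk2022r} together with Proposition \ref{prop two models for shape} (concretely, $U \cong \R^n$ is smoothly contractible and $\Sing_\infty$ inverts smooth homotopy equivalences). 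Substituting, $\int M$ becomes the homotopy colimit over $\ncat{\Delta}^{op}$ of the simplicial set $[n] \mapsto \{(i_0,\dots,i_n) : U_{i_0 \dots i_n} \neq \emptyset\}$, which is the nerve $N\mathcal{U}$ of the cover. Finally, the classical nerve theorem --- valid because $\mathcal{U}$ is a good cover of the paracompact space $M^{top}$, so all nonempty finite intersections are contractible --- identifies $N\mathcal{U}$ with $\Sing(M^{top})$, giving $\int M \simeq \Sing(M^{top})$.

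I expect the main obstacle to be the first key input: that $\check{C}(\mathcal{U}) \to M$ is a $\cH$-weak equivalence for a good cover of an \emph{arbitrary} manifold, since the \v{C}ech localization is performed explicitly only at covers of representable presheaves, so one must verify that descent along those forces descent along covers of all manifolds. One way to handle this cleanly is to write $M$ as a (homotopy) colimit of its cartesian charts along open inclusions, reduce to the case of cartesian spaces, and use stability of \v{C}ech-local weak equivalences under the relevant colimits; alternatively one invokes the hypercover machinery of \cite{dugger2004hypercovers} directly. A more elementary alternative that bypasses this point entirely is to use Proposition \ref{prop two models for shape} to identify $\int M$ with the smooth singular complex $\Sing_\infty(M) = \big( [n] \mapsto C^\infty(\Delta^n_a, M) \big)$, to restrict along $\Delta^n \hookrightarrow \Delta^n_a$ to obtain a natural map $\Sing_\infty(M) \to \Sing(M^{top})$, and to prove it is a weak equivalence by checking that both are Kan complexes (the smooth side via collaring and smoothing of horn fillers) and that classical smooth approximation identifies their homotopy groups; in that route the technical heart is the smoothing of fillers and of homotopies rel boundary.
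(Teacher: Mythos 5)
The paper does not supply its own proof here: the theorem is imported from \cite[Theorem 4.15]{bunk2021sheaves}, and the Remark immediately following it attributes the result to the classical nerve theorem --- which is precisely your primary route (good cover, \v{C}ech nerve, shape collapses each contractible patch to a point, nerve of the cover, nerve theorem), so your proposal is correct and matches the intended argument. The one step you rightly flag, that $\check{C}(\mathcal{U}) \to M$ is a \v{C}ech-local weak equivalence when $M$ is not representable, is a genuine point but is standard: it follows from Dugger's identification of a manifold with the homotopy colimit of the \v{C}ech nerve of any open cover in the local model structure (cf.\ \cite{dugger2001universal}, \cite{dugger2004hypercovers}), and your fallback route via $\Sing_\infty(M) \to \Sing(M^{top})$ and smooth approximation, using Proposition \ref{prop two models for shape}, is an equally viable alternative.
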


\begin{Rem}
It should be noted that Theorem \ref{th manifolds shape is underlying homotopy type} is really a consequence of a classical result known as the nerve theorem \cite{borsuk1948imbedding}.
\end{Rem}

\begin{Prop}[{\cite[Example 14.1]{pavlov2022projective}}]
Let $\Omega^n_{\cl}$ denote the sheaf on $\cart$ of closed differential $n$-forms. Its shape is
\begin{equation}
    \int \Omega^n_{\cl} \simeq \B^n \R^\delta.
\end{equation}
\end{Prop}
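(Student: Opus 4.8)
The plan is to prove the equivalence by induction on $n$, realizing $\Omega^{n+1}_{\cl}$ as a free quotient of $\Omega^n$ inside the de Rham complex and using that the shape functor sends such a quotient to a classifying object. Two preliminary facts drive the argument. \emph{First}, $\int\Omega^m\simeq *$ for every $m\ge 0$: writing $\u{\R}=C^\infty(-,\R)$ for the sheaf of smooth functions, the formula $h_U(\omega,g)=(1-g)\,\omega$ defines a morphism $h\colon\Omega^m\times\u{\R}\to\Omega^m$ of simplicial presheaves that restricts to $\mathrm{id}_{\Omega^m}$ at $g=0$ and to the composite $\Omega^m\to *\xrightarrow{0}\Omega^m$ through the zero section at $g=1$; since $\Sing_\infty$ preserves finite products (the diagonal of a product of bisimplicial sets being the product of the diagonals) and $\int\u{\R}\simeq\Sing(\R^{\mathrm{top}})\simeq *$ by Theorem \ref{th manifolds shape is underlying homotopy type}, this concordance forces $\int$ to invert $\Omega^m\to *$. \emph{Second}, $\Sing_\infty$ carries a free quotient to a homotopy quotient: for presheaves of abelian groups, restriction along the affine simplices $\Delta^\bullet_a$ commutes with products and with forming the nerve of an action groupoid, so for a sub-presheaf-of-groups $A\subseteq B$ one gets $\Sing_\infty(B/\!/A)\cong\Sing_\infty B/\!/\Sing_\infty A$ (both being the diagonal of the same bisimplicial set), and when the $A$-action on $B$ is objectwise free the comparison $B/\!/A\to B/A$ is an objectwise, hence $\cH$-, weak equivalence, which $\Sing_\infty$ preserves by Lemma \ref{lem sing preserves objectwise weak equiv}.

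For the induction, the base case $n=0$ is immediate: a closed $0$-form on a cartesian space is locally constant, hence constant, so $\Omega^0_{\cl}\cong\R^\delta=\Disc(\R)$, and the smooth singular complex of a discrete presheaf is itself, giving $\int\Omega^0_{\cl}\simeq\R^\delta=\B^0\R^\delta$. For the inductive step, assume $\int\Omega^n_{\cl}\simeq\B^n\R^\delta$. The Poincaré lemma on the contractible spaces $U\in\cart$ makes $d\colon\Omega^n\to\Omega^{n+1}_{\cl}$ objectwise surjective, so
\begin{equation*}
0\to\Omega^n_{\cl}\to\Omega^n\xrightarrow{\ d\ }\Omega^{n+1}_{\cl}\to 0
\end{equation*}
is a short exact sequence of presheaves of abelian groups, identifying $\Omega^{n+1}_{\cl}$ with $\Omega^n/\Omega^n_{\cl}$. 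The translation action of $\Omega^n_{\cl}$ on $\Omega^n$ being objectwise free, the second preliminary fact gives $\int\Omega^{n+1}_{\cl}\simeq\int(\Omega^n/\!/\Omega^n_{\cl})\simeq(\int\Omega^n)/\!/(\int\Omega^n_{\cl})$; by the first fact the total space $\int\Omega^n$ is contractible, so the fibration sequence $\int\Omega^n\to(\int\Omega^n)/\!/(\int\Omega^n_{\cl})\to\B(\int\Omega^n_{\cl})$ collapses to a weak equivalence $\int\Omega^{n+1}_{\cl}\simeq\B(\int\Omega^n_{\cl})\simeq\B\B^n\R^\delta=\B^{n+1}\R^\delta$, the middle step using the inductive hypothesis (and that $\B$ preserves such equivalences, $\R^\delta$ being a presheaf of abelian groups). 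This closes the induction.

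The step I expect to be most delicate is the second preliminary fact: one must resist simply invoking ``$\int$ preserves homotopy colimits'', since neither $\Omega^n$ nor $\Omega^n_{\cl}$ is $\cH$-cofibrant. The point is that $\Sing_\infty$ is an honest left adjoint on $\spre(\cart)$, so it preserves the strict colimits defining the bar construction, and its diagonal description lets one verify directly that no cofibrant-replacement correction intervenes. The remaining ingredients — the concordance, the Poincaré lemma, and the collapse of the fibration sequence over a contractible total space — are routine.
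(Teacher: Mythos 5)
Your argument is correct, and it is worth noting at the outset that the paper offers no proof of this proposition at all --- it is quoted from \cite[Example 14.1]{pavlov2022projective} --- so there is nothing internal to compare against; what you have written is a valid, self-contained derivation by the standard route (objectwise-split de Rham resolution plus concordance-invariance of $\Omega^m$). All three pillars hold up: (i) $h_U(\omega,g)=(1-g)\omega$ is natural in $U$, and composing with the $1$-simplex of $\Sing_\infty(C^\infty(-,\R))$ joining the constant functions $0$ and $1$ yields a genuine simplicial homotopy from the identity to the zero map, so $\Sing_\infty(\Omega^m)$ is contractible; (ii) the Poincar\'e lemma on cartesian spaces makes $d\colon\Omega^n\to\Omega^{n+1}_{\cl}$ an objectwise surjection with kernel $\Omega^n_{\cl}$, so the free translation action gives an objectwise weak equivalence $\Omega^n/\!/\Omega^n_{\cl}\to\Omega^{n+1}_{\cl}$, which $\Sing_\infty$ preserves by Lemma \ref{lem sing preserves objectwise weak equiv}; (iii) since $\Sing_\infty$ commutes with the objectwise finite products defining the nerve of the action groupoid, $\Sing_\infty(\Omega^n/\!/\Omega^n_{\cl})$ is the diagonal of the bisimplicial set $\Sing_\infty(\Omega^n)/\!/\Sing_\infty(\Omega^n_{\cl})$, and by (i) its projection to $*/\!/\Sing_\infty(\Omega^n_{\cl})$ is a \emph{levelwise} weak equivalence of bisimplicial sets, hence a weak equivalence on diagonals --- phrasing it this way is cleaner than invoking the fibration sequence, since it sidesteps any $\pi_*$-Kan hypothesis. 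Two small points to make explicit: you should record the standard identification of the diagonal of $N(*/\!/G)$ with $\overline{W}G$ for a simplicial group $G$ in order to land on $\B\Sing_\infty(\Omega^n_{\cl})$, and in the final step note either that all comparison maps in your zig-zag are maps of simplicial abelian groups, or simply that a connected simplicial abelian group whose only nontrivial homotopy group is $\R$ in degree $n+1$ is a $K(\R,n+1)$, so the inductive identification with $\B^{n+1}\R^\delta$ is legitimate. Your closing caution about cofibrancy is exactly right: the argument works because it is run through $\Sing_\infty$ rather than $\Pi_\infty$.
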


\begin{Lemma} \label{lem shape of discrete is identity}
Let $K$ be a simplicial set, then the counit
\begin{equation}
    \varepsilon_K: (\Pi_\infty \circ \Disc)(K) \to K.
\end{equation}
is an isomorphism. In other words, for discrete simplicial presheaves $K_c$, we have
\begin{equation*}
    \int K_c \cong K.
\end{equation*}
\end{Lemma}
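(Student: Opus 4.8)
The plan is to unwind the definitions of the functors involved and exploit the fact that $\Disc = K_c$ is objectwise constant. Recall that $\Pi_\infty(X) = \colim_{U \in \cart^{op}} X(U)$, so we need to compute $\colim_{U \in \cart^{op}} (\Disc K)(U) = \colim_{U \in \cart^{op}} K$, i.e. the colimit over $\cart^{op}$ of the constant diagram with value $K$. First I would observe that the colimit of a constant diagram over a category $\cat{I}$ with value an object $c$ is $\colim_{\cat{I}} c \cong \pi_0(\cat{I}) \cdot c$ (a copy of $c$ for each connected component of $\cat{I}$), at least when this makes sense; more robustly, since $\cart^{op}$ has an \emph{initial} object, namely $\R^0 = *$ viewed in $\cart^{op}$ (because $*$ is terminal in $\cart$), the colimit over $\cart^{op}$ of \emph{any} functor that is constant — or more precisely, the key point is that $\cart^{op}$ is a connected category, indeed it has an initial object, hence its nerve is contractible and in particular connected.

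The cleanest argument: $\cart$ has a terminal object $\R^0$, so $\cart^{op}$ has an initial object; a category with an initial object is connected; and the colimit of a constant functor $\Delta_K : \cat{I} \to \ncat{sSet}$ over a connected category $\cat{I}$ is canonically isomorphic to $K$, with the structure maps of the colimit cocone all equal to $\mathrm{id}_K$. Applying this with $\cat{I} = \cart^{op}$ and noting $(\Disc K)(U) = K$ for all $U$ with all transition maps the identity, we get $\Pi_\infty(\Disc K) = \colim_{U \in \cart^{op}} K \cong K$. I would then identify this isomorphism with the counit $\varepsilon_K$ of the adjunction $\Pi_\infty \dashv \Disc$: by the general formula for the counit of a left adjoint, $\varepsilon_K$ is precisely the canonical map out of the colimit that is the identity on each summand, so it coincides with the isomorphism just constructed. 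The final sentence, $\int K_c \cong K$, is then immediate since $\int = \Disc \circ \Pi_\infty$ and $\Disc$ is fully faithful, so $\int K_c = \Disc(\Pi_\infty \Disc K) \cong \Disc K = K_c$, which under the identification of discrete presheaves with simplicial sets reads $\int K_c \cong K$.

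The only subtlety — and the step I expect to need the most care — is making sure the ``colimit of a constant functor over a connected category is the value'' lemma is being applied correctly at the level of \emph{simplicial} presheaves: the colimit in $\spre(\cart)$ is computed levelwise, so it suffices to check the statement for each simplicial level $K_n$, where it is the elementary statement about colimits of constant $\ncat{Set}$-valued functors over the connected category $\cart^{op}$. There is no homotopy-theoretic content here at all — it is a strict $1$-categorical colimit computation — so no cofibrancy or fibrancy hypotheses are needed, which is why the statement is about the underived counit being an honest isomorphism. Once this levelwise check is in hand, the compatibility with face and degeneracy maps is automatic from naturality, and the identification with $\varepsilon_K$ follows formally.
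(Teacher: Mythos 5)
Your proof is correct, but it takes a different route from the paper. The paper disposes of the lemma in one line by invoking the general categorical fact that in an adjunction whose right adjoint is fully faithful the counit is an isomorphism, applied to $\Pi_\infty \dashv \Disc$ with $\Disc$ fully faithful. You instead compute the colimit explicitly: $\cart$ has a terminal object $\R^0$, so $\cart^{op}$ has an initial object and is in particular connected, and the colimit of a constant functor over a connected category (computed levelwise in $\ncat{sSet}$, hence reduced to the elementary statement for sets) is the value itself, with the comparison map being exactly the counit. Both arguments are sound, and they are close cousins: the full faithfulness of $\Disc$ that the paper cites is itself verified by essentially the same connectedness observation about $\cart$ (a natural transformation $K_c \to L_c$ is a family of maps $K \to L$ forced to be constant along morphisms of $\cart$), so your computation in effect unwinds the abstract lemma to its concrete content. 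What the paper's approach buys is brevity and reusability — the same one-liner applies whenever the right adjoint of any adjunction is known to be fully faithful; what yours buys is an explicit identification of the inverse and a self-contained argument that does not presuppose the full faithfulness of $\Disc$ as an input. Your closing remarks — that the colimit is a strict $1$-categorical one requiring no cofibrancy hypotheses, and that $\int K_c \cong K$ follows by applying $\Disc$ — are accurate and consistent with how the paper uses the lemma.
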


\begin{proof}
This follows from the fact that $\Disc$ is fully faithful, and the unit of an adjunction where the right adjoint is fully faithful is an isomorphism.
\end{proof}

\begin{Prop}[{\cite[Theorem 3.4]{carchedi2015homotopy}}]
Let $M$ denote a simplicial manifold. Let $\u{M}$ denote the simplicial presheaf on $\cart$ defined degreewise by
\begin{equation*}
    \u{M}(U)_k = C^\infty(U, M_k).
\end{equation*}
Then the shape of $\u{M}$ is
\begin{equation}
    \int \u{M} \simeq || M||
\end{equation}
where $||M||$ denotes the homotopy type of the ``fat" geometric realization of $M$, see \cite[Section 3.2]{carchedi2015homotopy}.
\end{Prop}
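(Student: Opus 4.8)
The plan is to present $\u{M}$ as a homotopy colimit over $\Delta^{op}$ of the simplicially constant presheaves built from the manifolds $M_k$, push the shape functor $\int$ through this colimit, evaluate it level by level using Theorem~\ref{th manifolds shape is underlying homotopy type}, and finally recognize the resulting homotopy colimit as the fat realization $||M||$.

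First I would observe that $\u{M}$ is the diagonal of the bisimplicial presheaf $[k]\mapsto \u{M_k}$, where $\u{M_k}$ denotes the sheaf $C^\infty(-,M_k)$ associated with the manifold $M_k$, viewed as a simplicially constant simplicial presheaf: indeed $\mathrm{diag}([k]\mapsto \u{M_k})(U)_n = C^\infty(U,M_n) = \u{M}(U)_n$. Since the diagonal of a bisimplicial set computes the homotopy colimit of its rows over $\Delta^{op}$, and since weak equivalences and homotopy colimits in $\H$ are detected objectwise, this yields a weak equivalence $\u{M}\simeq \mathrm{hocolim}_{[k]\in\Delta^{op}}\u{M_k}$ in $\H$; because $1_{\spre(\cart)}:\H\to\cH$ is left Quillen it preserves this homotopy colimit, so the equivalence also holds in $\cH$. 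In other words, $\u{M}$ is the geometric realization of the simplicial object $M_\bullet$ in the $\infty$-topos presented by $\cH$.

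Next I would apply $\int$. By the cohesive Quillen adjunctions~(\ref{eq cohesive adjunctions}) both $\Pi_\infty:\cH\to\ncat{sSet}$ and $\Disc:\ncat{sSet}\to\cH$ are left Quillen, hence $\int=\Disc\circ\Pi_\infty$ is left Quillen and preserves homotopy colimits; together with Proposition~\ref{prop two models for shape} this gives
\begin{equation*}
\int\u{M}\;\simeq\;\mathrm{hocolim}_{[k]\in\Delta^{op}}\int\u{M_k}.
\end{equation*}
Now $\u{M_k}$ is exactly the manifold $M_k$ regarded as a simplicial presheaf on $\cart$, so Theorem~\ref{th manifolds shape is underlying homotopy type} identifies $\int\u{M_k}\simeq\Sing(M_k^{top})$, and therefore $\int\u{M}\simeq\mathrm{hocolim}_{[k]\in\Delta^{op}}\Sing(M_\bullet^{top})$, the homotopy colimit over $\Delta^{op}$ of the simplicial space $\Sing(M_\bullet^{top})$.

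The remaining step, which I expect to be the main obstacle, is to identify this homotopy colimit with the fat realization $||M||$ rather than the naive (thin) realization of $\Sing(M_\bullet^{top})$ — the point being that the degeneracy maps of a general simplicial manifold need not be cofibrations, so the thin realization is not homotopy invariant, and this is precisely why Carchedi's statement is phrased with $||\cdot||$. The key input is the homotopy cofinality of the inclusion $\Delta_{\mathrm{inj}}\hookrightarrow\Delta$ of the subcategory of injective order-preserving maps (equivalently of $\Delta_{\mathrm{inj}}^{op}\hookrightarrow\Delta^{op}$), giving $\mathrm{hocolim}_{\Delta^{op}}Z_\bullet\simeq\mathrm{hocolim}_{\Delta_{\mathrm{inj}}^{op}}Z_\bullet$ for every simplicial object $Z_\bullet$; for $Z_\bullet=\Sing(M_\bullet^{top})$ the right-hand side is computed by the fat realization $||\Sing(M_\bullet^{top})||$, and since $\Sing$ and geometric realization are mutually inverse on homotopy categories while homotopy colimits are homotopy invariant, this presents the homotopy type $||M|| = ||M_\bullet^{top}||$. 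Putting the steps together yields $\int\u{M}\simeq||M||$, recovering \cite[Theorem~3.4]{carchedi2015homotopy}.
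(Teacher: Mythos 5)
The paper offers no proof of this proposition --- it is quoted directly from \cite{carchedi2015homotopy}. Your argument is correct and is essentially the standard one (and essentially Carchedi's): write $\u{M}$ as $\mathrm{hocolim}_{\Delta^{op}}\,\u{M_k}$ via the diagonal, commute the left Quillen functor $\smallint$ past the homotopy colimit, evaluate levelwise by Theorem~\ref{th manifolds shape is underlying homotopy type}, and pass to $\Delta_{\mathrm{inj}}^{op}$ by homotopy cofinality to recognize the fat realization. All the ingredients you invoke (the diagonal computing the homotopy colimit of the rows, objectwise homotopy colimits in the projective structure, cofinality of $\Delta_{\mathrm{inj}}^{op}\hookrightarrow\Delta^{op}$) are standard and correctly applied, and you rightly identify the non-cofibrancy of degeneracies as the reason the fat rather than the thin realization appears.
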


\begin{Rem}
The above result is especially interesting when $M$ is the nerve of a Lie groupoid, as this says that the shape of a Lie groupoid (thought of as a simplicial presheaf) is weak equivalent to the homotopy type of the Lie groupoid's classifying space \cite[Section 2.2]{carchedi2015homotopy}.
\end{Rem}

\begin{Rem}
We should also mention that the shape functor has been used to great effect in what is now called the Smooth Oka Principle. See the following references \cite{berwickevans2022classifying}, \cite{sati2022equivariant}, \cite{clough2023homotopy}, \cite[Section 10]{pavlov2022numerable}.
\end{Rem}

Let $\Sing_D : \ncat{Diff} \to \ncat{sSet}$ be the functor defined levelwise by
\begin{equation}
    \Sing_D(X)_n = \ncat{Diff}(\Delta^n_a, X).
\end{equation}
We call this the \textbf{diffeological singular complex}. Note that by using the diagonal as a model for the homotopy colimit, for a diffeological space $X$ (actually any sheaf of sets on $\cart$), we have
\begin{equation}
   \Sing_D(X) \cong \Sing_\infty(X). 
\end{equation}

\begin{Prop}
Let $X$ be a diffeological space, then 
\begin{equation}
    \Pi_\infty(QX) \simeq \Sing_D(X) \simeq N \ncat{Plot}(X)
\end{equation}
where $N\ncat{Plot}(X)$ is the nerve of the category of plots of $X$.
\end{Prop}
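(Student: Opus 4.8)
The plan is to chain three comparisons. First, the comparison $\Pi_\infty(QX)\simeq\Sing_D(X)$ is essentially free: Proposition~\ref{prop two models for shape} supplies a natural weak equivalence $\Pi_\infty\circ Q\simeq\Sing_\infty$, and since $X$ is (the simplicial presheaf associated to) a sheaf of sets by the Baez--Hoffnung Theorem, the remark immediately preceding the statement identifies $\Sing_\infty(X)\cong\Sing_D(X)$. Composing, $\Pi_\infty(QX)\simeq\Sing_D(X)$, so everything reduces to producing a weak equivalence $\Sing_D(X)\simeq N\ncat{Plot}(X)$; I will in fact exhibit an \emph{isomorphism} $\Pi_\infty(QX)\cong N\ncat{Plot}(X)$ and then invoke transitivity.

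To compute $\Pi_\infty(QX)$, recall from \cite[Section 4.2]{minichiello2022diffeological} that $QX=B(X,\cart,y)$ is, by construction, the geometric realization of a simplicial object in $\spre(\cart)$ whose term in degree $n$ is $\coprod_{(f_{n-1},\dots,f_0)\in N\ncat{Plot}(X)_n}yU_{p_n}$. Now $\Pi_\infty$ is left adjoint to $\Disc$, and this adjunction is simplicially enriched --- it is one of the simplicial Quillen adjunctions of (\ref{eq cohesive adjunctions}) --- so $\Pi_\infty$ preserves all colimits (coproducts and coends in particular) together with tensors by simplicial sets, and hence commutes with the geometric realization of simplicial objects. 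Moreover $\Pi_\infty(yU)\cong\Delta^0$, naturally in $U\in\cart$: by the enriched adjunction $\Pi_\infty\dashv\Disc$ and the enriched Yoneda lemma one has $K^{\Pi_\infty(yU)}\cong\u{\spre(\cart)}(yU,\Disc K)\cong(\Disc K)(U)=K=K^{\Delta^0}$ naturally in $K\in\ncat{sSet}$, and Yoneda in $\ncat{sSet}$ then forces $\Pi_\infty(yU)\cong\Delta^0$. Feeding this back in,
\[
\Pi_\infty(QX)\ \cong\ \Big|\,[n]\mapsto\coprod_{(f_{n-1},\dots,f_0)\in N\ncat{Plot}(X)_n}\Pi_\infty(yU_{p_n})\,\Big|\ \cong\ \Big|\,[n]\mapsto N\ncat{Plot}(X)_n\,\Big|\ \cong\ N\ncat{Plot}(X),
\]
where I write $|\cdot|$ for the geometric realization of a simplicial object and the last isomorphism is the standard fact that the realization of a discrete simplicial object recovers its underlying simplicial set (co-Yoneda: $\int^{[n]}S_n\otimes\Delta^n\cong S$).

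The conceptual content here is modest; the step I would watch most carefully is the passage of $\Pi_\infty$ through the bar construction. It uses that $\Pi_\infty$ is not merely cocontinuous but a \emph{simplicial} left adjoint, so that the tensors $(-)\otimes\Delta^n$ appearing in the realization are preserved; and it uses the naturality in $U$ of $\Pi_\infty(yU)\cong\Delta^0$, so that the degreewise isomorphisms assemble into an isomorphism of simplicial objects onto $N\ncat{Plot}(X)_\bullet$ respecting the face and degeneracy maps of the nerve --- which is exactly how the indexing of $QX$ is arranged in \cite[Section 4.2]{minichiello2022diffeological}. Granting these, transitivity with the first comparison yields $\Pi_\infty(QX)\simeq\Sing_D(X)\simeq N\ncat{Plot}(X)$, as claimed.
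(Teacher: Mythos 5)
Your proof is correct and follows essentially the same route as the paper's: the first equivalence is dispatched by the preceding discussion, and the second by pushing $\Pi_\infty$ through the coend/bar presentation of $QX$, collapsing each $\Pi_\infty(yU_{p_n})$ to a point, and finishing with co-Yoneda. The only real difference is that you obtain $\Pi_\infty(yU)\cong\Delta^0$ as an \emph{isomorphism} via the enriched adjunction and Yoneda, so your conclusion $\Pi_\infty(QX)\cong N\ncat{Plot}(X)$ holds on the nose and sidesteps the homotopy-invariance-of-the-coend point implicit in the paper's corresponding step, which instead cites the weak equivalence $\smallint U\simeq *$ for contractible manifolds.
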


\begin{proof}
We will only prove the second weak equivalence, as the first holds by the previous discussion. It is shown in \cite[Proposition 3.6]{bunk2022r} that $\Sing_\infty$ has a left and right adjoint (though only the right adjoint forms a Quillen adjunction), therefore we have
\begin{equation}
    \begin{aligned}
        \Pi_\infty(QX) & = \Pi_\infty \left( \int^{n \in \ncat{\Delta}} \coprod_{N\ncat{Plot}(X)_n} yU_{p_n} \otimes \Delta^n \right) \\
        & \cong \int^{n \in \ncat{\Delta}} \coprod_{N\ncat{Plot}(X)_n} \Pi_\infty(yU_{p_n}) \times (\Pi_\infty \circ \Disc)(\Delta^n) \\
        & \simeq \int^{n \in \ncat{\Delta}} \coprod_{N\ncat{Plot}(X)_n} * \times \Delta^n \\
        & \cong N\ncat{Plot}(X).
    \end{aligned}
\end{equation}
where the weak equivalence is given by Theorem \ref{th manifolds shape is underlying homotopy type} and Lemma \ref{lem shape of discrete is identity}.
\end{proof}

\section{The Irrational Torus} \label{section irrational torus}
In this section, we will show that if $K \subset \R$ is a diffeologically discrete subgroup of the real numbers, then the infinity stack cohomology of the irrational torus $T_K = \R^n / K$ (for any $n \geq 1$) with values in $\R^\delta$, is isomorphic to the group cohomology of $K$ with values in $\R^\delta$. This was first proved by Iglesias-Zemmour \cite[Page 15]{iglesias2023vcech} with his own version of diffeological \v{C}ech cohomology, which we will refer to as PIZ cohomology. In \cite{minichiello2022diffeological} we found maps between $\infty$-stack cohomology and PIZ cohomology, and showed that one of these maps is a retract, but it is still an open question as to whether these two cohomologies are isomorphic. 

The motivation for this section is two-fold. One is to support the conjecture that $\infty$-stack cohomology is isomorphic to PIZ cohomology. We do this by showing that they agree on one of the most important class of examples of diffeological spaces, the irrational tori. The second motivation is to show the power of $\infty$-topos theory and in particular $\infty$-stack cohomology through the use of the shape operation. The proof of Theorem \ref{th infinity stack cohomology of irrational torus is group cohomology} is completely different than that in \cite{iglesias2023vcech}, and it is conceptually more straightforward.

\begin{Def}
Suppose that $K \subset \R^n$ is a subgroup, and furthermore, when $K$ is given the subset diffeology of $\R$, it coincides with the discrete diffeology, so that every plot is constant. We call its quotient $T_K = \R^n / K$ the $n$-dimensional \textbf{$K$-irrational torus}\footnote{The word irrational comes from the example where $n =1$, and $K = \Z + \alpha \Z$ with $\alpha$ an irrational number. This is the most studied example of an irrational torus in diffeology. Interestingly, $\Z + \alpha \Z \subset \R$ with the subset topology is dense in $\R$, hence not discrete, however the subset diffeology is discrete.}.
\end{Def} 

We define the quotient map $\pi: \R^n \to T_K$ of diffeological spaces. This map is a diffeological principal $K$-bundle \cite[Article 8.15]{iglesias2013diffeology}, where $K$ is given the discrete diffeology. By the discussion in Example \ref{ex diffeological bundles via stacks}, the bundle $\pi$ is classified by a map of $\infty$-stacks $g_K: T_K \to N \ncat{DiffPrin}_K$. Furthermore, by \cite[Corollary 6.12]{minichiello2022diffeological}, we obtain the following cube, where the front face and back face are homotopy pullback squares and the maps going from the back face to the front face are all objectwise weak equivalences
\begin{equation} 
\begin{tikzcd}
	{\widetilde{\R^n}} && {\mathbf{E} K} \\
	& {\R^n} && {*} \\
	{QT_K} && {\B K} \\
	& {T_K} && {N \ncat{DiffPrin}_K}
	\arrow[from=2-2, to=2-4]
	\arrow["\pi"'{pos=0.4}, from=2-2, to=4-2]
	\arrow["{{g_K}}"', from=4-2, to=4-4]
	\arrow[from=2-4, to=4-4]
	\arrow[from=1-1, to=3-1]
	\arrow[from=1-3, to=3-3]
	\arrow[from=1-1, to=1-3]
	\arrow[from=1-1, to=2-2]
	\arrow[from=3-1, to=4-2]
	\arrow[from=1-3, to=2-4]
	\arrow[from=3-3, to=4-4]
	\arrow[from=3-1, to=3-3]
\end{tikzcd}
\end{equation}

Now if we apply the shape functor to this cube, the back face remains a homotopy pullback by the following result.

\begin{Prop}[{\cite[Proposition 3.3.8]{sati2022equivariant}}] \label{prop homotopy pullback of shape}
Let $K$ be a simplicial set, and let $f: X \to \Disc(K)$ and $g : Y \to \Disc(K)$ be maps of simplicial presheaves on $\cart$. Then there is a \v{C}ech weak equivalence
\begin{equation}
    \left( \smallint X \right) \times^h_{\Disc(K)} \left( \smallint Y \right) \simeq \smallint \left( X \times^h_{\Disc(K)} Y \right)
\end{equation}
between the homotopy pullbacks of the maps $\int f$ and $\int g$ and the shape of the homotopy pullback of the maps $f$ and $g$.
\end{Prop}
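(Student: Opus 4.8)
The plan is to reduce the assertion to a single fact — that the shape functor preserves homotopy pullbacks whose base is a \emph{discrete} object — and then to verify that fact by computing through the concrete model $\Sing_\infty$ of the shape.

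\textbf{Step 1: reduce from $\int$ to $\Pi_\infty$.} Recall $\int = \Disc \circ \Pi_\infty$. The functor $\Disc : \ncat{sSet} \to \cH$ sends weak equivalences to objectwise, hence \v{C}ech, weak equivalences, and it is simultaneously left Quillen (left adjoint to $\Gamma$) and right Quillen (right adjoint to $\Pi_\infty$) for the \v{C}ech model structure; consequently $\Disc$ preserves homotopy limits and homotopy colimits. In particular, for any cospan $A \to K \leftarrow B$ in $\ncat{sSet}$ one has $\Disc(A \times^h_K B) \simeq \Disc A \times^h_{\Disc K} \Disc B$. Combining this with Lemma \ref{lem shape of discrete is identity} (so that $\int \Disc K \cong \Disc K$), it suffices to produce a natural weak equivalence
\begin{equation*}
\Pi_\infty Q\big(X \times^h_{\Disc K} Y\big) \;\simeq\; (\Pi_\infty Q X) \times^h_K (\Pi_\infty Q Y),
\end{equation*}
where the pullback on the right is formed in $\ncat{sSet}$ along the maps adjoint to $f$ and $g$, and $Q$ is a cofibrant replacement functor for $\cH$.

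\textbf{Step 2: pass to $\Sing_\infty$ and compute objectwise.} By Proposition \ref{prop two models for shape} we may replace $\Pi_\infty Q$ by $\Sing_\infty$ throughout. Since $\Sing_\infty$ preserves objectwise weak equivalences (Lemma \ref{lem sing preserves objectwise weak equiv}), we may model $X \times^h_{\Disc K} Y$ by a \emph{strict} pullback: take a Kan complex model for $K$ and factor $f$ as an objectwise weak equivalence followed by an objectwise Kan fibration $\widetilde X \to \Disc K$, so that $\widetilde X \times_{\Disc K} Y$ computes the homotopy pullback. Because limits of simplicial presheaves are computed objectwise, and because $(\Disc K)(\Delta^n_a) = K$ for \emph{every} $n$ — this is precisely where discreteness of the base enters — we get $(\widetilde X \times_{\Disc K} Y)(\Delta^n_a) = \widetilde X(\Delta^n_a) \times_K Y(\Delta^n_a)$, each of which is a homotopy pullback of simplicial sets (the first leg being a Kan fibration). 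Applying the diagonal, $\Sing_\infty(X \times^h_{\Disc K} Y)$ is thus the diagonal of the bisimplicial set $[n] \mapsto \widetilde X(\Delta^n_a) \times_K Y(\Delta^n_a)$.

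\textbf{Step 3: commute the diagonal past the pullback.} The diagonal of a bisimplicial set is a model for the homotopy colimit of the associated $\Delta^{\mathrm{op}}$-indexed (equivalently, $\Delta^{\mathrm{op}} \times \Delta^{\mathrm{op}}$-indexed) diagram. Since the base $K$ of the fibre product is \emph{constant} in the simplicial variable, universality of colimits in the $\infty$-topos $\ncat{sSet}$ (homotopy colimits are stable under homotopy base change) together with this identification of the diagonal lets us pull the two simplicial homotopy colimits outside of $\times^h_K$, yielding
\begin{equation*}
\Sing_\infty(X \times^h_{\Disc K} Y) \;\simeq\; \big(\mathrm{hocolim}_{[m]} \widetilde X(\Delta^m_a)\big) \times^h_K \big(\mathrm{hocolim}_{[n]} Y(\Delta^n_a)\big) \;=\; \Sing_\infty X \times^h_K \Sing_\infty Y.
\end{equation*}
Unwinding Step 1 then gives the claimed \v{C}ech weak equivalence, and naturality is clear throughout.

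\textbf{Main obstacle.} The entire conceptual content sits in Step 3: interchanging a homotopy colimit with a homotopy pullback is false in general, and works here \emph{only} because $\Disc K$ is discrete, so the base of the pullback is constant over the diagram along which the colimit is formed. The remaining difficulty is purely bookkeeping — arranging compatible fibrant/cofibrant replacements so that the objectwise strict pullback of Step 2 really does model the homotopy pullback after $\Sing_\infty$, and making the diagonal/bisimplicial-colimit comparison precise.
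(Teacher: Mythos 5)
Your argument is sound, but note that the paper does not actually prove this proposition: it is quoted from \cite[Proposition 3.3.8]{sati2022equivariant}, and the accompanying remark only points to \cite[Theorem 4.1.34 and Proposition 4.1.35]{schreiber2013differential} for a model-categorical argument. The route taken there is the abstract cohesive one: write $\Disc(K)$ as a homotopy colimit of points indexed by $K$, use descent in the $\infty$-topos presented by $\cH$ to decompose $X \times^h_{\Disc(K)} Y$ into fibrewise products, and use that $\Pi_\infty$ preserves homotopy colimits (being left Quillen) and finite homotopy products. Your proof replaces descent in $\cH$ by descent in $\ncat{sSet}$ after first pushing everything through the concrete model $\Sing_\infty$ of the shape; this is a legitimate and more hands-on alternative, and the place where discreteness of the base enters is correctly identified. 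Two points should be made explicit to close the bookkeeping you flag yourself. First, in Step 2 you silently identify the \v{C}ech homotopy pullback of $f$ and $g$ with the objectwise (projective) one computed from the factorization $X \xrightarrow{\sim} \widetilde X \twoheadrightarrow \Disc(K)$; this identification is exactly Proposition \ref{prop model topos localization preserves finite homotopy limits} and should be cited. Second, the interchange in Step 3 should be unwound as: two applications of universality of homotopy colimits give
\begin{equation*}
\Big(\text{hocolim}_{n} A_n\Big) \times^h_K \Big(\text{hocolim}_{m} B_m\Big) \;\simeq\; \text{hocolim}_{(n,m) \in \ncat{\Delta}^{op} \times \ncat{\Delta}^{op}} \big(A_n \times^h_K B_m\big),
\end{equation*}
and then homotopy cofinality of the diagonal $\ncat{\Delta}^{op} \to \ncat{\Delta}^{op} \times \ncat{\Delta}^{op}$ restricts this to $\text{hocolim}_n (A_n \times^h_K B_n)$, which by Step 2 is $\Sing_\infty(X \times^h_{\Disc(K)} Y)$; your parenthetical about the diagonal computing the $\ncat{\Delta}^{op} \times \ncat{\Delta}^{op}$-indexed colimit is precisely this cofinality statement. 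With those two points spelled out the proof is complete.
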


\begin{Rem}
For a model-category theoretic proof of Proposition \ref{prop homotopy pullback of shape}, use the argument of \cite[Theorem 4.1.34]{schreiber2013differential}, and mirror the argument of \cite[Proposition 4.1.35]{schreiber2013differential} model categorically. 
\end{Rem}

Therefore the front face must also be a homotopy pullback square. Now by Lemma \ref{lem sing preserves objectwise weak equiv}, Lemma \ref{lem shape of discrete is identity}, and Theorem \ref{th manifolds shape is underlying homotopy type} it follows that
\begin{equation}
    \smallint \widetilde{\R^n} \simeq \smallint \R^n \simeq *, \qquad \smallint Q T_K \simeq \smallint T_K, \qquad \smallint \mathbf{E} K \simeq \smallint * \cong *, \qquad \B K \cong \smallint \B K \simeq \smallint \ncat{DiffPrin}_K.  
\end{equation}
From this we obtain the main result of this section.

\begin{Th} \label{th infinity stack cohomology of irrational torus is group cohomology}
There is an isomorphism
\begin{equation}
    \check{H}^n_\infty(T_K, \R^\delta) \cong H^n_\text{grp}(K, \R^\delta)
\end{equation}
of abelian groups, for every $n \geq 0$, where $\R^\delta$ denotes the discrete group of real numbers, and where $H^n_{\text{grp}}(K, \R^\delta)$ denotes the group cohomology of $K$ with coefficients in $\R^\delta$.
\end{Th}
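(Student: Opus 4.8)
The plan is to leverage the homotopy pullback cube already set up before the theorem statement together with the shape-preservation result (Proposition \ref{prop homotopy pullback of shape}). The front face of the cube, after applying $\int$, is a homotopy pullback square
\begin{equation*}
\begin{tikzcd}
	{\int\widetilde{\R^n}} & {\int\mathbf{E}K} \\
	{\int QT_K} & {\int \B K}
	\arrow[from=1-1, to=1-2]
	\arrow[from=1-1, to=2-1]
	\arrow[from=1-2, to=2-2]
	\arrow[from=2-1, to=2-2]
\end{tikzcd}
\end{equation*}
and by the displayed weak equivalences $\int\widetilde{\R^n}\simeq *$, $\int\mathbf{E}K\simeq *$, and $\int\B K\simeq \B K$ (the simplicial-set classifying space of the discrete group $K$), this exhibits $\int QT_K$ as the homotopy pullback of $*\to \B K\leftarrow *$, i.e. $\int QT_K \simeq \Omega\B K \simeq K$? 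No --- one must be careful about which corner is which. The correct reading is that $\mathbf{E}K\to\B K$ is the universal $K$-bundle, so its homotopy fiber is $K$; pulling back along $\int QT_K\to\B K$ and using $\int\widetilde{\R^n}\simeq *$ forces $\int QT_K\simeq\B K$ itself, since the homotopy pullback of $*\to\B K$ along $\int QT_K\to\B K$ being contractible means that map is a weak equivalence. So the first key step is to conclude $\int QT_K \simeq \B K$, and hence $\int T_K\simeq \B K$ via $\int QT_K\simeq\int T_K$.

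Next I would rewrite $\infty$-stack cohomology in terms of the shape. Since $\R^\delta = \Disc(\R^\delta)$ is a discrete simplicial presheaf, its deloopings $\B^n\R^\delta$ are also discrete, equal to $\Disc(\B^n\R^\delta)$ where the latter is the simplicial-set delooping of the discrete abelian group $\R^\delta$. The adjunction $\int\dashv$ (more precisely $\Pi_\infty\dashv\Disc$, a Quillen adjunction for $\cH$ versus Kan--Quillen on $\ncat{sSet}$) gives
\begin{equation*}
\check H^n_\infty(T_K,\R^\delta) = \pi_0\,\R\cH(QT_K, \Disc(\B^n\R^\delta)) \cong \pi_0\,\ncat{Map}_{\ncat{sSet}}(\Pi_\infty QT_K, \B^n\R^\delta) = [\,\int QT_K, \B^n\R^\delta\,].
\end{equation*}
Combining with $\int QT_K\simeq\B K$, this is $[\,\B K, \B^n\R^\delta\,]$, which is exactly the $n$-th singular cohomology $H^n(\B K;\R^\delta)$ of the classifying space of the discrete group $K$ with coefficients in the abelian group $\R^\delta$. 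The final step is the classical identification $H^n(\B K;\R^\delta)\cong H^n_{\mathrm{grp}}(K,\R^\delta)$, the defining comparison between singular cohomology of $\B K = BK$ (an Eilenberg--MacLane-type space for the discrete group $K$) and group cohomology; one can cite a standard reference such as Brown's \emph{Cohomology of Groups}.

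The main obstacle is making the bookkeeping in the first step airtight: one must verify that the cube before the theorem really does have the claimed form with the correct corners, that applying $\int$ (a left Quillen functor, hence preserving homotopy colimits but a priori \emph{not} homotopy pullbacks) genuinely preserves this particular square --- which is precisely the content invoked via Proposition \ref{prop homotopy pullback of shape}, using that the base $\B K$ has discrete shape --- and that contractibility of the homotopy pullback of $*\to \int\B K\leftarrow \int\widetilde{\R^n}$ indeed forces $\int QT_K\to \int\B K$ to be an equivalence (this uses that $\mathbf{E}K\to\B K$ is, after shape, the path fibration over $\B K$, so its homotopy fiber over any basepoint is $K$, and a map into $\B K$ with contractible homotopy fiber over the basepoint and connected source is a weak equivalence when the fibers over all components agree --- here $\B K$ is connected, so this is automatic). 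Everything downstream is formal adjunction juggling plus a citation, so once the shape of $T_K$ is pinned down as $\B K$ the theorem follows immediately.
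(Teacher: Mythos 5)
Your proposal is correct and follows essentially the same route as the paper: apply the shape functor to the pullback cube (via Proposition \ref{prop homotopy pullback of shape}) to conclude $\int T_K \simeq \B K$, use the $\Pi_\infty \dashv \Disc$ (equivalently $\int \dashv \flat$) adjunction to reduce $\infty$-stack cohomology with discrete coefficients to a simplicial mapping space out of $\B K$, and finish with the classical identification of $[\B K, \B^n\R^\delta]$ with group cohomology. The only cosmetic difference is that the paper invokes diffeological connectedness of $T_K$ directly where you extract connectedness of the source from the long exact sequence; both are fine.
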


\begin{proof}
First we note that since $\R^\delta$ is discrete, $\R^\delta \cong \flat \R^\delta$. Similarly, $\B^n \R^\delta \cong \flat \B^n \R^\delta$ for all $n \geq 0$. Thus 
\begin{equation}
    \R \cH (T_K, \B^n \R^\delta) \cong \R \cH(T_K, \flat \B^n \R^\delta) \cong \R \cH(\smallint T_K, \B^n \R^\delta).
\end{equation}
Now from Proposition \ref{prop two models for shape}, there is a weak equivalence $\Sing_D(T_K) \to \int T_K$ of simplicial sets, and by \cite[Proposition 4.30]{christensen2014homotopy}, $\Sing_D(T_K)$ is a Kan complex. Now $\int N \ncat{DiffPrin}_K$ is a Kan complex since it is a simplicial abelian group. Furthermore since $T_K$ is diffeologically connected \cite[Section II]{iglesias2023vcech}, $\Sing_D(T_K)$ is connected. Thus the composite map $\Sing_D(T_K) \to \int N\ncat{DiffPrin}_K$ is a map of connected Kan complexes whose homotopy fiber is contractible. Therefore by the long exact sequence of homotopy groups \cite[Theorem 3.8.12]{cisinski2019higher}, it is a weak equivalence, which implies that $\int g_K$ is a weak equivalence.
Therefore we have
\begin{equation}
    \R \cH(\smallint T_K, \B^n \R^\delta) \simeq \R \cH(\smallint N \ncat{DiffPrin}_K, \B^n \R^\delta)\simeq \R \cH(\B K, \B^n \R^\delta).
\end{equation}
Which implies that
\begin{equation}
     \check{H}^n_\infty(T_K, \R^\delta) = \pi_0 \R \cH(\smallint T_K, \B^n \R^\delta) \cong \pi_0 \R \cH(\B K, \B^n \R^\delta) = \check{H}^n_\infty (\B K, \R^\delta).
\end{equation}
However since $\B K$ and $\B^n \R^\delta$ are discrete, $\text{Disc}$ is fully faithful and $\B^n \R^\delta$ is a Kan complex, we have
\begin{equation*}
    \R \cH(\B K, \B^n \R^\delta) \simeq \u{\ncat{sSet}}(\B K, \B^n \R^\delta),
\end{equation*}
where $\u{\ncat{sSet}}(\B K, \B^n \R^\delta)$ is the usual simplicial set function complex. It is then well known (see \cite[Example 8.2.3]{weibel1995introduction} for instance) that
\begin{equation}
    \pi_0 \u{\ncat{sSet}}(\B K, \B^n \R^\delta) \cong H^n_{\text{grp}}(K, \R^\delta).
\end{equation}
This proves the theorem.
\end{proof}

\section{The Dold-Kan Correspondence} \label{section dold kan}

In this section, we discuss the Dold-Kan correspondence, which is central to Section \ref{section cech de Rham obstruction}.

\begin{Rem}
For the remainder of this paper, by a vector space, we mean a real vector space, not necessarily of finite dimension. By a chain complex we mean a non-negatively graded chain complex of vector spaces. Let $\ncat{Ch}$ denote the category of chain complexes.    
\end{Rem}

\begin{Def}
Let $\ncat{Vect}$ denote the category whose objects are vector spaces and whose morphisms are linear maps. Let $\ncat{sVect} \coloneqq \ncat{Vect}^{\mathsf{\Delta}^{op}}$ denote the category of simplicial vector spaces.
\end{Def}

\begin{Prop}[{\cite[Proposition 4.2 and Theorem 4.13]{goerss2006model}, \cite[Section 1]{jardine2003presheaves}}]
The category $\ncat{sVect}$ admits a proper, combinatorial, simplicial model category structure where a morphism $f: X \to Y$ is a 
\begin{enumerate}
    \item weak equivalence if it is a weak homotopy equivalence of the underlying simplicial sets,
    \item fibration if it is a Kan fibration of the underlying simplicial sets,
    \item cofibration if it is degreewise a monomorphsim.
\end{enumerate}
We call this the \textbf{Kan-Quillen} model structure\footnote{Note that every object in $\ncat{sVect}$ is fibrant and cofibrant.} on $\ncat{sVect}$.
\end{Prop}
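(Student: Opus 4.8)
The statement to prove is that $\ncat{sVect}$ admits the stated model structure, with weak equivalences, fibrations, and cofibrations as described.

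The plan is to transfer the model structure from simplicial sets along the free-forgetful adjunction, or equivalently, to invoke the general machinery for categories of simplicial objects in an abelian (or more generally, a nice algebraic) category. First I would observe that $\ncat{Vect}$ is a complete and cocomplete abelian category, in fact a Grothendieck abelian category, so that $\ncat{sVect} = \ncat{Vect}^{\mathsf{\Delta}^{op}}$ carries the structure described by Quillen's original work on simplicial objects, and more systematically by the references cited. The cleanest route is to cite \cite[Section 1]{jardine2003presheaves} and \cite[Proposition 4.2 and Theorem 4.13]{goerss2006model} directly, since these establish exactly that for a suitable category $\mathcal{A}$ (here $\ncat{Vect}$), the category $\mathcal{A}^{\mathsf{\Delta}^{op}}$ is a proper, combinatorial, simplicial model category with the three classes of maps as stated.

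The key steps, in order, would be: (1) identify the free-forgetful adjunction $\R[-] : \ncat{sSet} \rightleftarrows \ncat{sVect} : U$, where $\R[-]$ takes free $\R$-vector spaces levelwise and $U$ forgets; (2) verify the hypotheses of Kan's transfer theorem, namely that $U$ preserves filtered colimits (clear, as colimits in $\ncat{sVect}$ are computed on underlying sets for filtered diagrams) and that the transferred fibrations and weak equivalences can be used to build a model structure — the crux being the acyclicity condition, that relative $\R[\Lambda^n_k]$-cell complexes are weak equivalences; (3) check that a map is a transferred cofibration (left lift against transferred acyclic fibrations) if and only if it is a degreewise monomorphism — this uses the fact that every short exact sequence of vector spaces splits, so degreewise split monomorphisms and degreewise monomorphisms coincide, and the generating cofibrations $\R[\partial\Delta^n] \hookrightarrow \R[\Delta^n]$ are degreewise mono; (4) deduce properness (left properness is automatic since all objects are cofibrant once we know cofibrations are monos and $0$ is cofibrant; right properness follows from the simplicial-set version via $U$ detecting and preserving the relevant structure, or directly from the references); (5) deduce combinatoriality (locally presentable, since $\ncat{Vect}$ is, plus cofibrantly generated by the transferred generating sets) and the simplicial enrichment (the tensoring $V \otimes K$ by a simplicial set and the compatibility with the model structure, which again transfers cleanly).

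The main obstacle I expect is the acyclicity part of the transfer argument: showing that pushouts of the maps $\R[\Lambda^n_k] \to \R[\Delta^n]$, and transfinite compositions thereof, are weak equivalences. In the simplicial-sets case one uses that horn inclusions are anodyne and anodyne maps are closed under pushout and transfinite composition; here one must see that applying $\R[-]$ preserves this, which works because $\R[-]$ is a left adjoint (preserves pushouts and transfinite composites) and because a levelwise-monomorphism that is a weak equivalence of underlying simplicial sets remains so after these operations — using that in $\ncat{sVect}$ one can argue via the long exact sequence in homotopy/homology (Dold--Kan) that pushouts along monomorphisms behave well. Since the cited references carry this out in the required generality, the proof in the paper will simply appeal to them; I would note explicitly that $\ncat{Vect}$ satisfies their standing hypotheses (it is Grothendieck abelian, all objects injective and projective) and that this forces every object of $\ncat{sVect}$ to be both fibrant and cofibrant, as recorded in the footnote.
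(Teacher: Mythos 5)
Your proposal is correct and matches the paper's treatment: the paper offers no proof of this proposition, simply citing \cite[Proposition 4.2 and Theorem 4.13]{goerss2006model} and \cite[Section 1]{jardine2003presheaves}, and your sketch (transfer along the free--forgetful adjunction, acyclicity via anodyne extensions and the long exact sequence, cofibrations reducing to degreewise monomorphisms because every vector space is projective) is precisely the standard argument those references carry out. No gaps to report.
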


\begin{Prop}[{\cite[Theorem 1.5]{goerss2006model}, \cite[Section 1]{jardine2003presheaves}}] \label{prop projective model structure on chain complexes}
The category $\ncat{Ch}$ admits a proper, combinatorial, simplicial model category structure where a morphism $f: C \to D$ is a
\begin{enumerate}
    \item weak equivalence if it is a quasi-isomorphism of chain complexes, and
    \item fibration if $f_k : C_k \to D_k$ is surjective in degrees $k \geq 1$.
    \item cofibration if it is degreewise a monomorphism\footnote{For the projective model structure on chain complexes of $R$-modules for a general commutative ring $R$, we require these to be degreewise monomorphisms with projective cokernel. Since all vector spaces are projective as $\R$-modules, this condition is always satisfied.}.
\end{enumerate}
We call this the \textbf{projective model structure}\footnote{Note that every object in $\ncat{Ch}$ is fibrant and cofibrant.} on chain complexes.
\end{Prop}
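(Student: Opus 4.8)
The plan is to obtain this model structure by transporting the Kan--Quillen model structure on $\ncat{sVect}$ from the previous proposition along the Dold--Kan equivalence. Write $N \colon \ncat{sVect} \to \ncat{Ch}$ for the normalized Moore complex functor and $\Gamma \colon \ncat{Ch} \to \ncat{sVect}$ for its inverse; these form an adjoint equivalence of categories, the classical Dold--Kan correspondence, which is the subject of the present section. Since an equivalence of categories transports a model structure, declaring a map $f$ in $\ncat{Ch}$ to be a weak equivalence, fibration, or cofibration precisely when $\Gamma f$ is one produces a model structure on $\ncat{Ch}$; all the content lies in identifying these three classes with the ones named in the statement and in checking that the adjectives ``proper'', ``combinatorial'' and ``simplicial'' survive the transport.

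For the three classes: (i) for a simplicial $\R$-vector space $V$ there are natural isomorphisms $\pi_n(V) \cong H_n(NV)$, and a map of simplicial vector spaces is a weak equivalence of underlying simplicial sets iff it is a $\pi_*$-isomorphism (being a map of simplicial groups), so weak equivalences transport exactly to quasi-isomorphisms. (ii) A classical fact about simplicial abelian groups (Goerss--Jardine) says that a map is a Kan fibration iff its normalized Moore complex is surjective in every degree $\geq 1$; hence fibrations transport to the maps surjective in degrees $\geq 1$. (iii) Monomorphisms are preserved and reflected by any equivalence, and a monomorphism in either of $\ncat{sVect}$ or $\ncat{Ch}$ is exactly a degreewise one, so cofibrations transport to degreewise monomorphisms. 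Finally, local presentability and cofibrant generation pass along the equivalence (take the $\Gamma$-images of the generating sets of $\ncat{sVect}$), giving ``combinatorial''; properness passes since $\Gamma$ preserves and reflects pushouts, pullbacks and the three distinguished classes; and one transports the $\ncat{sSet}$-tensoring, cotensoring and enrichment of $\ncat{sVect}$ through $\Gamma$, the pushout-product axiom surviving because $\Gamma$ sends pushout-products of (acyclic) cofibrations to pushout-products of (acyclic) cofibrations.

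A self-contained alternative is to apply the recognition theorem of Kan for cofibrantly generated model categories directly in $\ncat{Ch}$, with generating cofibrations $I = \{ S^{n-1} \hookrightarrow D^n : n \geq 0 \}$ and generating acyclic cofibrations $J = \{ 0 \hookrightarrow D^n : n \geq 1 \}$, where $D^n$ is the complex $\R \xrightarrow{\mathrm{id}} \R$ concentrated in degrees $n, n-1$ for $n \geq 1$, $D^0 = \R$ is concentrated in degree $0$, and $S^n = \R$ is concentrated in degree $n$ with $S^{-1} = 0$. One checks by a direct computation with these complexes that the $I$-injective maps are the surjective quasi-isomorphisms, the $J$-injective maps are the maps surjective in degrees $\geq 1$, every relative $J$-cell complex is a quasi-isomorphism with the left lifting property against surjective quasi-isomorphisms, and a quasi-isomorphism that is surjective in positive degrees is automatically surjective in degree $0$ as well; the recognition theorem then yields the model structure, and one identifies the cofibrations with the degreewise monomorphisms by a short induction using that short exact sequences of vector spaces split. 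In this route left and right properness are automatic, since every object is both fibrant ($C \to 0$ is surjective in positive degrees) and cofibrant ($0 \to C$ is a degreewise monomorphism), so the standard criteria ``all objects cofibrant $\Rightarrow$ left proper'' and ``all objects fibrant $\Rightarrow$ right proper'' apply. Either way, the one step that is not purely formal is matching simplicial-homotopical data with chain-level data: in the transport proof this is the characterization of Kan fibrations of simplicial vector spaces via their Moore complexes; in the direct proof it is the pushout-product ($\mathrm{SM7}$) axiom for the tensoring $C \otimes K := C \otimes_{\R} N\R[K]$, with $N\R[K]$ the normalized chains on the free simplicial vector space on $K$, which one reduces to the generating (acyclic) cofibrations of $\ncat{sSet}$ and $\ncat{Ch}$. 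I expect that to be the main obstacle.
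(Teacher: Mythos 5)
The paper does not prove this proposition; it cites it to Goerss--Schemmerhorn and to Jardine, and your two routes are, in effect, the proofs found in those two references: the direct verification with the generating sets $I=\{S^{n-1}\hookrightarrow D^n\}$, $J=\{0\hookrightarrow D^n\}$ is the Goerss--Schemmerhorn argument, while the transport along Dold--Kan from the Kan--Quillen structure on $\ncat{sVect}$ (using $\pi_n V\cong H_n(NV)$ and the Moore-complex characterization of Kan fibrations of simplicial groups) is the Jardine-style argument. Both outlines are correct, and you correctly isolate the only non-formal steps: the identification of $I$-inj with the surjective quasi-isomorphisms (together with the observation that a quasi-isomorphism surjective in positive degrees is surjective in degree $0$), the identification of cofibrations with degreewise monomorphisms (which uses that every vector space is projective, as the paper's footnote notes), and the pushout-product axiom. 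One small caveat on the transport route: the enrichment obtained by transporting the simplicial structure of $\ncat{sVect}$ through $\Gamma$ has tensoring $N(\Gamma C\otimes \R[K])$, which agrees with the paper's convention $C\otimes N\R[K]$ only up to Eilenberg--Zilber, not on the nose; this is immaterial for the proposition as stated (either enrichment exhibits $\ncat{Ch}$ as a simplicial model category over the same underlying model structure), but worth flagging if you intend to use the paper's explicit formula $\u{\ncat{Ch}}(C,D)_k=\ncat{Ch}(N\R\Delta^k\otimes C,D)$ later, as this paper does.
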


There is an adjoint pair of functors,
\begin{equation}
\begin{tikzcd}
	{\ncat{Ch}} && {\ncat{sVect}}
	\arrow[""{name=0, anchor=center, inner sep=0}, "{\ncat{DK}}", curve={height=-12pt}, from=1-1, to=1-3]
	\arrow[""{name=1, anchor=center, inner sep=0}, "N", curve={height=-12pt}, from=1-3, to=1-1]
	\arrow["\dashv"{anchor=center, rotate=-90}, draw=none, from=0, to=1]
\end{tikzcd}
\end{equation}
which by the Dold-Kan correspondence \cite[Theorem 4.1]{goerss2006model} form an adjoint equivalence\footnote{This result actually holds for chain complexes and simplicial objects taking values in any idempotent complete, additive category, see \cite[Theorem 1.2.3.7]{lurie2017higher}.}.

\begin{Lemma}[{\cite[Section 4.1]{schwede2003equivalences}}]
The adjunction $N \dashv \ncat{DK}$ is a Quillen adjunction (\cite[Definition 8.5.2]{hirschhorn2009model}) between the model category structures on $\ncat{Ch}$ and $\ncat{sVect}$. Furthermore, the functors form a Quillen equivalence (\cite[Definition 8.5.20]{hirschhorn2009model}). In fact, since $N$ and $\ncat{DK}$ form an adjoint equivalence, it follows that $\ncat{DK} \dashv N$ is also an adjoint equivalence. Furthermore $N$ and $\ncat{DK}$ are both left and right Quillen functors.
\end{Lemma}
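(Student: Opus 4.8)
The plan is to leverage the fact, recalled just above, that $N$ and $\ncat{DK}$ are mutually inverse equivalences of categories, so that the whole statement reduces to checking that this equivalence matches up the three distinguished classes of maps on the two sides. Precisely, I would show: a morphism $f$ in $\ncat{sVect}$ is a weak equivalence (resp.\ cofibration, resp.\ fibration) if and only if $Nf$ is one in $\ncat{Ch}$. Granting this, $N$ preserves \emph{and} reflects all three classes, hence is simultaneously a left and a right Quillen functor; since $N\ncat{DK}\cong\mathrm{id}$ and $\ncat{DK}N\cong\mathrm{id}$ the same holds for $\ncat{DK}$, which gives the ``$N$ and $\ncat{DK}$ are both left and right Quillen'' clause. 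Because $N$ and $\ncat{DK}$ form an adjoint equivalence, $\ncat{DK}\dashv N$ holds automatically as well, and the Quillen equivalence statements will follow formally once the class comparison is in place.

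For weak equivalences: for a simplicial vector space $V$ there is a natural isomorphism $\pi_n(V,0)\cong H_n(NV)$ (a simplicial vector space is a Kan complex and its homotopy groups, based at $0$, are computed by its Moore, hence normalized, complex; see \cite{goerss2012simplicial}), so $f$ is a weak homotopy equivalence of underlying simplicial sets if and only if $Nf$ is a quasi-isomorphism, i.e.\ a weak equivalence in $\ncat{Ch}$; applying this to $\ncat{DK}C$ and using $N\ncat{DK}\cong\mathrm{id}$ gives the dual statement $H_n(C)\cong\pi_n(\ncat{DK}C)$, so both $N$ and $\ncat{DK}$ preserve and reflect weak equivalences. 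For cofibrations, which on both sides are exactly the degreewise monomorphisms: $N_nV$ is a linear subspace of $V_n$ functorially in $V$, so $N$ carries degreewise monos to degreewise monos, and $\ncat{DK}(C)_n=\bigoplus_{[n]\twoheadrightarrow[k]}C_k$ is a direct sum of copies of the $C_k$, so $\ncat{DK}$ does as well; hence both functors preserve and reflect cofibrations.

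The remaining point, fibrations, is the one that is not purely formal, and I expect it to be the main obstacle. The required input is the classical characterization of fibrations between simplicial abelian groups: $f\colon V\to W$ is a Kan fibration if and only if $Nf\colon NV\to NW$ is surjective in every positive degree (see \cite{goerss2012simplicial}); and by Proposition~\ref{prop projective model structure on chain complexes} this surjectivity condition is exactly the condition that $Nf$ be a fibration in $\ncat{Ch}$. Thus $N$ preserves and reflects fibrations, and since $\ncat{DK}$ is its inverse, so does $\ncat{DK}$. Everything else in the argument is bookkeeping; this is the only place one genuinely uses structure of the model categories beyond the Dold--Kan equivalence itself.

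Combining the three cases: $N$ carries the triple of classes $(\mathrm{cof},\mathrm{weq},\mathrm{fib})$ of $\ncat{sVect}$ bijectively onto those of $\ncat{Ch}$ (and $\ncat{DK}$ does the reverse), so $N\dashv\ncat{DK}$ and $\ncat{DK}\dashv N$ are both Quillen adjunctions and $N$, $\ncat{DK}$ are each left and right Quillen. Finally, for the Quillen equivalence: every object of $\ncat{sVect}$ and of $\ncat{Ch}$ is both cofibrant and fibrant, and for $V\in\ncat{sVect}$, $C\in\ncat{Ch}$, a map $g\colon NV\to C$ and its adjunct $\tilde g=\ncat{DK}(g)\circ\eta_V\colon V\to\ncat{DK}C$ differ by the isomorphism $\eta_V$, so $\tilde g$ is a weak equivalence iff $\ncat{DK}(g)$ is iff $g$ is (by the weak-equivalence case); thus $N\dashv\ncat{DK}$ is a Quillen equivalence, and the symmetric argument with $N$ and $\ncat{DK}$ interchanged shows $\ncat{DK}\dashv N$ is one too.
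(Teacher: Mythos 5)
Your proof is correct. Note that the paper does not actually prove this lemma at all --- it is stated with a citation to \cite[Section 4.1]{schwede2003equivalences} and used as a black box --- so there is no in-paper argument to compare against; what you have written is essentially the standard argument underlying that citation. You correctly reduce everything to the fact that the adjoint equivalence $N \dashv \ncat{DK}$ matches up the three distinguished classes on each side, and you correctly isolate the one non-formal ingredient: the characterization of Kan fibrations of simplicial groups as those maps whose Moore (equivalently normalized) complex is surjective in positive degrees, which is \cite[Lemma III.2.10 / Corollary III.2.12]{goerss2012simplicial}. The weak-equivalence step via $\pi_n(V,0)\cong H_n(NV)$ (using that simplicial groups are Kan and that basepoints may be translated to $0$) and the cofibration step via degreewise monomorphisms are both fine; for the latter it is worth being explicit, as the paper's footnote to Proposition \ref{prop projective model structure on chain complexes} is, that projective cofibrations of chain complexes are in general monomorphisms with \emph{projective} cokernel, and that this reduces to plain degreewise monomorphisms only because every vector space is projective --- your identification of the two classes silently uses this. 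The final deduction of the Quillen equivalence from the adjunct differing from $\ncat{DK}(g)$ by the unit isomorphism, together with every object being bifibrant, is also correct.
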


If we consider the category of simplicial sets $\ncat{sSet}$ with its usual Kan-Quillen model structure \cite[Theorem 1.22]{goerss2006model}, there is a simplicial Quillen adjunction
\begin{equation}
    \begin{tikzcd}
	{\ncat{sSet}} && {\ncat{sVect}}
	\arrow[""{name=0, anchor=center, inner sep=0}, "{\R[-]}", curve={height=-12pt}, from=1-1, to=1-3]
	\arrow[""{name=1, anchor=center, inner sep=0}, "U", curve={height=-12pt}, from=1-3, to=1-1]
	\arrow["\dashv"{anchor=center, rotate=-90}, draw=none, from=0, to=1]
\end{tikzcd}
\end{equation}
where $U$ denotes the forgetful functor, and $\R[-]$ denotes the functor that sends a simplicial set $X$ to the free simplicial vector space $\R X$, defined degreewise by $\R X_n = \R(X_n)$, where $\R(X_n)$ is the free vector space on the set $X_n$. Thus we obtain a Quillen adjunction
\begin{equation}
    \begin{tikzcd}
	{\ncat{sSet}} && {\ncat{Ch}}
	\arrow[""{name=0, anchor=center, inner sep=0}, "{N\R[-]}", curve={height=-12pt}, from=1-1, to=1-3]
	\arrow[""{name=1, anchor=center, inner sep=0}, "{U \ncat{DK}}", curve={height=-12pt}, from=1-3, to=1-1]
	\arrow["\dashv"{anchor=center, rotate=-90}, draw=none, from=0, to=1]
\end{tikzcd}
\end{equation}
which is furthermore a simplicial Quillen adjunction\footnote{We will often omit the functor $U$ in our notation.}.

Note that the Dold-Kan correspondence also provides a simplicial enrichment of $\ncat{Ch}$. Indeed, suppose $C$ and $D$ are chain complexes, then let $\u{\ncat{Ch}}(C,D)$ denote the simplicial vector space defined degreewise by
\begin{equation}
    \u{\ncat{Ch}}(C,D)_k = \ncat{Ch}(N \R \Delta^k \otimes C, D).
\end{equation}
This makes the Dold-Kan correspondence an enriched adjoint equivalence. This is also the simplicial enrichment mentioned in Proposition \ref{prop projective model structure on chain complexes}.

This also supplies $\ncat{Ch}$ with tensoring and cotensoring over $\ncat{sSet}$. Namely if $K$ is a simplicial set and $C$ is a chain complex then $C \otimes K$ is the chain complex $C \otimes N \R K$, and $C^K = N\R \u{\ncat{Ch}}(N \R K, C)$.

Now the category of chain complexes $\ncat{Ch}$ is also enriched over itself. Indeed, if $C$ and $D$ are chain complexes, then let $\MapCh(C,D)$ denote the chain complex defined as follows. First let us define the unbounded chain complex $\MapCh^\Z(C,D)$ defined in degree $k \in \Z$ by
\begin{equation}
    \MapCh^\Z(C,D)_k = \prod_{i \geq 0} \ncat{Vect}(C_i, D_{i + k}),
\end{equation}
with $d: \MapCh^\Z(C,D)_k \to \MapCh^\Z(C,D)_{k-1}$ defined for a map $f$ by
\begin{equation*}
    df = d_D f - (-1)^k f d_C.
\end{equation*}
We call an element of degree $k$ in $\MapCh^\Z(C,D)$ a degree $k$ map from $C$ to $D$.

\begin{Def}
If $C$ is an unbounded ($\Z$-graded) chain complex, then let $\tau_{\geq 0} C$ denote the chain complex defined degreewise by $(\tau_{\geq 0} C)_k = C_k$ for $k > 0$, and $(\tau_{\geq 0} C)_0 = \Z_0 C$, the set of $0$-cycles of $C$, i.e. those $x \in C_0$ such that $d x = 0$, the differential on $\tau_{\geq 0} C$ is induced by the differential on $C$. We call $\tau_{\geq 0} C$ the \textbf{smart truncation} of $C$. 
\end{Def}

Now given chain complexes $C$ and $D$, let 
\begin{equation} \label{eq internal hom of chain complexes}
\MapCh(C,D) = \tau_{\geq 0} \MapCh^\Z(C,D),
\end{equation}
denote the smart truncation applied to $\MapCh^\Z(C,D)$. This means that $\MapCh(C,D)_k = \MapCh^\Z(C,D)$ for $k > 0$, and $\MapCh(C,D)_0 \cong \ncat{Ch}(C,D)$. We refer to $\MapCh(C,D)$ as the \textbf{mapping chain complex} between $C$ and $D$.

\begin{Lemma}[{\cite[Example 4.3.2]{opadotun2021simplicial}}] \label{lem chain complexes hom and DK}
Let $C$ and $D$ be chain complexes, then we have an isomorphism of simplicial vector spaces
\begin{equation}
    \ncat{DK} \, \MapCh(C,D) \cong \u{\ncat{Ch}}(C,D).
\end{equation}
Further, this provides an isomorphism
\begin{equation}
    \MapCh(C,D) \cong N \u{\ncat{Ch}}(C,D).
\end{equation}
\end{Lemma}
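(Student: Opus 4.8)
The plan is to establish the first isomorphism $\ncat{DK}\,\MapCh(C,D) \cong \u{\ncat{Ch}}(C,D)$ of simplicial vector spaces directly from the definitions, and then deduce the second statement by applying $N$ and using the Dold-Kan adjoint equivalence. The key observation is that both sides are built from the same raw material: the data $\ncat{Vect}(C_i, D_{i+k})$, repackaged along the correspondence between (truncated) cochain-type indexing and simplicial indexing. First I would unwind the right-hand side: by definition $\u{\ncat{Ch}}(C,D)_k = \ncat{Ch}(N\R\Delta^k \otimes C, D)$, and since $\otimes$ here means $\otimes N\R(-)$ with $N\R\Delta^k$ a specific small chain complex, I would use the tensor-hom adjunction in $\ncat{Ch}$ (with respect to the internal hom $\MapCh^\Z$, or rather its non-negative truncation) to rewrite this as $\ncat{Ch}\big(N\R\Delta^k, \MapCh(C,D)\big)$. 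At this point the right-hand side has become $\u{\ncat{Ch}}(N\R\Delta^0 \text{-style data})$ applied to $\MapCh(C,D)$, i.e. it is visibly the simplicial vector space whose $k$-simplices are $\ncat{Ch}(N\R\Delta^k, E)$ for $E = \MapCh(C,D)$.

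Next I would invoke the standard fact — essentially the simplicial enrichment half of the Dold-Kan story — that for any chain complex $E$, the simplicial vector space $k \mapsto \ncat{Ch}(N\R\Delta^k, E)$ is naturally isomorphic to $\ncat{DK}(E)$. This is because $N\R\Delta^k$ corepresents, in $\ncat{Ch}$, the functor sending $E$ to the vector space of $k$-simplices of $\ncat{DK}(E)$; concretely $\ncat{DK}(E)_k = \bigoplus_{[k]\twoheadrightarrow[j]} E_j$ and $N\R\Delta^k$ has exactly the matching cellular structure, so $\ncat{Ch}(N\R\Delta^k, E) \cong \bigoplus_{[k]\twoheadrightarrow[j]} \ncat{Vect}(\Z, E_j) \cong \ncat{DK}(E)_k$, compatibly with the simplicial operators. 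Combining this with the previous paragraph, taking $E = \MapCh(C,D)$, gives
\begin{equation*}
\u{\ncat{Ch}}(C,D)_k \cong \ncat{Ch}\big(N\R\Delta^k, \MapCh(C,D)\big) \cong \ncat{DK}\,\MapCh(C,D)_k,
\end{equation*}
naturally in $[k]$, which is the first claimed isomorphism. The reference \cite[Example 4.3.2]{opadotun2021simplicial} presumably carries out exactly this bookkeeping.

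For the second isomorphism, I would simply apply the normalization functor $N$ to both sides of the first: $N\,\ncat{DK}\,\MapCh(C,D) \cong N\,\u{\ncat{Ch}}(C,D)$. Since $N \dashv \ncat{DK}$ is an adjoint equivalence with $N\,\ncat{DK} \cong \mathrm{id}_{\ncat{Ch}}$ (via the counit, which is an isomorphism), the left-hand side is canonically $\MapCh(C,D)$, yielding $\MapCh(C,D) \cong N\,\u{\ncat{Ch}}(C,D)$ as desired. The one point requiring a little care — and the main obstacle — is checking that the isomorphism $\ncat{Ch}(N\R\Delta^k, E) \cong \ncat{DK}(E)_k$ is natural in the simplicial variable $[k]$, i.e. that it intertwines the face and degeneracy maps induced by the cosimplicial structure of $[k]\mapsto N\R\Delta^k$ on the left with the simplicial structure of $\ncat{DK}(E)$ on the right; this is a compatibility that is conceptually clear from the fact that $\Delta \to \ncat{sSet}\xrightarrow{\R[-]}\ncat{sVect}\xrightarrow{N}\ncat{Ch}$ is the cosimplicial object whose associated nerve/realization adjunction recovers Dold-Kan, but it does demand writing out the explicit splitting of $\ncat{DK}(E)_k$ into degenerate and nondegenerate parts and matching it against the cellular basis of $\Delta^k$. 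Everything else is a formal consequence of adjunctions already recorded in the excerpt.
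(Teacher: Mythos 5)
Your argument is correct and is the standard one; the paper itself gives no proof of this lemma, deferring entirely to the cited reference, so there is no in-paper argument to diverge from. Your two main moves — the tensor--hom adjunction $\ncat{Ch}(N\R\Delta^k \otimes C, D) \cong \ncat{Ch}(N\R\Delta^k, \MapCh(C,D))$ (noting that a map out of a non-negatively graded complex into $\MapCh^\Z(C,D)$ automatically factors through the smart truncation, since the degree-$0$ component must land in the $0$-cycles), followed by the corepresentability $\ncat{Ch}(N\R\Delta^k, E) \cong \ncat{DK}(E)_k$ — are exactly what the cited example carries out, and the second isomorphism does follow formally by applying $N$ and using $N\,\ncat{DK} \cong \mathrm{id}$. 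The only imprecision is your parenthetical justification of the corepresentability by matching bases: $N\R\Delta^k$ has $\binom{k+1}{j+1}$ nondegenerate generators in degree $j$ while $\ncat{DK}(E)_k$ has $\binom{k}{j}$ summands $E_j$, so the identification is not a naive cell count but uses the chain-map constraint (e.g.\ for $k=1$ the data $(x_0,x_1,y)$ with $dy = x_1 - x_0$ collapses to $E_0 \oplus E_1$). The clean route, which you gesture at, is Yoneda in $\ncat{sVect}$: $\ncat{Ch}(N\R\Delta^k, E) \cong \ncat{sVect}(\R\Delta^k, \ncat{DK}\,E) \cong (\ncat{DK}\,E)_k$, naturally in $[k]$, which disposes of the compatibility with faces and degeneracies you flag as the main remaining obstacle.
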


An explicit description for the path space of a chain complex $C$, equivalently the cotensoring $C^{\Delta^1}$, is given in Appendix \ref{section proof}.

\begin{Def}
Let $\cat{C}$ be a small category. Then let $\ncat{ChPre}(\cat{C})$ denote the category whose objects are functors $\cat{C}^{op} \to \ncat{Ch}$, and whose morphisms are natural transformations. We call such functors presheaves of chain complexes.
\end{Def}

\begin{Prop}[{\cite[Section 11.6]{hirschhorn2009model}}]
The category $\ncat{ChPre}(\cat{C})$ admits a proper, combinatorial, simplicial model category structure where a morphism $f: C \to D$ is a
\begin{enumerate}
    \item weak equivalence if it objectwise a weak equivalence in the projective model structure on chain complexes, and
    \item fibration if it is objectwise a fibration in the projective model structure on chain complexes.
\end{enumerate}
We refer to this as the \textbf{(global) projective model structure} on presheaves of chain complexes.
\end{Prop}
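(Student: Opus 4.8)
The plan is to recognize $\ncat{ChPre}(\cat{C}) = \ncat{Ch}^{\cat{C}^{op}}$ as a category of $\cat{C}^{op}$-diagrams in $\ncat{Ch}$ and to obtain the model structure by the standard projective transfer. By Proposition \ref{prop projective model structure on chain complexes}, $\ncat{Ch}$ is a proper, combinatorial, simplicial model category; in particular it is cofibrantly generated, with generating cofibrations the inclusions $S^{n-1} \hookrightarrow D^n$ of the sphere complex into the disc complex (for $n \geq 0$, with $S^{-1} = 0$) and generating trivial cofibrations the maps $0 \to D^n$ (for $n \geq 1$). The first step is to apply \cite[Theorem 11.6.1]{hirschhorn2009model}: for any small category $\cat{D}$ and cofibrantly generated model category $\cat{M}$, the functor category $\cat{M}^{\cat{D}}$ carries the projective model structure, in which weak equivalences and fibrations are detected objectwise, and which is cofibrantly generated by $\{ F_d(i) : i \in I, \, d \in \mathrm{Ob}(\cat{D}) \}$ and $\{ F_d(j) : j \in J, \, d \in \mathrm{Ob}(\cat{D}) \}$, where $F_d$ is left adjoint to evaluation at $d$, given by $F_d(A) = \coprod_{\cat{D}(d,-)} A$. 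Taking $\cat{D} = \cat{C}^{op}$ and $\cat{M} = \ncat{Ch}$ yields the asserted model structure; the verification inside Hirschhorn's theorem is the small object argument together with the adjunction identity that a map of diagrams has the right lifting property against every $F_d(i)$ precisely when each of its components does.

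It remains to upgrade this to a \emph{proper, combinatorial, simplicial} structure, and all three adjectives descend objectwise from $\ncat{Ch}$. Observe first that each $F_d$ is left Quillen for the objectwise structure, hence carries generating cofibrations of $\ncat{Ch}$ to objectwise cofibrations; since objectwise cofibrations are closed under coproducts, pushouts, transfinite composition and retracts, every projective cofibration in $\ncat{ChPre}(\cat{C})$ is an objectwise cofibration. \textbf{Properness.} Pushouts, pullbacks, weak equivalences, (trivial) cofibrations and (trivial) fibrations are all computed objectwise, and $\ncat{Ch}$ is left and right proper, so $\ncat{ChPre}(\cat{C})$ is left and right proper. \textbf{Combinatoriality.} $\ncat{Ch}$ is locally presentable, a functor category valued in a locally presentable category is again locally presentable, and together with the cofibrant generation above this makes $\ncat{ChPre}(\cat{C})$ combinatorial (cf.\ \cite[Proposition A.2.8.2]{lurie2009higher}).

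\textbf{Simplicial structure.} Equip $\ncat{ChPre}(\cat{C})$ with the objectwise tensoring, cotensoring and $\ncat{sSet}$-enrichment inherited from the simplicial structure on $\ncat{Ch}$ recalled in Section \ref{section dold kan}, so that $(F \otimes K)(U) = F(U) \otimes K$, $(F^K)(U) = F(U)^K$, and $\u{\ncat{ChPre}(\cat{C})}(F,G)_n = \ncat{ChPre}(\cat{C})(F \otimes \Delta^n, G)$. The remaining content is the pushout-product axiom: for a cofibration $i : A \to B$ in $\ncat{ChPre}(\cat{C})$ and a cofibration $j : K \to L$ in $\ncat{sSet}$, the induced map $A \otimes L \amalg_{A \otimes K} B \otimes K \to B \otimes L$ must be a cofibration, and trivial if $i$ or $j$ is. Evaluating at an object $U$ of $\cat{C}$ turns this into the pushout-product of $i(U) : A(U) \to B(U)$ with $j$; since $i(U)$ is a cofibration in $\ncat{Ch}$, and a trivial one when $i$ is (as projective weak equivalences are objectwise), the statement reduces to the pushout-product axiom for the simplicial model category $\ncat{Ch}$.

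The genuinely load-bearing input is Hirschhorn's transfer theorem; once the projective model structure exists, the three extra properties are formal consequences of the corresponding properties of $\ncat{Ch}$ together with the fact that weak equivalences, (trivial) cofibrations, (trivial) fibrations and finite (co)limits in $\ncat{ChPre}(\cat{C})$ are all objectwise. The one place where a little care is needed is the claim that projective cofibrations are objectwise cofibrations, which I isolated above and which feeds into both the properness and the simplicial arguments.
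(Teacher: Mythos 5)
The paper offers no proof of this statement beyond the citation, and your overall strategy is the standard one underlying that citation: Hirschhorn's Theorem 11.6.1 produces the cofibrantly generated projective structure on any diagram category valued in a cofibrantly generated model category, properness follows from the objectwise description together with your (correct and genuinely load-bearing) observation that projective cofibrations are objectwise cofibrations, and combinatoriality is the local-presentability remark. All of that part of your argument is sound.

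There is, however, a real gap in your verification of the simplicial axiom SM7. Evaluating the pushout-product $A \otimes L \amalg_{A \otimes K} B \otimes K \to B \otimes L$ at an object $U$ of $\cat{C}$ shows only that it is an \emph{objectwise} cofibration (and an objectwise weak equivalence in the trivial cases). But a projective cofibration is strictly stronger than an objectwise cofibration --- your own preliminary paragraph stresses that the containment goes one way only --- so the objectwise reduction does not establish the cofibration clause of SM7. Two standard repairs: (i) the pushout-product with a fixed $j$ preserves the colimit constructions (coproducts, pushouts, transfinite composition, retracts) by which arbitrary projective cofibrations are built from the generators, so it suffices to treat $i = F_d(i_0)$ with $i_0$ a generating (trivial) cofibration of $\ncat{Ch}$; there one has a natural isomorphism identifying the pushout-product of $F_d(i_0)$ with $j$ with $F_d$ applied to the pushout-product of $i_0$ with $j$ (because $F_d$ and the objectwise tensoring are both left adjoints), and the latter is $F_d$ of a (trivial) cofibration of $\ncat{Ch}$, hence a projective (trivial) cofibration. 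Or (ii) verify the adjoint form of SM7 instead: for a projective fibration $p : X \to Y$ and a cofibration $j : K \to L$ of simplicial sets, the map $X^L \to X^K \times_{Y^K} Y^L$ is a projective (trivial) fibration; this form genuinely does reduce objectwise, since fibrations, pullbacks and cotensors in $\ncat{ChPre}(\cat{C})$ are all computed objectwise. A smaller wording issue: in the properness paragraph you assert that ``(trivial) cofibrations'' are computed objectwise, which is false as stated; what the left-properness argument actually uses, and what you have, is only the one-way implication that projective cofibrations are objectwise cofibrations.
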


Thus we obtain a similar simplicial Quillen pair
\begin{equation} \label{eq presheaves Dold Kan}
    \begin{tikzcd}
	{\ncat{sPre}(\cat{C})} && {\ncat{ChPre}(\cat{C})}
	\arrow[""{name=0, anchor=center, inner sep=0}, "{N\R[-]}", curve={height=-12pt}, from=1-1, to=1-3]
	\arrow[""{name=1, anchor=center, inner sep=0}, "{U \ncat{DK}}", curve={height=-12pt}, from=1-3, to=1-1]
	\arrow["\dashv"{anchor=center, rotate=-90}, draw=none, from=0, to=1]
\end{tikzcd}
\end{equation}
where $\spre(\cat{C})$ is equipped with the projective model structure. In Appendix \ref{section proof}, we will use (\ref{eq presheaves Dold Kan}), along with Proposition \ref{prop model topos localization preserves finite homotopy limits}, to compute homotopy pullbacks in $\cH$.

\section{Examples of $\infty$-stacks} \label{section examples of infinity stacks}

In this section we detail the $\infty$-stacks involved in this paper, and examine the $\infty$-stack cohomology of a diffeological space with coefficients in some of these example $\infty$-stacks.

\begin{Ex}
Given a finite dimensional smooth manifold $M$, the functor $U \mapsto C^\infty(U, M)$ defines a sheaf on $\cart$, and therefore an $\infty$-stack. The same goes for diffeological spaces. Given a diffeological space $X$, let $X^\delta$ denote the diffeological space with the same underlying set, but equipped with the discrete diffeology. As sheaves we have $X^\delta \cong \Disc(X(*)) = \flat X$.
\end{Ex}

\begin{Ex}
The presheaf of differential $k$-forms $\Omega^k$ and the presheaf of closed differential $k$-forms $\Omega^k_{\text{cl}}$ are sheaves of vector spaces on $\cart$ for every $k \geq 0$. Thus they are $\infty$-stacks. The de Rham differential defines a map of $\infty$-stacks $d : \Omega^k \to \Omega^{k+1}$ for all $k \geq 0$. 

There is a canonical map 
\begin{equation}
    \mc(\R) : \R \to \Omega^1
\end{equation}
of $\infty$-stacks, defined by the Yoneda Lemma as follows. Notice that $\R \in \cart$, so by the Yoneda lemma, a map $\omega : y \R \to \Omega^1$ is equivalent to an element $\omega \in \Omega^1(\R)$. There is a canonical element of the set of $1$-forms on $\R$, called the \textbf{Maurer-Cartan form} of $\R$. For a general Lie group $G$, we let $\mc(G)$ denote its Maurer-Cartan form. If we label the coordinate of $\R$ by $t$, then the Maurer-Cartan form is simply given by $\mc(\R) = dt$. Thus for a cartesian space $U$, the function $\mc(\R)(U) : \R(U) \to \Omega^1(U)$ acts by taking a smooth map $f : U \to \R$ and pulling back the Maurer-Cartan form $f^* \mc(\R) \in \Omega^1(U)$. Note that this is the same thing as $df$. In other words as maps of $\infty$-stacks, we have $\mc(\R) = d$.
\end{Ex}

\begin{Ex} \label{ex stack BG}
Let $G$ be a diffeological group. As discussed in Example \ref{ex diffeological bundles via stacks} the presheaf of groupoids $\B G$ given by
\begin{equation*}
U \mapsto [C^\infty(U,G) \rightrightarrows *]
\end{equation*}
is a stack, and is objectwise weak equivalent to the stack of diffeological principal $G$-bundles. We abuse notation as in \cite[Example 5.13]{minichiello2022diffeological} and also let $\B G$ denote the corresponding simplicial presheaf, which is an $\infty$-stack. Given a diffeological space $X$, a $G$-cocycle on $X$ as in Definition \ref{def cocycle} is precisely a map $QX \to \B G$ of simplicial presheaves. Theorem \ref{th diff cocycle theorem} shows that the resulting groupoid of $G$-cocycles on $X$ is equivalent to the groupoid of diffeological principal $G$-bundles on $X$. Thus $\check{H}^1_\infty(X, G) \coloneqq \check{H}^0_\infty(X, \B G)$ is the set of isomorphism classes of $G$-cocycles, which is isomorphic to the set of isomorphism classes of diffeological principal $G$-bundles.
\end{Ex}

\begin{Ex} \label{ex diff bundle with connection}
If $G$ is a Lie group with Lie algebra $\mathfrak{g}$, then let $\Omega^1(-,\mathfrak{g})//G$ denote the presheaf of groupoids 
\begin{equation*}
    U \mapsto [\Omega^1(U, \mathfrak{g}) \times C^\infty(U,G) \underset{s}{\overset{t}{\rightrightarrows}} \Omega^1(U, \mathfrak{g})]
\end{equation*}
where $t(\omega, g) = \omega$ and $s(\omega, g) = \text{Ad}_g^{-1}(\omega) + g^* \mc(G)$. The nerve of this presheaf of groupoids is an $\infty$-stack \cite[Proposition 3.2.5]{fiorenza2011cech}\footnote{Notice that the definition above is precisely the opposite of the corresponding $\infty$-stack considered in \cite[Section 3]{fiorenza2011cech}. This is because of the convention we use in \cite[Example 5.13]{minichiello2022diffeological}. However this makes no difference on the theory as we show in Appendix \ref{section diff bundles with connection}.}. This is the $\infty$-stack that classifies principal $G$-bundles with connection. We will often abuse notation and write $\Omega^1(-,\mathfrak{g})//G$ to refer to the presheaf of groupoids and the simplicial presheaf obtained by taking the nerve construction.
Note that a map $QX \to \Omega^1(-, \mathfrak{g})//G$ is equivalent to the data of a $G$-cocycle $g_{f_0}: U_{p_1} \to G$ and a collection $\{ A_{p_0} \}_{p_0 \in \ncat{Plot}(X)}$ of $1$-forms $A_{p_0} \in \Omega^1(U_{p_0}, \mathfrak{g})$ such that for every map $f_0: U_{p_1} \to U_{p_0}$ of plots we have
\begin{equation}\label{eq classical diffeological connection equation}
    A_{p_1} = \text{Ad}_{g_{f_0}}^{-1}(f_0^* A_{p_0}) + g_{f_0}^* \mc(G).
\end{equation}
Let us call this collection of data $(g, A) = (\{g_{f_0} \}, \{A_{p_0}\})$ a \textbf{$G$-cocycle with connection}. We show that this definition of connection is equivalent to the one given in \cite[Definition 3.2.1]{waldorf2012transgression} in Appendix \ref{section diff bundles with connection}.
\end{Ex}

\begin{Rem}
The following examples of simplicial presheaves can be checked to be $\infty$-stacks by using \cite[Corollary 6.2]{pavlov2022numerable}. One simply needs to notice that the examples that follow are presheaves of bounded chain complexes, and can thus be thought of equivalently as presheaves of cochain complexes, and that the homotopy descent condition for presheaves of cochain complexes is equivalent to the condition of Dold-Kan applied to the presheaves of chain complexes to be $\infty$-stacks.
\end{Rem}

\begin{Ex} \label{ex R bundle k gerbes}
Given a sheaf $A$ of abelian groups on $\cart$, with $k \geq 1$, the simplicial presheaf $\B^k A$ is obtained by taking Dold-Kan of the presheaf of chain complexes $[A \to 0 \to \dots \to 0]$. When $A$ is an abelian diffeological group, then $\B A$ is the $\infty$-stack that classifies diffeological principal $A$-bundles, as shown in \cite{minichiello2022diffeological}. 

In this paper we will consider the example $\B^k \R$. Given a diffeological space $X$, a map $QX \to \B^k \R$ consists of a $g \in \R(QX_k)$ such that $\delta g = 0$, see Appendix \ref{section totalization}. We call these \textbf{$\R$-bundle $(k-1)$-gerbes}. Thus a diffeological principal $\R$-bundle is precisely a $\R$-bundle $0$-gerbe.

There is a vast literature on bundle gerbes in differential geometry such as \cite{murray1996bundle}, \cite{bunk2021gerbes}, \cite{stevenson2004bundle}. Typically bundle gerbes are defined as geometric objects, and then shown to define cohomology classes through cocycles such as above. However, giving descriptions of bundle $k$-gerbes as geometric objects becomes difficult and tedious as $k$ grows. Their description as cocycles is much more economical, and is all we need for this paper. There should be no difficulty in translating between the geometric description of diffeological bundle $1$-gerbes, such as in \cite{waldorf2012transgression} and the cocycle description we give here, but we leave this to future work.
\end{Ex}

\begin{Ex}
For $k \geq 1$, let $\B^k_\nabla \R$\footnote{Note that the $\B^k$ in $\B^k_\nabla \R$ is just notation, it is not actually the delooping of anything.} denote the simplicial presheaf obtained by applying Dold-Kan to the following presheaf of chain complexes\footnote{Note that the way this chain complex is written, $\Omega^k$ is in degree $0$.}
\begin{equation}
    [\R \xrightarrow{d} \Omega^1 \xrightarrow{d} \Omega^2 \to \dots \to \Omega^k].
\end{equation}
This complex is often referred to as the \textbf{Deligne complex} when $\R$ is replaced with $U(1)$ and $d : \R \to \Omega^1$ is replaced with $d \log : U(1) \to \Omega^1$. For $k \geq 2$, $\B^k_\nabla \R$ classifies $\R$-bundle $(k-1)$-gerbes with connection, and $\infty$-stack cohomology with values in $\B^k_\nabla \R$ is called \textbf{pure differential cohomology}\footnote{We also recommend \cite[Section 3.2]{Myers2021} for a discussion of how pure differential cohomology fits into the hexagon diagram of differential cohomology.} in \cite[Definition 3.2.10]{Myers2021}. Thus we call $\B^k_\nabla \R$ the \textbf{pure $k$-Deligne complex}. 

Given a diffeological space $X$, we will denote $\infty$-stack cohomology with values in the pure $k$-Deligne complex by
\begin{equation*}
    \check{H}^k_{\infty, \nabla}(X, \R) \coloneqq \check{H}^0_\infty(X, \B^k_\nabla \R).
\end{equation*}

Let us also note that when $k = 1$, we have an objectwise weak equivalence of $\infty$-stacks
\begin{equation*}
    \B_{\nabla} \R \simeq (\Omega^1(-)// \R)^{op}.
\end{equation*}
To see this, note that both of the simplicial presheaves are identical in simplicial degrees $0$ and $1$. This is because $\R$ is abelian, so $t(\omega, g) = \text{Ad}_g^{-1}(\omega) + g^* \mc(\R) = \omega + dg$ in $(\Omega^1(-)//\R)^{op}$, which is precisely the face map $d_0: \B_\nabla \R_1 \to \B_\nabla \R_0$. Since $(\Omega^1(-)//\R)^{op}$ is the nerve of a presheaf of groupoids, it is $2$-coskeletal, and therefore its $k$-homotopy groups are trivial for $k \geq 2$. The objectwise homotopy groups of $\B_{\nabla}\R$ are given by the objectwise homology of the chain complex by the Dold-Kan correspondence, and thus are also trivial for $k \geq 2$, thus they are objectwise weak equivalent. The distinction between $\Omega^1(-)// \R$ and $(\Omega^1(-)//\R)^{op}$ is because of \cite[Example 5.13]{minichiello2022diffeological}, so technically $\B_{\nabla} \R$ classifies diffeological principal $\R^{op}$-bundles with opposite connection, but this distinction is immaterial to the theory and we sweep it under the rug, and we say that the above classifies diffeological principal $\R$-bundles with connection.

If $X$ is a diffeological space, then as we will see in Example \ref{ex bundle gerbe with connection}, a $\R$-bundle $(k-1)$-gerbe with connection on $X$ is given by the data
\begin{equation}
    (\omega^k, \omega^{k-1}, \, \dots \,, \omega^1, g) \in \Omega^k(QX_0) \oplus \Omega^{k-1}(QX_1) \oplus \dots \oplus \Omega^1(QX_{k-1}) \oplus \R(QX_k),
\end{equation}
such $D(\omega^k, \dots, g) = 0$ in the double complex $\Omega^i(QX_j)$, see Appendix \ref{section totalization}. We will let $[\omega^k, \, \dots \, , g]$ denote the isomorphism class it represents in $\check{H}^k_{\infty, 
\nabla}(X, \R)$. 
\end{Ex}

\begin{Ex} \label{ex flat bundle gerbes} For $k \geq 1$, consider the $\infty$-stack $\B^k \R^\delta$. This $\infty$-stack classifies diffeological principal $\R^\delta$-bundles. However, note that there is a map of presheaves of chain complexes
\begin{equation} \label{eq different presentations of B R delta}
    [\R^\delta \to 0 \to \dots \to 0] \to [\R \xrightarrow{d} \Omega^1 \xrightarrow{d} \cdots \xrightarrow{d} \Omega^k_{\cl}]
\end{equation}
given by the inclusion $\R^\delta \hookrightarrow \R$. Furthermore, this map is an objectwise quasi-isomorphism, by the Poincare lemma. Thus we will take the right hand side of (\ref{eq different presentations of B R delta}) to be the model of $\B^k \R^\delta$ we will use for the rest of this paper. From this presentation, it is easy to see that $\B^k \R^\delta$ is equivalently the $\infty$-stack that classifies diffeological principal $\R$-bundle $(k-1)$-gerbes with flat connection.
\end{Ex}

\begin{Ex}
For $k \geq 1$, consider the $\infty$-stack $\B^k \Omega^1_{\cl}$. There is a map of presheaves of chain complexes
\begin{equation}
    [\Omega^1_{\cl} \to 0 \to \dots \to 0] \to [\Omega^1 \xrightarrow{d} \Omega^2 \xrightarrow{d} \dots \xrightarrow{d} \Omega^k_{\cl}]
\end{equation}
and this map is an objectwise quasi-isomorphism again by the Poincare lemma. We take the right hand side to be the model we will use for $\B^k \Omega^1_{\cl}$ for the rest of the paper.
\end{Ex}

\begin{Ex}
For $k \geq 1$, let $\Omega^{1 \leq \bullet \leq k}$ denote the simplicial presheaf obtained by applying Dold-Kan to the following presheaf of chain complexes
\begin{equation}
    [\Omega^1 \xrightarrow{d} \Omega^2 \to \dots \to \Omega^k].
\end{equation}
\end{Ex}

If $X$ is a diffeological space and $A$ is a $k$-deloopable $\infty$-stack, recall that its $\infty$-stack cohomology is given by
\begin{equation*}
   \check{H}^k_\infty(X, A) = \R \cH(X, \B^k A). 
\end{equation*}
Let us compute an example of $\infty$-stack cohomology for a diffeological space $X$ with values in the $\infty$-stack $\B \Omega^1$, as it will be emblematic of how we compute $\infty$-stack cohomology for all of the relevant examples presented in this section. In Appendix \ref{section totalization} we go into detail on how to compute such examples.

\begin{Ex} \label{ex B omega example}
Let $A' = \B \Omega^1 = [\Omega^1 \to 0]$ and $A = \DK \, A'$. Let us compute $H^0(X,A)$ for a diffeological space $X$ using Proposition \ref{prop cohomology with coefficients in a presheaf of chain complexes}. Consider the double complex $A'(QX)$,
\begin{equation}
    \begin{tikzcd}
	{\Omega^1(QX_0)} & {\Omega^1(QX_1)} & {\Omega^1(QX_2)} & \dots \\
	0 & 0 & 0 & \dots
	\arrow[from=1-1, to=2-1]
	\arrow["\delta", from=1-1, to=1-2]
	\arrow[from=2-1, to=2-2]
	\arrow[from=1-2, to=2-2]
	\arrow["-\delta", from=1-2, to=1-3]
	\arrow[from=2-2, to=2-3]
	\arrow[from=1-3, to=2-3]
        \arrow[from=1-3, to=1-4]
        \arrow[from=2-3, to=2-4]
\end{tikzcd}
\end{equation}
so that $\tot A'(QX)$ is the chain complex
\begin{equation}
    \tot A'(QX) = [ \Omega^1(QX_0) \xrightarrow{D} \ker( -\delta : \Omega^1(QX_1) \to \Omega^1(QX_2))]
\end{equation}
Thus a $0$-cycle in $\tot A'(QX)$ consists of a collection $\{ \omega_{f_0} \in \Omega^1(U_{p_1}) \}_{f_0 : U_{p_1} \to U_{p_0}}$ of $1$-forms for every map of plots of $X$ such that $-\delta \omega = 0$, which is equivalent to the condition that for every pair of composable plot maps $U_{p_2} \xrightarrow{f_1} U_{p_1} \xrightarrow{f_0} U_{p_0}$ we have
\begin{equation*}
    (\delta \omega)_{(f_1,f_0)} = f_1^* \omega_{f_0} - \omega_{f_0 f_1} + \omega_{f_1} = 0.
\end{equation*}
Two such $0$-cocycles are cohomologous $\omega \sim \omega'$ if there exists a collection $\{ \lambda_{p_0} \in \Omega^1(U_{p_0}) \}_{p_0 \in \ncat{Plot}(X)}$ of $1$-forms for every plot of $X$ such that 
\begin{equation*}
    (D\lambda)_{f_0} = (\delta \lambda)_{f_0} = \omega'_{f_0} - \omega_{f_0}
\end{equation*}
for every map $f_0 : U_{p_1} \to U_{p_0}$ of plots of $X$. Thus $\check{H}^0_\infty(X, \B \Omega^1) = \check{H}^1_\infty(X, \Omega^1)$ is precisely analogous to the term $H^{1,1}_\delta$ in \cite[Section 4.4]{iglesias2023vcech}.
\end{Ex}

\begin{Ex} \label{ex cohomology of bundle with connection}
Let us compute $\check{H}^1_{\infty, \nabla}(X, \R) \coloneqq \check{H}^0_\infty(X,\B_\nabla \R)$ for a diffeological space $X$. Consider the double complex
\begin{equation}
\begin{tikzcd}
	{\R(QX_0)} & {\R(QX_1)} & {\R(QX_2)} & \dots \\
	{\Omega^1(QX_0)} & {\Omega^1(QX_1)} & {\Omega^1(QX_2)} & \dots
	\arrow["d"', from=1-1, to=2-1]
	\arrow["\delta", from=1-1, to=1-2]
	\arrow["{-\delta}", from=1-2, to=1-3]
	\arrow["{-\delta}", from=2-1, to=2-2]
	\arrow["d"', from=1-2, to=2-2]
	\arrow["d"', from=1-3, to=2-3]
	\arrow["\delta", from=2-2, to=2-3]
	\arrow[from=1-3, to=1-4]
	\arrow[from=2-3, to=2-4]
\end{tikzcd}
\end{equation}
where $\R(QX_i) = C^\infty(QX_i, \R)$. A $0$-cocycle is the data of a map $g: QX_1 \to \R$ and a $1$-form $A \in \Omega^1(QX_0)$ such that $-\delta g = 0$ and $ -\delta A = d g$. The condition $-\delta g = \delta g = 0$ is equivalent to the condition $g_{f_0 f_1} = f_1^* g_{f_0} + g_{f_1}$ for every pair of composable plot maps $U_{p_2} \xrightarrow{f_1} U_{p_1} \xrightarrow{f_0} U_{p_0}$, which is precisely the condition for $g$ to be a cocycle defining a diffeological principal $\R$-bundle on $X$, see \cite[Section 5]{minichiello2022diffeological}. The condition $\delta A = dg$ is equivalent to the condition that for every map of plots $f_0 : U_{p_1} \to U_{p_0}$ we have $A_{p_1} - f_0^*A_{p_0} = dg_{f_0}$, which is precisely the equation for a connection on a diffeological principal $\R$-bundle, see Appendix \ref{section diff bundles with connection}. Given two $0$-cocycles $(A,g)$ and $(A', g')$, a $1$-coboundary consists of an element $h \in \R(QX_0)$ such that $\delta h = g' - g$ and $d h = A' - A$. This is precisely the definition of a morphism of $G$-cocycles with connection, see Definition \ref{def category of cocycles with connection}. Thus an element of $\check{H}^1_{\infty, \nabla}(X, \R)$ is an isomorphism class of a diffeological principal $\R$-bundle on $X$ with connection.
\end{Ex}

\begin{Ex} \label{ex bundle gerbe with connection}
Let us compute $\check{H}^2_{\infty, \nabla}(X, \R)$ for a diffeological space $X$. Consider the double complex
\begin{equation}
\begin{tikzcd}
	{\R(QX_0)} & {\R(QX_1)} & {\R(QX_2)} & \dots \\
	{\Omega^1(QX_0)} & {\Omega^1(QX_1)} & {\Omega^1(QX_2)} & \dots \\
	{\Omega^2(QX_0)} & {\Omega^2(QX_1)} & {\Omega^2(QX_2)} & \dots
	\arrow["d"', from=1-1, to=2-1]
	\arrow["{-\delta}", from=1-1, to=1-2]
	\arrow["\delta", from=1-2, to=1-3]
	\arrow["\delta", from=2-1, to=2-2]
	\arrow["d"', from=1-2, to=2-2]
	\arrow["d"', from=1-3, to=2-3]
	\arrow["{-\delta}", from=2-2, to=2-3]
	\arrow[from=1-3, to=1-4]
	\arrow[from=2-3, to=2-4]
	\arrow["d"', from=2-1, to=3-1]
	\arrow["{-\delta}", from=3-1, to=3-2]
	\arrow["d"', from=2-2, to=3-2]
	\arrow["\delta", from=3-2, to=3-3]
	\arrow[from=3-3, to=3-4]
	\arrow["d"', from=2-3, to=3-3]
\end{tikzcd}
\end{equation}
Then a $0$-cycle in $\tot \B^2_\nabla \R(QX)$ is an element $(\omega, A, g) \in \Omega^2(QX_0) \oplus \Omega^1(QX_1) \oplus \R(QX_2)$ such that $D(\omega, A, g) = 0$. This is equivalent to the equations $-\delta \omega = d A$, $-\delta A = d g$ and $-\delta g = 0$. The equation $-\delta g = \delta g = 0$ is equivalent to the condition that for every triple of composable maps of plots $(f_2, f_1, f_0)$ of $X$, we have 
\begin{equation}
    (\delta g)_{(f_2, f_1, f_0)} = f_2^* g_{(f_1, f_0)} - g_{(f_1 f_2, f_0)} + g_{(f_2, f_0 f_1)} - g_{(f_2, f_1)} = 0. 
\end{equation}
This is the diffeological analogue of the cocycle data of a $\R$-bundle gerbe on $X$. The other two equations $-\delta \omega = d A$ and $-\delta A = d g$ are the diffeological analogue of the cocycle data of a connection on a $\R$-bundle gerbe. Thus $\check{H}^2_{\infty, \nabla}(X, \R) = \check{H}^0_{\infty}(X, \B^2_\nabla \R)$ can be taken as the definition of the abelian group of isomorphism classes of diffeological $\R$-bundle $1$-gerbes with connection on $X$. The story for $k \geq 2$ is exactly analogous, and thus we take our definition of the abelian group of isomorphism classes of diffeological $\R$-bundle $(k-1)$-gerbes with connection to be $\check{H}^k_{\infty, \nabla}(X, \R) = \check{H}^0_{\infty}(X, \B^k_{\nabla} \R)$. 
\end{Ex}

\section{The \v{C}ech de Rham Obstruction} \label{section cech de Rham obstruction}

In this section, we obtain a diffeological \v{C}ech-de Rham obstruction exact sequence in every degree from a homotopy pullback diagram of $\infty$-stacks. In degree $1$, our exact sequence is analogous to \cite{iglesias2023vcech}.

Let $X$ be a diffeological space. In \cite{iglesias2023vcech}, Iglesias-Zemmour constructs the following exact sequence of vector spaces
\begin{equation} \label{eq patricks sequence}
    0 \to H^1_{dR}(X) \to \check{H}^1_{PIZ}(X,\R^\delta) \to {}^d E^{1,0}_2(X) \xrightarrow{c_1} H^2_{dR}(X) \to \check{H}^2_{PIZ}(X, \R^\delta)
\end{equation}
using the five term exact sequence coming from a diffeological version of the \v{C}ech-de Rham bicomplex spectral sequence. The vector space ${}^d E^{1,0}_2(X)$ is the subspace of isomorphism classes of diffeological principal $\R$-bundles on $X$ that admit a connection, and the vector spaces $\check{H}^k_{\text{PIZ}}(X, \R^\delta)$ are Iglesias-Zemmour's version of diffeological \v{C}ech cohomology, which we refer to as PIZ cohomology. The relationship between PIZ cohomology and $\infty$-stack cohomology is only partially understood, see \cite[Section 5]{minichiello2022diffeological}.

The exact sequence (\ref{eq patricks sequence}) demonstrates the obstruction to the \v{C}ech-de Rham Theorem holding for diffeological spaces. For finite dimensional smooth manifolds, all principal $\R$-bundles are trivial, as they have contractible fiber, and thus the obstruction vanishes. However, there are diffeological spaces (the irrational torus for example) that have nontrivial principal $\R$-bundles that admit connections \cite[Article 8.39]{iglesias2013diffeology}. 

We construct and geometrically interpret the obstruction to the \v{C}ech-de Rham isomorphism in each degree $k \geq 1$ via $\infty$-stacks. 

\begin{Th} \label{th hopullback diagram}
For every $k \geq 1$, there exists a commutative diagram of $\infty$-stacks of the following form
\begin{equation}
\begin{tikzcd}
	{*} & {\B^k \R^\delta} & {*} & {*} \\
	{*} & {\B^k_\nabla \R} & {\Omega^{k+1}_\cl} & {\Omega^{k+1}} \\
	& {\B^k \R} & {\B^k \Omega^1_{\cl}} & {\Omega^{1 \leq \bullet \leq k+1}} \\
	& {*} & {\B^{k+1} \R^\delta} & {\B^{k+1}_\nabla \R}
	\arrow[from=1-1, to=2-1]
	\arrow[from=1-1, to=1-2]
	\arrow[from=2-1, to=2-2]
	\arrow[from=1-2, to=2-2]
	\arrow[from=2-2, to=2-3]
	\arrow[from=1-2, to=1-3]
	\arrow[from=1-3, to=2-3]
	\arrow[from=2-2, to=3-2]
	\arrow[from=2-3, to=2-4]
	\arrow[from=1-4, to=2-4]
	\arrow[from=1-3, to=1-4]
	\arrow[from=2-4, to=3-4]
	\arrow[from=2-3, to=3-3]
	\arrow[from=3-2, to=3-3]
	\arrow[from=3-3, to=3-4]
	\arrow[from=4-3, to=4-4]
	\arrow[from=3-4, to=4-4]
	\arrow[from=3-2, to=4-2]
	\arrow[from=4-2, to=4-3]
	\arrow[from=3-3, to=4-3]
\end{tikzcd}
\end{equation}
furthermore every commutative square in this diagram is a homotopy pullback square in $\cH$.
\end{Th}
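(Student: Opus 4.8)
The plan is to carry out the whole verification one categorical level down, in presheaves of chain complexes, and then transport it up to $\cH$. By the models fixed in Section~\ref{section examples of infinity stacks}, each of the eight $\infty$-stacks in the diagram equals $\DK$ applied to an explicit presheaf of bounded chain complexes of vector spaces on $\cart$, and each arrow of the diagram (the inclusion $\R^\delta\hookrightarrow\R$, the de Rham differential, the curvature maps obtained by applying $d$ to the top-degree form, the maps forgetting a connection, and the evident quotient and truncation maps) is $\DK$ of a map of such presheaves. So the first step is to assemble the corresponding commutative diagram in $\ncat{ChPre}(\cart)$; its commutativity is a direct check with the differentials $d$, and applying $\DK$ returns the stated diagram of $\infty$-stacks. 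Now $\DK$ preserves objectwise quasi-isomorphisms, objectwise fibrations, and strict pullbacks, hence sends homotopy pullback squares in the projective model structure on $\ncat{ChPre}(\cart)$ to homotopy pullback squares in $\H$; and homotopy pullbacks in that projective structure are computed objectwise. Finally, Proposition~\ref{prop model topos localization preserves finite homotopy limits} (together with the fact that $\cH$-weak equivalences between $\infty$-stacks are objectwise) promotes a homotopy pullback square in $\H$ to one in $\cH$. It therefore suffices to show that each of the seven $2\times2$ subsquares of the diagram of presheaves of chain complexes becomes, after evaluation at an arbitrary $U\in\cart$, a homotopy pullback square in $\ncat{Ch}$.

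For the objectwise check I would use the fibre criterion: a commutative square of connective chain complexes of vector spaces is a homotopy pullback square if and only if the induced map between the homotopy fibres of its two parallel vertical maps (equivalently, its two parallel horizontal maps) is a quasi-isomorphism, where the homotopy fibre of $f\colon C\to D$ is $\tau_{\ge0}\bigl(\mathrm{Cone}(f)[-1]\bigr)$ --- so the ordinary kernel when $f$ is objectwise surjective in positive degrees, and $\tau_{\ge0}\bigl(\mathrm{coker}(f)[-1]\bigr)$ when $f$ is objectwise injective. For each of the seven squares one picks the more convenient of the two directions and computes both homotopy fibres directly from the chain-complex models. The key geometric input is the Poincar\'e lemma: on a cartesian space every closed form of positive degree is exact, which makes the curvature map $\B^k_\nabla\R\to\Omega^{k+1}_\cl$, the de Rham maps, and the projections onto closed-form quotients objectwise surjective, and which supplies the identifications $\Omega^j/\Omega^j_\cl\cong\Omega^{j+1}_\cl$ and $\Omega^1_\cl\cong\R/\R^\delta$ used to recognize the various cokernel complexes. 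With these in hand, the two homotopy fibres of each square come out literally equal, joined by the evident identity map; for example $\B^k\R^\delta=\mathrm{hofib}(\B^k_\nabla\R\xrightarrow{\mathrm{curv}}\Omega^{k+1}_\cl)$ is the kernel of a surjection, whereas $\mathrm{hofib}(\B^k\R^\delta\hookrightarrow\B^k_\nabla\R)=0$ because the cokernel sits in degree $0$ and $\tau_{\ge0}$ of its $(-1)$-shift vanishes, so the square with $\ast$ in the opposite corner is a homotopy pullback.

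I expect the genuine work to be twofold. First there is bookkeeping: fixing the internal degree and sign conventions for all eight chain-complex models, naming each of the roughly twenty arrows, and checking commutativity of all seven squares and objectwise surjectivity of the maps where that is used. Second, and more conceptually, one must resist the urge to argue that ``the total fibre of the square is acyclic'': $\ncat{Ch}$ of non-negatively graded complexes is \emph{not} stable, so homotopy pullback and homotopy pushout squares genuinely differ, and several of these squares are homotopy pullbacks precisely because the truncation $\tau_{\ge0}$ annihilates a mapping cone that would be nonzero in the unbounded derived category. This is the chain-level shadow of the slogans ``a flat gerbe is a gerbe with vanishing curvature'' and ``a monomorphism of sheaves has no homotopy fibre'', and it must be handled via the connective homotopy fibre rather than a naive Mayer--Vietoris sequence.
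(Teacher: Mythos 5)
Your outer strategy is the same as the paper's: model every corner as $\DK$ of an explicit presheaf of chain complexes, verify each square objectwise in $\ncat{Ch}$, and transport the result to $\cH$ via the right Quillen functor $\DK$ and Proposition \ref{prop model topos localization preserves finite homotopy limits}. That reduction is fine. The gap is in the criterion you use for the objectwise check. The claim that a square of connective chain complexes is a homotopy pullback \emph{if and only if} the induced map on the connective homotopy fibres $\tau_{\ge 0}(\mathrm{Cone}(f)[-1])$ of its two parallel vertical maps (equivalently, its two parallel horizontal maps) is a quasi-isomorphism is false, and the two directions are not equivalent. Since $H_n\bigl(\tau_{\ge 0}(\mathrm{Cone}(f)[-1])\bigr) = H_{n+1}(\mathrm{Cone}(f))$ for $n \ge 0$, this fibre is blind to the cokernel of $f$ on $H_0$; comparing fibres therefore only yields that $A \to C\times^h_D B$ is an isomorphism on $H_n$ for $n\ge 1$ and injective on $H_0$ --- surjectivity on $H_0$ is exactly the information annihilated by the truncation. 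Concretely, for the square with $\ast$ in three corners and $\R$ (concentrated in degree $0$) in the remaining bottom corner, both vertical fibres are $\tau_{\ge 0}(\R[-1]) \simeq 0$ and $0$, so your criterion certifies a homotopy pullback, yet the homotopy pullback of $\R \to \ast \leftarrow \ast$ is $\R$, not $\ast$. This is precisely the inference pattern you invoke when you argue that ``the cokernel sits in degree $0$, so $\tau_{\ge0}$ of its $(-1)$-shift vanishes, so the square with $\ast$ in the opposite corner is a homotopy pullback,'' and the licence to ``pick the more convenient of the two directions'' makes it easy to land on the invalid one.

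The conclusion is salvageable, because for each of the seven squares the missing degree-$0$ datum --- that the square of $H_0$-sheaves is itself a pullback, or equivalently that $\mathrm{im}(H_0(A)\to H_0(C))$ exhausts $\mathrm{im}(H_0(C\times^h_D B)\to H_0(C))$ --- does hold, by the Poincar\'e lemma on cartesian spaces; but this must be stated and verified square by square, and your write-up never does so, so as it stands none of the seven squares is actually established. The paper sidesteps the issue entirely: each square is exhibited either as a strict pullback along a map that is a projective fibration (objectwise surjective in positive degrees), where Lemma \ref{lem hopullback is actual pullback when fibration} applies unconditionally, or as the strict pullback of the path-object replacement of Lemma \ref{lem homotopy pullback using path space fibration}, which is then identified with the claimed corner by an explicit quasi-isomorphism, with squares $[2]$ and $[4]$ deduced afterwards from the pasting law. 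If you want to keep the fibre-comparison route, replace your ``iff'' by the correct statement (fibre comparison a quasi-isomorphism \emph{and} the $H_0$-condition above) and check the second condition in each of the seven cases.
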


We prove Theorem \ref{th hopullback diagram} in Appendix \ref{section proof}. 

\begin{Cor}\label{cor prePIZ exact sequence}
For every diffeological space $X$, there is an exact sequence of vector spaces\footnote{Near the completion of this paper, we learned that an analogous exact sequence was also obtained in \cite[Page 27]{Myers2021} using completely different methods in the framework of homotopy type theory.}
\begin{equation} \label{eq degree k prePIZ exact sequence}
   0 \to \check{H}^k_{\infty}(X, \R^\delta) \to \check{H}^k_{\infty, \nabla}(X, \R) \to \Omega^{k+1}_{\cl}(X) \to \check{H}^{k+1}_{\infty}(X, \R^\delta).
\end{equation}
\end{Cor}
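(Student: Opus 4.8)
The plan is to extract the exact sequence from the homotopy pullback square in the upper-left portion of the diagram in Theorem \ref{th hopullback diagram}, namely the square
\begin{equation*}
\begin{tikzcd}
	{*} & {\B^k \R^\delta} \\
	{\Omega^{k+1}_\cl} & {\B^{k+1}_\nabla \R}
	\arrow[from=1-1, to=1-2]
	\arrow[from=1-1, to=2-1]
	\arrow[from=1-2, to=2-2]
	\arrow[from=2-1, to=2-2]
\end{tikzcd}
\end{equation*}
obtained by pasting the homotopy pullback squares along the top two rows and leftmost two columns (recall that a pasting of homotopy pullback squares is again a homotopy pullback square). Concretely, the composite $\B^k_\nabla \R \to \Omega^{k+1}_\cl$ is the curvature map, its homotopy fiber over the basepoint is $\B^k \R^\delta$ (this is exactly the content of the pasted square with the two $*$'s), and delooping once identifies $\B^k \R^\delta$ as sitting in a homotopy fiber sequence $\B^k \R^\delta \to \B^k_\nabla \R \to \Omega^{k+1}_\cl \to \B^{k+1} \R^\delta$ whose connecting map $\Omega^{k+1}_\cl \to \B^{k+1} \R^\delta$ is precisely the map appearing in the fourth row of the diagram.

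First I would apply the derived mapping space functor $\R\cH(X, -)$ to this homotopy fiber sequence. Since $\R\cH(X,-)$ is a right Quillen functor (more precisely, $\u{\spre(\cart)}(QX, R(-))$ preserves homotopy limits, hence homotopy fiber sequences), we obtain a fiber sequence of simplicial sets
\begin{equation*}
\R\cH(X, \B^k \R^\delta) \to \R\cH(X, \B^k_\nabla \R) \to \R\cH(X, \Omega^{k+1}_\cl).
\end{equation*}
Next I would invoke the long exact sequence of homotopy groups of a fibration. Here one must be careful about basepoints and about the fact that $\Omega^{k+1}_\cl$ is a sheaf of sets (concentrated in simplicial degree $0$ after Dold–Kan of $[\Omega^{k+1}_\cl \to 0 \to \cdots]$ in degree $0$), so that $\R\cH(X,\Omega^{k+1}_\cl)$ is homotopy discrete with $\pi_0 = \Omega^{k+1}_\cl(X)$ and all higher homotopy groups trivial. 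Using that all the $\infty$-stacks involved come from presheaves of chain complexes, the derived mapping spaces are infinite loop spaces (simplicial abelian groups, by the material of Section \ref{section dold kan} and Appendix \ref{section totalization}), so the long exact sequence in homotopy is a long exact sequence of vector spaces, and the relevant segment reads
\begin{equation*}
\pi_1 \R\cH(X, \Omega^{k+1}_\cl) \to \check H^k_\infty(X,\R^\delta) \to \check H^k_{\infty,\nabla}(X,\R) \to \pi_0 \R\cH(X,\Omega^{k+1}_\cl) \to \pi_0 \R\cH(X, \B^{k+1}\R^\delta),
\end{equation*}
where I have used the identifications $\pi_0 \R\cH(X,\B^k\R^\delta) = \check H^k_\infty(X,\R^\delta)$, $\pi_0 \R\cH(X,\B^k_\nabla\R) = \check H^k_{\infty,\nabla}(X,\R)$, and the connecting map into $\check H^{k+1}_\infty(X,\R^\delta)$ is induced by $\Omega^{k+1}_\cl \to \B^{k+1}\R^\delta$. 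Since $\pi_1 \R\cH(X,\Omega^{k+1}_\cl) = 0$ and $\pi_0 \R\cH(X,\Omega^{k+1}_\cl) = \Omega^{k+1}_\cl(X)$, this is exactly the claimed sequence (\ref{eq degree k prePIZ exact sequence}), and exactness at each spot is exactness of the long exact sequence of the fibration.

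The main obstacle I anticipate is bookkeeping rather than conceptual: one must make sure the identification of $\R\cH(X,\Omega^{k+1}_\cl)$ as homotopy discrete is legitimate — this follows because $\Omega^{k+1}_\cl$, in the model used in this paper, is (Dold–Kan of) a chain complex concentrated in degree $0$, so $\R\cH(X,-)$ of it has trivial higher homotopy — and one must check that the long exact sequence is genuinely one of vector spaces, which requires knowing the fiber sequence is a sequence of simplicial abelian group objects, again guaranteed by working with presheaves of chain complexes and the Dold–Kan correspondence of Section \ref{section dold kan}. A secondary point is basepoint connectivity: since everything is an infinite loop space the fibration is over a connected base in each loop-degree, so no subtlety about which component to take arises, and the $0$ on the left is honestly justified. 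Finally one records that $H^0$ of the mapping complexes vanishes below the relevant degree, which gives the injectivity of $\check H^k_\infty(X,\R^\delta) \hookrightarrow \check H^k_{\infty,\nabla}(X,\R)$ claimed by the leading $0$.
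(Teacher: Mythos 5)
Your overall strategy is the paper's: extract homotopy fiber sequences from the diagram of Theorem \ref{th hopullback diagram}, apply $\R\cH(X,-)$, and run the long exact sequence of homotopy groups (this is precisely the content of Lemma \ref{lem fiber sequence gives exact sequence}, which together with the theorem is the paper's entire proof). Your treatment of the details the paper leaves implicit is also sound: $\R\cH(X,\Omega^{k+1}_{\cl})$ is indeed homotopy discrete with $\pi_0 \cong \Omega^{k+1}_{\cl}(X)$, the vanishing of its $\pi_1$ gives the leading $0$, and everything in sight is a simplicial abelian group, so the sequence is one of vector spaces.

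However, the square you display at the outset is not a homotopy pullback and is not a pasting of squares in the diagram. The map $\B^k\R^\delta \to \B^{k+1}_\nabla\R$ read off from the diagram factors through the terminal object, so the homotopy pullback of $\Omega^{k+1}_{\cl} \to \B^{k+1}_\nabla\R \leftarrow \B^k\R^\delta$ is $\B^k\R^\delta \times \mathrm{hofib}(\Omega^{k+1}_{\cl}\to\B^{k+1}_\nabla\R)$, and a direct computation with the path complex shows that this homotopy fiber is equivalent to $\B^k_\nabla\R$, which is not contractible; so the pullback is not $*$. Fortunately your argument does not actually use this square. The squares you need are square $[2]$, which gives the fiber sequence $\B^k\R^\delta \to \B^k_\nabla\R \to \Omega^{k+1}_{\cl}$, and the vertical pasting of squares $[4]$ and $[6]$, which directly exhibits $\B^k_\nabla\R \to \Omega^{k+1}_{\cl} \to \B^{k+1}\R^\delta$ as a fiber sequence; this makes your appeal to ``delooping once'' unnecessary (though rotating the fiber sequence is legitimate here, since all coefficients are presheaves of chain complexes and the ambient homotopy theory is stable). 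Note also that the connecting map $\Omega^{k+1}_{\cl}\to\B^{k+1}\R^\delta$ is the composite down the third column of the diagram, $\Omega^{k+1}_{\cl}\to\B^k\Omega^1_{\cl}\to\B^{k+1}\R^\delta$, not a map in the fourth row. With the opening display replaced by these two squares, your proof is correct and agrees with the paper's.
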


\begin{proof}
This follows from Theorem \ref{th hopullback diagram} and Lemma \ref{lem fiber sequence gives exact sequence}.
\end{proof}

Let us explore the consequences of Corollary \ref{cor prePIZ exact sequence} in the case where $X$ is the irrational torus. Let $K = \Z + \alpha \Z$ be the subgroup of $\R$ consisting of those $x \in \R$ of the form $n + \alpha m$ where $n, m$ are integers and $\alpha$ is an irrational number. Let $T_\alpha = \R / \Z + \alpha \Z$. We can fully compute the de Rham and $\infty$-stack cohomology of $T_\alpha$. Every differential form on $T_\alpha$ is closed \cite[Exercise 119]{iglesias2013diffeology} so $\Omega^k(T_\alpha) = \Omega^k_{\cl}(T_\alpha) = H^k_{\dR}(T_\alpha)$, and furthermore $\Omega^k_{\cl}(T_\alpha) \cong \Lambda^k \R$ by \cite[Exercise 105]{iglesias2013diffeology}. Therefore we have
\begin{equation} \label{eq derham of irrational torus}
   \Omega^k_{\cl}(T_\alpha) = H_{\dR}^k(T_\alpha) \cong \begin{cases}
       \R, & k = 0, 1 \\
       0, & k > 1.
   \end{cases} 
\end{equation}
Now by Theorem \ref{th infinity stack cohomology of irrational torus is group cohomology}, we have
\begin{equation} \label{eq cech of irrational torus}
    \check{H}^k_\infty(T_\alpha, \R^\delta) \cong \check{H}^0_\infty(\B (\Z + \alpha \Z), \B^k \R^\delta) \cong \check{H}^0_\infty(\B \Z^2, \B^k \R^\delta) \cong \check{H}^k_\infty(T^2, \R^\delta) \cong \begin{cases}
        \R, & k = 0, 2 \\
        \R^2, & k = 1 \\
        0, & k > 2,
    \end{cases} 
\end{equation}
where $T^2$ denotes the usual $2$-dimensional torus.

From Corollary \ref{cor prePIZ exact sequence}, setting $k = 1$, we obtain the exact sequence
\begin{equation}
    0 \to \check{H}^1_{\infty}(T_\alpha, \R^\delta) \to \check{H}^1_{\infty, \nabla}(T_\alpha, \R) \to \Omega^2_{\cl}(T_\alpha) \to \check{H}^2_{\infty}(T_\alpha, \R^\delta).
\end{equation}
Since $\Omega^2_{\cl}(T_\alpha) = 0$, this implies that $\check{H}^1_{\infty, \nabla}(T_\alpha, \R) \cong \check{H}^1_{\infty}(T_\alpha, \R^\delta) \cong \R^2$.

From Corollary \ref{cor prePIZ exact sequence}, setting $k = 2$, we obtain the exact sequence
\begin{equation}
    0 \to \check{H}^2_\infty(T_\alpha, \R^\delta) \to \check{H}^2_{\infty, \nabla}(T_\alpha, \R) \to \Omega^3_{\cl}(T_\alpha) \to \check{H}^3_{\infty}(T_\alpha, \R^\delta),
\end{equation}
but $\Omega^3_{\cl}(T_\alpha) \cong H^3_{\dR}(T_\alpha) \cong 0 \cong \check{H}^3_\infty(T_\alpha, \R^\delta)$, thus $\check{H}^2_{\infty, \nabla}(T_\alpha, \R) \cong \R$. Similar reasoning proves that $\check{H}^k_{\infty, \nabla}(T_\alpha, \R) \cong 0$ for $k > 2$. Thus we have proven the following.

\begin{Th} \label{th bundle gerbes of irrational torus}
Let $T_\alpha$ denote the irrational torus, then
\begin{equation}
    \check{H}^k_{\infty, \nabla}(T_\alpha, R) \cong \begin{cases}
        \R^2, & k = 1,\\
        \R, & k = 2,\\
        0, & k > 2.
    \end{cases}
\end{equation}
\end{Th}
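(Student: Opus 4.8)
The plan is to read off the result directly from the four-term exact sequence of Corollary \ref{cor prePIZ exact sequence}, specialized to $X = T_\alpha$: for each $k \geq 1$ it reads
\begin{equation*}
0 \to \check{H}^k_{\infty}(T_\alpha, \R^\delta) \to \check{H}^k_{\infty, \nabla}(T_\alpha, \R) \to \Omega^{k+1}_{\cl}(T_\alpha) \to \check{H}^{k+1}_{\infty}(T_\alpha, \R^\delta),
\end{equation*}
so the only remaining task is to compute the two flanking terms for the irrational torus and then chase exactness.

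First I would handle the de Rham flank. By \cite[Exercise 119]{iglesias2013diffeology} every differential form on $T_\alpha$ is closed, and \cite[Exercise 105]{iglesias2013diffeology} identifies $\Omega^j_{\cl}(T_\alpha) \cong \Lambda^j \R$, so $\Omega^{k+1}_{\cl}(T_\alpha) = 0$ for every $k \geq 1$. Hence the exact sequence degenerates, and for each $k \geq 1$ the left-hand map $\check{H}^k_{\infty}(T_\alpha, \R^\delta) \to \check{H}^k_{\infty, \nabla}(T_\alpha, \R)$ is an isomorphism. Second I would import the flat cohomology from Theorem \ref{th infinity stack cohomology of irrational torus is group cohomology}: writing $K = \Z + \alpha\Z \cong \Z^2$, one gets $\check{H}^k_\infty(T_\alpha, \R^\delta) \cong H^k_{\text{grp}}(\Z^2, \R^\delta) \cong H^k(T^2; \R)$, which is $\R$ for $k = 0$, $\R^2$ for $k = 1$, $\R$ for $k = 2$, and $0$ for $k > 2$. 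Splicing the two inputs together yields $\check{H}^1_{\infty, \nabla}(T_\alpha, \R) \cong \R^2$, $\check{H}^2_{\infty, \nabla}(T_\alpha, \R) \cong \R$, and $\check{H}^k_{\infty, \nabla}(T_\alpha, \R) \cong 0$ for $k > 2$, which is the assertion.

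There is essentially no obstacle internal to this argument: it is pure bookkeeping once Theorem \ref{th hopullback diagram}, Corollary \ref{cor prePIZ exact sequence}, and Theorem \ref{th infinity stack cohomology of irrational torus is group cohomology} are available. The one point worth an explicit check is that the connecting term $\Omega^{k+1}_{\cl}(T_\alpha)$ really vanishes for all $k \geq 1$, so that the sequence collapses without having to identify any differentials or cokernels; this is exactly what the two cited exercises of Iglesias-Zemmour provide. All of the genuine effort has already been spent constructing the homotopy pullback diagram of $\infty$-stacks and carrying out the shape-theoretic reduction of the $\infty$-stack cohomology of $T_\alpha$ to ordinary group cohomology.
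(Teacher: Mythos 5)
Your proposal is correct and follows essentially the same route as the paper: specialize Corollary \ref{cor prePIZ exact sequence} to $T_\alpha$, note that $\Omega^{k+1}_{\cl}(T_\alpha) \cong \Lambda^{k+1}\R = 0$ for $k \geq 1$ so the left-hand arrow is an isomorphism, and then read off $\check{H}^k_{\infty}(T_\alpha, \R^\delta)$ from Theorem \ref{th infinity stack cohomology of irrational torus is group cohomology}. The only cosmetic difference is that you collapse all degrees at once via the uniform vanishing of $\Omega^{k+1}_{\cl}(T_\alpha)$, whereas the paper treats $k=1$, $k=2$, and $k>2$ separately; the content is identical.
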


The reader should note that the above computations only work because the irrational torus has the property that its deRham cohomology is equal to its closed forms. This is not the case for general diffeological spaces, and therefore Corollary \ref{cor prePIZ exact sequence} is not generally helpful for computations. Therefore we desire an exact sequence which uses the deRham cohomology of a diffeological space rather than its closed forms.

\begin{Def}
Given a diffeological space $X$ and $k \geq 1$, let $\check{H}^k_{\conn}(X, \R)$ denote the subspace of $H^k_{\dR}(X) \oplus \check{H}^k_{\infty}(X, \R)$ generated by the subset of pairs $([F], [g])$ where $F$ is the curvature form $F = d \omega^k$ of a diffeological $\R$-bundle $(k-1)$-gerbe with connection $(\omega^k,\omega^{k-1}, \, \dots \, , \omega^1, g)$.
\end{Def}

The vector space $\check{H}^k_{\conn}(X, \R)$ sits in an exact sequence
\begin{equation} \label{eq H_conn exact sequence}
    0 \to \check{H}^k_{\infty, \text{triv}}(X, \R^\delta) \to \check{H}^k_{\infty, \nabla}(X, \R) \to \check{H}^k_{\conn}(X, \R) \to 0,
\end{equation}
where $\check{H}^k_{\infty, \text{triv}}(X, \R^\delta) \subset \check{H}^k_{\infty}(X, \R^\delta)$ is the subspace of isomorphism classes of trivial $\R$-bundle $(k-1)$-gerbes with flat connection.

Let us now define a new sequence of vector spaces
\begin{equation*} 
    \check{H}^k_{\infty}(X, \R^\delta) \xrightarrow{\alpha} \check{H}^k_{\conn}(X, \R) \xrightarrow{\beta} H^{k+1}_{\dR}(X) \xrightarrow{\gamma} \check{H}^{k+1}_{\infty}(X, \R^\delta).
\end{equation*}
The map $\alpha$ takes an isomorphism class of an $\R$-bundle $k$-gerbe with flat connection $[\omega^k, \dots, \omega^1, g]$ and gives $(0, [g])$. The map $\beta$ sends $([F], [g]) \mapsto [F]$. Finally, $\gamma$ sends $[\omega]$ to the isomorphism class of the $\R$-bundle $k$-gerbe with connection $(\omega, 0, \dots, 0)$. This map is well defined, because if $\omega' - \omega = d \tau$ for some $\tau \in \Omega^k(X)$, then $(\omega' - \omega, 0, \dots, 0) = D(\tau, 0, \dots, 0)$.

\begin{Th} \label{th degree k PIZ exact sequence}
Given a diffeological space $X$ and $k \geq 1$, the sequence of vector spaces
\begin{equation} \label{eq degree k PIZ exact sequence}
     \check{H}^k_{\infty}(X, \R^\delta) \xrightarrow{\alpha} \check{H}^k_{\conn}(X, \R) \xrightarrow{\beta} H^{k+1}_{\dR}(X) \xrightarrow{\gamma} \check{H}^{k+1}_{\infty}(X, \R^\delta)
\end{equation}
is exact.
\end{Th}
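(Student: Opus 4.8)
The plan is to derive the theorem purely formally from Corollary~\ref{cor prePIZ exact sequence} and the exact sequence~(\ref{eq H_conn exact sequence}), quotienting the former by the latter so as to trade closed forms for de Rham classes. Write $\iota\colon\check{H}^k_{\infty}(X,\R^\delta)\to\check{H}^k_{\infty,\nabla}(X,\R)$ for the inclusion of flat gerbes, $\kappa\colon\check{H}^k_{\infty,\nabla}(X,\R)\to\Omega^{k+1}_{\cl}(X)$ for the curvature map $[\omega^k,\dots,g]\mapsto d\omega^k$, $\partial\colon\Omega^{k+1}_{\cl}(X)\to\check{H}^{k+1}_{\infty}(X,\R^\delta)$ for the connecting map, and $\pi\colon\Omega^{k+1}_{\cl}(X)\to H^{k+1}_{\dR}(X)$ for the projection, so that Corollary~\ref{cor prePIZ exact sequence} is exactness of $0\to\check{H}^k_{\infty}(X,\R^\delta)\xrightarrow{\iota}\check{H}^k_{\infty,\nabla}(X,\R)\xrightarrow{\kappa}\Omega^{k+1}_{\cl}(X)\xrightarrow{\partial}\check{H}^{k+1}_{\infty}(X,\R^\delta)$, and~(\ref{eq H_conn exact sequence}) exhibits $\check{H}^k_{\conn}(X,\R)$ as the quotient of $\check{H}^k_{\infty,\nabla}(X,\R)$ by $\check{H}^k_{\infty,\text{triv}}(X,\R^\delta)$ via the surjection $q[\omega^k,\dots,g]=([d\omega^k],[g])$. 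The first step is to identify the three maps in the statement with descended versions of these: $\alpha=q\circ\iota$ straight from the definitions; $\beta$ is induced by $\pi\circ\kappa$ along $q$, which is legitimate because $\check{H}^k_{\infty,\text{triv}}(X,\R^\delta)\subseteq\ker\kappa$ by the Corollary, so $\kappa$ already factors through $q$; and $\gamma$ is induced by $\partial$ along $\pi$, which is legitimate because $\partial(d\Omega^k(X))=0$, which is exactly the well-definedness already checked when $\gamma$ was introduced.

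Granting this, exactness at $H^{k+1}_{\dR}(X)$ is a two-line diagram chase. Since $q$ is surjective, $\im\beta=\pi(\im\kappa)$; and since $\ker\pi=d\Omega^k(X)\subseteq\ker\partial$, the kernel of $\gamma$ is $\pi(\ker\partial)$. By exactness of the Corollary at $\Omega^{k+1}_{\cl}(X)$ we have $\ker\partial=\im\kappa$, hence $\ker\gamma=\pi(\im\kappa)=\im\beta$.

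Exactness at $\check{H}^k_{\conn}(X,\R)$ rests on one auxiliary observation: for any global form $\tau\in\Omega^k(X)$, the tuple $t_\tau\coloneqq(\tau,0,\dots,0)$ is a genuine $0$-cocycle for $\B^k_\nabla\R$ on $QX$ (in the sense of Example~\ref{ex bundle gerbe with connection}) — the only cocycle equation it must satisfy is $\delta\tau=0$, which holds because $\tau$ is global — and it satisfies $\kappa(t_\tau)=d\tau$ and $q(t_\tau)=([d\tau],[0])=(0,0)$, so $t_\tau\in\ker q=\check{H}^k_{\infty,\text{triv}}(X,\R^\delta)\subseteq\im\iota$. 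Now $\im\alpha\subseteq\ker\beta$ is clear because $\kappa\circ\iota=0$ (flat gerbes have vanishing curvature). Conversely, take $b'\in\ker\beta$ and lift it to $b$ with $q(b)=b'$; then $\pi(\kappa(b))=\beta(b')=0$ forces $\kappa(b)=d\tau$ for some $\tau\in\Omega^k(X)$, hence $\kappa(b-t_\tau)=0$, hence $b-t_\tau\in\ker\kappa=\im\iota$ by the Corollary. Together with $t_\tau\in\im\iota$ this gives $b\in\im\iota$, so $b'=q(b)\in q(\im\iota)=\im(q\circ\iota)=\im\alpha$. This proves the theorem.

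The hard part is not any single argument but the bookkeeping needed to make the first paragraph honest: one must verify carefully that $\alpha$, $\beta$, $\gamma$ as spelled out in the statement really are the descended maps just described, and that the identification $\check{H}^k_{\conn}(X,\R)\cong\check{H}^k_{\infty,\nabla}(X,\R)/\check{H}^k_{\infty,\text{triv}}(X,\R^\delta)$ coming from~(\ref{eq H_conn exact sequence}) is compatible with curvature and with the connecting map. In particular the step $t_\tau\in\ker q$ silently invokes the nontrivial half of~(\ref{eq H_conn exact sequence}) — that a $(k-1)$-gerbe with connection whose curvature is exact and whose underlying $\R$-class vanishes is cohomologous to a trivial flat gerbe — so once that sequence is in hand, everything else here is purely formal.
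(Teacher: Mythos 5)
Your overall strategy---descending the exact sequence of Corollary \ref{cor prePIZ exact sequence} along the surjection $q\colon\check{H}^k_{\infty,\nabla}(X,\R)\to\check{H}^k_{\conn}(X,\R)$---is essentially the paper's argument in formal dress, and your treatment of exactness at $H^{k+1}_{\dR}(X)$ is correct. But there is a genuine error in the argument at $\check{H}^k_{\conn}(X,\R)$: the step $t_\tau\in\ker q=\check{H}^k_{\infty,\text{triv}}(X,\R^\delta)\subseteq\im\iota$ is false, because the inclusion $\ker q\subseteq\im\iota$ fails. Indeed $\im\iota=\ker\kappa$ by Corollary \ref{cor prePIZ exact sequence}, whereas $\kappa(t_\tau)=d\tau$, which is a nonzero closed $(k{+}1)$-form whenever $\tau$ is a non-closed global $k$-form (e.g.\ $\tau=x\,dy$ on $X=\R^2$, $k=1$); so $t_\tau$ lies in $\ker q$ but not in $\im\iota$. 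The same example shows that the sequence (\ref{eq H_conn exact sequence}) you are quoting is not exact in the middle as stated: for $X=\R^2$, $k=1$ it would read $0\to 0\to\Omega^2(\R^2)\to 0\to 0$, since $\check{H}^1_{\infty,\nabla}(\R^2,\R)\cong\Omega^2_{\cl}(\R^2)$ by the Corollary while both outer terms vanish. So the ``nontrivial half'' you flag at the end is not merely left unproved---it is false, and your intermediate conclusion $b\in\im\iota$ would force the curvature form of $b$ to vanish identically, which does not follow from $\beta(b')=0$.

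The good news is that your computation already contains the correct argument and the faulty step is removable. You have $b-t_\tau\in\ker\kappa=\im\iota$, say $b-t_\tau=\iota(c)$, and you have verified $q(t_\tau)=([d\tau],[0])=(0,0)$ directly from the definition of $q$, using only that $d\tau$ is exact and that the underlying cocycle of $t_\tau$ is zero---no appeal to (\ref{eq H_conn exact sequence}) is needed. Hence $b'=q(b)=q(b-t_\tau)+q(t_\tau)=\alpha(c)$, which is all that is required. This is precisely the paper's proof written at the cocycle level: there one replaces the representative $(\omega^k,\dots,g)$ by the flat cocycle $(\omega^k-\tau,\omega^{k-1},\dots,g)$ and observes $([F],[g])=(0,[g])\in\im\alpha$. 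For the same reason, the well-definedness of $\beta$ should not be justified by ``$\kappa$ already factors through $q$'' (it does not, by the example above); rather $\pi\circ\kappa$ factors through $q$ tautologically, since the first component of $q(b)$ is $\pi(\kappa(b))$ by definition---which is also why the paper simply defines $\beta$ as the projection $([F],[g])\mapsto[F]$ with nothing to check.
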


\begin{proof}
Note that $\beta \alpha = 0$. Suppose $([F],[g]) \in \check{H}^k_{\conn}(X, \R)$ and consider $\gamma \beta ([F],[g]) = [F, 0, \dots, 0]$. Since $([F],[g]) \in \check{H}^k_{\conn}(X, \R)$, there exists a $\R$-bundle $(k-1)$-gerbe with connection $(\omega^k, \dots, \omega^0, g)$ such that
\begin{equation*}
    D(\omega^k, \dots, \omega^0, g) = (F, 0, \dots, 0).
\end{equation*}
Therefore $[F,0, \dots, 0] = 0$. In fact, $\gamma \beta ([F],[g]) = 0$ if and only if there exists an $\R$-bundle $(k-1)$-gerbe with connection such that the above equation holds. This implies that $\ker \gamma = \im \beta$.

Now suppose that $\beta([F], [g]) = [F] = 0$. Then there exists a global $k$-form $\tau$ such that $d \tau = F$. Since $([F], [g]) \in \check{H}^k_{\conn}(X, \R)$, there exists an $\R$-bundle $(k-1)$-gerbe $(\omega^k, \dots, g)$ such that $d \omega^k = F$. Then $\omega^k - \tau \in \Omega^k(QX_0)$, $d (\omega^k - \tau) = 0$ and $\delta (\omega^k - \tau) = \delta \omega^k - \delta \tau = \delta \omega^k$. Therefore $(\omega^k - \tau, \omega^{k-1}, \dots, \omega^0, g)$ defines an $\R$-bundle $k$-gerbe with flat connection. Thus $([F],[g]) = (0,[g]) \in \im \alpha$. 
\end{proof}

We will refer to the exact sequence (\ref{eq degree k PIZ exact sequence}) as the \textbf{degree $k$ PIZ exact sequence}. When $k = 1$, there is an interesting additional phenomenon.

\begin{Lemma} \label{lem cohomology classes are unique in degree 1}
Let $(A,g)$ and $(A', g)$ be $\R$-bundle $0$-gerbes/diffeological principal $\R$-bundles with connection on a diffeological space $X$ with the same underlying $\R$-cocycle $g$, then $dA$ and $dA'$ are global closed $2$-forms on $X$ and their de Rham cohomology classes agree $[dA] = [dA']$.
\end{Lemma}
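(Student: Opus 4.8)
The plan is to argue directly with the \v{C}ech--de Rham cocycle descriptions from Example \ref{ex cohomology of bundle with connection}. Recall that a diffeological principal $\R$-bundle with connection on $X$ is encoded by a $0$-cocycle with connection $(A,g)$, where $g : QX_1 \to \R$ is the underlying $\R$-cocycle (so $\delta g = 0$) and $A = (A_{p_0})_{p_0 \in \ncat{Plot}(X)} \in \Omega^1(QX_0)$ satisfies $A_{p_1} - f_0^* A_{p_0} = dg_{f_0}$ for every map of plots $f_0 : U_{p_1} \to U_{p_0}$, i.e. $\delta A = dg$ in the double complex $\Omega^i(QX_j)$. First I would check that $dA$ descends to a global closed $2$-form: applying the de Rham differential to the connection equation and using $d^2 = 0$ together with $d f_0^* = f_0^* d$ gives $dA_{p_1} = f_0^*(dA_{p_0})$ for every $f_0$, which is exactly the compatibility condition making $(dA_{p_0})_{p_0}$ a differential $2$-form on $X$ in the sense of \cite[Article 6.28]{iglesias2013diffeology}; equivalently, it says the $\delta$-cochain $dA$ is a $0$-cocycle of $\Omega^2(QX_\bullet)$, and since $\Omega^2$ is a sheaf and $QX \to X$ is a \v{C}ech weak equivalence, such $0$-cocycles are precisely global $2$-forms on $X$ (cf. Example \ref{ex B omega example}). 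Closedness is immediate, since $d(dA_{p_0}) = 0$ on each plot, and the same reasoning applies verbatim to $A'$, giving a global closed $2$-form $dA'$.

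Next I would compare the two de Rham classes. Put $\eta := A - A' = (A_{p_0} - A'_{p_0})_{p_0} \in \Omega^1(QX_0)$. Since $(A,g)$ and $(A',g)$ have the \emph{same} underlying $\R$-cocycle $g$, subtracting their connection equations cancels the right-hand sides, giving $\eta_{p_1} = f_0^* \eta_{p_0}$ for every map of plots, i.e. $\delta \eta = 0$. Hence $\eta$ is a global differential $1$-form on $X$, by the same sheaf-descent argument as above. Then $dA - dA' = d\eta$ as global $2$-forms on $X$ --- this may be checked plotwise, since $dA_{p_0} - dA'_{p_0} = d(A_{p_0} - A'_{p_0}) = d\eta_{p_0}$ --- so $[dA] = [dA']$ in $H^2_{\dR}(X)$, as claimed.

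The argument is short and essentially a direct computation, so there is no real obstacle; the one point deserving care, and used twice, is the identification of a $\delta$-closed family of forms on the plots making up $QX_0$ with a genuine global differential form on $X$ --- that is, that the sheaves $\Omega^1$ and $\Omega^2$ satisfy descent along $QX \to X$, so that $H^0$ of the relevant rows of the double complex computes $\Omega^1(X)$ and $\Omega^2(X)$ respectively. This is already implicit in the computations of Examples \ref{ex B omega example} and \ref{ex cohomology of bundle with connection}, so it can simply be invoked.
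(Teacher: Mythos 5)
Your proposal is correct and follows essentially the same route as the paper: both arguments observe that $\delta(A'-A)=0$ because the two connection equations share the same right-hand side $dg$, conclude that $A'-A$ (and likewise $dA$, $dA'$) descends to a global form on $X$ by sheaf descent along $QX \to X$, and then note $dA'-dA = d(A'-A)$ is exact. The only difference is that you spell out the descent step for $dA$ and $dA'$ explicitly, which the paper leaves as a "similarly."
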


\begin{proof}
Since $(A, g)$ and $(A', g)$ are both $\R$-bundle $0$-gerbes with connection on $X$, this implies that for every map of plots $f_0$, we have
\begin{equation*}
    -(\delta A)_{f_0} = dg_{f_0} = -(\delta A')_{f_0}.
\end{equation*}
Now consider the form $A' - A \in \Omega^1(QX_0)$, defined plotwise by $(A' - A)_{p_0} = A'_{p_0} - A_{p_0}$. This is a global $1$-form, because for every plot map $f_0$ we have
\begin{equation*}
    (\delta (A' - A))_{f_0} = (\delta A')_{f_0} - (\delta A)_{f_0} = 0.
\end{equation*}
Thus $A' - A \in \Omega^1(X)$. Similarly $dA'$ and $dA$ are also global $2$-forms on $X$. Now $d(A' - A) \in \Omega^2(X)$ is an exact form, and $dA' - dA = d(A' - A)$. Thus $dA'$ and $dA$ represent the same de Rham cohomology class.
\end{proof}

Lemma \ref{lem cohomology classes are unique in degree 1} implies that $\check{H}^1_{\conn}(X, \R)$ is isomorphic to the subspace of $\check{H}^1_{\infty}(X, \R)$ generated by the subset of those diffeological principal $\R$-bundles that admit a connection, as every connection produces a unique cohomology class. Thus we see that $\check{H}^1_{\conn}(X, \R)$ is exactly analogous to the term ${}^d E^{1,0}_2(X)$ in (\ref{eq patricks sequence}).

Now the degree $k$ PIZ exact sequence (\ref{eq degree k PIZ exact sequence}) seems to be missing two terms compared to (\ref{eq patricks sequence}). These two terms reappear when $k = 1$, as we shall now prove. 

\begin{Th} \label{th degree 1 PIZ exact sequence}
Given a diffeological space $X$, there exists a map $\theta: H^1_{\dR}(X) \to \check{H}^1_\infty(X, \R^\delta)$ such that the sequence of vector spaces
\begin{equation} \label{eq degree 1 piz exact sequence}
0 \to H^1_{\dR}(X) \xrightarrow{\theta} \check{H}^1_{\infty}(X, \R^\delta) \xrightarrow{\alpha} \check{H}^1_{\conn}(X, \R) \xrightarrow{\beta} H^2_{\dR}(X) \xrightarrow{\gamma} \check{H}^2_{\infty}(X, \R^\delta)
\end{equation}
is exact.
\end{Th}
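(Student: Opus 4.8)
The plan is to lean on Theorem~\ref{th degree k PIZ exact sequence} with $k=1$, which already supplies exactness at $\check{H}^1_{\conn}(X,\R)$ and at $H^2_{\dR}(X)$; so the only genuinely new content is to construct $\theta$, prove it is injective (exactness at $H^1_{\dR}(X)$), and prove $\ker\alpha=\im\theta$ (exactness at $\check{H}^1_\infty(X,\R^\delta)$). Throughout I would use the Deligne-type model $\B\R^\delta=\DK[\R\xrightarrow{d}\Omega^1_{\cl}]$ from Example~\ref{ex flat bundle gerbes} together with the totalization description of Appendix~\ref{section totalization}: a class in $\check{H}^1_\infty(X,\R^\delta)$ is represented by a pair $(g,\omega)\in\R(QX_1)\oplus\Omega^1_{\cl}(QX_0)$ with $\delta g=0$ and $dg=\pm\delta\omega$, and two such pairs are cohomologous exactly when they differ by $(\pm\delta h,\,dh)$ for some $h\in\R(QX_0)$.

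First I would define $\theta$. A class in $H^1_{\dR}(X)$ is represented by a global closed $1$-form $\omega\in\Omega^1_{\cl}(X)$; since $\Omega^1_{\cl}$ is a sheaf, pulling back along plots identifies $\Omega^1_{\cl}(X)$ with the $\delta$-closed subspace of $\Omega^1_{\cl}(QX_0)$, and I write $\omega^Q$ for the corresponding element. Then $(0,\omega^Q)$ is a total cycle, and I set $\theta([\omega])=[0,\omega^Q]$. Well-definedness and linearity are immediate: if $\omega=d\tau$ with $\tau\in\Omega^0(X)$, then $\tau^Q\in\R(QX_0)$ satisfies $\delta\tau^Q=0$ and $d\tau^Q=\omega^Q$, so $(0,\omega^Q)=D\tau^Q$ is a coboundary. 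For injectivity, if $\theta([\omega])=0$ then $(0,\omega^Q)=Dh$ for some $h\in\R(QX_0)$, forcing $\delta h=0$ and $dh=\omega^Q$; the first equation makes $h=\tau^Q$ for a unique global $\tau\in\Omega^0(X)$, the second then reads $(d\tau)^Q=\omega^Q$, and injectivity of the pullback $\Omega^1(X)\hookrightarrow\Omega^1(QX_0)$ (again because $\Omega^1$ is a sheaf and $\Omega^1(X)=\lim(\Omega^1(QX_0)\rightrightarrows\Omega^1(QX_1))$) gives $d\tau=\omega$, i.e. $[\omega]=0$. This is exactness at $H^1_{\dR}(X)$.

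For exactness at $\check{H}^1_\infty(X,\R^\delta)$, the inclusion $\im\theta\subseteq\ker\alpha$ is clear: $\alpha[0,\omega^Q]=(0,[0])=0$ in $\check{H}^1_{\conn}(X,\R)\subseteq H^2_{\dR}(X)\oplus\check{H}^1_\infty(X,\R)$, since the underlying $\R$-cocycle is zero and the curvature vanishes. Conversely, take $[g,\omega]$ with $\alpha[g,\omega]=(0,[g])=0$; by the definition of $\alpha$ this says the underlying diffeological principal $\R$-bundle is trivial, i.e. $[g]=0$ in $\check{H}^1_\infty(X,\R)$, so $g=\pm\delta h$ for some $h\in\R(QX_0)$. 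Subtracting the coboundary $Dh$ rewrites $[g,\omega]=[\,0,\ \omega-dh\,]$; and since $(0,\omega-dh)$ is a cycle, $\omega-dh$ is $\delta$-closed, hence of the form $\eta^Q$ for a global closed $1$-form $\eta$, so that $[g,\omega]=[0,\eta^Q]=\theta([\eta])\in\im\theta$. Together with Theorem~\ref{th degree k PIZ exact sequence} (for exactness at $\check{H}^1_{\conn}(X,\R)$ and at $H^2_{\dR}(X)$) this proves the theorem.

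I expect the real work to be essentially bookkeeping with this double complex; the one point that needs genuine care — and the thing I would be most careful to pin down — is the dictionary between the two models of $\B\R^\delta$ and the precise meaning of $\alpha$ on a representative $(g,\omega)$: namely that it records the isomorphism class $[g]$ of the underlying $\R$-bundle (with zero curvature), so that $\alpha[g,\omega]=0$ is literally the triviality of that bundle and hence the solvability $g=\pm\delta h$. Everything else (global forms being exactly the $\delta$-closed forms on $QX_0$, injectivity of pullback along $QX_0$, the signs in the total differential $D$) is routine given Appendix~\ref{section totalization}.
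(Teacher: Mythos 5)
Your proposal is correct and follows essentially the same route as the paper: the paper also reduces to Theorem \ref{th degree k PIZ exact sequence} for exactness at $\check{H}^1_{\conn}(X,\R)$ and $H^2_{\dR}(X)$, and handles the two new spots by the same computation in the total complex of $[\R\to\Omega^1_{\cl}](QX)$, using that $\delta$-closed elements of level $0$ are exactly global sections. The only cosmetic difference is that you define $\theta$ directly as $[\omega]\mapsto[0,\omega^Q]$ in the Deligne model, whereas the paper first defines $\theta([A])=[\delta a]$ (for a plotwise primitive $a$ of $A$) in the $[\R^\delta\to 0]$ model and then checks this agrees with your map under the comparison isomorphism $\varphi$ of Example \ref{ex flat bundle gerbes}.
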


\begin{proof}
The sequence is exact everywhere except for $H^1_{\dR}(X)$ and $\check{H}^1_{\infty}(X, \R^\delta)$ by Theorem \ref{th degree k PIZ exact sequence}. Now recall the isomorphism $\varphi: \check{H}^0_\infty(X, [\R^\delta \to 0]) \to \check{H}^0_\infty(X, [\R \to \Omega^1_{\cl}])$ induced by the map of presheaves of chain complexes described in Example \ref{ex flat bundle gerbes} for $k = 1$. The map $\varphi$ takes an isomorphism class of a diffeological principal $\R^\delta$-bundle cocycle $[g]$ and gives the isomorphism class of the $\R$-bundle $0$-gerbe with connection $[0,g]$. Let $\theta: H^1_{\dR}(X) \to \check{H}^0_\infty(X, [\R^\delta \to 0])$ denote the map defined as follows. Let $[A] \in H^1_{\dR}(X)$ denote a cohomology class, and suppose that $A$ is a global closed $1$-form representing this class. Since it is closed, there exists an $a \in \R(QX_0)$ such that $d a = A$. Then $\delta a$ defines an $\R^\delta$ cocycle, as $d \delta a = \delta A = 0$. Let $\theta([A]) = [\delta a]$. This map is well defined, as suppose that $a, a' \in \R(QX_0)$ such that $d a = d a' = A$. Then $a - a'$ is a $\R^\delta$-coboundary between $\delta a$ and $\delta a'$ as $d(a - a') = A - A = 0$ and $\delta a - \delta a' = \delta(a - a')$, so $[\delta a] = [\delta a']$.

We have a commutative diagram
\begin{equation}
\begin{tikzcd}
	{H^1_{\dR}(X)} & {\check{H}^0_\infty(X, [\R^\delta \to 0])} & {\check{H}^1_{\conn}(X,\R)} \\
	& {\check{H}^0_\infty(X, [\R \to \Omega^1_{\cl}])}
	\arrow["\varphi", from=1-2, to=2-2]
	\arrow["\theta", from=1-1, to=1-2]
	\arrow["{{\theta'}}"', from=1-1, to=2-2]
	\arrow["\alpha", from=1-2, to=1-3]
\end{tikzcd}
\end{equation}
where $\alpha$ takes a $\R^\delta$-cocycle and considers it as a $\R$-cocycle, and $\theta'([A]) = [-A,0]$. Now $\varphi \theta = \theta'$ because $\varphi \theta ([A]) = [0, \delta a]$, $\theta'([A]) = [-A,0]$, and $(-A, 0) - (0, \delta a) = (-A, -\delta a) = (-d a, -\delta a) = D(-a)$.

Clearly $\alpha \theta = 0$. Let us show that $\im \theta = \ker \alpha$. Suppose that $[g]$ is the isomorphism class of a diffeological principal $\R^\delta$-bundle such that it is trivial as a diffeological principal $\R$-bundle. Then there exists a $\lambda \in \R(QX_0)$ such that $g = \delta \lambda$. Then $\theta([d \lambda]) = [g]$. Now let us show that $\theta$ is injective. It is enough to show that $\theta'$ is injective, as $\varphi$ is an isomorphism. Suppose that $[A]$ and $[B]$ are cohomology classes such that $\theta'([A]) = [-A,0] = [-B, 0] = \theta'([B])$. Then there exists a $\tau \in \R(QX_0)$ such that $(-A - (-B), 0) = (B - A, 0) = D \tau$, which implies that $\delta \tau = 0$, so that $\tau$ is a global $0$-form and $d \tau = B - A$. Thus $[A] = [B]$.
Thus we have proven that $\theta$ is injective. Now abuse notation and let $\check{H}^1_{\infty}(X, \R^\delta) = \check{H}^0_\infty(X, [\R^\delta \to 0])$. This proves that the above sequence is exact everywhere.
\end{proof}

Considering again the case where $X = T_\alpha$ is the irrational torus, from (\ref{eq derham of irrational torus}), (\ref{eq cech of irrational torus}) and (\ref{eq degree 1 piz exact sequence}), we obtain that 
\begin{equation}
 \check{H}^1_{\conn}(T_\alpha, \R) \cong \R,
\end{equation}
which agrees with \cite{iglesias2023vcech}. From (\ref{eq H_conn exact sequence}) and Theorem \ref{th bundle gerbes of irrational torus} we then obtain an isomorphism 
\begin{equation}
 \check{H}^1_{\infty, \text{triv}}(T_\alpha, \R^\delta) \cong \R.   
\end{equation}

Similarly, from (\ref{eq degree k PIZ exact sequence}) we obtain an exact sequence
\begin{equation*}
   \R \to \check{H}^2_{\conn}(T_\alpha, \R) \to 0 
\end{equation*}
so that $\check{H}^2_{\conn}(T_\alpha, \R)$ is either $0$ or $\R$. 

\appendix

\section{Diffeological Principal Bundles with Connection} \label{section diff bundles with connection}

In this section we show that the notion of diffeological principal $G$-bundle with connection introduced in Example \ref{ex diff bundle with connection} is equivalent to Waldorf's, given in \cite[Definition 3.2.1]{waldorf2012transgression}.

Given a diffeological space $X$, and a Lie group $G$, recall the definition of the $\infty$-stack $\Omega^1(-,\mathfrak{g})//G$ from Example \ref{ex diff bundle with connection}. The data of a map $QX \to \Omega^1(-,\mathfrak{g})//G$ is equivalent to a $G$-cocycle $g$ and a collection $A = \{ A_{p_0} \}$ of $1$-forms $A_{p_0} \in \Omega^1(U_{p_0}, \mathfrak{g})$ satisfying \begin{equation*}
    A_{p_1} = \text{Ad}_{g_{f_0}}^{-1}(f_0^* A_{p_0}) + g_{f_0}^* \mc(G).
\end{equation*}
for every plot $p_0 : U_{p_0} \to X$. We refer to such a map $QX \to \Omega^1(-, \mathfrak{g})//G$ as a \textbf{$G$-cocycle with connection}.

\begin{Def} \label{def category of cocycles with connection}
Let $\ncat{Coc}_{\nabla}(X,G)$ denote the category whose objects are $G$-cocycles with connection on $X$, and whose morphisms $h: (A, g) \to (A', g')$ are collections $h = \{ h_{p_0} \}$ of maps $h_{p_0} : U_{p_0} \to G$ such that $h$ is a morphism of $G$-cocycles in the sense of Definition \ref{def cocycle} and $A_{p_0} = \text{Ad}_{h_{p_0}}^{-1}(A'_{p_0}) + h_{p_0}^* \mc(G)$ for every plot $p_0$ of $X$. It is easy to see that this category is a groupoid.
\end{Def}

\begin{Def}
Let $\pi: P \to X$ be a diffeological principal $G$-bundle where $G$ is a Lie group. A \textbf{Waldorf connection} on $P$ is a $1$-form $\omega \in \Omega^1(P, \mathfrak{g})$ such that 
\begin{equation} \label{eq waldorf connection}
    \rho^* \omega = \text{Ad}_g^{-1}( \text{pr}^* \omega) + g^* \mc(G)
\end{equation}
where $\rho: P \times G \to P$ is the action map, and $g: P \times G \to G$ and $\text{pr} : P \times G \to P$ are the corresponding projection maps.

A morphism $f: (\omega, P) \to (\omega', P')$ of diffeological principal $G$-bundles on $X$ with Waldorf connections is a morphism of diffeological principal $G$-bundles $f: P \to P'$ such that $f^* \omega' = \omega$. Such morphisms are isomorphisms. Given a diffeological space $X$, let $\ncat{Wal}_G(X)$ denote the groupoid of diffeological principal $G$-bundles on $X$ equipped with a Waldorf connection.
\end{Def}

In \cite[Section 3]{minichiello2022diffeological}, we showed there is a functor $\ncat{Cons} : \ncat{Coc}(X,G) \to \ncat{DiffPrin}_G(X)$ that takes a $G$-cocycle $g$ and constructs a diffeological principal $G$-bundle $\ncat{Cons}(g) = \pi : P \to X$ on $X$. Furthermore, by Theorem \ref{th diff cocycle theorem}, this functor is an equivalence. Thus we need only understand how to construct a Waldorf connection from the collection $A = \{A_{p_0} \}$ of $1$-forms and vice versa. 

So let $g = \{ g_{f_0} \}$ be a fixed $G$-cocycle representing a diffeological principal $G$-bundle $\ncat{Cons}(g) = \pi: P \to X$. We wish to construct a $1$-form $\omega$ on $P$ from a $G$-cocycle with connection $A$ on $X$. The diffeological principal $G$-bundle $\ncat{Cons}(g)$ has a canonical plotwise trivialization $\varphi_{p_0} : U_{p_0} \times G \to p_0^* P$ such that if $f_0 : U_{p_1} \to U_{p_0}$ is a map of plots, then the induced map $\widetilde{f_0}: U_{p_1} \times G \to U_{p_0} \times G$ is given by $\widetilde{f_0}(x_{p_1}, h) = (f_0(x_{p_1}), g_{f_0}(x_{p_1}) \cdot h)$, where $g_{f_0} : U_{p_1} \to G$ is the component of the $G$-cocycle on $f_0$. See \cite[Section 3]{minichiello2022diffeological} for more details.

Now let $q_0: U_{q_0} \to P$ be a plot. We obtain a commutative diagram
\begin{equation}
    \begin{tikzcd}
	{U_{q_0}} \\
	& {U_{q_0} \times G} & {p_0^*P} & P \\
	&& {U_{q_0}} & X
	\arrow["\pi", from=2-4, to=3-4]
	\arrow["{p_0}"', from=3-3, to=3-4]
	\arrow[from=2-3, to=3-3]
	\arrow[from=2-3, to=2-4]
	\arrow["\lrcorner"{anchor=center, pos=0.125}, draw=none, from=2-3, to=3-4]
	\arrow["{\varphi_{p_0}}", from=2-2, to=2-3]
	\arrow["{k_{q_0}}", from=1-1, to=2-2]
	\arrow["{q_0}", curve={height=-12pt}, from=1-1, to=2-4]
	\arrow["{1_{U_{q_0}}}"', curve={height=24pt}, from=1-1, to=3-3]
	\arrow[from=2-2, to=3-3]
\end{tikzcd}
\end{equation}
where $p_0 = \pi q_0$ and $k_{q_0} : U_{q_0} \to U_{q_0} \times G$ is the unique map given by the universal property of the pullback $U_{q_0} \times G \cong p_0^* P$. Since this map is over $U_{q_0}$, we have $k_{q_0}(x_{q_0}) = (x_{q_0}, g_{q_0}(x_{q_0}))$ for a unique map $g_{q_0} : U_{q_0} \to G$. 

Now if $f_0 : U_{p_1} \to U_{p_0}$ is a map of plots, we obtain a commutative diagram
\begin{equation}
    \begin{tikzcd}
	{U_{q_1}} && {U_{q_0}} \\
	{U_{q_1} \times G} && {U_{q_0} \times G} \\
	& P
	\arrow["{k_{q_1}}"', from=1-1, to=2-1]
	\arrow["{\widetilde{f}_0}", from=2-1, to=2-3]
	\arrow["{k_{q_0}}"', from=1-3, to=2-3]
	\arrow["{f_0}", from=1-1, to=1-3]
	\arrow[from=2-1, to=3-2]
	\arrow[from=2-3, to=3-2]
\end{tikzcd}
\end{equation}
which implies that if $x_{q_1} \in U_{q_1}$, then
\begin{equation*}
    \widetilde{f}_0 k_{q_1}(x_{q_1}) = \widetilde{f}_0(x_{q_1}, g_{q_1}(x_{q_1}))= (f_0(x_{q_1}), g_{f_0}(x_{q_1}) \cdot g_{q_1}(x_{q_1})) = (f_0(x_{q_1}), g_{q_0}(f_0(x_{q_1})) = k_{q_0} f_0(x_{q_1}).
\end{equation*}
From this we obtain the equation
\begin{equation}
    g_{f_0} \cdot g_{q_1} = (g_{q_0} \circ f_0).
\end{equation}
Now suppose that $A = \{ A_{p_0} \}$ is a $G$-cocycle with connection for the fixed cocycle $g$. We wish to obtain a $1$-form on $P$. Since $P$ is a diffeological space, we can define it plotwise. Given a plot $q_0: U_{q_0} \to P$, we obtain a plot $p_0 : U_{q_0} \to X$ of the base $X$ by setting $p_0 = \pi q_0$. Thus there is a $1$-form $A_{p_0} \in \Omega^1(U_{q_0}, \mathfrak{g})$ from the $G$-cocycle with connection. Now let
\begin{equation}
    B_{q_0} = \text{Ad}_{g_{q_0}}^{-1}( A_{p_0}) + g_{q_0}^* \mc(G).
\end{equation}
Thus $B_{q_0} \in \Omega^1(U_{q_0}, \mathfrak{g})$. We wish to show that this defines a $1$-form on $P$, namely we need to check that if $f_0: U_{q_1} \to U_{q_0}$ is a map of plots of $P$, then
\begin{equation}
    f_0^* B_{q_0} = B_{q_1}.
\end{equation}
So let $f_0 : U_{q_1} \to U_{q_0}$ be such a map of plots. Then we have
\begin{equation}
\begin{aligned}
    f_0^* B_{q_0} & = f_0^* \left( \text{Ad}_{g_{q_0}}^{-1}(A_{p_0}) + g_{q_0}^* \mc(G) \right) \\
    & = \text{Ad}_{(g_{q_0} \circ f_0)}^{-1} ( f_0^* A_{p_0} ) + (g_{q_0} \circ f_0)^* \mc(G) \\
    & = \text{Ad}_{g_{q_1}}^{-1} \text{Ad}_{g_{f_0}}^{-1} ( f_0^* A_{p_0} ) + (g_{f_0} \cdot g_{q_1})^* \mc(G) \\
    & = \text{Ad}_{g_{q_1}}^{-1} \text{Ad}_{g_{f_0}}^{-1} (f_0^* A_{p_0}) + \text{Ad}_{g_{q_1}}^{-1}(g_{f_0}^* \mc(G)) + g_{q_1}^* \mc(G) \\
    & = \text{Ad}_{g_{q_1}}^{-1} \left( \text{Ad}_{g_{f_0}}^{-1}( f_0^* A_{p_0}) + g_{f_0}^* \mc(G) \right) + g_{q_1}^* \mc(G) \\
    & = \text{Ad}_{g_{q_1}}^{-1}(A_{p_1}) + g_{q_1}^* \mc(G) \\
    & = B_{q_1}.
\end{aligned}
\end{equation}
We have used the product rule for the Maurer-Cartan form
\begin{equation}
    (g \cdot h)^* \mc(G) = \text{Ad}_h^{-1}(g^* \mc(G)) + h^* \mc(G),
\end{equation}
on the fourth line above, which can easily be verified using the description of $\mc(G)$ as $g^{-1} dg$.

Thus the collection $\{ B_{q_0} \}$ defines a $1$-form $\omega \in \Omega^1(P, \mathfrak{g})$ with $\omega_{q_0} = B_{q_0}$. We must still show that $\omega$ is a Waldorf connection.

We will check the equation (\ref{eq waldorf connection}) plotwise on $P \times G$. A plot of $P \times G$ is a pair of plots $q_0 : U_{q_0} \to P$ and $h_0 : U_{q_0} \to G$, which we shall pair to form the plot $\langle q_0, h_0 \rangle : U_{q_0} \to P \times G$. Let us examine $(\rho^* \omega)_{\langle q_0, h_0 \rangle}$. This is the $1$-form $\omega_{\rho \langle q_0, h_0 \rangle}$, where $\rho : P \times G \to P$ is the action map. We can thus write $\rho \langle q_0, h_0 \rangle = q_0 \cdot h_0$, where $\cdot$ is the action of $G$ on $P$. Thus we wish to compute $\omega_{q_0 \cdot h_0}$. Looking plotwise, it is easy to see that
\begin{equation}
g_{q_0 \cdot h_0} = g_{q_0} \cdot h_0.    
\end{equation}
Thus we have
\begin{equation}
    \begin{aligned}
        \omega_{q_0 \cdot h_0} & = B_{q_0 \cdot h_0} \\
        &= \text{Ad}_{g_{q_0 \cdot h_0}}^{-1}(A_{p_0}) + g_{q_0 \cdot h_0}^* \mc(G) \\
        &= \text{Ad}_{h_0}^{-1} \text{Ad}_{g_{q_0}}^{-1}(A_{p_0}) + (g_{q_0} \cdot h_0)^* \mc(G) \\
        &= \text{Ad}_{h_0}^{-1} \text{Ad}_{g_{q_0}}^{-1}(A_{p_0}) + \text{Ad}_{h_0}^{-1} g_{q_0}^* \mc(G) + h_0^* \mc(G) \\
        & = \text{Ad}_{h_0}^{-1}(B_{q_0}) + h_0^* \mc(G).
    \end{aligned}
\end{equation}
Pulling back to $P \times G$ gives precisely the equation (\ref{eq waldorf connection}). So given a $G$-cocycle with connection $(A, g)$, let $\ncat{Cons}_{\nabla}(A,g) = (\omega, P)$ denote the diffeological principal $G$-bundle $P = \ncat{Cons}(g)$ equipped with Waldorf connection $\omega$.

Now suppose that $h: (A, g) \to (A', g')$ is a morphism of $G$-cocycles with connection on $X$. We wish to obtain a morphism of diffeological principal $G$-bundles that preserve the Waldorf connection. By \cite[Section 3]{minichiello2022diffeological}, we know that $\ncat{Cons}(h): \ncat{Cons}(g) \to \ncat{Cons}(g')$ is a map of the respective diffeological principal $G$-bundles. We need only show that $\ncat{Cons}(h)$ preserves the Waldorf connection. Let $(\omega, P) = \ncat{Cons}_{\nabla}(A, g)$ and $(\omega', P') = \ncat{Cons}_{\nabla}(A',g')$, and let $\widetilde{h} = \ncat{Cons}(h)$ denote the corresponding morphism given by the morphism $h$ of $G$-cocycles. For every plot $q_0: U_{q_0} \to P$ we obtain the following commutative diagram
\begin{equation}
    \begin{tikzcd}
	& {U_{q_0}} \\
	{U_{q_0} \times G} && {U_{q_0} \times G} \\
	P && {P'}
	\arrow["{\widetilde{h}}"', from=3-1, to=3-3]
	\arrow["{k_{q_0}}"', from=1-2, to=2-1]
	\arrow["{k'_{\widetilde{h} q_0}}", from=1-2, to=2-3]
	\arrow["{l_{q_0}}"', from=2-1, to=3-1]
	\arrow["{l'_{\widetilde{h} q_0}}", from=2-3, to=3-3]
	\arrow["{\widetilde{h_{q_0}}}"{description}, from=2-1, to=2-3]
\end{tikzcd}
\end{equation}
where $\widetilde{h_{q_0}}(x_{q_0}, g) = (x_{q_0}, h_{q_0}(x_{q_0}) \cdot g)$ for $h_{q_0} : U_{q_0} \to G$ the component of the morphism $h$ of cocycles. The above diagram also implies that
\begin{equation}
    k'_{\widetilde{h} \, q_0}(x_{q_0}) = (x_{q_0}, g'_{\widetilde{h}  q_0}(x_{q_0})) = (x_{q_0}, h_{p_0}(x_{q_0}) \cdot g_{q_0}(x_{q_0})) = \widetilde{h_{q_0}} k_{q_0}(x_{q_0)},
\end{equation}
and thus we have
\begin{equation}
    g'_{\widetilde{h} \, q_0} = h_{q_0} \cdot g_{q_0}.
\end{equation}
We wish to show that $\widetilde{h}^* \omega' = \omega$. It is therefore equivalent to show that
\begin{equation}
    (\widetilde{h}^* \omega')_{q_0} = \omega'_{\widetilde{h} q_0} = B'_{\widetilde{h} q_0} = B_{q_0} = \omega_{q_0}.
\end{equation}
Now we have
\begin{equation} \label{eq preserve waldorf connection computation}
    \begin{aligned}
        B'_{\widetilde{h} q_0} & = \text{Ad}_{g_{\widetilde{h} q_0}}^{-1}(A'_{p_0}) + {g'}_{\widetilde{h} q_0}^* \mc(G) \\
        & = \text{Ad}_{g_{q_0}}^{-1} \text{Ad}_{h_{p_0}}^{-1}(A'_{p_0}) + (h_{p_0} \cdot g_{q_0})^* \mc(G) \\
        & = \text{Ad}_{g_{q_0}}^{-1} \text{Ad}_{h_{p_0}}^{-1}(A'_{p_0}) + \text{Ad}_{g_{q_0}}^{-1}(h_{p_0}^* \mc(G)) + g_{q_0}^* \mc(G) \\
        &= \text{Ad}_{g_{q_0}}^{-1}( \text{Ad}_{h_{p_0}}^{-1}(A'_{p_0}) + h_{p_0}^* \mc(G)) + g_{q_0}^* \mc(G) \\
        &= \text{Ad}_{g_{q_0}}^{-1} (A_{p_0}) + g_{q_0}^* \mc(G) \\
        &= B_{q_0}.
    \end{aligned}
\end{equation}
Thus $\widetilde{h}: P \to P'$ preserves the Waldorf connections. In summary, we have constructed a functor $\ncat{Cons}_{\nabla} : \ncat{Coc}(X,G) \to \ncat{Wal}_G(X)$. Now we wish to show that this functor is an equivalence of groupoids.

Now let us show that if we have a Waldorf connection $\omega$ on $P$, we can obtain an $G$-cocycle with connection. Suppose that $\pi : P \to X$ is a diffeological principal $G$-bundle, and choose a fixed plotwise trivialization $\varphi$. From this we obtain a $G$-cocycle $g$. Suppose that $\omega \in \Omega^1(P, \mathfrak{g})$ is a Waldorf connection, and let $p_0 : U_{p_0} \to X$ be a plot. We obtain the commutative diagram
\begin{equation*}
    \begin{tikzcd}
	{U_{p_0} \times G} & {p_0^*P} & P \\
	& {U_{p_0}} & X
	\arrow["\pi", from=1-3, to=2-3]
	\arrow["{p_0}"', from=2-2, to=2-3]
	\arrow[from=1-2, to=2-2]
	\arrow["{\psi_{p_0}}", from=1-2, to=1-3]
	\arrow["\lrcorner"{anchor=center, pos=0.125}, draw=none, from=1-2, to=2-3]
	\arrow["{\varphi_{p_0}}", from=1-1, to=1-2]
	\arrow[from=1-1, to=2-2]
	\arrow["{\sigma_{p_0}}", curve={height=-12pt}, from=2-2, to=1-1]
\end{tikzcd}
\end{equation*}
where $\varphi_{p_0}$ is the fixed trivialization, which is a $G$-equivariant diffeomorphism over $U_{p_0}$, and $\sigma_{p_0} : U_{p_0} \to U_{p_0} \times G$ is the canonical section $\sigma_{p_0}(x_{p_0}) = (x_{p_0}, e_G)$. Let $q_0 : U_{p_0} \to P$ be given by $q_0 = \psi_{p_0} \varphi_{p_0} \sigma_{p_0}$. Suppose that $f_0 : U_{p_1} \to U_{p_0}$ is a map of plots of $X$. Then we obtain a diagram
\begin{equation*}
\begin{tikzcd}
	{U_{p_1}} && {U_{p_0}} \\
	{U_{p_1} \times G} && {U_{p_0} \times G} \\
	{p_1^*P} && {p_0^*P} \\
	& P
	\arrow["{\sigma_{p_1}}"', from=1-1, to=2-1]
	\arrow["{\widetilde{f_0}}", from=2-1, to=2-3]
	\arrow["{\sigma_{p_0}}", from=1-3, to=2-3]
	\arrow["{f_0}", from=1-1, to=1-3]
	\arrow["{\varphi_{p_1}}"', from=2-1, to=3-1]
	\arrow["{\varphi_{p_0}}", from=2-3, to=3-3]
	\arrow["{\psi_{p_1}}"', from=3-1, to=4-2]
	\arrow["{\psi_{p_0}}", from=3-3, to=4-2]
	\arrow["{\widehat{f_0}}", from=3-1, to=3-3]
\end{tikzcd}
\end{equation*}
Notice that the middle square and the bottom triangle commute, but the top square does not commute, as $\widetilde{f_0} \sigma_{p_1}(x_{p_1}) = (f_0(x_{p_1}), g_{f_0}(x_{p_1}))$ while $\sigma_{p_0}(f_0(x_{p_1})) = (f_0(x_{p_1}), e_G)$.
Thus we have
\begin{equation}
\begin{aligned}
    q_1(x_{p_1}) & = (\psi_{p_1} \varphi_{p_1} \sigma_{p_1})(x_{p_1}) \\
    & = \psi_{p_0} \varphi_{p_0} \widetilde{f_0} \sigma_{p_1} (x_{p_1}) \\
    & = \psi_{p_0} \varphi_{p_0} (f_0(x_{p_1}), g_{f_0}(x_{p_1})) \\ 
    & = (\psi_{p_0} \varphi_{p_0})\left[ (f_0(x_{p_1}), e_G) \cdot g_{f_0}(x_{p_1}) \right] \\
    & = (\psi_{p_0} \varphi_{p_0} \sigma_{p_0} f_0)(x_{p_1}) \cdot g_{f_0}(x_{p_1}) \\
    &= (q_0 \circ f_0)(x_{p_1}) \cdot g_{f_0}(x_{p_1}).
\end{aligned}
\end{equation}
where on the fourth line we used the fact that $\varphi_{p_0}$ and $\psi_{p_0}$ are $G$-equivariant.

Thus we have obtained the equation
\begin{equation}
    q_1 = (q_0 \circ f_0) \cdot g_{f_0}.
\end{equation}

Now let $A_{p_0} = \omega_{q_0}$. Note that $f_0^* A_{p_0} \neq A_{p_1}$ since $f_0$ is not a map of plots from $q_1$ and $q_0$, i.e. $(q_0 \circ f_0) \neq q_1$. Consider the equation (\ref{eq waldorf connection}) at the plot $\langle (q_0 \circ f_0), g_{f_0} \rangle : U_{q_1} \to P \times G$. Note that 
\begin{equation}
    (\rho^* \omega)_{\langle (q_0 \circ f_0), g_{f_0} \rangle} = \omega_{\rho \langle (q_0 \circ f_0), g_{f_0} \rangle} = \omega_{(q_0 \circ f_0) \cdot g_{f_0}} = \omega_{q_1} = A_{p_1},
\end{equation}
and
\begin{equation}
    \text{Ad}_{g_{f_0}}^{-1}((\text{pr}^* \omega)_{\langle (q_0 \circ f_0), g_{f_0} \rangle}) + g_{f_0}^* \mc(G) = \text{Ad}_{g_{f_0}}^{-1}(\omega_{(q_0 \circ f_0)}) + g_{f_0}^* \mc(G).
\end{equation}
Now $f_0^* A_{p_0} = f_0^* \omega_{q_0} = \omega_{(q_0 \circ f_0)}$ because $f_0$ \text{is} a plot map from $q_0$ to $(q_0 \circ f_0)$ trivially, and $\omega$ is a $1$-form on $P$. Thus we obtain equation (\ref{eq classical diffeological connection equation}), so the collection $A = \{ A_{p_0} \}$ defines a $G$-cocycle with connection on $X$.

Now if $\widetilde{h} : (\omega, P ) \to (\omega', P')$ is a map of diffeological principal $G$-bundles with Waldorf connection, we want to show that it induces a map of $G$-cocycle with connection. We know that $\widetilde{h}$ induces a map $h$ of the $G$-cocycles $g$ and $g'$ representing $P$ and $P'$ respectively, and we wish to show that $A_{p_0} = \text{Ad}_{h_{p_0}}^{-1}(A'_{p_0}) + h_{p_0}^* \mc(G)$ for every plot $p_0: U_{p_0} \to X$. We know that $\widetilde{h}^* \omega' = \omega$, which is equivalent to asking that $B'_{\widetilde{h} q_0} = B_{q_0}$. So if $p_0$ is a plot of $X$, then we obtain a plot $q_0 : U_{p_0} \to P$ in the same way as above. We obtain
\begin{equation}
    \begin{aligned}
        A_{p_0} &= \text{Ad}_{g_{q_0}}(B_{q_0} - g_{q_0}^* \mc(G))\\
        &= \text{Ad}_{g_{q_0}}(B'_{\widetilde{h} q_0} - g_{q_0}^*\mc(G)) \\
        &= \text{Ad}_{g_{q_0}}[ \text{Ad}_{g_{\widetilde{h} q_0}} (A'_{p_0}) + g_{\widetilde{h} q_0}^* \mc(G) - g_{q_0}^*\mc(G)] \\
        &= \text{Ad}_{g_{q_0}} \text{Ad}_{g_{q_0}}^{-1} \text{Ad}_{h_{p_0}}^{-1}(A'_{p_0}) + \text{Ad}_{g_{q_0}} \text{Ad}_{g_{q_0}}^{-1}(h_{p_0}^* \mc(G)) \\
        &+ \text{Ad}_{g_{q_0}}(g_{q_0}^* \mc(G)) - \text{Ad}_{g_{q_0}}( g_{q_0}^* \mc(G)) \\
        &= \text{Ad}_{h_{p_0}}^{-1}(A'_{p_0}) + h_{p_0}^* \mc(G),
    \end{aligned}
\end{equation}
where we have basically done the computation of (\ref{eq preserve waldorf connection computation}) in reverse. Thus $h$ is a morphism of $G$-cocycles with connection. 

\begin{Th} \label{th waldorf and cocycle with connections iso}
Given a diffeological space $X$ and a Lie group $G$, the functor
\begin{equation}
    \ncat{Cons}_\nabla : \ncat{Coc}_{\nabla}(X,G) \to \ncat{Wal}_G(X),
\end{equation}
is an equivalence of groupoids.
\end{Th}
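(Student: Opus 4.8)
Essentially all of the work is already done in the preceding discussion: I have built the functor $\ncat{Cons}_{\nabla}$ on objects (sending a $G$-cocycle with connection $(A,g)$ to $\ncat{Cons}(g)$ equipped with the Waldorf connection $\omega$ determined plotwise by $\omega_{q_0} = \operatorname{Ad}_{g_{q_0}}^{-1}(A_{p_0}) + g_{q_0}^*\mc(G)$) and on morphisms, and I have described a reverse assignment taking a Waldorf connection to a $G$-cocycle with connection. So the plan is to package these into a proof that $\ncat{Cons}_{\nabla}$ is an equivalence of groupoids, i.e. that it is faithful, full, and essentially surjective. The guiding observation is that $\ncat{Cons}_{\nabla}$ sits in a square
\[
\begin{tikzcd}
	{\ncat{Coc}_{\nabla}(X,G)} & {\ncat{Wal}_G(X)} \\
	{\ncat{Coc}(X,G)} & {\ncat{DiffPrin}_G(X)}
	\arrow["{\ncat{Cons}_{\nabla}}", from=1-1, to=1-2]
	\arrow[from=1-1, to=2-1]
	\arrow[from=1-2, to=2-2]
	\arrow["{\ncat{Cons}}"', from=2-1, to=2-2]
\end{tikzcd}
\]
with the vertical arrows the forgetful functors, which commutes by construction and whose bottom row is an equivalence by Theorem \ref{th diff cocycle theorem}.

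First I would check faithfulness and fullness. Faithfulness is immediate: a morphism $h\colon (A,g)\to(A',g')$ in $\ncat{Coc}_{\nabla}(X,G)$ is, by Definition \ref{def category of cocycles with connection}, a morphism $h\colon g\to g'$ of the underlying $G$-cocycles satisfying the extra equation $A_{p_0} = \operatorname{Ad}_{h_{p_0}}^{-1}(A'_{p_0}) + h_{p_0}^*\mc(G)$, and since $\ncat{Cons}$ is faithful, $h$ is recovered from $\ncat{Cons}_{\nabla}(h)=\ncat{Cons}(h)$. For fullness, given a morphism $\widetilde h\colon \ncat{Cons}_{\nabla}(A,g)\to \ncat{Cons}_{\nabla}(A',g')$ in $\ncat{Wal}_G(X)$, forget the Waldorf connection: it becomes a morphism of the underlying diffeological principal $G$-bundles, hence equals $\ncat{Cons}(h)$ for a unique morphism $h\colon g\to g'$ of $G$-cocycles by fullness of $\ncat{Cons}$. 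The computation \eqref{eq preserve waldorf connection computation}, read in reverse — which is precisely the last displayed computation above — shows that this $h$ satisfies the connection-compatibility equation, so $h$ is a morphism in $\ncat{Coc}_{\nabla}(X,G)$ with $\ncat{Cons}_{\nabla}(h)=\widetilde h$.

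It remains to prove essential surjectivity, and here the plan is to show the two object-level constructions are mutually inverse up to isomorphism. Given $(\omega,P)\in\ncat{Wal}_G(X)$, use essential surjectivity of $\ncat{Cons}$ to pick a $G$-cocycle $g$ and an isomorphism $\Phi\colon\ncat{Cons}(g)\xrightarrow{\ \sim\ }P$ of bundles, and set $\bar\omega=\Phi^*\omega$; pulling equation \eqref{eq waldorf connection} back along $\Phi$ shows $\bar\omega$ is again a Waldorf connection. Applying the construction ``from a Waldorf connection to a $G$-cocycle with connection'' above to $(\bar\omega,\ncat{Cons}(g))$, using the canonical plotwise trivializations of $\ncat{Cons}(g)$, produces $(A,g)$ with $A_{p_0}=\bar\omega_{q_0}$, where $q_0=\psi_{p_0}\varphi_{p_0}\sigma_{p_0}$ is the plot of $\ncat{Cons}(g)$ attached to a plot $p_0$ of $X$ via the canonical section $\sigma_{p_0}(x)=(x,e_G)$. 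Because $q_0$ is built from this section one has $g_{q_0}=e_G$, so the Waldorf connection $\omega'$ of $\ncat{Cons}_{\nabla}(A,g)$ satisfies $\omega'_{q_0}=\operatorname{Ad}_{g_{q_0}}^{-1}(A_{p_0})+g_{q_0}^*\mc(G)=A_{p_0}=\bar\omega_{q_0}$ for every $p_0$. Since the plots $q_0$ together with their $G$-translates generate all plots of $\ncat{Cons}(g)$, and both $\omega'$ and $\bar\omega$ are Waldorf connections — hence transform identically under the $G$-action via \eqref{eq waldorf connection} — it follows that $\omega'=\bar\omega$, so $\ncat{Cons}_{\nabla}(A,g)\cong(\omega,P)$ via $\Phi$. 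This establishes essential surjectivity and finishes the proof.

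The main obstacle I anticipate is not conceptual but bookkeeping: one must scrupulously keep track of the three kinds of plots in play — plots of the base $X$, plots of the total spaces $P$ and $\ncat{Cons}(g)$, and the auxiliary maps $g_{q_0}$, $k_{q_0}$, $\widetilde{f_0}$ attached to them by a chosen trivialization — so that the identity $g_{q_0}=e_G$ and the claim that the $q_0$ and their $G$-translates generate all plots of $\ncat{Cons}(g)$ are genuinely justified. No new idea beyond the computations already carried out is needed; the content of the proof is exactly this reconciliation.
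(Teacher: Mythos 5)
Your proposal is correct and takes essentially the same route as the paper: the paper's proof is literally ``combine the cocycle classification theorem with the above constructions,'' and your faithful/full/essentially-surjective packaging over the forgetful functors, including reading the computation \eqref{eq preserve waldorf connection computation} in reverse for fullness and using that every plot of $\ncat{Cons}(g)$ is a $G$-translate of a canonical plot (so $g_{q_0}=e_G$ there) for essential surjectivity, is exactly the intended reconciliation.
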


\begin{proof}
This follows from combining \cite[Theorem 5.13]{minichiello2022diffeological} with the above constructions.
\end{proof}

\begin{Rem}
It should be said that when $G = \R$ or $G = S^1$, one can check that a Waldorf connection reduces to a connection $1$-form in the sense of \cite[Section 5.3]{iglesias2023vcech}, thus we have an equivalence between all three definitions of diffeological principal $G$-bundle with connection in these cases.
\end{Rem}

\begin{Rem}
There is nothing stopping one from extending the above definition to the case when $G$ is a diffeological group. In this case then $\mathfrak{g}$ should be the internal tangent space \cite{christensen2015tangent} to the diffeological group $G$ at the identity. Nothing in this Appendix depended on $G$ being a Lie group, so the whole previous discussion extends to this case. It is an interesting question to see how far one can go with this analogy. For instance, does this extended definition agree with that given in \cite[Article 8.32]{iglesias2013diffeology}? We leave this question for future work.
\end{Rem}

\section{Totalization} \label{section totalization}

Given a presheaf of chain complexes $A$ and a diffeological space $X$, we wish to compute the $\infty$-stack cohomology of $X$ with values in $A$. This is defined as the abelian group
\begin{equation}
    \check{H}^0_\infty(X, A) \coloneqq \pi_0 \R \cH(X, A).
\end{equation}

We will use the Dold-Kan correspondence to get an amenable model for the homotopy type of $\R \cH(X,A)$.

Let $C$ be a cosimplicial chain complex, whose cosimplicial degree is denoted by the chain complex $C^p$. The $q$th degree of the chain complex $C^p$ is denoted $C^{p,q}$, with differential $d : C^{p, q} \to C^{p,q-1}$. From a cosimplicial chain complex we can obtain a (mixed) double complex by applying the dual of the Dold-Kan correspondence \cite[Section 4.3]{minichiello2022diffeological} to $C^\bullet$ to obtain a $\Z_{\geq 0} \times \Z_{\geq 0}$-graded vector space with two operators $d : C^{p,q} \to C^{p,q-1}$ given by the differential of each $C^p$ and $\delta: C^{p,q} \to C^{p+1,q}$ defined as the alternating sum $\sum_{i=0}^p (-1)^i d^i$ of the coface maps of $C$, with the property that $d \delta = \delta d$. From this we can obtain an unbounded ($\Z$-graded) chain complex $K = \tot^\Z C$, with
\begin{equation}
    (\tot^{\Z} C)_k = \prod_{q - p = k} C^{p,q}
\end{equation}
and differential $D = (d - (-1)^{q - p } \delta)$. In order to obtain a non-negatively graded chain complex, we apply smart truncation to obtain $\tot C = \tau_{\geq 0} \tot^{\Z} C$.

\begin{Prop} \label{prop total complex as an end}
Given a cosimplicial chain complex $C$, we have the following isomorphism of chain complexes,
    \begin{equation}
        \int_{n \in \ncat{\Delta}} \MapCh(N \R[\Delta^n], C^n) \cong \tot C,
    \end{equation}
where $\MapCh(N \R[\Delta^n], C^n)$ is the mapping chain complex defined by (\ref{eq internal hom of chain complexes}).
\end{Prop}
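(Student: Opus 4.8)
The plan is to identify both sides degree by degree and then check that the differentials match. First I would recall that for any cosimplicial object $C$ valued in a complete category, the end $\int_{n \in \ncat{\Delta}} T^n \otimes C^n$ (where $T$ is a cosimplicial object in the indexing variable) computes the totalization, so the content here is purely to unwind what this means when the values are chain complexes and $T^n = N\R[\Delta^n]$, and to match it with the explicit total-complex formula $(\tot^\Z C)_k = \prod_{q-p=k} C^{p,q}$ with differential $D = d - (-1)^{q-p}\delta$ followed by smart truncation. By Lemma \ref{lem chain complexes hom and DK}, $\MapCh(N\R[\Delta^n], C^n) \cong N\u{\ncat{Ch}}(N\R[\Delta^n], C^n)$, and the cotensoring identities from Section \ref{section dold kan} give $\u{\ncat{Ch}}(N\R[\Delta^n], C^n) \cong \ncat{DK}((C^n)^{\Delta^n})$ in the appropriate sense; but it is cleaner to work directly with the explicit description of $\MapCh^\Z$ and $\tau_{\geq 0}$ from (\ref{eq internal hom of chain complexes}).

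Next I would compute the end explicitly. An element of $\int_{n} \MapCh(N\R[\Delta^n], C^n)$ in degree $k$ is a compatible family of degree-$k$ chain maps $f_n : N\R[\Delta^n] \to C^n$, one for each $n$, natural in $[n] \in \ncat{\Delta}$. Since $N\R[\Delta^n]$ is generated in each chain degree $q$ by the nondegenerate $q$-simplices of $\Delta^n$, and by naturality such a family is determined by its value on the universal simplex $\iota_n \in (\Delta^n)_n$, the Yoneda-type reduction shows that the family $(f_n)$ is the same data as, for each $p$, an element of $\ncat{Vect}(\, (N\R[\Delta^p])_p,\, C^{p, p+k}) \cong C^{p,p+k}$ — i.e.\ an element of $\prod_p C^{p, p+k}$, which reindexes to $\prod_{q-p=k} C^{p,q}$. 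This is exactly $(\tot^\Z C)_k$ for $k$ in the relevant range. The $\tau_{\geq 0}$ on the left-hand side of $\MapCh$ (which forces degree-$0$ pieces to be cycles) corresponds precisely to the $\tau_{\geq 0}$ applied to $\tot^\Z C$ to get $\tot C$; I would check this bookkeeping carefully since it is where the non-negative grading convention enters. Then I would verify that the differential on the end — which combines the internal differential $d$ of each $C^n$ with the sign rule $df = d_C f - (-1)^k f d_{N\R[\Delta^\bullet]}$ from (\ref{eq internal hom of chain complexes}), the latter encoding, under naturality, the alternating sum of coface maps — reproduces $D = d - (-1)^{q-p}\delta$. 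The coface maps $d^i: C^p \to C^{p+1}$ enter because the simplicial differential on $N\R[\Delta^\bullet]$, transported through naturality of the family $(f_n)$, becomes the cosimplicial coface structure on $C$.

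The main obstacle I expect is getting the signs and the degree conventions to line up exactly: the paper uses chain complexes written so that $\Omega^k$ sits in degree $0$, the differential on $\MapCh^\Z$ has a sign $(-1)^k$, the total differential has a sign $(-1)^{q-p}$, and the reindexing $k = q-p$ must make all three consistent; moreover the interaction of smart truncation on $\MapCh$ with smart truncation on $\tot^\Z$ must be checked to genuinely agree rather than merely being isomorphic abstractly. Once the degree-$k$ identification $\prod_{q-p=k} C^{p,q}$ is established functorially in $[n]$ via the Yoneda/coend manipulation, and the differential is matched by a direct computation on the generating simplices $\iota_n$, the isomorphism of chain complexes follows; I would present the Yoneda reduction and the differential check as the two substantive steps and relegate the sign verification to a remark.
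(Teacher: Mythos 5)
Your proposal is correct and follows essentially the same route as the paper's proof: unwind the end as a compatible family of degree-$k$ maps $N\R[\Delta^n] \to C^n$, reduce via naturality to the values on the top nondegenerate simplices $\iota_n$ to identify the degree-$k$ piece with $\prod_{q-p=k}C^{p,q}$, and match the mapping-complex differential $d_C f - (-1)^k f d$ with $D = d - (-1)^{q-p}\delta$. The only cosmetic issue is calling the $f_n$ ``degree-$k$ chain maps'' (they are merely degree-$k$ graded maps, elements of $\MapCh^\Z(N\R[\Delta^n],C^n)_k$), and the truncation bookkeeping you flag is harmless since $\tau_{\geq 0}$ commutes with the limits defining the end.
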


\begin{proof}
For the rest of this proof only, let $\Delta^n = N \R \Delta^n$. The chain complex $E \coloneqq \int_{n \in \ncat{\Delta}} \MapCh^\Z(\Delta^n, C^n)$ is isomorphic to the equalizer
\begin{equation}
    \text{eq} \left( \prod_{[n] \in \ncat{\Delta}} \MapCh^\Z(\Delta^n, C^n) \rightrightarrows \prod_{f : [m] \to [n]} \MapCh^\Z(\Delta^m, C^n) \right).
\end{equation}
It is equipped with the usual differential of mapping chain complexes, namely $d_E: E_k \to E_{k-1}$ is the map
\begin{equation}
    d_E(\varphi) = d_{C^\bullet} \circ \varphi - (-1)^k \varphi \circ d_{\Delta^\bullet}.
\end{equation}
Thus for $k \in \Z$, an element $\varphi \in E_k$ consists of a family of degree $k$ maps $\varphi_n : \Delta^n \to C^n$, such that for every map $f : [m] \to [n]$ the following diagram commutes
\begin{equation}
   \begin{tikzcd}
	{\Delta^m} & {C^m} \\
	{\Delta^n} & {C^n}
	\arrow["f"', from=1-1, to=2-1]
	\arrow["{\varphi_m}", from=1-1, to=1-2]
	\arrow["{\varphi_n}"', from=2-1, to=2-2]
	\arrow["{C^f}", from=1-2, to=2-2]
\end{tikzcd} 
\end{equation}
and this makes sense, as pre or post-composing a degree $k$ map of chain complexes with a chain map is a degree $k$ map. 
This is equivalent to having a commutative diagram of the form
\begin{equation} \label{eq cosimplicial diagram for total complex}
\begin{tikzcd}
	{\Delta^0} & {\Delta^1} & {\Delta^2} & \dots \\
	{C^0} & {C^1} & {C^2} & \dots
	\arrow["{\varphi_0}"', from=1-1, to=2-1]
	\arrow[shift left=2, from=1-1, to=1-2]
	\arrow[shift right=1, from=1-1, to=1-2]
	\arrow[shift left=3, from=1-2, to=1-3]
	\arrow[shift right=3, from=1-2, to=1-3]
	\arrow[from=1-2, to=1-3]
	\arrow[shift left=2, from=2-1, to=2-2]
	\arrow[shift right=1, from=2-1, to=2-2]
	\arrow[shift left=3, from=2-2, to=2-3]
	\arrow[shift right=3, from=2-2, to=2-3]
	\arrow[from=2-2, to=2-3]
	\arrow[shift left=5, from=1-3, to=1-4]
	\arrow[shift right=4, from=1-3, to=1-4]
	\arrow[shift left=2, from=1-3, to=1-4]
	\arrow[shift right=1, from=1-3, to=1-4]
	\arrow[shift left=5, from=2-3, to=2-4]
	\arrow[shift right=4, from=2-3, to=2-4]
	\arrow[shift left=2, from=2-3, to=2-4]
	\arrow[shift right=1, from=2-3, to=2-4]
	\arrow["{\varphi_1}"', from=1-2, to=2-2]
	\arrow["{\varphi_2}"', from=1-3, to=2-3]
\end{tikzcd}
\end{equation}
where we have hidden the codegeneracy maps for clarity. For each $n \geq 0$, a degree $k$ map $\varphi_n : \Delta^n \to C^n$ is equivalently the data of an element $x_n$ in degree $k + n$ in $C^n$, corresponding to the top non-degenerate $n$-simplex $\iota_n \in (\Delta^n)_n$, along with an element $x_n \circ f$ in degree $k + m$ for every map $f : [m] \to [n]$. However, the diagram commuting implies that $x_n \circ f = C^f x_m$. In other words, the data of the $\{x_n \}_{n \geq 0}$ completely determine the whole diagram. Thus for $k \in \Z$, there is a bijection $E_k \cong \left( \tot^\Z C \right)_k \cong \prod_{q - p = k} C^{p,q}.$ Furthermore their differentials agree, thus defining an isomorphism $E \cong \tot^\Z C$. Since $\int_{n \in \ncat{\Delta}} \MapCh(N \R[\Delta^n], C^n) = \tau_{\geq 0} E$ and $\tot C = \tau_{\geq 0} \tot^\Z C$, they are isomorphic.
\end{proof}

\begin{Rem}
Let $d_\Map$ and $d_v$ denote the differentials $\prod_{q - p = k} C^{p,q} \to \prod_{q - p = k -1} C^{p,q}$ defined componentwise by
\begin{equation*}
    d_{\Map} = (d - (-1)^{q - p} \delta), \qquad d_v = (d + (-1)^q \delta).
\end{equation*}
The differential $d_v$ is more commonly seen for total complexes in the literature. There is an isomorphism $(\tot C, d_\Map) \cong (\tot C, d_v)$ given as follows. We wish to find isomorphisms $\psi_k : (\tot C)_k \to (\tot C)_k$ making the following diagrams commute for all $k \geq 0$
\begin{equation*}
\begin{tikzcd}
	{\prod_{q - p = k} C^{p,q}} & {\prod_{q-p=k} C^{p,q}} \\
	{\prod_{q - p = k - 1} C^{p,q}} & {\prod_{q- p = k-1}C^{p,q}}
	\arrow["{d_{\Map}}"', from=1-1, to=2-1]
	\arrow["{\psi_k}", from=1-1, to=1-2]
	\arrow["{\psi_{k-1}}"', from=2-1, to=2-2]
	\arrow["{d_v}", from=1-2, to=2-2]
\end{tikzcd}
\end{equation*}
namely we want an isomorphism of chain complexes. Let us define maps $\sigma_{p,q} : C^{p,q} \to C^{p,q}$ by
\begin{equation*}
    \sigma_{p,q} = \begin{cases}
        \text{ id} & \text{if } p \equiv 0,3 \text{ (mod 4)} \\
        - \text{id} & \text{if } p \equiv 1,2 \text{ (mod 4)}.
    \end{cases}
\end{equation*}
Then set $\psi_k = \prod_{q - p = k} \sigma_{p,q}$. This gives the desired isomorphism\footnote{We obtained the maps $\sigma_{p,q}$ by carefully following the procedure outlined in \cite{rickard2019}.}.
\end{Rem}

Let us now examine how Proposition \ref{prop total complex as an end} helps us compute $\infty$-stack cohomology for diffeological spaces. Suppose that $A'$ is a presheaf of chain complexes such that $A = \DK \, A'$ is an $\infty$-stack, and $X$ is a diffeological space. Then the $0$th $\infty$-stack cohomology of $X$ with values in $A$ is given by $\pi_0 \cH(X,A)$. Let us compute $\cH(X,A)$.
\begin{equation}
\begin{aligned} 
    \cH(X, A) & = \u{\spre(\cart)}(QX, \DK \, A') \\
    & \cong \u{\spre(\cart)}\left( \int^n \coprod_{(f_{n-1}, \dots, f_0)} yU_{p_n} \times \Delta^n, \DK \, A' \right) \\
    & \cong \int_n \prod_{(f_{n-1}, \dots, f_0)} \u{\ncat{sSet}}(\Delta^n, \u{\spre(\cart)}(yU_{p_n}, \DK \, A')) \\
    & \cong \int_n \prod_{(f_{n-1}, \dots, f_0)} \u{\ncat{sSet}}(\Delta^n, [\DK \, A'](U_{p_n})) \\
    & \cong \int_n \prod_{(f_{n-1}, \dots, f_0)} \u{\ncat{Ch}}(N \R \Delta^n, A'(U_{p_n})) \\
    & \cong \int_n \prod_{(f_{n-1}, \dots, f_0)} \DK \, \MapCh(N \R \Delta^n, A'(U_{p_n}))\\
    & \cong \DK \int_n \MapCh(N \R \Delta^n, \prod_{(f_{n-1}, \dots, f_0)} A'(U_{p_n})) \\
    & \cong \DK \; \tot A'(QX),
\end{aligned} 
\end{equation}
where the last isomorphism follows from Proposition \ref{prop total complex as an end}, and the third to last isomorphism follows from Lemma \ref{lem chain complexes hom and DK}. Thus we have proven the following.

\begin{Prop} \label{prop cohomology with coefficients in a presheaf of chain complexes}
Given a presheaf of chain complexes $A'$ such that $A = \DK A'$ is an $\infty$-stack, and $X$ a diffeological space, the $0$th $\infty$-stack cohomology of $X$ with values in $A$ is given by
\begin{equation*}
    \check{H}^0_\infty(X, A) \cong H_0( \tot A'(QX)).
\end{equation*}
\end{Prop}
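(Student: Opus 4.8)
The plan is to unwind the definitions of $\check{H}^0_\infty(X,A)$ and $\R\cH(X,A)$ and then run the chain of natural isomorphisms displayed in the excerpt just before the statement. First I would observe that since $A = \DK\, A'$ is an $\infty$-stack, it is in particular fibrant in $\cH$, and since $X$ is a diffeological space we have the explicit cofibrant replacement $QX$ with $QX_n = \coprod_{(f_{n-1},\dots,f_0) \in N(\ncat{Plot}(X))_n} yU_{p_n} \otimes \Delta^n$. Hence $\R\cH(X,A) = \u{\spre(\cart)}(QX, \DK\, A')$, and $\check{H}^0_\infty(X,A) = \pi_0$ of this simplicial set.

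The core computation proceeds in the following order. Using that $QX$ is built as a coend $\int^{n\in\ncat{\Delta}} \coprod_{(f_{n-1},\dots,f_0)} yU_{p_n}\otimes\Delta^n$, and that $\u{\spre(\cart)}(-,-)$ sends colimits in the first variable to limits, I rewrite the mapping space as an end $\int_n \prod_{(f_{n-1},\dots,f_0)} \u{\ncat{sSet}}(\Delta^n, \u{\spre(\cart)}(yU_{p_n},\DK\,A'))$. By the enriched Yoneda lemma $\u{\spre(\cart)}(yU_{p_n},\DK\,A') \cong [\DK\,A'](U_{p_n}) = \DK(A'(U_{p_n}))$. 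Then I invoke the Dold-Kan enrichment: $\u{\ncat{sSet}}(\Delta^n, \DK\, C) \cong \u{\ncat{Ch}}(N\R\Delta^n, C) \cong \DK\,\MapCh(N\R\Delta^n, C)$, the last isomorphism being exactly Lemma \ref{lem chain complexes hom and DK}. Since $\DK$ is a right adjoint (indeed an equivalence) it commutes with the products and the end, so the whole expression becomes $\DK\int_n \MapCh(N\R\Delta^n, \prod_{(f_{n-1},\dots,f_0)} A'(U_{p_n})) = \DK\int_n \MapCh(N\R\Delta^n, A'(QX_n))$, where $A'(QX)$ denotes the cosimplicial chain complex $n\mapsto A'(QX_n)$. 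Finally Proposition \ref{prop total complex as an end} identifies this end with $\tot A'(QX)$, giving $\R\cH(X,A) \simeq \DK\,\tot A'(QX)$. Taking $\pi_0$ and using that $\pi_0 \DK\, C \cong H_0(C)$ for any chain complex $C$ yields $\check{H}^0_\infty(X,A) \cong H_0(\tot A'(QX))$.

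The main obstacle I expect is bookkeeping rather than conceptual: one must be careful that the coend presentation of $QX$ interacts correctly with the simplicial tensoring (so that $\u{\spre(\cart)}(yU_{p_n}\otimes\Delta^n, Y) \cong \u{\ncat{sSet}}(\Delta^n, \u{\spre(\cart)}(yU_{p_n},Y))$, which is the tensor-hom adjunction for the $\ncat{sSet}$-enrichment), and that commuting $\DK$ past the end is legitimate — this is fine because $\DK$ is an equivalence of categories and hence preserves all limits, but it should be stated. A secondary point worth a sentence is that since $A$ is already fibrant in $\cH$ and $QX$ cofibrant, no further fibrant/cofibrant replacement is needed, so $\R\cH(X,A)$ is literally $\u{\spre(\cart)}(QX,A)$ on the nose. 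No genuinely hard estimate or construction is required; the content is the assembly of the enriched adjunctions together with Lemma \ref{lem chain complexes hom and DK} and Proposition \ref{prop total complex as an end}.
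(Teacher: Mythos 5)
Your proposal is correct and follows essentially the same route as the paper: the displayed chain of isomorphisms preceding the statement (coend presentation of $QX$, tensor--hom adjunction, enriched Yoneda, Lemma \ref{lem chain complexes hom and DK}, commuting $\DK$ past the end, and Proposition \ref{prop total complex as an end}) is exactly the paper's proof, after which one takes $\pi_0$ and uses $\pi_0 \DK\, C \cong H_0(C)$. Your added remarks about fibrancy/cofibrancy making the derived mapping space literal are sound and consistent with the paper's conventions.
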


Propositon \ref{prop cohomology with coefficients in a presheaf of chain complexes} allows us to get a component level description of $\infty$-stack cohomology of diffeological spaces with values in the $\infty$-stacks of interest, see Section \ref{section examples of infinity stacks}. Let us now use Proposition \ref{prop total complex as an end} to prove the following well-known folklore result.

\begin{Prop} \label{prop holim of cosimplicial chain complex}
Let $C$ be a cosimplicial chain complex, then
\begin{equation}
    \text{holim}_{n \in \ncat{\Delta}} C^n \simeq \tot C,
\end{equation}
where we are computing the homotopy limit in the category of chain complexes equipped with the projective model structure.
\end{Prop}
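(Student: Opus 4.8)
The plan is to express the homotopy limit as a derived mapping space out of a cofibrant replacement of the constant cosimplicial object and then identify that mapping space with the end computed in Proposition~\ref{prop total complex as an end}. Concretely, recall that for any small category $\mathcal{I}$ and any diagram $D: \mathcal{I} \to \mathcal{M}$ in a simplicial model category $\mathcal{M}$, the homotopy limit $\text{holim}_{\mathcal{I}} D$ can be computed by the cosimplicial/Bousfield--Kan cobar construction, and when $\mathcal{I} = \ncat{\Delta}$ this takes the form of the totalization $\Tot$ of a cosimplicial object in $\mathcal{M}$. Here $\mathcal{M} = \ncat{Ch}$ with the projective model structure, which is simplicial (as recorded in Proposition~\ref{prop projective model structure on chain complexes}), so the machinery applies.

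First I would write, using the simplicial bar/cobar construction (cf.\ \cite[Section 4.2]{riehl2014categorical} or the cosimplicial cobar construction of Bousfield--Kan), that
\begin{equation*}
    \text{holim}_{n \in \ncat{\Delta}} C^n \simeq \int_{n \in \ncat{\Delta}} (C^n)^{N(n/\ncat{\Delta})},
\end{equation*}
where $(-)^{(-)}$ denotes the cotensoring of $\ncat{Ch}$ over $\ncat{sSet}$ discussed after Lemma~\ref{lem chain complexes hom and DK}, and $N(n/\ncat{\Delta})$ is the nerve of the undercategory. Since each $C^n$ is fibrant (every object of $\ncat{Ch}$ is fibrant) and the cosimplicial frame supplied by cotensoring with the contractible simplicial sets $N(n/\ncat{\Delta})$ is a Reedy fibrant replacement of the constant cosimplicial diagram on the point, this end does compute the homotopy limit. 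Next, the nerve $N(n/\ncat{\Delta})$ is homotopy equivalent to $\Delta^0$, but more is true: there is a canonical simplicial homotopy equivalence, natural in $[n]$, between the cosimplicial simplicial set $n \mapsto N(n/\ncat{\Delta})$ and the cosimplicial simplicial set $n \mapsto \Delta^n$ (this is the standard fact that $n\mapsto N(n/\ncat{\Delta})$ and $n \mapsto \Delta^n$ are both Reedy-cofibrant resolutions of the point, hence levelwise equivalent via a zig-zag that can be rectified). Applying $C^n$-cotensoring, which is a right Quillen bifunctor, transports this equivalence to a weak equivalence
\begin{equation*}
    \int_{n \in \ncat{\Delta}} (C^n)^{N(n/\ncat{\Delta})} \simeq \int_{n \in \ncat{\Delta}} (C^n)^{\Delta^n}.
\end{equation*}

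Then I would unwind the right-hand side: by definition of the cotensoring, $(C^n)^{\Delta^n} = N\R \,\u{\ncat{Ch}}(N\R\Delta^n, C^n)$, and by Lemma~\ref{lem chain complexes hom and DK} this is $N\R$ applied to $\DK$ of $\MapCh(N\R\Delta^n, C^n)$; but $N\R \circ \DK$ is naturally isomorphic to the identity (it is $N$ applied to $\DK$, and $N\DK \cong \id$ by Dold--Kan), so $(C^n)^{\Delta^n} \cong \MapCh(N\R\Delta^n, C^n)$ as chain complexes, naturally in $[n]$. Hence
\begin{equation*}
    \int_{n \in \ncat{\Delta}} (C^n)^{\Delta^n} \cong \int_{n \in \ncat{\Delta}} \MapCh(N\R[\Delta^n], C^n) \cong \tot C,
\end{equation*}
the last isomorphism being exactly Proposition~\ref{prop total complex as an end}. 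Stringing the equivalences together yields $\text{holim}_{n\in\ncat{\Delta}} C^n \simeq \tot C$.

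The main obstacle I expect is the first step: justifying cleanly that the specific end $\int_n (C^n)^{N(n/\ncat{\Delta})}$ (or $\int_n (C^n)^{\Delta^n}$) really does model the homotopy limit over $\ncat{\Delta}$, rather than just some limit. This requires knowing that $n\mapsto \Delta^n$ (resp.\ $n \mapsto N(n/\ncat{\Delta})$) is a Reedy-cofibrant cosimplicial frame on the terminal simplicial set, together with the fact that every object of $\ncat{Ch}$ is fibrant so that no further fibrant replacement of the $C^n$ is needed; this is standard (see Hirschhorn, \cite{hirschhorn2009model}, Chapters 18--19, on the Bousfield--Kan formula for homotopy limits via frames), and I would simply cite it. A secondary subtlety is making the identification $n \mapsto N(n/\ncat{\Delta}) \simeq n \mapsto \Delta^n$ sufficiently natural/coherent to push through the end; invoking that both are cofibrant resolutions of the point in the Reedy model structure on cosimplicial simplicial sets, hence connected by a Reedy weak equivalence, and that cotensoring into a fibrant object sends Reedy weak equivalences of cofibrant cosimplicial objects to weak equivalences, handles this. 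Everything after that is the purely algebraic repackaging via Dold--Kan and Proposition~\ref{prop total complex as an end}, which is routine.
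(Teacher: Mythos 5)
Your strategy is the same as the paper's in outline (Bousfield--Kan/totalization via cosimplicial frames, followed by Proposition \ref{prop total complex as an end}), but there is a genuine gap at the frame-comparison step, i.e.\ the passage from $\int_{n}(C^n)^{N(n/\ncat{\Delta})}$ to $\int_{n}(C^n)^{\Delta^n}$. To push the levelwise equivalence between the two Reedy-cofibrant cosimplicial simplicial sets through the end, you need the functor $\mathrm{hom}(-,C)=\int_n (C^n)^{(-)}$ to send weak equivalences of Reedy-cofibrant cosimplicial simplicial sets to quasi-isomorphisms; by Ken Brown's lemma and the Reedy form of SM7, this holds when $C$ is \emph{Reedy fibrant} as a cosimplicial object of $\ncat{Ch}$ (i.e.\ the matching maps $C^n\to M^nC$ are projective fibrations), not merely when each $C^n$ is fibrant. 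Your justification --- ``every object of $\ncat{Ch}$ is fibrant so that no further fibrant replacement of the $C^n$ is needed'' --- only gives objectwise fibrancy, which suffices for the Bousfield--Kan formula in your first display but not for the second step (equivalently, not for \cite[Theorem 19.8.7]{hirschhorn2009model}, whose hypothesis is Reedy fibrancy). This is exactly the point where $\Tot$ of a cosimplicial object can fail to model the homotopy limit even though all of its objects are fibrant. (A smaller slip: for homotopy limits the Bousfield--Kan cotensor is taken over the nerves of the over-categories $\ncat{\Delta}\downarrow[n]$ rather than the under-categories; both are contractible with a canonical comparison to $\Delta^n$, so this is cosmetic.)

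The missing fact is true, but it is the actual content of the paper's proof: every cosimplicial chain complex of vector spaces is Reedy fibrant. Since limits of chain complexes are computed degreewise and projective fibrations are the maps surjective in positive degrees, this reduces to showing that the matching map $A^n\to M^nA$ of a cosimplicial \emph{vector space} is surjective; the paper does this by identifying $M^nA$ with the subspace of $(A^{n-1})^{n}$ cut out by the codegeneracy compatibilities and then running an explicit induction using the cosimplicial identities. If you supply that verification (or an accurate citation for it), your argument closes up and becomes essentially the paper's proof.
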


\begin{proof}
First let us show that every cosimplicial chain complex $C$ is Reedy fibrant. For more information about Reedy categories, see \cite[Section 14]{riehl2014categorical}.  We wish to show that the matching map $C^n \to M^n C$ is a projective fibration. To do so, it will be sufficient to show that if $A$ is a cosimplicial vector space, then the canonical map $s: A^n \to M^n A$, defined by the same limit above, is surjective. This is sufficient because limits of chain complexes are computed degreewise, and a map is a projective fibration if and only if it is surjective in all positive degrees. We follow the proof\footnote{Note that the proof given in that note has several typographical errors, which is why we chose to reproduce a full proof here.} given in \cite[Lemma 21.1]{cosimplicialmodulejardine}. Let $D_n$ denote the category whose objects are surjective maps $[n] \xrightarrow{\sigma} [k]$ where $k = n-1$ or $k = n-2$, and whose morphisms are either identities or coface maps $s^j: [n - 1] \to [n-2]$
\begin{equation*}
\begin{tikzcd}
	& {[n]} \\
	{[n-1]} && {[n-2]}
	\arrow["{s^j}", two heads, from=2-1, to=2-3]
	\arrow["{s^i}"', two heads, from=1-2, to=2-1]
	\arrow["{s^j s^i}", two heads, from=1-2, to=2-3]
\end{tikzcd}
\end{equation*}
Then by \cite[Proposition 15.2.6]{hirschhorn2009model}, 
\begin{equation}
    M^n A \cong \lim_{\sigma : [n] \twoheadrightarrow [k]} A^k.
\end{equation}
Now let us label every object of $D_n$ by either $s^i : [n] \to [n-1]$ or $\sigma : [n] \to [n-2]$, and every non-identity morphism by a pair $(s^i, s^j)$. Since $s^j s^i = s^i s^{j+1}$ for every $i \leq j$, the objects $s^j s^i : [n] \to [n-2]$ and $s^i s^{j+1} : [n] \to [n-2]$ are the same, but the morphisms $(s^i, s^j)$ and $(s^{j+1}, s^i)$ are not. We can write the above limit as the equalizer (where we are not denoting the identity maps)
\begin{equation}
    M^n A \cong \text{eq} \left( \prod_{s^i} A^{n-1} \times \prod_{\sigma} A^{n-2} \underset{\beta}{\overset{\alpha}{\rightrightarrows}} \prod_{(s^i, s^j)} A^{n-2} \right),
\end{equation}
where $\alpha$ is defined in component $(s^i, s^j)$ by $\alpha(a, a') = s^j a_i$, and $\beta$ is defined in component $(s^i, s^j)$ by $\beta(a, a') = a'_{s^j s^i}$. Since $\beta$ in component $(s^i, s^j)$ and in component $(s^{j+1}, s^i)$ are equal $a'_{s^j s^i} = a'_{s^i s^{j+1}}$ for $i \leq j$, this equalizer will be isomorphic to the subspace of $(A^{n-1})^n$ of those tuples $a = (a_0, \dots, a_{n-1})$ where $s^j a_i = s^i a_{j+1}$ for $i \geq j$. The matching map $s: A^n \to M^n A$ is then given by $s(a_0, \dots, a_{n-1}) = (s^0 a_0, \dots, s^{n-1} a_{n-1})$.

Now let us prove by induction that $s$ is surjective. In the base case, note that $(0, \dots, 0) = s(0, \dots, 0)$. Now suppose that every element $b \in M^n A$ of the form $b = (b_0, \dots, b_{j-1}, 0, \dots, 0)$ is in the image of $s$. We wish to show that every element of the form $a = (a_0, \dots, a_{j-1}, a_j, 0, \dots, 0)$ is in the image of $s$. 

First note that for $i \leq j$, we have $s^j a_i = s^i a_{j+1} = s^i 0 = 0$. Thus $s^j d^i a_i = d^i s^{j-1} a_i = 0$ for $i < j$ and $s^j d^j a_j = a_j$. Thus 
\begin{equation*}
 a - s(d^j a_j) = (a_0 - s^0 d^j a_j, \dots, a_{j-1} - s^{j-1} d^j a_j,0, \dots, 0).  
\end{equation*}
By the induction hypothesis, there exists a $c \in A^n$ such that $s(c) = a - s(d^j a_j)$. Therefore $a = s(c + d^j a_j)$. 

So we have shown that $s: A^n \to M^n A$ is surjective. This implies that $s : C^n \to M^n C$ is a projective fibration for all cosimplicial chain complexes $C$. This implies that all cosimplicial chain complexes are Reedy fibrant. 

Since $C$ is Reedy fibrant, \cite[Theorem 19.8.7]{hirschhorn2009model} implies that the totalization of $C$ computes the homotopy limit, i.e. $\text{holim}_{\ncat{\Delta}} C \simeq \int_{n \in \ncat{\Delta}} \MapCh(N \R[\Delta^n], C^n)$. Thus Proposition \ref{prop total complex as an end} proves that $\text{holim}_{\ncat{\Delta}} C \simeq \tot C$.
\end{proof}

\begin{Rem}
During the writing of this paper, the preprint \cite{arakawa2023homotopy} came out, which also proves Proposition \ref{prop holim of cosimplicial chain complex} in greater generality. However since the scope of our argument is much smaller, we believe our proof of Proposition \ref{prop holim of cosimplicial chain complex} is simpler and more direct.
\end{Rem}

\section{Proof of Theorem \ref{th hopullback diagram}} \label{section proof}

In this section we prove Theorem \ref{th hopullback diagram}. We will need several technical preliminary results.

Given a chain complex $C$, consider the chain complex $C^{\Delta^1} \cong \MapCh(N \R \Delta^1, C)$. This is the chain complex with $C^{\Delta^1}_n = C_n \oplus C_n \oplus C_{n+1}$ for $n > 0$, and with differential
\begin{equation*}
    d_n : C_n \oplus C_n \oplus C_{n+1} \to C_{n-1} \oplus C_{n-1} \oplus C_n
\end{equation*}
given by $d_n (x,y,z) = (dx,dy, dz - (-1)^n[-x + y])$.

This means that for $k = 0$, we have
\begin{equation*}
    C^{\Delta^1}_0 \cong \ker \left( C_0 \oplus C_0 \oplus C_1 \xrightarrow{d_0} 0 \oplus 0 \oplus C_0 \right)
\end{equation*}
where $d_0(x,y,z) = (0,0, dz + x - y)$. There is an isomorphism $C^{\Delta^1}_0 \cong C_0 \oplus C_1$ given by the maps
\begin{equation*}
    \sigma : C^{\Delta^1}_0 \to C_0 \oplus C_1, \qquad \sigma(x,y,z) = (x,z)
\end{equation*}
\begin{equation*}
    \tau : C_0 \oplus C_1 \to C^{\Delta^1}_0, \qquad \tau(x,z) = (x, x + dz, z).
\end{equation*}
Thus the differential $d : C^{\Delta^1}_1 \to C^{\Delta^1}_0$ is isomorphic to the map $\alpha = \sigma \circ d_1$,
\begin{equation*}
    \alpha: C_1 \oplus C_1 \oplus C_2 \to C_0 \oplus C_1, \qquad \alpha(x,y,z) = \sigma d_1(x,y,z) = \sigma(dx,dy,dz - x + y) = (dx, dz - x + y).
\end{equation*}

The map $\pi : C^{\Delta^1} \to C \oplus C$ is given in degree $k > 0$ by
\begin{equation}
 \pi_k : C_k \oplus C_k \oplus C_{k+1} \to C_k \oplus C_k, \qquad \pi_k(x,y,z) = (x,y). 
\end{equation}
It is given in degree $k = 0$ by
\begin{equation} \label{eq projection from path chain complex degree 0}
    \pi_0: C_0 \oplus C_1 \to C_0 \oplus C_0, \qquad \pi_0(x,z) = (x, x + dz).
\end{equation}

Let us now state a few model categorical results that we will need for the proof of Theorem \ref{th hopullback diagram}.

\begin{Lemma}[{\cite[Corollary 13.3.8]{hirschhorn2009model}}] \label{lem hopullback is actual pullback when fibration}
Let $\cat{C}$ be a right proper model category and let
\begin{equation*}
    \begin{tikzcd}
	A & B \\
	C & D
	\arrow[from=1-1, to=2-1]
	\arrow[from=1-1, to=1-2]
	\arrow["g"', from=2-1, to=2-2]
	\arrow["f", from=1-2, to=2-2]
	\arrow["\lrcorner"{anchor=center, pos=0.125}, draw=none, from=1-1, to=2-2]
\end{tikzcd}
\end{equation*}
be a pullback square in $\cat{C}$ such that at least one of maps $f$ or $g$ is a fibration. Then the above square is a homotopy pullback square.
\end{Lemma}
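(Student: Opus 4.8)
The plan is to unwind the definition of a homotopy pullback square and reduce the whole claim to a single application of right properness. Recall that, since the given square is a strict pullback, it carries a canonical comparison map from $A$ to the homotopy pullback of the cospan $C \xrightarrow{g} D \xleftarrow{f} B$, and the square is a homotopy pullback precisely when this map is a weak equivalence; moreover it suffices to verify this for one convenient model of the homotopy pullback, namely the strict pullback obtained after factoring one of the two legs as an acyclic cofibration followed by a fibration.

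By the symmetry of the hypothesis we may assume $f : B \to D$ is the fibration. The key choice is then to factor the \emph{other} leg: write $g = r \circ i$ with $i : C \to \widetilde C$ an acyclic cofibration and $r : \widetilde C \to D$ a fibration, so that the strict pullback $\widetilde C \times_D B$ is a model for the homotopy pullback. Next I would use the elementary identity of iterated pullbacks
\[
A \;=\; C \times_D B \;\cong\; \bigl( \widetilde C \times_D B \bigr) \times_{\widetilde C} C,
\]
which exhibits the comparison map $A \to \widetilde C \times_D B$ as the base change of $i$ along the projection $\mathrm{pr} : \widetilde C \times_D B \to \widetilde C$. Since $\mathrm{pr}$ is itself the base change of the fibration $f$ along $r$, it is a fibration; and $i$ is a weak equivalence. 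Right properness says exactly that the base change of a weak equivalence along a fibration is a weak equivalence, so the comparison map is a weak equivalence and the square is a homotopy pullback.

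The only place the hypothesis is used — and the one step that requires care — is the choice to factor the leg opposite to the assumed fibration, so that the projection off the pullback inherits fibrancy from $f$; had we instead factored $f$, the same argument would have demanded that $g$ be a fibration. Beyond this the proof is entirely formal: bookkeeping with pullback rectangles together with the definition of right properness, plus the standard facts that fibrations and acyclic cofibrations are stable under base change (see \cite[Section 13.3]{hirschhorn2009model} for the precise setup and for well-definedness of homotopy pullbacks).
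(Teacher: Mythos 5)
Your argument is correct: factoring the leg opposite the given fibration, identifying the comparison map $A \to \widetilde C \times_D B$ as the base change of the weak equivalence $i$ along the fibration $\mathrm{pr}$, and invoking right properness is exactly the standard proof of \cite[Corollary 13.3.8]{hirschhorn2009model}. The paper itself gives no proof here --- it simply cites Hirschhorn --- so your write-up reproduces the cited argument faithfully, including the one genuinely delicate point (that one must factor the \emph{other} leg so that the projection inherits fibrancy from $f$).
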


\begin{Lemma}[{\cite[Proposition 13.3.15]{hirschhorn2009model}}] \label{lem pasting law}
Let $\cat{C}$ be a right proper model category, and suppose we have a commutative diagram of the form
\begin{equation}
    \begin{tikzcd}
	A & B & C \\
	D & E & F
	\arrow[from=1-1, to=2-1]
	\arrow[from=1-1, to=1-2]
	\arrow[from=2-1, to=2-2]
	\arrow[from=1-2, to=2-2]
	\arrow[from=2-2, to=2-3]
	\arrow[from=1-2, to=1-3]
	\arrow[from=1-3, to=2-3]
\end{tikzcd}
\end{equation}
and suppose that the right hand square is a homotopy pullback square. Then the left hand square is a homotopy pullback square if and only if the outer rectangle is a homotopy pullback square.
\end{Lemma}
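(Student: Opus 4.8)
The plan is to reduce Lemma~\ref{lem pasting law} to the strict pasting law for pullback squares, which holds in any category, by replacing the rightmost cospan with a fibrant model and then invoking right properness of $\cat{C}$ repeatedly together with Lemma~\ref{lem hopullback is actual pullback when fibration}. First I would factor the map $C \to F$ as a trivial cofibration $C \xrightarrow{\;\sim\;} C'$ followed by a fibration $q : C' \twoheadrightarrow F$, and form the ordinary pullbacks $P \coloneqq E \times_F C'$ and, using the strict pasting law, $P' \coloneqq D \times_E P \cong D \times_F C'$. The projections $P \to E$ and $P' \to D$ are pullbacks of the fibration $q$, hence are again fibrations, so Lemma~\ref{lem hopullback is actual pullback when fibration} shows that the squares with corners $(P, C', E, F)$, $(P', P, D, E)$ and $(P', C', D, F)$ are all homotopy pullback squares. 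Since $C \xrightarrow{\;\sim\;} C'$ is a weak equivalence over $F$ and $q$ is a fibration, right properness identifies $P$ with a model of the homotopy pullback of the cospan $E \to F \leftarrow C$, and $P'$ with a model of the homotopy pullback of the cospan $D \to F \leftarrow C$.

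Next I would bring in the hypothesis that the right-hand square is a homotopy pullback: this says precisely that the canonical comparison map $B \to P$ is a weak equivalence. As $P \to E$ is a fibration, right properness makes the induced map $D \times_E B \to D \times_E P = P'$ a weak equivalence, so $P'$, with its projection to $D$, is also a model of the homotopy pullback of the left-hand cospan $D \to E \leftarrow B$. Under these identifications both ``the left square is a homotopy pullback'' and ``the outer rectangle is a homotopy pullback'' become the single statement that the canonical map $A \to P'$ is a weak equivalence, and the two maps $A \to P'$ produced by the two descriptions agree by the universal property of pullbacks and commutativity of the original diagram. This yields the claimed equivalence.

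I expect the only real work to be the bookkeeping: verifying that the two comparison maps $A \to P'$ genuinely coincide, and checking that each application of ``a pullback of a weak equivalence along a fibration is a weak equivalence'' is made along an honest fibration — which is exactly why one fibrantly replaces the rightmost cospan rather than an interior one. Given right properness and Lemma~\ref{lem hopullback is actual pullback when fibration} there is no conceptual obstacle; the statement is essentially formal, and the argument above is, in slightly different packaging, the proof of \cite[Proposition 13.3.15]{hirschhorn2009model}.
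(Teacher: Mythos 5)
The paper does not actually prove this lemma: it is quoted directly from Hirschhorn \cite[Proposition 13.3.15]{hirschhorn2009model}, so there is no in-paper argument to compare against. Your overall architecture --- fibrantly replace $C \to F$ by $C \xrightarrow{\sim} C' \twoheadrightarrow F$, form $P = E \times_F C'$ and $P' = D \times_E P \cong D \times_F C'$, note that the projections $P \to E$ and $P' \to D$ are fibrations, and reduce both ``left square is a homotopy pullback'' and ``outer rectangle is a homotopy pullback'' to the single condition that the comparison map $A \to P'$ is a weak equivalence --- is the standard proof and is essentially the cited one.

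One step is wrong as written, however. You assert that ``as $P \to E$ is a fibration, right properness makes the induced map $D \times_E B \to D \times_E P = P'$ a weak equivalence.'' The map $D \times_E B \to D \times_E P$ is the base change of the weak equivalence $B \to P$ along $D \times_E P \to P$, and that latter map is a base change of the arbitrary map $D \to E$; it need not be a fibration, so right properness does not apply. Indeed the strict pullback $D \times_E B$ is not homotopy invariant here, since neither leg of the cospan $D \to E \leftarrow B$ is assumed to be a fibration, and the claimed equivalence can genuinely fail. Moreover, even if it held, a weak equivalence $D \times_E B \to P'$ would not by itself exhibit $P'$ as the homotopy pullback of $D \to E \leftarrow B$. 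The correct (and standard) justification of the conclusion you want is different: the homotopy pullback is invariant under objectwise weak equivalence of cospans, so $\text{holim}(D \to E \leftarrow B) \simeq \text{holim}(D \to E \leftarrow P)$, and the latter is computed by the strict pullback $P' = D \times_E P$ because $P \to E$ is a fibration (Lemma \ref{lem hopullback is actual pullback when fibration}). With that substitution, and the bookkeeping you already flag about the two comparison maps $A \to P'$ agreeing, the argument closes up correctly.
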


\begin{Lemma}[{\cite[Corollary 2.3.10]{schreiber2013differential}}] \label{lem homotopy pullback using path space fibration}
Let $\cat{C}$ be a model category, and suppose $X,Y,Z$ are fibrant objects in $\cat{C}$ and $f: X \to Z$ and $g: Y \to Z$ are maps between them. Then the homotopy pullback of $f$ and $g$ is naturally weak equivalent to the actual pullback
\begin{equation}
    \begin{tikzcd}
	{X \times^h_Z Y} & {Z^{I}} \\
	{X \times Y} & {Z \times Z}
	\arrow[from=1-1, to=2-1]
	\arrow[from=1-1, to=1-2]
	\arrow["{f \times g}"', from=2-1, to=2-2]
	\arrow[ from=1-2, to=2-2]
	\arrow["\lrcorner"{anchor=center, pos=0.125}, draw=none, from=1-1, to=2-2]
\end{tikzcd}
\end{equation}
where $Z^{I} \to Z \times Z$ is a path object for $Z$.
\end{Lemma}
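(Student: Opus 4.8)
This is the classical path-space description of a homotopy fibre product; I would prove it by the usual mapping-path-space argument, noting that fibrancy of $X,Y,Z$ is exactly what makes it go through.

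First fix a \emph{good} path object for $Z$: factor the diagonal $\Delta_Z\colon Z\to Z\times Z$ as a weak equivalence $w\colon Z\xrightarrow{\ \sim\ }Z^I$ followed by a fibration $(d_0,d_1)\colon Z^I\to Z\times Z$ (such a factorization exists, and this $Z^I\to Z\times Z$ is the path object in the statement). Since $Z$ is fibrant, $Z^I$ is fibrant; and each endpoint map $d_i\colon Z^I\to Z$ is a trivial fibration --- a fibration, being $(d_0,d_1)$ followed by a product projection $Z\times Z\to Z$, and a weak equivalence, since $d_i\circ w=\mathrm{id}_Z$ with $w$ a weak equivalence.

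Next form the mapping path space $P_f\coloneqq X\times_{Z,d_0}Z^I=\{(x,\gamma):f(x)=d_0\gamma\}$, with the maps $P_f\to X$, $(x,\gamma)\mapsto x$, and $P_f\to Z$, $(x,\gamma)\mapsto d_1\gamma$. The first is the pullback of the trivial fibration $d_0$, hence a trivial fibration, and $x\mapsto(x,w f(x))$ is a section of it, so $X\xrightarrow{\ \sim\ }P_f$ is a weak equivalence. The second is a fibration: the map $P_f\to X\times Z$, $(x,\gamma)\mapsto(x,d_1\gamma)$, is the pullback of the fibration $(d_0,d_1)$ along $f\times\mathrm{id}_Z\colon X\times Z\to Z\times Z$, and composing with $\mathrm{pr}_Z\colon X\times Z\to Z$ (a fibration, $X$ being fibrant) gives $P_f\to Z$. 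So $X\xrightarrow{\ \sim\ }P_f\twoheadrightarrow Z$ is a factorization of $f$ into a weak equivalence followed by a fibration, with $P_f$ fibrant.

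Since $X,Y,Z$ are fibrant, the homotopy pullback of $f$ and $g$ is computed by the strict pullback $P_f\times_Z Y$ of the fibration $P_f\twoheadrightarrow Z$ along $g$, so $X\times^h_Z Y\simeq P_f\times_Z Y$. Finally,
\begin{equation*}
    P_f\times_Z Y=\bigl\{(x,\gamma,y):f(x)=d_0\gamma,\ d_1\gamma=g(y)\bigr\}\ \cong\ (X\times Y)\times_{Z\times Z}Z^I,
\end{equation*}
both sides being the subobject of $X\times Y\times Z^I$ equalizing $(d_0,d_1)\circ\mathrm{pr}_{Z^I}$ and $(f\circ\mathrm{pr}_X,\,g\circ\mathrm{pr}_Y)$. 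Transporting the weak equivalence across this isomorphism gives the statement, and all the constructions --- the factorization of $\Delta_Z$, the mapping path space $P_f$, the two pullbacks --- being functorial in the cospan $X\xrightarrow{f}Z\xleftarrow{g}Y$, the equivalence is natural. The only points that need care are genuinely small: that the path object is taken good (so $P_f\to Z$ really is a fibration), and that the fibrancy hypothesis is what lets one conclude that the single strict pullback $P_f\times_Z Y$ already computes the homotopy pullback rather than merely a pullback.
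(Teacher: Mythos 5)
Your proof is correct. Note that the paper does not prove this lemma at all --- it is imported verbatim as a citation to \cite[Corollary 2.3.10]{schreiber2013differential} --- so there is no in-paper argument to compare against; what you have written is the standard mapping-path-space proof of the cited result, and it is sound. The two points that actually require care are exactly the ones you flag: the path object must be good (the factorization of $\Delta_Z$ into a weak equivalence followed by a fibration, so that each $d_i$ is a trivial fibration and $P_f \to Z$ is a fibration), and the replacement of only one leg by a fibration suffices to compute the homotopy pullback because $X$, $Y$, $Z$ (hence $P_f$) are all fibrant --- no right properness is needed, which matters since the lemma is stated for an arbitrary model category.
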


\begin{Lemma}[{\cite[Proposition 3.3.16]{hirschhorn2009model}}] \label{lem fibrations of infinity stacks}
Suppose $f: X \to Y$ is a map of $\infty$-stacks on $\cart$ that is an projective fibration. Then it is a fibration in the \v{C}ech model structure.
\end{Lemma}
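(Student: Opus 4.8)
The plan is to recognize this statement as a special case of the general good behaviour of fibrations under left Bousfield localization, and to reproduce the short retract argument underlying the cited Hirschhorn result. Recall that $\cH$ is by construction the left Bousfield localization of the projective model structure $\H$ at the class $\check{C}$ of \v{C}ech nerve projections, that $\H$ (and hence $\cH$) is left proper, and that the $\infty$-stacks are precisely the fibrant objects of $\cH$. So it suffices to prove the following abstract claim and specialize it: in a left Bousfield localization $L_S\mathcal{M}$ of a left proper model category $\mathcal{M}$, any map $f\colon X \to Y$ between $L_S\mathcal{M}$-fibrant objects that is an $\mathcal{M}$-fibration is automatically an $L_S\mathcal{M}$-fibration. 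Taking $\mathcal{M} = \H$, $L_S\mathcal{M} = \cH$, and $f$ a projective fibration between $\infty$-stacks then yields the lemma verbatim.

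First I would record the two structural facts about localization that the argument rests on: cofibrations in $\mathcal{M}$ and in $L_S\mathcal{M}$ coincide, every $\mathcal{M}$-weak equivalence is an $S$-local (hence $\cH$) weak equivalence, and conversely an $S$-local weak equivalence between $S$-local (i.e.\ $L_S\mathcal{M}$-fibrant) objects is already an $\mathcal{M}$-weak equivalence. Granting these, the proof is a retract trick. Factor $f$ in $\cH$ as $X \xrightarrow{i} Z \xrightarrow{p} Y$ with $i$ a $\cH$-acyclic cofibration and $p$ a $\cH$-fibration. Since $Y$ is $\cH$-fibrant and $p$ is a $\cH$-fibration, $Z$ is $\cH$-fibrant; thus $i$ is a $\cH$-weak equivalence between $\cH$-fibrant objects, so by the third fact above it is an $\mathcal{M}$-weak equivalence, and therefore an $\mathcal{M}$-acyclic cofibration. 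Because $f$ is an $\mathcal{M}$-fibration it has the right lifting property against $i$; solving the lifting problem whose top edge is $\mathrm{id}_X$ and bottom edge is $p$ produces a retraction $r\colon Z \to X$ with $r i = \mathrm{id}_X$ and $f r = p$. This exhibits $f$ as a retract of $p$ in the arrow category of $\spre(\cart)$, and since $\cH$-fibrations are closed under retracts and $p$ is a $\cH$-fibration, $f$ is a $\cH$-fibration, as desired.

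The one step requiring genuine care — and the step I expect to be the main obstacle — is the claim that a $\cH$-weak equivalence between $\cH$-fibrant objects is an $\mathcal{M}$-weak equivalence. This is precisely where left properness of $\H$ and the characterization of $\check{C}$-local equivalences in terms of derived simplicial mapping spaces into local objects are used, and it is the substantive content packaged inside Hirschhorn's Proposition 3.3.16. In writing this up I would either cite that proposition directly for the abstract claim, or reproduce the standard Whitehead-type argument: for $\cH$-fibrant targets the $\cH$-weak equivalences are detected by $\u{\spre(\cart)}(-,W)$ for all $\cH$-fibrant $W$, which coincides with the detection of $\mathcal{M}$-weak equivalences once both source and target are already local. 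Everything else in the argument is formal and needs no computation.
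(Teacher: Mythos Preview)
The paper does not give a proof of this lemma at all; it simply records the statement with a citation to Hirschhorn's Proposition~3.3.16. Your proposal goes further and reproduces the standard retract argument that underlies that cited result, and the argument you sketch is correct: factor $f$ in $\cH$, observe that the acyclic-cofibration part is between local objects and hence is already a projective weak equivalence, lift against $f$, and conclude $f$ is a retract of a $\cH$-fibration. The one substantive input you flag, that a $\cH$-weak equivalence between $\cH$-fibrant objects is a projective weak equivalence, is itself a separate statement in Hirschhorn (his 3.2.13), so if you want the write-up to be self-contained you should either cite that or give the derived-mapping-space argument you outline; otherwise a bare citation, as the paper does, suffices.
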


\begin{Def}
If $X,Y,Z$ are $\infty$-stacks on $\cart$, and the commutative diagram
\begin{equation*}
\begin{tikzcd}
	X & Y \\
	{*} & Z
	\arrow[from=1-1, to=2-1]
	\arrow[from=1-1, to=1-2]
	\arrow["z"', from=2-1, to=2-2]
	\arrow["f", from=1-2, to=2-2]
\end{tikzcd}
\end{equation*}
is a homotopy pullback square, where $* \coloneqq \Delta^0$, then we say that the sequence of maps
\begin{equation*}
    X \to Y \to Z
\end{equation*}
is a \textbf{homotopy fiber sequence}, and we call $X$ the \textbf{homotopy fiber} of $f$ at $z$, which we sometimes denote by $\text{hofib}(f)$.
\end{Def}

\begin{Lemma} \label{lem fiber sequence gives exact sequence} Let $X \to Y \to Z$ be a homotopy fiber sequence of pointed $\infty$-stacks on $\cart$, and where the morphisms preserve the points. Then the resulting sequence
\begin{equation*}
    \check{H}^0_\infty(W, X) \xrightarrow{f} \check{H}^0_\infty(W, Y) \xrightarrow{g} \check{H}^0_\infty(W, Z),
\end{equation*}
is exact.\footnote{Exact in the sense that each set $\check{H}^0_\infty(W, A)$ is pointed by the constant map $*$ to the point of $A$, and the image of $f$ is equal to the set of $x \in \check{H}^0_\infty(W,Y)$ such that $g(x) = *$, which we call the kernel of $g$.}
\end{Lemma}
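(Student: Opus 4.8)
The plan is to reduce the statement to the standard fact that, for a homotopy fiber sequence of pointed spaces (simplicial sets), the induced sequence of pointed sets $\pi_0$ is exact at the middle term, and then transport this along the derived mapping space functor $\R\cH(W,-)$. First I would observe that $\R\cH(W,-)$ preserves homotopy pullbacks: since $\cH$ is a simplicial model category and $W$ can be replaced by its cofibrant replacement $QW$, the functor $\u{\spre(\cart)}(QW,-)$ is a right Quillen functor (after fibrant replacement on the target), hence sends homotopy pullback squares in $\cH$ to homotopy pullback squares of simplicial sets. Because $*=\Delta^0$ is sent to $\R\cH(W,*)\simeq *$ (the point maps to the point), applying $\R\cH(W,-)$ to the homotopy fiber sequence $X\to Y\to Z$ yields a homotopy fiber sequence of simplicial sets
\begin{equation*}
    \R\cH(W,X)\to \R\cH(W,Y)\to \R\cH(W,Z),
\end{equation*}
i.e. a homotopy pullback square with $\R\cH(W,X)$ the homotopy fiber of $\R\cH(W,Y)\to\R\cH(W,Z)$ over the basepoint.

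Next I would pass to $\pi_0$. The classical long exact sequence of a fibration of pointed simplicial sets (or of Kan complexes, after fibrant replacement) gives, at the bottom, exactness of the pointed sequence
\begin{equation*}
    \pi_0 \R\cH(W,X)\to \pi_0 \R\cH(W,Y)\to \pi_0 \R\cH(W,Z)
\end{equation*}
at the middle: the image of the first map equals the fiber over the basepoint of the second. This is exactly the asserted exactness of $\check{H}^0_\infty(W,X)\xrightarrow{f}\check{H}^0_\infty(W,Y)\xrightarrow{g}\check{H}^0_\infty(W,Z)$, since $\check{H}^0_\infty(W,A)=\pi_0\R\cH(W,A)$ by definition and the basepoints are induced by the chosen points of $X,Y,Z$. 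One mild point to address is that the homotopy fiber appearing in the statement is taken over the point $z\colon *\to Z$, which is the image of the basepoint of $Y$; this is precisely the basepoint needed for the $\pi_0$-exactness statement, so there is no discrepancy.

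The main obstacle, and the only step requiring real care, is justifying that $\R\cH(W,-)$ carries the given homotopy pullback square to a genuine homotopy pullback of simplicial sets — one wants to invoke that a simplicial model category's derived mapping space functor preserves homotopy limits, which for finite homotopy limits (in particular pullbacks) is standard, but it should be spelled out that $X,Y,Z$ being $\infty$-stacks (hence fibrant in $\cH$) plus cofibrant replacement of $W$ makes $\u{\spre(\cart)}(QW,-)$ compute the derived functor and that it is right Quillen, so preserves the homotopy pullback. Once that is granted, everything else is the textbook fiber-sequence argument for pointed sets, e.g. as in \cite{goerss2012simplicial}. I would therefore structure the proof as: (1) reduce to a homotopy fiber sequence of simplicial sets via the derived mapping space; (2) quote the $\pi_0$-exactness of a homotopy fiber sequence of pointed spaces; (3) unwind the definitions of $\check{H}^0_\infty$ and the basepoints.
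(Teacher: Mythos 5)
Your proposal is correct and follows essentially the same route as the paper's proof: apply $\R\cH(W,-)$, use that it preserves homotopy pullbacks in the second variable to get a homotopy fiber sequence of spaces, and then read off exactness on $\pi_0$ from the long exact sequence of homotopy groups. The extra care you take in justifying the preservation of homotopy pullbacks (cofibrant replacement of $W$, fibrancy of the $\infty$-stacks, right Quillen-ness of the mapping space functor) is exactly the point the paper leaves implicit.
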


\begin{proof}
This follows from the fact that $\R \cH(-,-)$ preserves homotopy pullbacks in its second factor, so a homotopy fiber sequence of $\infty$-stacks produces a homotopy fiber sequence of spaces
\begin{equation*}
    \R \cH(W, X) \to \R \cH(W, Y) \to \R \cH(W, Z)
\end{equation*}
and the long exact sequence of homotopy groups gives exactness for $\pi_0$.
\end{proof}

\begin{Prop}
Suppose that we have a commutative square
\begin{equation} \label{eq hopullback of presheaves of chain complexes}
\begin{tikzcd}
	A & B \\
	C & D
	\arrow["g"', from=1-1, to=2-1]
	\arrow["f", from=1-1, to=1-2]
	\arrow["h"', from=2-1, to=2-2]
	\arrow["k", from=1-2, to=2-2]
\end{tikzcd}
\end{equation}
of presheaves of chain complexes over $\cart$ such that
\begin{equation}\label{eq hopullback of presheaves of chain complexes 2}
    \begin{tikzcd}
	{\DK A} & {\DK B} \\
	{\DK C} & {\DK D}
	\arrow["{\DK g}"', from=1-1, to=2-1]
	\arrow["{\DK f}", from=1-1, to=1-2]
	\arrow["{\DK h}"', from=2-1, to=2-2]
	\arrow["{\DK h}", from=1-2, to=2-2]
\end{tikzcd}
\end{equation}
is a commutative diagram of $\infty$-stacks. If (\ref{eq hopullback of presheaves of chain complexes}) is a homotopy pullback square in the projective model structure on $\ncat{ChPre}(\cart)$, then (\ref{eq hopullback of presheaves of chain complexes 2}) is a homotopy pullback square in $\cH$.
\end{Prop}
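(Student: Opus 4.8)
The plan is to chain together two ``preservation of homotopy pullbacks'' statements that are already in hand. The first is that the objectwise Dold--Kan functor $\DK\colon \ncat{ChPre}(\cart)\to\spre(\cart)$ is a right Quillen functor from the projective model structure on presheaves of chain complexes to $\H$; this is exactly the right adjoint in the Quillen pair (\ref{eq presheaves Dold Kan}), up to suppressing the forgetful functor $U$. The second is Proposition~\ref{prop model topos localization preserves finite homotopy limits}, that the localization functor $1_{\spre(\cart)}\colon \H\to\cH$ preserves finite homotopy limits. Granting for a moment that a right Quillen functor preserves homotopy pullback squares, applying $\DK$ to (\ref{eq hopullback of presheaves of chain complexes}) shows that (\ref{eq hopullback of presheaves of chain complexes 2}) is a homotopy pullback square in $\H$; and then, since $1_{\spre(\cart)}$ is the identity on objects and morphisms and a homotopy pullback square is a finite homotopy limit, the very same square is a homotopy pullback square in $\cH$, which is the assertion.

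I would then spell out why $\DK$ preserves the homotopy pullback, since right Quillen functors only manifestly behave well on fibrant objects. The relevant point is that in the projective model structure on $\ncat{ChPre}(\cart)$ \emph{every} object is fibrant, because every chain complex is fibrant. Hence the homotopy pullback of the cospan $C\to D\leftarrow B$ underlying (\ref{eq hopullback of presheaves of chain complexes}) is computed with no fibrant replacements: factor one leg, say $B\to D$, as a trivial cofibration $B\xrightarrow{\ \sim\ }B'$ followed by a projective fibration $B'\twoheadrightarrow D$, so that the homotopy pullback is the strict pullback $C\times_D B'$, and ``(\ref{eq hopullback of presheaves of chain complexes}) is a homotopy pullback'' means the canonical map $A\to C\times_D B'$ is an objectwise quasi-isomorphism. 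Now $\DK$ preserves limits (it is a right adjoint), sends the fibration $B'\twoheadrightarrow D$ to a projective fibration (it is right Quillen), and preserves weak equivalences between fibrant objects by Ken Brown's lemma — and every object occurring here is fibrant. Thus $\DK(C\times_D B')=\DK C\times_{\DK D}\DK B'$, the map $\DK B\to\DK B'$ is an $\H$-weak equivalence, $\DK B'\twoheadrightarrow \DK D$ is a projective fibration (so by Lemma~\ref{lem hopullback is actual pullback when fibration}, using that $\H$ is right proper, the strict pullback along it is a homotopy pullback), and $\DK A\to \DK C\times_{\DK D}\DK B'$ is an $\H$-weak equivalence. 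This is exactly the statement that (\ref{eq hopullback of presheaves of chain complexes 2}) is a homotopy pullback square in $\H$.

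For the passage from $\H$ to $\cH$ I would invoke Proposition~\ref{prop model topos localization preserves finite homotopy limits} together with the fact that every $\H$-weak equivalence is a $\cH$-weak equivalence: the object $\DK C\times_{\DK D}\DK B'$ is then also a model for the homotopy pullback of $\DK C\to\DK D\leftarrow\DK B$ computed in $\cH$, and the comparison map $\DK A\to \DK C\times_{\DK D}\DK B'$, being an $\H$-weak equivalence, is in particular a $\cH$-weak equivalence; hence (\ref{eq hopullback of presheaves of chain complexes 2}) is a homotopy pullback square in $\cH$. (The hypothesis that (\ref{eq hopullback of presheaves of chain complexes 2}) is a diagram of $\infty$-stacks is not actually needed for this argument; it is recorded because in the intended application the four corners are $\infty$-stacks.)

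The argument is essentially formal, and the one place that requires care is the bookkeeping in the middle step: one must be sure that passing through $\DK$ does not silently demand a fibrant replacement that $\DK$ would fail to preserve. This is precisely where the observation that $\ncat{ChPre}(\cart)$ has all objects fibrant does the work, reducing everything to the three facts that $\DK$ preserves limits, fibrations, and weak equivalences between fibrant objects. The step through the \v{C}ech localization is then immediate from Proposition~\ref{prop model topos localization preserves finite homotopy limits}.
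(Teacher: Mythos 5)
Your proposal is correct and follows essentially the same route as the paper: both arguments exploit that every object of $\ncat{ChPre}(\cart)$ is projective fibrant so that $\DK$, being right Quillen, carries a strict model of the homotopy pullback to one in $\H$, and then invoke Proposition \ref{prop model topos localization preserves finite homotopy limits} to pass to $\cH$. The only (immaterial) difference is the chosen model of the homotopy pullback --- you factor one leg as a trivial cofibration followed by a fibration, while the paper uses the path-object model of Lemma \ref{lem homotopy pullback using path space fibration}; your explicit appeal to Ken Brown's lemma is a welcome bit of extra care.
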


\begin{proof}
If (\ref{eq hopullback of presheaves of chain complexes}) is a homotopy pullback diagram, then $A$ is weak equivalent to the actual pullback $C \times^h_D B$ of Lemma \ref{lem homotopy pullback using path space fibration}. Since both of these presheaves of chain complexes are projective fibrant, and $\DK$ is right Quillen, then $\DK A$ is weak equivalent to $\DK (C \times^h_D B) \cong \DK C \times^h_{\DK D} \DK B$. Therefore $\DK A$ is a homotopy pullback of (\ref{eq hopullback of presheaves of chain complexes 2}) in $\H$. Then by Proposition \ref{prop model topos localization preserves finite homotopy limits}, it is a homotopy pullback in $\cH$.
\end{proof}

Now that we have all the technical tools we need, we restate Theorem \ref{th hopullback diagram} for  the convenience of the reader.

\begin{reptheorem}{th hopullback diagram}    
For every $k \geq 1$, there exists a commutative diagram of $\infty$-stacks of the following form
\begin{equation}
    \begin{tikzcd}
	{*} & {\B^k \R^\delta} & {*} & {*} \\
	{*} & {\B^k_\nabla \R} & {\Omega^{k+1}_\cl} & {\Omega^{k+1}} \\
	& {\B^k \R} & {\B^k \Omega^1_{\cl}} & {\Omega^{1 \leq \bullet \leq k+1}} \\
	& {*} & {\B^{k+1} \R^\delta} & {\B^{k+1}_\nabla \R}
	\arrow[from=1-1, to=2-1]
	\arrow[""{name=0, anchor=center, inner sep=0}, from=1-1, to=1-2]
	\arrow[""{name=1, anchor=center, inner sep=0}, from=2-1, to=2-2]
	\arrow[from=1-2, to=2-2]
	\arrow[""{name=2, anchor=center, inner sep=0}, from=2-2, to=2-3]
	\arrow[""{name=3, anchor=center, inner sep=0}, from=1-2, to=1-3]
	\arrow[from=1-3, to=2-3]
	\arrow[from=2-2, to=3-2]
	\arrow[""{name=4, anchor=center, inner sep=0}, from=2-3, to=2-4]
	\arrow[from=1-4, to=2-4]
	\arrow[""{name=5, anchor=center, inner sep=0}, from=1-3, to=1-4]
	\arrow[from=2-4, to=3-4]
	\arrow[from=2-3, to=3-3]
	\arrow[""{name=6, anchor=center, inner sep=0}, from=3-2, to=3-3]
	\arrow[""{name=7, anchor=center, inner sep=0}, from=3-3, to=3-4]
	\arrow[""{name=8, anchor=center, inner sep=0}, from=4-3, to=4-4]
	\arrow[from=3-4, to=4-4]
	\arrow[from=3-2, to=4-2]
	\arrow[""{name=9, anchor=center, inner sep=0}, from=4-2, to=4-3]
	\arrow[from=3-3, to=4-3]
	\arrow["1"{description}, draw=none, from=0, to=1]
	\arrow["2"{description}, draw=none, from=3, to=2]
	\arrow["4"{description}, draw=none, from=2, to=6]
	\arrow["3"{description}, draw=none, from=5, to=4]
	\arrow["5"{description}, draw=none, from=4, to=7]
	\arrow["6"{description}, draw=none, from=6, to=9]
	\arrow["7"{description}, draw=none, from=7, to=8]
\end{tikzcd}
\end{equation}
furthermore every commutative square in this diagram is a homotopy pullback square in $\cH$.
\end{reptheorem}

\begin{Lemma} \label{lem squares 4 and 5}
The pasted square $[4|5]$, given as follows
\begin{equation}
    \begin{tikzcd}
	{\B^k_\nabla \R} & {\Omega^{k+1}} \\
	{\B^k \R} & {\Omega^{1 \leq \bullet \leq k+1}}
	\arrow[from=1-2, to=2-2]
	\arrow[from=2-1, to=2-2]
	\arrow[from=1-1, to=2-1]
	\arrow[from=1-1, to=1-2]
\end{tikzcd}
\end{equation}
is a homotopy pullback square in $\cH$.
\end{Lemma}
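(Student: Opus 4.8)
The plan is to translate the statement into presheaves of chain complexes. Under Dold--Kan the four corners of $[4|5]$ are the images of the presheaves of chain complexes
\[
[\R \xrightarrow{d} \Omega^1 \xrightarrow{d} \cdots \xrightarrow{d} \Omega^k],\quad [\Omega^{k+1}],\quad [\R \to 0 \to \cdots \to 0],\quad [\Omega^1 \xrightarrow{d}\cdots\xrightarrow{d} \Omega^{k+1}]
\]
(with the rightmost entry in chain degree $0$ in each case): the left vertical is the projection of the Deligne complex onto its top entry, the top map is the curvature $d\colon\Omega^k\to\Omega^{k+1}$ in chain degree $0$, the right map is the inclusion of the bottom entry, and the bottom map is the Maurer--Cartan $d\colon\R\to\Omega^1$ in chain degree $k$. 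Rather than verify strict commutativity at the chain level, I would use that the square of $\infty$-stacks commutes up to homotopy (part of the setup of Theorem \ref{th hopullback diagram}) together with the standard fact that a homotopy-commutative square is a homotopy pullback square precisely when the induced map on the homotopy fibres of its two vertical maps is a weak equivalence.

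First I would compute those homotopy fibres. The degree-$(\le k-1)$ subcomplex of the Deligne complex gives a short exact sequence of presheaves of chain complexes
\[
0 \to \Omega^{1\leq\bullet\leq k} \to [\R \to \Omega^1 \to \cdots \to \Omega^k] \to [\R \to 0 \to \cdots] \to 0 ,
\]
and since $\DK$ is an equivalence of abelian categories it sends this to a short exact sequence of presheaves of simplicial abelian groups, hence a homotopy fibre sequence in $\H$, hence --- by Proposition \ref{prop model topos localization preserves finite homotopy limits} and the Quillen pair (\ref{eq presheaves Dold Kan}) --- a homotopy fibre sequence in $\cH$; so the homotopy fibre of the left vertical map is $\Omega^{1\leq\bullet\leq k}$. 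Likewise the inclusion $\Omega^{k+1}\hookrightarrow\Omega^{1\leq\bullet\leq k+1}$ of the bottom entry sits in a short exact sequence with cokernel the shift $\Omega^{1\leq\bullet\leq k}[1]$; applying $\DK$ and looping the resulting fibre sequence identifies the homotopy fibre of the right vertical map with $\Omega^{1\leq\bullet\leq k}$ as well. The square thus produces a comparison map $c\colon\Omega^{1\leq\bullet\leq k}\to\Omega^{1\leq\bullet\leq k}$, and it remains to show $c$ is a weak equivalence.

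To do this I would argue on homotopy sheaves, evaluating at a cartesian space $U$. By the Poincar\'e lemma ($U$ being contractible and connected), $\Omega^{1\leq\bullet\leq k}(U) = [\Omega^1(U)\to\cdots\to\Omega^k(U)]$ has homology concentrated in its top degree $k-1$, where it is $\Omega^1_{\cl}(U)$, and in degree $0$, where it is $\Omega^k(U)/d\Omega^{k-1}(U)$; one reads off similarly the homology of the middle and bottom complexes of both vertical maps. The map $c$ extends to a map of the two long exact sequences of homotopy sheaves of the two homotopy fibre sequences. In the top one the connecting map $\pi_k(\B^k\R)\to\pi_{k-1}(\Omega^{1\leq\bullet\leq k})$ is, objectwise, the surjection $d\colon C^\infty(U)\to\Omega^1_{\cl}(U)$, and the left vertical map induces the same map $d$ into $\pi_k(\Omega^{1\leq\bullet\leq k+1})(U)=\Omega^1_{\cl}(U)$, whose own connecting map into $\pi_{k-1}(\Omega^{1\leq\bullet\leq k})$ is an isomorphism; naturality then forces $c$ to be an isomorphism on $\pi_{k-1}$. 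In the bottom one the fibre-inclusion map $\pi_0(\Omega^{1\leq\bullet\leq k})\to\pi_0(\Omega^{k+1})$ is objectwise the injection $d\colon\Omega^k(U)/d\Omega^{k-1}(U)\hookrightarrow\Omega^{k+1}(U)$, which the top vertical map of the square realizes compatibly, so naturality forces $c$ to be an isomorphism on $\pi_0$; on all other homotopy sheaves $\Omega^{1\leq\bullet\leq k}$ vanishes. Hence $c$ is an objectwise, and therefore \v{C}ech, weak equivalence, and the square $[4|5]$ is a homotopy pullback square in $\cH$.

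The main obstacle is this last step: one must keep careful track of exactly which maps appear in the two long exact sequences --- in particular that the relevant connecting maps really are the exterior derivative on sections over each cartesian space --- so as to conclude that the comparison map $c$ is an isomorphism on homotopy sheaves, and not merely that its source and target have isomorphic homotopy sheaves. Everything else is formal, using only the exactness of $\DK$, Proposition \ref{prop model topos localization preserves finite homotopy limits}, and the Poincar\'e lemma.
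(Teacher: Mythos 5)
Your route is genuinely different from the paper's and is essentially sound for $k \geq 2$. The paper never passes through homotopy fibres: it replaces the cospan $\B^k\R \to \Omega^{1\leq\bullet\leq k+1} \leftarrow \Omega^{k+1}$ by the strict pullback along the path object $(\Omega^{1\leq\bullet\leq k+1})^{\Delta^1}$ (via Lemma \ref{lem homotopy pullback using path space fibration}) and checks degree by degree that this pullback of presheaves of chain complexes is isomorphic to the Deligne complex, the chain homotopy making the square commute being absorbed into the map to the path object. Your fibrewise argument buys a computation-free identification of both homotopy fibres with $\Omega^{1\leq\bullet\leq k}$ (the two short exact sequences and the rotation are correct, and the degreewise surjectivity needed to treat them as fibrations of presheaves of chain complexes does hold), at the price of having to control the comparison map $c$; your insistence on pinning down $c$ itself by naturality, rather than merely noting its source and target agree, is exactly the right move, and for $k \geq 2$ your two naturality squares do force $c$ to be an isomorphism on $\pi_{k-1}$ and $\pi_0$, the only nonvanishing homotopy sheaves by the Poincar\'e lemma. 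One presentational caveat: the homotopy commutativity of $[4|5]$ is part of what Theorem \ref{th hopullback diagram} asserts, so you should exhibit the chain homotopy (the identity $\Omega^{k-j}\to\Omega^{k-j}$ in the intermediate degrees, up to sign) rather than cite the theorem you are helping to prove.

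The one genuine gap is the case $k=1$, which the lemma also claims and which the paper needs for Theorem \ref{th degree 1 PIZ exact sequence}. For $k=1$ the two homotopy sheaves you treat separately coincide (since $k-1=0$), $\pi_0(\Omega^{1\leq\bullet\leq 1})(U)$ is all of $\Omega^1(U)$ rather than $\Omega^1_{\cl}(U)$ or $\Omega^1(U)/dC^\infty(U)$, the connecting map $\pi_1(\Omega^{1\leq\bullet\leq 2})\to\pi_0$ of the fibre is the inclusion $\Omega^1_{\cl}\hookrightarrow\Omega^1$ (not an isomorphism), and the fibre inclusion into $\pi_0(\Omega^2)$ is $d\colon\Omega^1\to\Omega^2$ (not injective). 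So neither of your two naturality arguments alone determines $c$ there. They can, however, be combined: the connecting-map square gives $c(df)=df$, so $c$ restricts to the identity on $\Omega^1_{\cl}$, while the fibre-inclusion square gives $d(c(\omega))=d\omega$, so $c=\mathrm{id}+\epsilon$ with $\epsilon$ valued in $\Omega^1_{\cl}$ and vanishing on $\Omega^1_{\cl}$; then $\epsilon^2=0$ and $c$ is invertible with inverse $\mathrm{id}-\epsilon$. With that patch your proof covers all $k\geq 1$.
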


\begin{proof}
Let us analyze this part of the diagram as presheaves of chain complexes.
{\small
\begin{equation}
\begin{tikzcd}
	{[\R \to \Omega^1 \to \Omega^2 \to \dots \to \Omega^k]} & {[0 \to 0 \to \dots \to \Omega^{k+1}_{\text{cl}}]} & {[0 \to 0 \to 0 \to \dots \to \Omega^{k+1}]} \\
	{[\R \to 0 \to 0 \to \dots \to 0]} & {[\Omega^1 \to \Omega^2  \to \dots \to \Omega^{k+1}_{\cl}]} & {[\Omega^1 \to \Omega^2 \to \dots \to \Omega^k \to \Omega^{k+1}]}
	\arrow[ from=1-1, to=2-1]
	\arrow[from=1-1, to=1-2]
	\arrow[from=2-1, to=2-2]
	\arrow[from=2-2, to=2-3]
	\arrow[from=1-2, to=1-3]
	\arrow[from=1-3, to=2-3]
	\arrow[from=1-2, to=2-2]
\end{tikzcd}
\end{equation}
}
where the upper horizontal left hand map is $0$ except in degree $0$ where it applies the differential $d$. The lower horizontal left hand map is $d$ in degree $k+1$ and $0$ elsewhere. The rest of the maps are either degreewise inclusions or identity maps.

Let us show that the outer rectangle is a homotopy pullback diagram. Note that neither the bottom map nor the right hand map is objectwise surjective in positive degree, namely they are not fibrations in $\ncat{ChPre}(\cart)$. However we can use Lemma \ref{lem homotopy pullback using path space fibration} to compute the homotopy pullback of $\B^k \R \to \Omega^{1 \leq \bullet \leq k+1} \leftarrow \Omega^{k+1}$. Namely it is given as the actual (objectwise) pullback of the diagram
\begin{equation}
 \begin{tikzcd}
	& {(\Omega^{1 \leq \bullet \leq k+1})^{\Delta^1}} \\
	{\B^k \R \oplus \Omega^{k+1} } & {\Omega^{1 \leq \bullet \leq k+1} \oplus \Omega^{1 \leq \bullet \leq k+1}}
	\arrow[from=2-1, to=2-2]
	\arrow[from=1-2, to=2-2]
\end{tikzcd}
\end{equation}

Now $(\Omega^{1 \leq \bullet \leq k+1})^{\Delta^1}$ is given by the presheaf of chain complexes
\begin{equation*}
   [\Omega^1 \oplus \Omega^1 \to \Omega^2 \oplus \Omega^2 \oplus \Omega^1 \to \dots \to \Omega^k \oplus \Omega^k \oplus \Omega^{k-1} \to \Omega^{k+1} \oplus \Omega^k]
\end{equation*}
which projects to $\Omega^{1 \leq \bullet \leq k+1} \oplus \Omega^{1 \leq \bullet \leq k+1}$. From this we obtain the following diagram the following diagram of presheaves of chain complexes
{\small
\begin{equation} \label{eq computation for hopullback squares 4 and 5}
    \begin{tikzcd}
	{[\R \to \Omega^1 \to \Omega^2 \to \dots \to \Omega^k]} & {[\Omega^1 \oplus \Omega^1 \to\Omega^2 \oplus \Omega^2 \oplus \Omega^1 \to \dots \to \Omega^k \oplus \Omega^k \oplus \Omega^{k-1} \to \Omega^{k+1} \oplus \Omega^k]} \\
	{[\R \to 0 \to \dots \to 0 \to \Omega^{k+1}]} & {[\Omega^1 \oplus \Omega^1 \to \Omega^2 \oplus \Omega^2 \to \dots \to \Omega^k \oplus \Omega^k \to \Omega^{k+1} \oplus \Omega^{k+1}]}
	\arrow["\pi", from=1-2, to=2-2]
	\arrow[from=2-1, to=2-2]
	\arrow[from=1-1, to=2-1]
	\arrow[from=1-1, to=1-2]
\end{tikzcd}
\end{equation}
}
and this is an actual pullback square. To see this, note that pullbacks of chain complexes are computed degreewise. For degrees $k > 0$, it is clearly a pullback. In degree $0$ we are trying to show that $\Omega^k$ is isomorphic to the pullback of
\begin{equation} \label{eq pullback using path spaces important idea}
    \begin{tikzcd}
	& {\Omega^{k+1}\oplus \Omega^k} \\
	{0 \oplus \Omega^{k+1}} & {\Omega^{k+1} \oplus \Omega^{k+1}}
	\arrow["{\pi_0}", from=1-2, to=2-2]
	\arrow["{0 \oplus 1_{\Omega^{k+1}}}"', from=2-1, to=2-2]
\end{tikzcd}
\end{equation}
but from (\ref{eq projection from path chain complex degree 0}), we know that $\pi_0(x,z) = (x,x + dz)$. For every cartesian space $U$, the pullback is the set of triples $(w, x, z) \in \Omega^{k + 1}(U) \oplus \Omega^{k+1}(U) \oplus \Omega^k(U)$ such that $x = 0$ and $w = dz$. This set is of course in bijection with $\Omega^k(U)$. Thus $[4|5]$ is a homotopy pullback square.
\end{proof}

\begin{Lemma} \label{lem square 5}
The square $[5]$, given by
\begin{equation}
    \begin{tikzcd}
	{\Omega^{k+1}_\text{cl}} & {\Omega^{k+1}} \\
	{\B^k \Omega^1_{\text{cl}}} & {\Omega^{1 \leq \bullet \leq k+1}}
	\arrow[from=1-2, to=2-2]
	\arrow[from=1-1, to=1-2]
	\arrow[from=1-1, to=2-1]
	\arrow[from=2-1, to=2-2]
\end{tikzcd}
\end{equation}
is a homotopy pullback square.
\end{Lemma}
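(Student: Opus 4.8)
The plan is to recognize square $[5]$, in contrast to the pasted square $[4|5]$ treated in Lemma \ref{lem squares 4 and 5}, as an \emph{honest} pullback square one of whose legs is already a fibration, so that no path-object replacement is needed. Concretely, I would write all four corners as $\DK$ of presheaves of chain complexes, using the same presentations fixed in the proof of Lemma \ref{lem squares 4 and 5}: the corners $\Omega^{k+1}_\cl$ and $\Omega^{k+1}$ as the complexes $[0 \to \dots \to 0 \to \Omega^{k+1}_\cl]$ and $[0 \to \dots \to 0 \to \Omega^{k+1}]$ concentrated in chain degree $0$, and $\B^k\Omega^1_\cl = [\Omega^1 \to \dots \to \Omega^k \to \Omega^{k+1}_\cl]$, $\Omega^{1 \leq \bullet \leq k+1} = [\Omega^1 \to \dots \to \Omega^k \to \Omega^{k+1}]$, all of length $k+1$ in chain degrees $k,\dots,0$; the four maps are the evident inclusions, identity maps wherever possible and the inclusion $\Omega^{k+1}_\cl \hookrightarrow \Omega^{k+1}$ in degree $0$. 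In particular the square commutes strictly.

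First I would check that this square of presheaves of chain complexes is an actual pullback. Since limits of chain complexes are computed degreewise, this is a degree-by-degree verification: in chain degree $0$ the square is the pullback of $\Omega^{k+1}_\cl \hookrightarrow \Omega^{k+1} \xleftarrow{=} \Omega^{k+1}$, whose pullback is $\Omega^{k+1}_\cl$; in each positive chain degree $j$ the complex $\Omega^{k+1}$ (top-right) contributes $0$, while $\B^k\Omega^1_\cl$ and $\Omega^{1 \leq \bullet \leq k+1}$ both have $\Omega^{k+1-j}$ in degree $j$ (for $1\le j\le k$) identified by the identity comparison map, so the pullback there is $0$. Hence the pullback complex is exactly $[0 \to \dots \to 0 \to \Omega^{k+1}_\cl]$, i.e. the top-left corner. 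As $\DK$ is a right adjoint it preserves limits, so applying $\DK$ gives an actual pullback square of simplicial presheaves, which is also an actual pullback in $\cH$. Next I would observe that $\B^k\Omega^1_\cl \to \Omega^{1 \leq \bullet \leq k+1}$ is the identity in every positive chain degree (the two complexes differ only in degree $0$), hence an objectwise projective fibration in $\ncat{ChPre}(\cart)$; applying the right Quillen functor $\DK$ (see (\ref{eq presheaves Dold Kan})) produces a projective fibration of $\infty$-stacks, and Lemma \ref{lem fibrations of infinity stacks} upgrades it to a fibration in $\cH$. Since $\cH$ is right proper, Lemma \ref{lem hopullback is actual pullback when fibration} applied to this actual pullback square with the fibration leg $\B^k\Omega^1_\cl \to \Omega^{1 \leq \bullet \leq k+1}$ then shows $[5]$ is a homotopy pullback square in $\cH$. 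As a bonus, combining this with Lemma \ref{lem squares 4 and 5} and the pasting law (Lemma \ref{lem pasting law}) immediately gives that square $[4]$ is a homotopy pullback as well.

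The only genuine obstacle is the bookkeeping with the chain-complex models: the Poincaré-lemma presentation of $\B^k\Omega^1_\cl$ must be taken with the \emph{same number of terms} as $\Omega^{1 \leq \bullet \leq k+1}$ (the $(k+1)$-term complex ending in $\Omega^{k+1}_\cl$, rather than the shorter model mentioned in Section \ref{section examples of infinity stacks}), since only then is the comparison map the identity above degree $0$ and hence a fibration. With the shorter model the leg fails to be surjective in chain degree $1$, and one would be forced back to a path-object computation exactly as in the proof of Lemma \ref{lem squares 4 and 5} — replacing $\B^k\R$ there by $\B^k\Omega^1_\cl$ and checking, using (\ref{eq projection from path chain complex degree 0}), that the resulting degree-$0$ pullback is $\Omega^{k+1}_\cl$ rather than $\Omega^k$.
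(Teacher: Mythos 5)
Your proof is correct and follows essentially the same route as the paper: the paper likewise writes square $[5]$ as the evident square of presheaves of chain complexes with $\B^k\Omega^1_{\cl}$ modelled by the $(k+1)$-term complex $[\Omega^1 \to \dots \to \Omega^{k+1}_{\cl}]$, observes it is an actual pullback whose bottom leg is surjective (indeed the identity) in positive degrees and hence a projective fibration, and concludes by Lemma \ref{lem hopullback is actual pullback when fibration}. Your extra care about using the $(k+1)$-term Poincar\'e-lemma model, and the transfer to $\cH$ via $\DK$ and Lemma \ref{lem fibrations of infinity stacks}, matches what the paper does (the latter step via the proposition on $\DK$ preserving homotopy pullbacks).
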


\begin{proof}
Consider the commutative diagram of presheaves of chain complexes
\begin{equation} \label{eq square 5}
    \begin{tikzcd}
	{[0 \to 0 \to \dots \to \Omega^{k+1}_{\text{cl}}]} & {[0 \to 0 \to \dots \to \Omega^{k+1}]} \\
	{[\Omega^1 \xrightarrow{d} \Omega^2 \to \dots \to \Omega^{k+1}_{\text{cl}}]} & {[\Omega^1 \xrightarrow{d} \Omega^2 \to \dots \to \Omega^k \to \Omega^{k+1}]}
	\arrow[from=1-1, to=2-1]
	\arrow[from=2-1, to=2-2]
	\arrow[from=1-1, to=1-2]
	\arrow[from=1-2, to=2-2]
\end{tikzcd}
\end{equation}
Now the above diagram is an actual pullback, and the bottom map is objectwise a surjection in positive degrees, thus it is a fibration of presheaves of chain complexes, and therefore by Lemma \ref{lem hopullback is actual pullback when fibration} the diagram (\ref{eq square 5}) is a homotopy pullback.
\end{proof}

\begin{Cor} \label{cor square 4}
The square $[4]$ is a homotopy pullback square.
\end{Cor}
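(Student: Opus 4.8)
The plan is to obtain $[4]$ as a purely formal consequence of the two lemmas just proved, via the pasting law for homotopy pullbacks (Lemma \ref{lem pasting law}). First I would record that $[4]$ and $[5]$ are the left and right halves of a single commutative rectangle of $\infty$-stacks
\begin{equation*}
\begin{tikzcd}
	{\B^k_\nabla\R} \arrow[r] \arrow[d] & {\Omega^{k+1}_\cl} \arrow[r] \arrow[d] & {\Omega^{k+1}} \arrow[d] \\
	{\B^k\R} \arrow[r] & {\B^k\Omega^1_{\cl}} \arrow[r] & {\Omega^{1 \leq \bullet \leq k+1}}
\end{tikzcd}
\end{equation*}
whose outer rectangle is exactly the pasted square $[4|5]$ appearing in Lemma \ref{lem squares 4 and 5}. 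This identification is transparent from the presheaf-of-chain-complexes models used in the proofs of Lemmas \ref{lem squares 4 and 5} and \ref{lem square 5}: in both of those proofs every arrow is a degreewise inclusion, the de Rham differential, or an identity map, so the triangles needed to present the rectangle above as a horizontal composite of $[4]$ and $[5]$ commute on the nose.

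Next I would note that $\cH$ is right proper (it is proper by the theorem of Dugger--Hollander--Isaksen quoted earlier) and that all six objects above are $\infty$-stacks, hence fibrant in $\cH$, so Lemma \ref{lem pasting law} is applicable to this rectangle. By Lemma \ref{lem square 5} the right-hand square $[5]$ is a homotopy pullback square in $\cH$, and by Lemma \ref{lem squares 4 and 5} the outer rectangle $[4|5]$ is a homotopy pullback square in $\cH$. The pasting law then yields immediately that the left-hand square $[4]$ is a homotopy pullback square in $\cH$, which is precisely the assertion of the corollary.

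I do not expect any genuine obstacle: once Lemmas \ref{lem squares 4 and 5} and \ref{lem square 5} are in hand, Corollary \ref{cor square 4} is a two-line deduction. The only point requiring care is bookkeeping — checking that the composite $\B^k_\nabla\R \to \Omega^{k+1}_\cl \to \B^k\Omega^1_{\cl}$ and the map $\B^k\R \to \B^k\Omega^1_{\cl}$ agree with the two maps used to form the square $[4|5]$ of Lemma \ref{lem squares 4 and 5}, and similarly for $\Omega^{k+1}_\cl \to \Omega^{k+1}$; this is again immediate from the chain-complex descriptions, so the proof is essentially a one-sentence invocation of the pasting law.
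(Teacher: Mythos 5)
Your proposal is correct and is exactly the paper's own argument: the paper also deduces $[4]$ from Lemma \ref{lem squares 4 and 5} (the outer rectangle $[4|5]$) and Lemma \ref{lem square 5} (the right square $[5]$) via the pasting law, Lemma \ref{lem pasting law}. The extra bookkeeping you mention is fine but not needed beyond what those lemmas already provide.
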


\begin{proof}
By Lemma \ref{lem squares 4 and 5}, $[4|5]$ is a homotopy pullback square. By Lemma \ref{lem square 5}, $[5]$ is a homotopy pullback square. Thus by Lemma \ref{lem pasting law}, $[4]$ is a homotopy pullback square.
\end{proof}

\begin{Lemma}
The square $[6]$
\begin{equation}
\begin{tikzcd}
	{\B^k \R} & {\B^k \Omega^1_{\cl}} \\
	{*} & {\B^{k+1} \R^{\delta}}
	\arrow[from=1-1, to=2-1]
	\arrow[from=2-1, to=2-2]
	\arrow[from=1-1, to=1-2]
	\arrow[from=1-2, to=2-2]
\end{tikzcd}
\end{equation}
is a homotopy pullback square.
\end{Lemma}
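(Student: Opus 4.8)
The plan is to analyze the square $[6]$ at the level of presheaves of chain complexes, just as in the proofs of Lemmas \ref{lem squares 4 and 5} and \ref{lem square 5}, but with one genuine complication: unlike those squares, $[6]$ does not commute strictly at the chain level, so the filling $2$-cell must be supplied by hand. Using the chain models of Section \ref{section examples of infinity stacks}, I would write $\B^k\R$ for $\DK$ of the complex with $\R$ in degree $k$ and $0$ elsewhere, $\B^k\Omega^1_\cl$ for $\DK[\Omega^1 \xrightarrow{d} \Omega^2 \to \dots \to \Omega^{k+1}_\cl]$ (with $\Omega^1$ in degree $k$), and $\B^{k+1}\R^\delta$ for $\DK[\R \xrightarrow{d} \Omega^1 \xrightarrow{d} \dots \to \Omega^{k+1}_\cl]$ (with $\R$ in degree $k+1$). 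The key point is that the arrow $\B^k\Omega^1_\cl \to \B^{k+1}\R^\delta$ appearing in Theorem \ref{th hopullback diagram} is realized by the degreewise split monomorphism $\iota$ that includes $\B^k\Omega^1_\cl$ as the brutal truncation of $\B^{k+1}\R^\delta$ in degrees $\le k$; its cokernel is the complex with $\R$ in degree $k+1$ and $0$ elsewhere, i.e.\ the chain model of $\B^{k+1}\R$. Hence there is a short exact sequence of presheaves of chain complexes
\begin{equation*}
    0 \longrightarrow \B^k\Omega^1_\cl \xrightarrow{\iota} \B^{k+1}\R^\delta \xrightarrow{q} \B^{k+1}\R \longrightarrow 0 .
\end{equation*}

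From here I would argue purely formally. Since $q$ is degreewise surjective, $\DK q$ is objectwise a Kan fibration, hence a projective fibration of simplicial presheaves, hence a fibration in $\cH$ by Lemma \ref{lem fibrations of infinity stacks}; its strict fibre over the basepoint is $\DK(\ker q) = \B^k\Omega^1_\cl$, so by Lemma \ref{lem hopullback is actual pullback when fibration} together with Proposition \ref{prop model topos localization preserves finite homotopy limits} the sequence $\B^k\Omega^1_\cl \to \B^{k+1}\R^\delta \to \B^{k+1}\R$ is a homotopy fibre sequence of $\infty$-stacks. Rotating the associated Puppe sequence once to the left (all objects here being canonically pointed by $0$, as simplicial abelian group objects) yields a homotopy fibre sequence $\Omega\B^{k+1}\R \to \B^k\Omega^1_\cl \to \B^{k+1}\R^\delta$. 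Finally, $\B^{k+1}\R = \overline{W}\,\B^k\R$ by the definition of $\B^{k+1}$ and Example \ref{ex R bundle k gerbes}, and $\B^k\R$ is an objectwise connected ($k \ge 1$) simplicial abelian group, so the unit $\B^k\R \to \Omega\overline{W}\,\B^k\R$ is a weak equivalence; therefore $\Omega\B^{k+1}\R \simeq \B^k\R$. This identifies the homotopy fibre of $\B^k\Omega^1_\cl \to \B^{k+1}\R^\delta$ with $\B^k\R$ and displays $[6]$ as a homotopy pullback square in $\cH$. One bookkeeping check remains, namely that under this identification the connecting map $\B^k\R \simeq \Omega\B^{k+1}\R \to \B^k\Omega^1_\cl$ is homotopic to the arrow $\B^k\R \to \B^k\Omega^1_\cl$ of Theorem \ref{th hopullback diagram}; this holds because that arrow is the $k$-fold delooping of $d\colon\R\to\Omega^1_\cl$, while the displayed short exact sequence is the $k$-fold delooping of the short exact sequence of sheaves $0 \to \R^\delta \to \R \xrightarrow{d} \Omega^1_\cl \to 0$, whose Puppe connecting map is exactly that delooped $d$.

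The main obstacle is precisely the failure of strict commutativity at the chain level: it rules out the shortest route (exhibit a strict pullback of presheaves of chain complexes with a degreewise surjective leg and apply $\DK$, as for $[4]$ and $[5]$), and forces one to produce the coherence data, either through the Puppe rotation above or through an explicit path-object model. For the latter, one invokes Lemma \ref{lem homotopy pullback using path space fibration}: form the strict pullback of $\B^k\Omega^1_\cl \to \B^{k+1}\R^\delta \leftarrow (\B^{k+1}\R^\delta)^{\Delta^1}$ using the explicit formula for $C^{\Delta^1}$ recorded at the beginning of Appendix \ref{section proof}, and check by a degreewise comparison of cycles and boundaries --- using that $dz$ is automatically closed --- that the homology of the resulting complex is $\R$ concentrated in degree $k$ and that the projections realize the intended maps. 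Either way, the only genuinely delicate step is the identification of that final map with the one in the diagram; the rest is routine.
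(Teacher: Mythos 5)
Your argument is correct, but your primary route is genuinely different from the paper's. The paper proves Lemma for $[6]$ exactly by your ``Route B'': it forms the strict pullback of $0 \oplus \B^k\Omega^1_{\cl} \to \B^{k+1}\R^\delta \oplus \B^{k+1}\R^\delta \leftarrow (\B^{k+1}\R^\delta)^{\Delta^1}$ using the explicit path-object formula from the start of Appendix \ref{section proof}, writes out the resulting presheaf of chain complexes degree by degree, and checks by hand that the evident inclusion of $\R$ concentrated in degree $k$ is a quasi-isomorphism, just as it does for $[4|5]$ and $[\frac{2}{4}]$. Your main route instead realizes $\iota : \B^k\Omega^1_{\cl} \to \B^{k+1}\R^\delta$ as the kernel of the degreewise surjection $q$ onto $\B^{k+1}\R$, so that $\B^k\Omega^1_{\cl} \to \B^{k+1}\R^\delta \to \B^{k+1}\R$ is a strict fibre sequence along a fibration (Lemmas \ref{lem fibrations of infinity stacks} and \ref{lem hopullback is actual pullback when fibration}), and then rotates once; this is more conceptual --- it explains \emph{why} the homotopy fibre is $\B^k\R$ rather than verifying it by an ad hoc cohomology computation --- at the price of having to identify the connecting map with the given arrow $\B^k\R \to \B^k\Omega^1_{\cl}$. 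That identification is your one loosely justified step: the phrase about the sequence being ``the $k$-fold delooping of $0 \to \R^\delta \to \R \to \Omega^1_{\cl} \to 0$'' is not quite the right statement (your short exact sequence is the brutal-truncation sequence of the chain model of $\B^{k+1}\R^\delta$, and $\B^k\R \to \B^k\Omega^1_{\cl}$ is a delooping of $d$, not the Puppe boundary of that sheaf sequence). The clean finish is the standard formula for the connecting morphism of a degreewise split short exact sequence of chain complexes, $\partial = i^{-1}(d_B \circ s - s \circ d_C)$ for a graded splitting $s$: here $s$ is the identity on $\R$ in degree $k+1$ and $d_B s = d : \R \to \Omega^1$ lands in degree $k$ of $\B^k\Omega^1_{\cl}$, which is, up to sign, exactly the top map of $[6]$; with that substitution your argument is complete. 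One small framing correction: $[4|5]$ and $[\frac{2}{4}]$ also fail to commute strictly at the chain level --- that is precisely why the paper resorts to the path object there too --- so $[6]$ is not special in that respect.
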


\begin{proof}
This proof is very similar as the proof of Lemma \ref{lem squares 4 and 5}. We take the actual pullback of the diagram
\begin{equation}
    \begin{tikzcd}
	& {(\B^{k+1} \R^\delta)^{\Delta^1}} \\
	{0 \oplus \B^k \Omega^1_{\cl}} & {(\B^{k+1} \R^\delta \oplus \B^{k+1} \R^\delta)}
	\arrow[from=1-2, to=2-2]
	\arrow[from=2-1, to=2-2]
\end{tikzcd}
\end{equation}
which by similar reasoning to the paragraph after (\ref{eq pullback using path spaces important idea}) is precisely
{\small
\begin{equation}
\begin{tikzcd}
	{[0 \to \Omega^1 \oplus \R \xrightarrow{d_1} \Omega^2 \oplus \Omega^1 \xrightarrow{d_2} \dots \xrightarrow{d_{k-2}} \Omega^{k-1} \oplus \Omega^{k-2} \xrightarrow{\alpha}  \Omega^{k-1}]} & {[\R \oplus \R \to \Omega^1 \oplus \Omega^1 \oplus \R \to \dots \to \Omega^k_{\cl} \oplus \Omega^{k-1}]} \\
	{[0 \to \Omega^1 \to \dots \to \Omega^k_{\cl}]} & {[\R \oplus \R \to \Omega^1 \oplus \Omega^1 \to \dots \to \Omega^k_{\cl} \oplus \Omega^k_{\cl}]}
	\arrow["\pi", from=1-2, to=2-2]
	\arrow[from=1-1, to=1-2]
	\arrow[from=1-1, to=2-1]
	\arrow[from=2-1, to=2-2]
\end{tikzcd}
\end{equation}
}
where $d_i(a,b) = (da, db -(-1)^i a)$ for $1 \leq i \leq k-2$, and $\alpha(a,b) = db + a$. Now there is an obvious map
\begin{equation}
    [0 \to \R \to \dots \to 0] \to [0 \to \Omega^1 \oplus \R \xrightarrow{d_1} \Omega^2 \oplus \Omega^1 \xrightarrow{d_2} \dots \xrightarrow{d_{k-2}} \Omega^{k-1} \oplus \Omega^{k-2} \xrightarrow{\alpha}  \Omega^{k-1}]
\end{equation}
that is an isomorphism on cohomology. Indeed $\alpha$ is surjective, and the kernel of $d_i: \Omega^i \oplus \Omega^{i -1} \to \Omega^{i+1} \oplus \Omega^i$ is the set of pairs $(a,b)$ where $a = (-1)^i db$, and these are all in the image of $d_{i-1}$.
\end{proof}

\begin{Lemma}
The square $[7]$
\begin{equation}
\begin{tikzcd}
	{\B^k \Omega^1_{\cl}} & {\Omega^{1 \leq \bullet \leq k+1}} \\
	{\B^{k+1} \R^\delta} & {\B^{k+1}_\nabla \R}
	\arrow[from=1-1, to=1-2]
	\arrow[from=1-1, to=2-1]
	\arrow[from=2-1, to=2-2]
	\arrow[from=1-2, to=2-2]
\end{tikzcd}
\end{equation}
is a homotopy pullback square.
\end{Lemma}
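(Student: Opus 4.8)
The plan is to mimic the proof of Lemma~\ref{lem square 5}: realise the square as an \emph{actual} pullback of presheaves of chain complexes one of whose legs is a projective fibration, invoke Lemma~\ref{lem hopullback is actual pullback when fibration} in the right proper model category $\ncat{ChPre}(\cart)$, and then transport the conclusion into $\cH$ via the comparison Proposition stated above (the one saying that $\DK$ sends a homotopy pullback of presheaves of chain complexes whose image consists of $\infty$-stacks to a homotopy pullback in $\cH$).

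First I would fix the chain-complex models, using the ``long'' resolutions already employed in the proof of Lemma~\ref{lem squares 4 and 5} and in Example~\ref{ex flat bundle gerbes}: take $\B^k\Omega^1_{\cl} = \DK\,[\Omega^1 \xrightarrow{d} \Omega^2 \xrightarrow{d} \dots \xrightarrow{d} \Omega^k \xrightarrow{d} \Omega^{k+1}_{\cl}]$ with $\Omega^1$ in homological degree $k$; $\Omega^{1\le\bullet\le k+1} = \DK\,[\Omega^1 \xrightarrow{d} \dots \xrightarrow{d} \Omega^k \xrightarrow{d} \Omega^{k+1}]$ with $\Omega^1$ in degree $k$; $\B^{k+1}\R^\delta = \DK\,[\R \xrightarrow{d} \Omega^1 \xrightarrow{d} \dots \xrightarrow{d} \Omega^k \xrightarrow{d} \Omega^{k+1}_{\cl}]$ with $\R$ in degree $k+1$; and $\B^{k+1}_\nabla\R = \DK\,[\R \xrightarrow{d} \Omega^1 \xrightarrow{d} \dots \xrightarrow{d} \Omega^k \xrightarrow{d} \Omega^{k+1}]$ with $\R$ in degree $k+1$. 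All four are $\infty$-stacks (Section~\ref{section examples of infinity stacks}), and the long model of $\B^k\Omega^1_{\cl}$ is objectwise quasi-isomorphic to the shorter one by the Poincar\'e lemma, so this choice is harmless for a homotopy-pullback question. Under these identifications the four edges of the square are the evident degreewise maps: the identity in the $\Omega^i$-spots for $1\le i\le k$; the inclusion $\Omega^{k+1}_{\cl}\hookrightarrow\Omega^{k+1}$ in the bottom spot along the edges $\B^k\Omega^1_{\cl}\to\Omega^{1\le\bullet\le k+1}$ and $\B^{k+1}\R^\delta\to\B^{k+1}_\nabla\R$; and the zero map $0\to\R$ in the top spot along the edges $\B^k\Omega^1_{\cl}\to\B^{k+1}\R^\delta$ and $\Omega^{1\le\bullet\le k+1}\to\B^{k+1}_\nabla\R$. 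A quick check (using $dd=0$ and that $d\omega$ is closed) confirms that these are chain maps and that the square commutes.

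With the models in place there are two observations. First, the leg $\B^{k+1}\R^\delta\to\B^{k+1}_\nabla\R$ is the identity in every positive homological degree, hence an objectwise (and therefore projective) fibration in $\ncat{ChPre}(\cart)$. Second, the square is an actual pullback of presheaves of chain complexes: pullbacks of chain complexes are computed objectwise and degreewise, and in each degree the two maps into the common corner form one of the cospans $\R\xrightarrow{\operatorname{id}}\R$ and $0\to\R$ (top degree), $\Omega^i\xrightarrow{\operatorname{id}}\Omega^i$ and $\Omega^i\xrightarrow{\operatorname{id}}\Omega^i$ (degrees $1$ through $k$), or $\Omega^{k+1}_{\cl}\hookrightarrow\Omega^{k+1}$ and $\Omega^{k+1}\xrightarrow{\operatorname{id}}\Omega^{k+1}$ (bottom degree), whose pullbacks are $0$, $\Omega^i$ and $\Omega^{k+1}_{\cl}$ respectively, i.e. exactly the corresponding degree of $\B^k\Omega^1_{\cl}$, with the comparison map from $\B^k\Omega^1_{\cl}$ the identity. (It is this last degree that forces the long model: the shorter $[\Omega^1\to\dots\to\Omega^k_{\cl}]$ lacks the bottom $\Omega^{k+1}_{\cl}$ and would fail this check.)

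Combining the two observations, Lemma~\ref{lem hopullback is actual pullback when fibration} applied in the right proper model category $\ncat{ChPre}(\cart)$ shows that the square is a homotopy pullback square in the projective model structure on presheaves of chain complexes, and then the comparison Proposition above (together with $\DK$ being right Quillen and Proposition~\ref{prop model topos localization preserves finite homotopy limits}) promotes this to a homotopy pullback of $\infty$-stacks in $\cH$, which is the desired statement. The only delicate point is the degree bookkeeping: aligning the long models so that (a) the four edges are honest commuting chain maps, (b) one leg is a projective fibration, and (c) the degreewise pullback reproduces $\B^k\Omega^1_{\cl}$ on the nose. Once that is arranged the argument is a short diagram chase with no analytic content, in the spirit of the proofs of Lemmas~\ref{lem squares 4 and 5} and~\ref{lem square 5}.
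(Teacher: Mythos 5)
Your proof is correct and follows essentially the same route as the paper's: both exhibit the square, in the long chain-complex models, as an actual degreewise pullback of presheaves of chain complexes whose leg $\B^{k+1}\R^\delta \to \B^{k+1}_\nabla\R$ is a projective fibration (being the identity in positive degrees), and then invoke Lemma \ref{lem hopullback is actual pullback when fibration} together with the Dold--Kan comparison into $\cH$. The paper simply asserts the pullback and fibration checks that you spell out explicitly.
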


\begin{proof}
As presheaves of chain complexes we have
\begin{equation}
\begin{tikzcd}
	{[0 \to \Omega^1 \to \Omega^2 \to \dots \to \Omega^{k+1}_{\cl}]} & {[0 \to \Omega^1 \to \Omega^2 \to \dots \to \Omega^k \to \Omega^{k+1}]} \\
	{[\R \to \Omega^1 \to \Omega^2 \to \dots \to \Omega^{k+1}_{\cl}]} & {[\R \to \Omega^1 \to \Omega^2 \to \dots \to \Omega^k \to \Omega^{k+1}]}
	\arrow[from=1-2, to=2-2]
	\arrow[from=2-1, to=2-2]
	\arrow[from=1-1, to=2-1]
	\arrow[from=1-1, to=1-2]
\end{tikzcd}
\end{equation}
which is an actual pullback, and the bottom horizontal map is a fibration, thus by Lemma \ref{lem hopullback is actual pullback when fibration}, $[7]$ is a homotopy pullback.
\end{proof}

\begin{Lemma} \label{lem squares 2 and 4}
The pasted square $[\frac{2}{4}]$, 
\begin{equation}
    \begin{tikzcd}
	{\B^k \R^\delta} & {*} \\
	{\B^k \R} & {\B^k \Omega^1_{\text{cl}}}
	\arrow[from=1-1, to=1-2]
	\arrow[from=1-1, to=2-1]
	\arrow[from=2-1, to=2-2]
	\arrow[from=1-2, to=2-2]
\end{tikzcd}
\end{equation}
is a homotopy pullback square.
\end{Lemma}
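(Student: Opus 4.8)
The plan is to recognise the pasted square $[\frac{2}{4}]$ as the $k$-fold delooping of the short exact sequence of sheaves of abelian groups on $\cart$
\[
0 \to \R^\delta \to \R \xrightarrow{\;d\;} \Omega^1_{\cl} \to 0 ,
\]
which is exact precisely because the Poincar\'e lemma makes every closed $1$-form on a cartesian space exact, so that $d\colon \Omega^0 \to \Omega^1_{\cl}$ is an objectwise epimorphism (throughout, $\R$ is identified with the sheaf $\Omega^0$ of smooth functions). As in the proofs of the other squares of Theorem \ref{th hopullback diagram}, the real work is carried out one step down, in $\ncat{ChPre}(\cart)$ with its projective model structure, and then transported across $\DK$.

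First I would pass to the ``unresolved'' presheaves of chain complexes $[\R^\delta \to 0 \to \cdots \to 0]$ and $[\Omega^1_{\cl} \to 0 \to \cdots \to 0]$, with the coefficient placed in chain degree $k$, in place of the resolved models of $\B^k\R^\delta$ (Example \ref{ex flat bundle gerbes}) and of $\B^k\Omega^1_{\cl}$ fixed in the text. By the Poincar\'e lemma the comparison maps are objectwise quasi-isomorphisms, so after applying $\DK$ they become objectwise, hence \v{C}ech, weak equivalences; since the resulting square of weak equivalences is compatible with all the structure maps, it is enough to show that $\DK$ of the unresolved square
\begin{equation*}
\begin{tikzcd}
	{[\R^\delta \to 0 \to \cdots \to 0]} & {[0 \to 0 \to \cdots \to 0]} \\
	{[\R \to 0 \to \cdots \to 0]} & {[\Omega^1_{\cl} \to 0 \to \cdots \to 0]}
	\arrow[from=1-1, to=1-2]
	\arrow[from=1-1, to=2-1]
	\arrow["d", from=2-1, to=2-2]
	\arrow[from=1-2, to=2-2]
\end{tikzcd}
\end{equation*}
is a homotopy pullback square in $\cH$, the bottom map being the de Rham differential $d\colon \Omega^0 \to \Omega^1_{\cl}$ in degree $k$ and zero in every other degree.

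Next I would observe that the displayed square is an honest pullback in $\ncat{ChPre}(\cart)$: pullbacks of chain complexes are computed degreewise, and in degree $k$ the pullback of $\Omega^0 \xrightarrow{d} \Omega^1_{\cl} \leftarrow 0$ is $\ker(d\colon \Omega^0 \to \Omega^1) = \R^\delta$, while in the remaining positive degrees it is the pullback of $0 \to 0 \leftarrow 0$. Moreover its bottom leg is a fibration of presheaves of chain complexes, since in degree $k \ge 1$ it is $d\colon \Omega^0 \to \Omega^1_{\cl}$, objectwise surjective by the Poincar\'e lemma, and in the other positive degrees it is the zero map. Because $\ncat{ChPre}(\cart)$ is right proper, Lemma \ref{lem hopullback is actual pullback when fibration} then promotes this pullback to a homotopy pullback square in $\ncat{ChPre}(\cart)$. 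Finally, the right Quillen functor $\DK\colon \ncat{ChPre}(\cart) \to \H$ preserves homotopy pullbacks, and by Proposition \ref{prop model topos localization preserves finite homotopy limits} the localisation $\H \to \cH$ preserves finite homotopy limits; since $\DK$ sends the four corners to $\B^k\R^\delta$, $*$, $\B^k\R$ and $\B^k\Omega^1_{\cl}$, this exhibits the asserted homotopy pullback square $[\frac{2}{4}]$ in $\cH$.

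The only genuinely geometric input is the Poincar\'e lemma on cartesian spaces, used both to justify the passage between resolved and unresolved models and, more essentially, to make $d\colon \Omega^0 \to \Omega^1_{\cl}$ an objectwise epimorphism so that the bottom leg is a true fibration; everything else is bookkeeping with chain degrees and with the compatibility of structure maps, which is the only place I anticipate friction. As an independent cross-check, one may instead run the same fibration argument directly on the curvature map $\B^k_\nabla\R \to \Omega^{k+1}_{\cl}$ — whose degreewise kernel is exactly the resolved model $[\R \to \Omega^1 \to \cdots \to \Omega^k_{\cl}]$ of $\B^k\R^\delta$, so that square $[2]$ is a homotopy pullback — and then recover $[\frac{2}{4}]$ from Corollary \ref{cor square 4} together with the pasting law, Lemma \ref{lem pasting law}.
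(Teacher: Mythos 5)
Your argument is correct, but it takes a genuinely different route from the paper's. The paper proves $[\tfrac{2}{4}]$ by the same device as Lemma \ref{lem squares 4 and 5}: it keeps the resolved models fixed in Section \ref{section examples of infinity stacks}, replaces the cospan by the path-object fibration $(\B^k\Omega^1_{\cl})^{\Delta^1} \to \B^k\Omega^1_{\cl}\oplus\B^k\Omega^1_{\cl}$ via Lemma \ref{lem homotopy pullback using path space fibration}, and identifies the resulting strict pullback degree by degree with the resolved model $[\R \to \Omega^1 \to \dots \to \Omega^k_{\cl}]$ of $\B^k\R^\delta$. You instead change models first: in the unresolved presentation the square is literally the degree-$k$ shift of the short exact sequence $0 \to \R^\delta \to \R \xrightarrow{d} \Omega^1_{\cl} \to 0$, the bottom leg becomes an objectwise surjection in all positive degrees (here the Poincar\'e lemma is doing real work, since with the resolved models the map $\B^k\R \to \B^k\Omega^1_{\cl}$ is \emph{not} a fibration, which is exactly why the paper reaches for the path object), and Lemma \ref{lem hopullback is actual pullback when fibration} finishes in $\ncat{ChPre}(\cart)$ before transporting along $\DK$ and Proposition \ref{prop model topos localization preserves finite homotopy limits}. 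Your approach is shorter and more conceptual; the paper's buys an explicit identification of the homotopy fiber with the resolved model it uses everywhere else, without the extra (routine but worth stating carefully) step of checking that the comparison quasi-isomorphisms commute with all four structure maps of the square. Your closing cross-check is also sound and non-circular: the curvature map $\B^k_\nabla\R \to \Omega^{k+1}_{\cl}$ targets a complex concentrated in degree $0$, hence is automatically a projective fibration with strict kernel the resolved $\B^k\R^\delta$, giving $[2]$ directly; combined with Corollary \ref{cor square 4} and Lemma \ref{lem pasting law} this recovers $[\tfrac{2}{4}]$ (note the paper runs this pasting in the opposite direction, deducing $[2]$ from $[\tfrac{2}{4}]$ and $[4]$).
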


\begin{proof}
We use the same proof technique as in Lemma \ref{lem squares 4 and 5}, namely we will compute the actual pullback of the diagram
\begin{equation}
    \begin{tikzcd}
	& {(\B^k \Omega^1_{\text{cl}})^{\Delta^1}} \\
	{\B^k \R \oplus 0} & {\B^k \Omega^1_{\text{cl}} \oplus \B^k \Omega^1_{\text{cl}} }
	\arrow[from=2-1, to=2-2]
	\arrow[from=1-2, to=2-2]
\end{tikzcd}
\end{equation}
The actual pullback we obtain is given by
{\small
\begin{equation}
       \begin{tikzcd}
	{[\R \to \Omega^1 \to \Omega^2 \to \dots \to \Omega^k_{\text{cl}}]} & {[\Omega^1 \oplus \Omega^1 \to\Omega^2 \oplus \Omega^2 \oplus \Omega^1 \to \dots \to \Omega^k_{\cl} \oplus \Omega^k_{\cl} \oplus \Omega^{k-1} \to 0 \oplus \Omega^k_{\cl}]} \\
	{[\R \to 0 \to \dots \to 0 \to 0]} & {[\Omega^1 \oplus \Omega^1 \to \Omega^2 \oplus \Omega^2 \to \dots \to \Omega^k_{\cl} \oplus \Omega^k_{\cl} \to 0]}
	\arrow["\pi", from=1-2, to=2-2]
	\arrow[from=2-1, to=2-2]
	\arrow[from=1-1, to=2-1]
	\arrow[from=1-1, to=1-2]
\end{tikzcd} 
\end{equation}
}
which is similar to the computation (\ref{eq computation for hopullback squares 4 and 5}). Thus $[\frac{2}{4}]$ is a homotopy pullback square.
\end{proof}

\begin{Cor}
The square $[2]$ is a homotopy pullback square.
\end{Cor}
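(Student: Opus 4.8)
The plan is to deduce this from the pasting law for homotopy pullbacks (Lemma \ref{lem pasting law}) applied to the vertical composite of $[2]$ and $[4]$. Observe that stacking square $[2]$ on top of square $[4]$ produces exactly the pasted square $[\frac{2}{4}]$ of Lemma \ref{lem squares 2 and 4}: the common edge is the map $\B^k_\nabla \R \to \Omega^{k+1}_{\cl}$, the outer top-left vertex is $\B^k \R^\delta$, the outer top-right vertex is $*$, the outer bottom-left vertex is $\B^k \R$, and the outer bottom-right vertex is $\B^k \Omega^1_{\cl}$. Since $\cH$ is right proper, Lemma \ref{lem pasting law} applies to this configuration (after transposing rows and columns, which is harmless).

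First I would record that $[4]$ is a homotopy pullback square by Corollary \ref{cor square 4}, and that the outer rectangle $[\frac{2}{4}]$ is a homotopy pullback square by Lemma \ref{lem squares 2 and 4}. Then, invoking Lemma \ref{lem pasting law} with $[4]$ playing the role of the already-known homotopy pullback square and $[\frac{2}{4}]$ the outer rectangle, I conclude that the remaining square $[2]$ is a homotopy pullback square. This is the entire argument; no computation with presheaves of chain complexes is needed at this stage, since all the chain-level work has already been done in the proofs of Lemma \ref{lem squares 4 and 5}, Lemma \ref{lem square 5}, and Lemma \ref{lem squares 2 and 4}.

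There is essentially no obstacle here: the only point requiring mild care is the bookkeeping to confirm that the vertical composite of $[2]$ and $[4]$ is literally the square $[\frac{2}{4}]$ as drawn (in particular that the maps $\B^k_\nabla\R \to \Omega^{k+1}_{\cl}$, $\B^k_\nabla\R \to \B^k\R$, and $\B^k\R \to \B^k\Omega^1_{\cl}$ are the ones appearing in the big diagram of Theorem \ref{th hopullback diagram}), which is immediate from the explicit presentations given in Section \ref{section examples of infinity stacks}. Thus the proof reads: by Lemma \ref{lem squares 2 and 4} the pasted square $[\frac{2}{4}]$ is a homotopy pullback square, by Corollary \ref{cor square 4} the square $[4]$ is a homotopy pullback square, and hence by the pasting law (Lemma \ref{lem pasting law}) the square $[2]$ is a homotopy pullback square.
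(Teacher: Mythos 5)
Your proposal is correct and is precisely the paper's own argument: the paper's proof of this corollary reads ``By Corollary \ref{cor square 4}, Lemma \ref{lem squares 2 and 4} and Lemma \ref{lem pasting law}.'' The only addition you make is the (harmless and accurate) remark that the pasting law must be applied in its transposed, vertical form.
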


\begin{proof}
By Corollary \ref{cor square 4}, Lemma \ref{lem squares 2 and 4} and Lemma \ref{lem pasting law}.
\end{proof}

\begin{Lemma} \label{lem square 3}
The square $[3]$ is a homotopy pullback square.
\end{Lemma}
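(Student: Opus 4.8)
The plan is to handle square $[3]$ by the same path-space/fibration technique used in Lemmas \ref{lem square 5} and \ref{lem squares 4 and 5}, except that here the argument collapses to something very short: the relevant map is already a fibration, so no explicit path object is needed and the square turns out to be a strict pullback. Written out as presheaves of chain complexes, square $[3]$ is
\begin{equation*}
\begin{tikzcd}
	{[0]} & {[0]} \\
	{[\Omega^{k+1}_{\cl}]} & {[\Omega^{k+1}]}
	\arrow[from=1-1, to=1-2]
	\arrow[from=1-1, to=2-1]
	\arrow["\iota"', hook, from=2-1, to=2-2]
	\arrow[from=1-2, to=2-2]
\end{tikzcd}
\end{equation*}
where $[\Omega^{k+1}_{\cl}]$ and $[\Omega^{k+1}]$ denote the sheaves of closed, respectively all, $(k+1)$-forms viewed as presheaves of chain complexes concentrated in degree $0$, $\iota$ is the inclusion (as in the proof of Lemma \ref{lem squares 4 and 5}), and the remaining three arrows are the unique maps out of the zero presheaf $[0]$.

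First I would note that $\iota$ is a projective fibration in $\ncat{ChPre}(\cart)$: objectwise it is only required to be surjective in degrees $\geq 1$, which holds vacuously since both complexes are concentrated in degree $0$. Since $\DK$ is right Quillen for the projective model structures (see (\ref{eq presheaves Dold Kan})), $\DK\iota : \Omega^{k+1}_{\cl} \to \Omega^{k+1}$ is then a projective fibration of simplicial presheaves, hence a fibration in $\cH$ by Lemma \ref{lem fibrations of infinity stacks}. Next I would observe that square $[3]$, back in $\cH$, is an \emph{actual} pullback square: pullbacks in $\spre(\cart)$ are computed objectwise and degreewise, and over $U \in \cart$ the fibre product of $* \to \Omega^{k+1}(U) \xleftarrow{\iota_U} \Omega^{k+1}_{\cl}(U)$ is $\{\omega \in \Omega^{k+1}_{\cl}(U) : \iota_U(\omega) = 0\} = 0$ by injectivity of $\iota_U$, so the pullback is the terminal object $*$, which is exactly the top-left corner of $[3]$.

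Finally, since $\cH$ is right proper and one leg of the square ($\DK\iota$) is a fibration, Lemma \ref{lem hopullback is actual pullback when fibration} upgrades this actual pullback square to a homotopy pullback square in $\cH$, which is the assertion of the Lemma. I do not anticipate a genuine obstacle here: the only points that need care are the (vacuous) fibrancy check for $\iota$ and the observation that $[3]$ is already a strict pullback, so unlike in Lemmas \ref{lem squares 4 and 5} and \ref{lem squares 2 and 4} and the square $[6]$, no path-object computation is required.
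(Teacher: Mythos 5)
Your proof is correct and uses exactly the technique the paper applies to squares $[1]$, $[5]$ and $[7]$: exhibit the square as an actual pullback of presheaves of chain complexes with one leg a (here vacuously) projective fibration, and conclude via right properness, Lemma \ref{lem hopullback is actual pullback when fibration}, and the passage through $\DK$ to $\cH$. The one point worth flagging is that the paper's own printed proof of this lemma displays a different square --- the fiber sequence $\Omega^{1\leq\bullet\leq k}\to\B^k_\nabla\R\to\B^k\R$ --- which does not match square $[3]$ as labeled in Theorem \ref{th hopullback diagram} (that square is $*\to *$ over $\Omega^{k+1}_{\cl}\hookrightarrow\Omega^{k+1}$, precisely the one you treat), so your identification of the square and the ensuing one-line argument are the correct instantiation of the paper's method.
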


\begin{proof}
As a diagram of presheaves of chain complexes
\begin{equation}
    \begin{tikzcd}
	{[0 \to \Omega^1 \to \dots \to \Omega^k]} & {[\R \to \Omega^1 \to \dots \to \Omega^k]} \\
	{[0 \to 0 \to\dots \to 0]} & {[\R \to 0 \to \dots \to 0]}
	\arrow[from=1-1, to=1-2]
	\arrow[from=2-1, to=2-2]
	\arrow[from=1-1, to=2-1]
	\arrow[from=1-2, to=2-2]
\end{tikzcd}
\end{equation}
it is an actual pullback, and the right hand map is a fibration.
\end{proof}

\begin{Lemma}
The square $[1]$ is a homotopy pullback square.
\end{Lemma}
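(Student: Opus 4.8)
The plan is to realize square $[1]$ as the image under Dold--Kan of an actual pullback square of presheaves of chain complexes one of whose legs is a fibration, so that Lemma \ref{lem hopullback is actual pullback when fibration} applies after transporting to $\cH$. Recall from Examples \ref{ex R bundle k gerbes} and \ref{ex flat bundle gerbes} that $\B^k_\nabla \R$ is $\DK$ of the presheaf of chain complexes $[\R \xrightarrow{d} \Omega^1 \xrightarrow{d} \cdots \xrightarrow{d} \Omega^k]$ (with $\Omega^k$ in chain degree $0$) while $\B^k \R^\delta$ is $\DK$ of $[\R \xrightarrow{d} \Omega^1 \xrightarrow{d} \cdots \xrightarrow{d} \Omega^k_{\cl}]$, and that under these models the right-hand vertical map of square $[1]$ is $\DK$ applied to the map of presheaves of chain complexes
\begin{equation*}
[\R \xrightarrow{d} \Omega^1 \to \cdots \to \Omega^{k-1} \to \Omega^k_{\cl}] \longrightarrow [\R \xrightarrow{d} \Omega^1 \to \cdots \to \Omega^{k-1} \to \Omega^k]
\end{equation*}
which is the identity in every chain degree $\geq 1$ and the inclusion $\Omega^k_{\cl} \hookrightarrow \Omega^k$ in chain degree $0$.

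First I would note that this map of presheaves of chain complexes is a fibration in the projective model structure on $\ncat{ChPre}(\cart)$: by Proposition \ref{prop projective model structure on chain complexes} this only requires objectwise surjectivity in chain degrees $\geq 1$, which is clear since it is the identity there. Since $\DK$ is right Quillen (indeed both left and right Quillen) for the projective structures, the induced map $\B^k \R^\delta \to \B^k_\nabla \R$ is a projective fibration of simplicial presheaves, hence a fibration in $\cH$ by Lemma \ref{lem fibrations of infinity stacks}. Next I would compute the pullback of $* \to \B^k_\nabla \R \leftarrow \B^k \R^\delta$: because the displayed map of chain complexes is a levelwise monomorphism, its kernel is the zero presheaf of chain complexes, so the pullback of $0 \to [\R \to \cdots \to \Omega^k] \leftarrow [\R \to \cdots \to \Omega^k_{\cl}]$ in $\ncat{ChPre}(\cart)$ is $0$; applying $\DK$, which preserves limits (being a right adjoint) and sends $0$ to $*$, shows the pullback of $* \to \B^k_\nabla \R \leftarrow \B^k \R^\delta$ in $\spre(\cart)$ is $*$, exactly the top-left corner of square $[1]$.

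Putting this together: square $[1]$ is an actual pullback square in $\cH$ one of whose legs is a fibration, so by Lemma \ref{lem hopullback is actual pullback when fibration}, using that $\cH$ is right proper, it is a homotopy pullback square; this completes the proof of Theorem \ref{th hopullback diagram}. I do not expect a genuine obstacle here. The only thing to get right is the chain-degree bookkeeping that makes the comparison map the identity away from degree $0$ --- this is precisely what simultaneously makes it a projective fibration, giving the homotopy-pullback conclusion, and a monomorphism, forcing the pullback to be terminal. (Equivalently, one may apply the Proposition relating homotopy pullbacks in $\ncat{ChPre}(\cart)$ to homotopy pullbacks in $\cH$ to the evident homotopy pullback square of presheaves of chain complexes with corners $0$, $[\R \to \cdots \to \Omega^k_{\cl}]$, $0$, $[\R \to \cdots \to \Omega^k]$.)
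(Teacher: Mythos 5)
Your proposal is correct and follows essentially the same route as the paper: exhibit square $[1]$ as $\DK$ of the strict pullback square of presheaves of chain complexes with corners $0$, $[\R \to \cdots \to \Omega^k_{\cl}]$, $0$, $[\R \to \cdots \to \Omega^k]$, observe that the right-hand map is a projective fibration (identity, hence surjective, in positive chain degrees), and invoke the fibration-implies-homotopy-pullback lemma together with the transport to $\cH$. The paper's proof is just a terser statement of exactly these two observations.
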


\begin{proof}
As a diagram of presheaves of chain complexes
\begin{equation}
    \begin{tikzcd}
	{[0 \to \dots \to 0]} & {[\R \to \dots \to \Omega^k_{\cl}]} \\
	{[0 \to \dots \to 0]} & {[\R \to \dots \to \Omega^k]}
	\arrow[from=1-1, to=2-1]
	\arrow[from=2-1, to=2-2]
	\arrow[from=1-1, to=1-2]
	\arrow[from=1-2, to=2-2]
\end{tikzcd}
\end{equation}
it is an actual pullback, and the right hand map is a fibration.
\end{proof}

Thus we have proven Theorem \ref{th hopullback diagram}.

\printbibliography

\end{document}